\numberwithin{equation}{section}
\renewcommand{\dim}{\mathrm{dim}\,}
\newcommand{\RR}{\mathbb{R}}
\newcommand{\cP}{A} 
\newcommand{\cS}{\mathcal{S}}
\newcommand{\TT}{\mathbb{T}}
\newcommand{\cE}{\mathcal{E}}
\newcommand{\eps}{\varepsilon}
\newcommand{\tubes}{\mathbb{T}}
\newcommand{\supp}{\operatorname{supp}}
\newcommand{\dimA}{\dim_{\! \!A}}
\newtheorem{thm}{Theorem}
\numberwithin{thm}{section}
\newtheorem{prop}[thm]{Proposition}
\newtheorem{defn}[thm]{Definition}
\newtheorem{lem}[thm]{Lemma}
\newtheorem{cor}[thm]{Corollary}
\newtheorem*{4wayDicotPropEnv}{Proposition \ref{4WayDichotProp}}
\theoremstyle{remark}
\newtheorem{rem}[thm]{Remark}
\title{The Assouad dimension of Kakeya sets in $\RR^3$}
\author{Hong Wang\thanks{Courant Institute of Mathematical Sciences, New York University. New York, NY, USA.} \and Joshua Zahl \thanks{Department of Mathematics, The University of British Columbia. Vancouver, BC, Canada.}}
\date{\today}
\begin{document}
\maketitle

\begin{abstract}
This paper studies the structure of Kakeya sets in $\mathbb{R}^3$. We show that for every Kakeya set $K\subset\mathbb{R}^3$, there exist well-separated scales $0<\delta<\rho\leq 1$ so that the $\delta$ neighborhood of $K$ is almost as large as the $\rho$ neighborhood of $K$. As a consequence, every Kakeya set in $\mathbb{R}^3$ has Assouad dimension 3 and every Ahlfors-David regular Kakeya set in $\mathbb{R}^3$ has Hausdorff dimension 3. We also show that every Kakeya set in $\mathbb{R}^3$ that has ``stably equal'' Hausdorff and packing dimension (this is a new notion, which is introduced to avoid certain obvious obstructions) must have Hausdorff dimension 3.  

The above results follow from certain multi-scale structure theorems for arrangements of tubes and rectangular prisms in three dimensions, and a generalization of  the sticky Kakeya theorem previously proved by the authors. 
\end{abstract}

\section{Introduction}\label{introSection}
A Kakeya set is a compact subset of $\RR^n$ that contains a unit line segment pointing in every direction. The Kakeya set conjecture asserts that every Kakeya set in $\RR^n$ has Minkowski and Hausdorff dimension $n$. This conjecture is proved in the plane \cite{Cor, Dav}, and is open in three and higher dimensions. See \cite{KT02, Wolff96} for a survey of progress on the Kakeya conjecture. 

In this paper, we study the structure of Kakeya sets in $\RR^3$. We show that for every Kakeya set $K\subset\mathbb{R}^3$, there exist well-separated scales $0<\delta<\rho\leq 1$ so that the $\delta$ neighborhood of $K$ is almost as large as the $\rho$ neighborhood of $K$. The precise statement is given in Theorem \ref{mainThmDiscretized} below. As a consequence, we prove several weaker variants of the Kakeya set conjecture: we prove that every Kakeya set in $\RR^3$ has Assouad dimension 3, and we prove that every Kakeya set in $\RR^3$ with ``stably equal'' Hausdorff and packing dimension (see Definition \ref{essentiallyEqualHausdorffPacking} below) must have Hausdorff and packing dimension 3. In order to explain these statements precisely, we require the following definitions.
\begin{defn}\label{defnAssouadDimension}
Let $E\subset\RR^n$ be non-empty. The \emph{Assouad dimension} of $E$, denoted $\dimA(E)$, is the infimum of all $\beta\geq 0$ for which there exist positive constants $C$ and $r_0$ so that for all $0<\rho<r\leq r_0$, we have
\[
\sup_{x\in \RR^n}\mathcal{E}_\rho \big(E \cap B(x,r)\big)\leq C(r/\rho)^\beta.
\]
In the above, $\mathcal{E}_\rho(X)$ denotes the $\rho$-covering number of $X$.
\end{defn}

The Assouad dimension is always at least as large as the Hausdorff and Minkowski dimensions, i.e.~if $E\subset\RR^n$ is bounded then we have $0\leq \dim_{\!H}(E)\leq\underline{\dim}_M(E)\leq \overline{\dim}_M(E)\leq\dim_A(E)\leq n$. If $E$ is bounded and Ahlfors-David regular, then all of these dimensions are equal. See \cite{Fra} for further details on Assouad dimension and its properties. 

Our first result is the following weak version of the Kakeya set conjecture in $\RR^3$.

\begin{thm}\label{assouadDimThm}
Every Kakeya set in $\RR^3$ has Assouad dimension 3.
\end{thm}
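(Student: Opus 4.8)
The plan is to deduce Theorem~\ref{assouadDimThm} from the multi-scale structure result Theorem~\ref{mainThmDiscretized} by a short pigeonholing-and-contradiction argument; essentially all of the work lives in Theorem~\ref{mainThmDiscretized} itself. Since $K\subset\RR^3$ forces $\dimA(K)\le 3$, it suffices to prove $\dimA(K)\ge 3$. Suppose not: then there is $\beta$ with $\dimA(K)<\beta<3$, together with constants $C,r_0>0$, such that
\[
\sup_{x\in\RR^3}\mathcal{E}_\sigma\big(K\cap B(x,r)\big)\le C\,(r/\sigma)^{\beta}\qquad\text{for all }0<\sigma<r\le r_0 .
\]
Fix a small $\epsilon>0$ with $\beta<3-2\epsilon$.

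Next I would invoke Theorem~\ref{mainThmDiscretized} with this $\epsilon$ (and the threshold $r_0$). Translating the assertion ``the $\delta$ neighborhood of $K$ is almost as large as the $\rho$ neighborhood of $K$'' into covering numbers via $|N_\sigma(K)|\approx\sigma^{3}\mathcal{E}_\sigma(K)$, this should furnish a pair of well-separated scales $0<\delta<\rho\le r_0$, with $\rho/\delta$ as large as I wish and with $\delta\le\rho^{2}$, satisfying
\[
\mathcal{E}_\delta(K)\ \ge\ \rho^{\epsilon}\,(\rho/\delta)^{3}\,\mathcal{E}_\rho(K).
\]
Cover $K$ by a family $\{B(x_i,\rho)\}$ of $\mathcal{E}_\rho(K)$ balls of radius $\rho$. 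Since a $\delta$-cover of $K$ can be assembled from $\delta$-covers of the pieces $K\cap B(x_i,\rho)$, we have $\mathcal{E}_\delta(K)\le\sum_i\mathcal{E}_\delta\big(K\cap B(x_i,\rho)\big)$, and the pigeonhole principle produces an index $i$ with $\mathcal{E}_\delta\big(K\cap B(x_i,\rho)\big)\ge\rho^{\epsilon}(\rho/\delta)^{3}$. Writing $x=x_i$ and using $\delta\le\rho^{2}$, so that $\rho^{\epsilon}\ge(\delta/\rho)^{\epsilon}=(\rho/\delta)^{-\epsilon}$, I obtain
\[
\mathcal{E}_\delta\big(K\cap B(x,\rho)\big)\ \ge\ (\rho/\delta)^{3-\epsilon}.
\]

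Comparing this with the Assouad estimate above, applied to the admissible pair $0<\delta<\rho\le r_0$, gives $(\rho/\delta)^{3-\epsilon}\le C\,(\rho/\delta)^{\beta}$, i.e.\ $(\rho/\delta)^{\,3-\epsilon-\beta}\le C$. But $3-\epsilon-\beta>\epsilon>0$, and $\rho/\delta$ was free to be taken arbitrarily large, a contradiction; hence $\dimA(K)=3$. In this deduction the only points requiring genuine care are (i) converting the measure-theoretic ``almost as large'' conclusion of Theorem~\ref{mainThmDiscretized} into the displayed lower bound for $\mathcal{E}_\delta(K)$, absorbing whatever subpolynomial loss appears there into the factor $\rho^{\epsilon}$; and (ii) ensuring that the scale pairs supplied by Theorem~\ref{mainThmDiscretized} can be chosen with $\rho$ below the prescribed $r_0$ and with $\rho/\delta$ arbitrarily large --- which is exactly why that theorem is phrased in terms of well-separated pairs $0<\delta<\rho\le 1$ rather than a single scale. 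I expect (ii) to be the step one must reconcile most carefully with the precise statement of Theorem~\ref{mainThmDiscretized}; the pigeonholing and the exponent bookkeeping are routine.
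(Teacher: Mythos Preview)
Your overall strategy---derive a local covering-number lower bound from Theorem~\ref{mainThmDiscretized} and contradict the Assouad hypothesis---is exactly right and matches the paper. But you have misread Theorem~\ref{mainThmDiscretized} in two places, which leaves a genuine gap and also makes the argument longer than necessary.

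First, Theorem~\ref{mainThmDiscretized} is a statement about a collection $\tubes$ of $\delta$-tubes satisfying the Convex Wolff Axioms, not about $K$ itself. You must first build such a $\tubes$: for each direction in a maximal $\delta$-separated subset of $S^2$, choose a $\delta$-tube contained in $N_\delta(K)$ with that direction (this is possible because $K$ is Kakeya). The resulting $\#\tubes\sim\delta^{-2}$ tubes point in $\delta$-separated directions and hence satisfy the Convex Wolff Axioms with error $O(1)$. This step is routine but cannot be omitted.

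Second, and more importantly, the conclusion of Theorem~\ref{mainThmDiscretized} is already \emph{local}: it outputs scales $\rho,r\in[\delta,1]$ with $r/\rho\ge\delta^{-\eta}$ \emph{together with} a ball $B$ of radius $r$ such that $|B\cap N_\rho(\bigcup Y(T))|\ge(\rho/r)^\eps|B|$, whence directly $\mathcal{E}_\rho(K\cap B)\gtrsim(r/\rho)^{3-\eps}$. There is no global inequality of the form $\mathcal{E}_\delta(K)\ge\rho^\eps(\rho/\delta)^3\mathcal{E}_\rho(K)$ in the theorem (you appear to be working from the one-line summary in the abstract rather than the precise statement), so your pigeonholing step to locate a ball is unnecessary---the theorem already hands you $B$. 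Relatedly, note that in the theorem $\delta$ is the \emph{input} tube scale, while the two \emph{output} scales are called $\rho$ and $r$; your relabeling conflates these. The side condition $\delta\le\rho^2$ is neither available nor needed: the scale separation you actually use is $r/\rho\ge\delta^{-\eta}$, which is made large by shrinking the tube scale $\delta$. Your concern (ii) about $r_0$ is handled trivially by replacing $C$ with $C(1+r_0^{-3})$ so that the Assouad bound holds for all $r\le 1$. With these corrections the proof collapses to a few lines, exactly as in the paper.
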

\noindent\emph{Remarks}\\
1. Theorem \ref{assouadDimThm} holds for a slightly more general class of sets, where the lines satisfy a mild strengthening of the Wolff axioms. See the remarks in Section \ref{proofOfAssouadDimThmSec} for details.
\medskip

\medskip
\noindent 2. Our proof of Theorem \ref{assouadDimThm} establishes a slightly stronger (though more technical) result, which asserts that the analogue of Theorem \ref{assouadDimThm} continues to hold if we restrict to pairs of scales $\rho<r$ in Definition \ref{defnAssouadDimension} that are ``exponentially separated,'' in the sense that $\rho < r^{1+\eps}$. This shows that every Kakeya set in $\RR^3$ has quasi-Assouad dimension 3. (The Assouad dimension of a set is always bounded below by its quasi-Assouad dimension; see \cite{Fra} for further details).

\medskip
\noindent 3.
Fraser, Olson and Robinson \cite{FOR} proved that every half-extended Kakeya set in $\RR^n$ (i.e.~a subset of $\RR^n$ containing a half-infinite line segment in every direction) has Assouad dimension $n$. However this is a somewhat different question. In brief, Fraser, Olson and Robinson showed that the Assouad dimension does not increase under a ``zooming out'' rescaling, which transforms an extended Kakeya set into one where all lines pass through the origin (the latter type of set has full Assouad dimension).

\medskip

Next, we consider Kakeya sets with equal Hausdorff and packing dimension. Recall that for $E\subset\RR^n$, the packing dimension is given by 
\[
\dim_{\!P}(E)=\inf\{\sup \overline {\dim}_M E_i\},
\]
where $\overline{\dim}_M$ denotes the upper Minkowski dimension, and the infimum is taken over all decompositions $E=\bigcup_i E_i$ into countably many sets. We have $\dim_{\!P}(E)\geq\dim_{\!H}(E)$ for every set $E\subset\RR^n$, and if $\dim_{\!P}(E)=\dim_{\!H}(E)=\alpha $, then for every $\eps>0$ there exists a constant $C$ and a measure $\mu$ supported on $E$ so that for $\mu$-a.e.~$x\in E$ we have the Frostman-type estimate 
\[
C^{-1}r^{\alpha+\eps}\leq\mu(B(x,r))\leq C r^{\alpha-\eps}\quad\textrm{for all}\ 0<r\leq 1. 
\]

We would like to say that every Kakeya set in $\RR^3$ with equal Hausdorff and packing dimension must have dimension 3. Unfortunately, we cannot prove this statement, and indeed this would imply that every Kakeya set in $\RR^3$ has packing dimension 3: let $K\subset\RR^3$ be a Kakeya set, and let $X\subset\RR^3$ be a compact set with equal Hausdorff and packing dimension $\dim_{\!H}(X) = \dim_{\!P}(X) = \dim_{\!P}(K)$. Then $K'=K\cup X$ is a Kakeya set with equal Hausdorff and packing dimension, and this common value is $\dim_{\!P}(K)$. In the above example, the Kakeya set $K'$ has equal Hausdorff and packing dimension for the somewhat trivial reason that it is a union of a Kakeya set with potentially unequal Hausdorff and packing dimension, and a set of equal Hausdorff and packing dimension\footnote{We could even select $X$ to be a union of unit line segments pointing in different directions (specifically a $\dim X - 1$ dimensional set of directions), and modify $K$ so that $K'$ is a union of unit line segments, with one line segment pointing in each direction.}. In particular, if $\dim_{\!H} K < \dim_{\!H} K'=\alpha$, then an $\alpha$-dimensional Frostman measure supported on $K'$ would be null on $K$. To exclude this type of situation, we introduce the following definition.

\begin{defn}\label{essentiallyEqualHausdorffPacking}
We say a set compact set $K\subset\RR^n$ is a \emph{Kakeya set with stably equal Hausdorff and packing dimension} if for all $\eps>0$, there is a constant $C$ and a measure $\mu$ supported on $K$ with the following properties:
\begin{itemize}
	\item For all $x\in\operatorname{supp}(\mu)$ and all $0<r\leq 1$, we have
	\[
		C^{-1} r^{\alpha+\eps}\leq \mu(B(x,r))\leq C r^{\alpha-\eps},
	\]
	where $\alpha=\dim_{\!H}(K)$.

	\item There is a positive (two-dimensional Lebesgue) measure set of directions $\Omega\subset S^2,$ so that for each direction $e\in\Omega$, there is a line $\ell_e$ pointing in direction $e$ for which $\ell_e\cap \supp(\mu)$ has positive one-dimensional Lebesgue measure.
	\end{itemize}
\end{defn}

In particular, if $K$ is a Kakeya set with unequal Hausdorff and packing dimension, then the set $K'=K\cup X$ described above will not have stably equal Hausdorff and packing dimension\footnote{Unless the set $X$ is itself a Kakeya set with stably equal Hausdorff and packing dimension.}.

\begin{thm}\label{equalHausdorffPackingKakeya}
Every Kakeya set in $\RR^3$ with stably equal Hausdorff and packing dimension has Hausdorff and packing dimension 3.
\end{thm}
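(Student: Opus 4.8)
The plan is to deduce Theorem~\ref{equalHausdorffPackingKakeya} from Theorem~\ref{assouadDimThm} together with the quantitative multi-scale statement of Theorem~\ref{mainThmDiscretized}, by showing that the stably-equal hypothesis forces the Frostman measure $\mu$ to be, at every scale and location, comparable to an Ahlfors--David regular measure of dimension $\alpha=\dim_H(K)$, after which the Assouad-dimension mechanism upgrades $\alpha$ to $3$. First I would fix a small $\eps>0$ and take the measure $\mu$ provided by Definition~\ref{essentiallyEqualHausdorffPacking}. The two-sided bound $C_\eps^{-1}r^{\alpha+\eps}\le\mu(B(x,r))\le C_\eps r^{\alpha-\eps}$ already gives that $\supp(\mu)$ has Hausdorff dimension between $\alpha-\eps$ and $\alpha+\eps$, hence (letting $\eps\to0$ and using $\dim_H(K)=\alpha$) that $\supp(\mu)$ essentially realizes the dimension of $K$; the point of the second bullet is that a positive-measure set of the unit line segments witnessing the Kakeya property actually meet $\supp(\mu)$ in positive length, so $\supp(\mu)$ itself contains (a positive-measure family of) line segments pointing in a positive-measure set of directions.

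Next I would run the argument behind Theorem~\ref{assouadDimThm} on the set $E=\supp(\mu)$ rather than on $K$. The remark after Theorem~\ref{assouadDimThm} notes that it applies to any set whose constituent lines satisfy a mild strengthening of the Wolff axioms; a positive-measure family of directions, each carrying a positive-length sub-segment inside $E$, gives exactly such a configuration (possibly after a harmless pigeonholing to a compact positive-measure sub-collection and a rescaling). Thus $\dimA(E)=3$. Now invoke Theorem~\ref{mainThmDiscretized}: its conclusion produces well-separated scales $0<\delta<\rho\le1$ and a point $x$ at which the $\delta$-covering number of $E\cap B(x,\rho)$ is nearly $(\rho/\delta)^3$. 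Combined with the Frostman lower bound $\mu(B(y,\delta))\ge C_\eps^{-1}\delta^{\alpha+\eps}$, a ball $B(x,\rho)$ containing $\gtrsim(\rho/\delta)^{3-o(1)}$ disjoint $\delta$-balls each of $\mu$-mass $\gtrsim\delta^{\alpha+\eps}$ has $\mu(B(x,\rho))\gtrsim(\rho/\delta)^{3-o(1)}\delta^{\alpha+\eps}$; comparing this with the Frostman upper bound $\mu(B(x,\rho))\le C_\eps\rho^{\alpha-\eps}$ yields
\[
\rho^{\alpha-\eps}\gtrsim_\eps (\rho/\delta)^{3-o(1)}\,\delta^{\alpha+\eps},
\]
which, because $\delta$ can be taken arbitrarily small relative to $\rho$ (the scales are well-separated, with separation quantified by the structure theorem), is only tenable if $3-o(1)\le\alpha+O(\eps)$. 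Letting $\eps\to0$ and tracking the $o(1)$ gives $\alpha\ge3$, hence $\alpha=3$; since $\dim_P(K)\le\dim_H(K)=3\le\dim_P(K)$ is forced by $\dim_P\ge\dim_H$-failing the other way, one gets $\dim_P(K)=3$ as well. (Strictly: $\dim_H\le\dim_P$ always, so $\dim_P(K)\ge3$, and $\dim_P(K)\le n=3$.)

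The main obstacle I expect is not the final scale-counting inequality, which is routine, but rather \emph{legitimately transferring the Kakeya/Wolff structure from $K$ to $\supp(\mu)$}: Theorem~\ref{assouadDimThm} and Theorem~\ref{mainThmDiscretized} are stated for Kakeya sets (or sets satisfying strengthened Wolff axioms), and $\supp(\mu)$ need not literally be a Kakeya set — it only contains a positive-measure family of sub-unit-length segments in a positive-measure set of directions. One must check that this is enough to invoke the structure theorems, presumably by (i) rescaling a typical short segment to unit length, (ii) using a density/pigeonhole argument at the level of the direction space to extract a collection of lines obeying the requisite Wolff-type non-concentration, and (iii) verifying that the covering-number conclusion of Theorem~\ref{mainThmDiscretized} survives this reduction with only $\eps$-losses. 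A secondary technical point is uniformity in $\eps$: $C_\eps$ and the separation of scales $\delta,\rho$ both depend on $\eps$, so one must order the quantifiers carefully — fix $\eps$, obtain $\mu$ and then the structure-theorem scales (depending on $\eps$ and on $\mu$ through its Frostman constants), derive $\alpha\ge 3-O(\eps)$, and only then send $\eps\to0$.
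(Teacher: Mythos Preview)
Your proposal is correct and follows essentially the same route as the paper: apply Theorem~\ref{mainThmDiscretized} to a collection of $\delta$-tubes coming from the lines $\ell_e$, $e\in\Omega$, with shadings given by the portion of each tube lying in (the $\delta$-neighbourhood of) $\supp(\mu)$, then compare the resulting covering-number lower bound inside a ball against the two-sided Frostman estimate on $\mu$ to force $\alpha\ge 3-O(\eps)$. The obstacle you flag --- that $\supp(\mu)$ is not literally a Kakeya set --- is handled exactly as you outline and turns out to be almost trivial: a $\delta$-separated subset $\Omega_\delta\subset\Omega$ has $\#\Omega_\delta\gtrsim|\Omega|\delta^{-2}$, the corresponding tubes point in $\delta$-separated directions and hence satisfy the Convex Wolff Axioms with error $O(|\Omega|^{-1})\le\delta^{-\eta}$ once $\delta$ is small, and the positive-length intersection $\ell_e\cap\supp(\mu)$ gives a uniformly $\gtrsim\tau$-dense shading; no rescaling of short segments or additional pigeonholing in direction space is needed. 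One minor point: invoking Theorem~\ref{assouadDimThm} to first conclude $\dimA(\supp\mu)=3$ is unnecessary --- the quantitative output of Theorem~\ref{mainThmDiscretized} is what feeds directly into the Frostman comparison, and the paper goes straight there.
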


Theorems \ref{assouadDimThm} and \ref{equalHausdorffPackingKakeya} will follow from a common discretized Kakeya estimate, which is the main technical contribution of this paper. In the statement below, $N_\rho(X)$ denotes the $\rho$-neighborhood of the set $X$; a $\delta$-tube is the $\delta$-neighborhood of a unit line segment; and we say two $\delta$-tubes are essentially distinct if neither tube is contained in the 2-fold dilate of the other. See Section \ref{notationDefinitionsSection} for precise definitions.
\begin{thm}\label{mainThmDiscretized}
For all $\eps>0$, there exists $\eta,\delta_0>0$ so that the following holds for all $\delta\in(0,\delta_0]$. Let $\tubes$ be a set of essentially distinct $\delta$-tubes in $B(0,1)\subset \RR^3$, and suppose that each convex set of volume $V$ contains at most $\delta^{-\eta}V(\#\tubes)$ tubes from $\tubes$. For each $T\in\tubes$, let $Y(T)\subset T$ be a measurable set, and suppose that $\sum_{\tubes}|Y(T)|\geq\delta^{\eta}\sum_{\tubes}|T|$. 

Then there exists $\rho,r\in [\delta,1]$ with $\rho\leq\delta^{\eta}r$ and a ball $B$ of radius $r$ so that
\begin{equation}\label{rhoRNbhdFullVol}
\Big| B \cap N_{\rho}\Big(\bigcup_{T\in\tubes}Y(T)\Big)\Big| \geq (\rho/r)^{\eps} |B|. 
\end{equation}
\end{thm}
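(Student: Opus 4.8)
Let me think about this carefully. We have a collection of $\delta$-tubes in $\mathbb{R}^3$ satisfying a Wolff-type non-concentration condition (convex sets of volume $V$ contain $\lesssim \delta^{-\eta} V \#\mathbb{T}$ tubes), and shadings $Y(T)$ of density $\gtrsim \delta^\eta$. We want to find scales $\rho \leq \delta^\eta r$ and a ball $B$ of radius $r$ where the $\rho$-neighborhood of the union of shadings fills a $(\rho/r)^\epsilon$ fraction of $B$.

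The key idea: if no such pair of scales existed, then at *every* pair of scales $(\rho, r)$ with $\rho/r$ small, the union $\bigcup Y(T)$ would be "thin" inside every $r$-ball — i.e. $|B \cap N_\rho(\bigcup Y(T))| < (\rho/r)^\epsilon |B|$ for all $r$-balls $B$. This is a strong multiscale smallness hypothesis. I should think about what structure this forces.

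So the plan is: argue by contradiction and derive a contradiction with a known structure theorem for Kakeya-type tube arrangements. First, I would record that the conclusion we are negating, when applied at all scale-pairs $(\rho, r)$, says that the set $E = \bigcup_T Y(T)$ is uniformly "$\epsilon$-dimensional deficient" at every scale: for every dyadic $\rho < r$ and every $r$-ball $B$, $E \cap B$ is covered by $\ll (r/\rho)^{3-\epsilon}$ balls of radius $\rho$. By a standard argument this should transfer to the tubes themselves — since each $Y(T)$ has density $\gtrsim \delta^\eta$ in $T$, the tubes $\mathbb{T}$ also satisfy a Frostman/Katz–Tao-type bound at every scale: the number of $\rho$-tubes (coming from $\mathbb{T}$) through any $r$-ball, or inside any $r \times r \times 1$ slab, is controlled by $(r/\rho)^{2-\epsilon}$-type bounds. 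The next step is the heart of the matter: invoke the authors' multiscale structure theorem for tube arrangements together with the generalized sticky Kakeya theorem (referenced in the abstract), which says roughly that a tube arrangement satisfying the Wolff axioms but failing to "thicken" at every pair of scales must be essentially sticky, hence (by the sticky Kakeya theorem) must live inside a union of $\delta^\epsilon$-neighborhoods of planes or lower-dimensional algebraic sets — this contradicts either the non-concentration hypothesis or the density of the shadings, because a plane-concentrated family cannot carry a $\delta^\eta$-dense shading while obeying the convex-set bound.

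Let me be more precise about the contradiction mechanism, since this is where the real work is. Suppose for contradiction the conclusion fails for a sequence $\delta \to 0$. Iterating the failure across a geometric sequence of scales $\delta = \rho_0 < \rho_1 < \cdots < \rho_m = 1$ with each ratio $\rho_{i+1}/\rho_i = \delta^\eta$, the set $N_{\rho_i}(E)$ must have volume deficit $(\rho_i/\rho_{i+1})^\epsilon = \delta^{\eta\epsilon}$ inside every $\rho_{i+1}$-ball. Telescoping these deficits, $N_\delta(E)$ has total volume $\leq \delta^{\eta\epsilon m} = \delta^{c\epsilon}$ for $m \sim 1/\eta$ — but actually $|N_\delta(E)| \gtrsim \delta^\eta \sum_T |T| \gtrsim \delta^\eta \cdot \delta^2 \cdot \#\mathbb{T}$, so this already forces $\#\mathbb{T} \leq \delta^{-2 - c'\epsilon + \eta}$, i.e. the tubes are few. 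More importantly, the scale-by-scale deficit means the arrangement refines, at each stage, into tubes clustered in lower-dimensional sets. The plan is to feed this into the generalized sticky Kakeya machine: a tube family that is "sticky" at every scale and satisfies the Wolff axioms, after the sticky Kakeya theorem and the grains/planar-clustering structure theorem, is trapped in a $\delta^\epsilon$-neighborhood of a finite union of planes; but the Wolff-axiom/convex-set hypothesis bounds the number of tubes in any plane-slab of thickness $\delta^\epsilon$ by $\delta^{-\eta} \cdot \delta^\epsilon \cdot \#\mathbb{T}$, which is $\ll \#\mathbb{T}$ once $\epsilon \gg \eta$ — contradiction with the shadings having density $\delta^\eta$ (the bushes covered by shadings would have to sit inside these slabs).

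The main obstacle, and the part I expect to be genuinely hard, is making the "fails to thicken at every scale $\Rightarrow$ sticky structure" implication rigorous and quantitative: the failure of \eqref{rhoRNbhdFullVol} is a statement about a single pair of scales per iteration with a fixed ball, but stickiness is a statement about the fine-scale tube directions being organized by the coarse-scale tube directions across *all* pairs of scales in a coherent, hierarchical way. Bridging "volume of the shading is small in every ball at every scale pair" to "the direction map is Lipschitz/sticky at every scale" requires a pigeonholing-and-refinement argument (likely the multiscale analysis from the companion structure theorems), and controlling the accumulation of $\delta^\eta$ and $\delta^\epsilon$ losses over the $\sim 1/\eta$ scales so that the final contradiction survives — this is the delicate bookkeeping that forces the precise order of quantifiers $\epsilon \to \eta \to \delta_0$ in the statement. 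The secondary obstacle is that the shadings $Y(T)$ are arbitrary measurable subsets, so one must first do a pigeonholing to replace each $Y(T)$ by a union of $\delta$-balls (or a single tube segment) of comparable measure and with controlled multiplicity, which is routine but necessary before any of the structural machinery applies.
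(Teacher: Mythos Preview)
Your overall shape is right --- argue by contradiction, extract multiscale structure, and invoke a sticky-Kakeya-type result --- and this is indeed how the paper proceeds. But two substantive pieces are missing or mis-stated.

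\textbf{The endgame is inverted.} You write that the sticky Kakeya theorem, combined with a grains/planar structure result, shows that a sticky arrangement is ``trapped in a $\delta^\eps$-neighborhood of a finite union of planes,'' and that this contradicts the convex-set non-concentration hypothesis. This is backwards. The conclusion of the sticky Kakeya theorem (Theorem~\ref{stickySelfSimThm} here, equivalently the main result of \cite{WZ} in its generalized form) is the \emph{positive} volume bound $\big|\bigcup Y(T)\big|\geq\delta^\eps$. Graininess and planar clustering are intermediate structural steps inside its proof, used to reach a contradiction \emph{internal} to that theorem; they are not the output. So the correct final step is: once you know the hypothetical counterexample satisfies the Convex Wolff Axioms at every scale, Theorem~\ref{stickySelfSimThm} gives $|E_{\tubes}|\geq\delta^\eps$ directly, which already contradicts the assumed failure of \eqref{rhoRNbhdFullVol} at $(\rho,r)=(\delta,1)$. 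No plane-slab counting is needed at that stage.

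\textbf{The reduction to ``sticky'' requires an extremality argument you have not supplied.} You correctly flag as the main obstacle that ``volume is small at every pair of scales'' does not immediately give that the tubes satisfy the Convex Wolff Axioms \emph{at every scale} (Definition~\ref{convexAtEveryScale}), which is the hypothesis Theorem~\ref{stickySelfSimThm} actually needs. The paper resolves this not by a direct pigeonholing on the given $\tubes$, but by an extremality/compactness device: define $\omega$ to be the largest exponent for which the reverse of \eqref{rhoRNbhdFullVol} holds for \emph{some} sequence of counterexamples, and $\alpha$ the largest cardinality exponent achieving $\omega$. A structure proposition (Proposition~\ref{4WayDichotProp}) then says that any collection satisfying the Convex Wolff Axioms either (A) has large discretized Assouad dimension, (B) has a large subset satisfying the \emph{Self-Similar} Convex Wolff Axioms, or (C)/(D) admits a coarsening or a rescaled restriction that is \emph{strictly richer} (cardinality exponent $\geq\beta+\tau$). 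For an Assouad-extremal counterexample, (A) is excluded by hypothesis and (C)/(D) are excluded because they would produce a counterexample with strictly larger $\alpha$, contradicting the definition of $\alpha$. This forces (B), and only then can Theorem~\ref{stickySelfSimThm} be applied. Your proposal has no analogue of this ``worst counterexample'' selection, and without it there is no mechanism to exclude the possibility that the tubes inside a typical $\rho$-tube fail the Convex Wolff Axioms after rescaling. The geometric content of Proposition~\ref{4WayDichotProp} --- showing that clustering into $s\times t\times 1$ prisms with $s\ll t$ forces large Assouad dimension (Section~\ref{arrangeRecPrismSection}) --- is also absent from your sketch and is where most of the work lies.
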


\begin{rem}
In the above theorem, we have required that the tubes be essentially distinct and that every convex set $U\subset\RR^3$ contain at most $\delta^{-\eta}|U|(\#\tubes)$ tubes from $\tubes$. This condition is satisfied, for example, if $\#\tubes=\delta^{-2}$ and the tubes in $\tubes$ point in $\delta$-separated directions. Indeed; for the latter condition we need only consider convex sets $U\subset B(0,1)\subset\RR^3$ of diameter roughly 1, and every set of this form is comparable to an ellipsoid whose axes have lengths $s\leq t \leq w$ with $w\sim 1$. If $T$ is a $\delta$-tube contained in $U$, then the direction of $T$ lies in a rectangular sector of $S^2$ of dimensions roughly $s\times t$; if the tubes in $\tubes$ point in $\delta$-separated directions, we conclude that $\#\{T\in\tubes\colon T\subset U\}\lesssim st\delta^{-2}\lesssim |U|(\#\tubes)$. The anti-clustering condition imposed in Theorem \ref{mainThmDiscretized} will be discussed further in Section \ref{wolffAxiomsSection}.
\end{rem}

Theorem \ref{mainThmDiscretized} can be amplified to obtain the following variant, which says that if $\tubes$ is a collection of $\delta$-tubes satisfying the anti-clustering condition described above, then there are two well-separated scales $\rho < r$ so that the $\rho$-neighborhood of $\bigcup_{\tubes}T$ is almost as large as its $r$-neighborhood. 

\begin{cor}\label{KakeyaTwoScales}
For all $\eps>0$, there exists $\eta,\delta_0>0$ so that the following holds for all $\delta\in(0,\delta_0]$. Let $\tubes$ be a set of essentially distinct $\delta$-tubes in $B(0,1)\subset \RR^3$, and suppose that each convex set of volume $V$ contains at most $\delta^{-\eta}V(\#\tubes)$ tubes from $\tubes$. For each $T\in\tubes$, let $Y(T)\subset T$ be a measurable set, and suppose that $\sum_{\tubes}|Y(T)|\geq\delta^{\eta}\sum_{\tubes}|T|$.  

Then there exist sets $Y'(T)\subset Y(T),$ $T\in\tubes$, with $\sum_\tubes|Y'(T)|\gtrsim|\log\delta|^{-2}\sum_\tubes|Y(T)|$, and there exist scales $\rho,r\in [\delta,1]$ with $\rho\leq\delta^{\eta}r$ so that
\begin{equation}\label{KakeyaSameTwoScalesIneq}
\Big| N_{\rho}\Big(\bigcup_{T\in\tubes}Y'(T)\Big)\Big| \geq (\rho/r)^{\eps} \Big| N_{r}\Big(\bigcup_{T\in\tubes}Y'(T)\Big)\Big|. 
\end{equation}
\end{cor}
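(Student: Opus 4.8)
The plan is to deduce the corollary from Theorem \ref{mainThmDiscretized} by a pigeonholing argument over dyadic scales, upgrading the single-ball conclusion \eqref{rhoRNbhdFullVol} to the global two-scale comparison \eqref{KakeyaSameTwoScalesIneq}. First I would apply Theorem \ref{mainThmDiscretized} to the given data $(\tubes, \{Y(T)\})$ to obtain scales $\rho \le \delta^\eta r$ (after shrinking $\eta$ slightly to accommodate later losses) and a ball $B = B(x_0, r)$ with $|B \cap N_\rho(\bigcup_T Y(T))| \ge (\rho/r)^\eps |B|$. The issue is that this is information at a single ball, whereas \eqref{KakeyaSameTwoScalesIneq} compares the full $\rho$-neighborhood to the full $r$-neighborhood. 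The natural fix is to cover $N_r(\bigcup_T Y(T))$ by boundedly-overlapping balls of radius $r$ and argue that a positive proportion of the mass of $\bigcup_T Y(T)$ lives in $r$-balls satisfying a density bound comparable to the one at $B$.

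The key steps, in order: (i) Run Theorem \ref{mainThmDiscretized} to get the scales $\rho, r$ and the good ball $B$. (ii) Define $Y'(T)$ by restricting each $Y(T)$ to the union of those $r$-balls $B'$ (from a fixed maximal $r$-separated net) for which the local density $|B' \cap N_\rho(\bigcup_T Y(T))| / |B'|$ exceeds, say, $\tfrac12 (\rho/r)^\eps$ — more precisely, one should dyadically pigeonhole the local density and keep the popular dyadic band, which is where the $|\log\delta|^{-2}$ loss enters (one $|\log\delta|$ for the choice of scale $r$, implicit in Theorem \ref{mainThmDiscretized}, and one for the dyadic density band; I would track this carefully to confirm the exponent). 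A covering/double-counting argument shows that $\sum_T |Y'(T)| \gtrsim |\log\delta|^{-2} \sum_T |Y(T)|$: if too little mass lived in high-density balls, then the total mass $\sum_T |Y(T)| \lesssim |N_r(\bigcup Y(T))|$ would force the ball $B$ produced by the theorem to also have small density, contradicting \eqref{rhoRNbhdFullVol}. Here one uses that the hypotheses of Theorem \ref{mainThmDiscretized} are monotone under passing to subsets $Y'(T) \subset Y(T)$, so the theorem's conclusion is genuinely about a structured family. (iii) For the retained family $\{Y'(T)\}$, sum the local density bound over the covering balls: $|N_\rho(\bigcup_T Y'(T))| \gtrsim \sum_{B' \text{ good}} |B' \cap N_\rho(\cdots)| \gtrsim (\rho/r)^\eps \sum_{B'} |B'| \gtrsim (\rho/r)^\eps |N_r(\bigcup_T Y'(T))|$, absorbing the bounded-overlap constants and the $|\log\delta|$ factors into a slight worsening of $\eps$ (legitimate since we are free to start from a smaller target exponent $\eps' < \eps$ in Theorem \ref{mainThmDiscretized}).

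The main obstacle is Step (ii): arranging that $\{Y'(T)\}$ is obtained by restricting $Y(T)$ to a union of $r$-balls, while simultaneously (a) retaining a $|\log\delta|^{-2}$ fraction of the mass and (b) ensuring the retained $r$-balls genuinely carry the $\rho$-density. The subtlety is that the single good ball $B$ guaranteed by Theorem \ref{mainThmDiscretized} need not be one of the net balls, and a priori only one ball is controlled; the argument must instead re-derive a density statement that holds on a positive-proportion \emph{collection} of $r$-balls. The cleanest route is probably to not use the good ball $B$ directly at all, but rather to observe that applying Theorem \ref{mainThmDiscretized} to the restriction of $\{Y(T)\}$ to any subfamily of $r$-balls carrying a definite fraction of the mass still returns \emph{some} good ball at comparable scales; iterating or pigeonholing this forces a positive density of the $r$-balls to be good, at the cost of the logarithmic factors. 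I would also need to check the bookkeeping that the scales $\rho, r$ can be taken uniform across the retained balls — this follows from pigeonholing the (boundedly many) dyadic scale-pairs $(\rho, r)$ before selecting $Y'$.
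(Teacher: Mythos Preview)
Your main approach in Step (ii) has the gap you yourself flag: a single application of Theorem~\ref{mainThmDiscretized} yields only one good ball $B$, and your proposed contradiction argument (``if too little mass lived in high-density balls, then \ldots\ the ball $B$ produced by the theorem would also have small density'') does not go through. Nothing prevents $B$ from being an exceptional high-density ball amidst a sea of low-density ones, so you cannot deduce that a positive proportion of the $r$-balls in a fixed net are good from one application.

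Your fallback suggestion at the end---iterate the theorem---is the correct idea, and the paper's execution is slightly cleaner than restricting to subfamilies. The paper iterates by \emph{removal}: set $Y_0=Y$, apply Theorem~\ref{mainThmDiscretized} to $(\tubes,Y_{i-1})$ to get a ball $B_i$ with its own scales $(\rho_i,r_i)$, define $Y_i(T)=Y_{i-1}(T)\setminus B_i$, and repeat until the remaining shading mass drops below $\tfrac12\delta^\eta\sum_\tubes|T|$. At that point the balls $\{B_i\}$ capture at least half of $\sum_\tubes|Y(T)|$. Now dyadically pigeonhole the pair $(\rho_i,r_i)$ to a common $(\rho,r)$ up to a factor of $2$---this is precisely the source of the $|\log\delta|^{-2}$, one logarithm per scale. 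Pass greedily to a disjoint subcollection $\{B_i\}_{i\in I'}$ and set $Y'(T)=Y(T)\cap\bigcup_{i\in I'}B_i$. Since the $B_i$ are disjoint, each satisfies $|B_i\cap N_\rho(\bigcup Y'(T))|\gtrsim(\rho/r)^{\eps/2}|B_i|$, and $N_r(\bigcup Y'(T))$ is contained in $\bigcup 2B_i$, summing gives \eqref{KakeyaSameTwoScalesIneq}. Note in particular that the scales $(\rho_i,r_i)$ are \emph{not} comparable across iterations a priori; the pigeonholing step is essential and must come \emph{after} the iteration, not before.
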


\subsection{Main ideas, and a sketch of the proof}\label{proofSketchSection}
In \cite{WZ}, the authors proved that every sticky Kakeya set in $\RR^3$ has Hausdorff and Minkowski dimension 3. In brief, a \emph{Sticky Kakeya Set} in $\RR^n$ is a Kakeya set that, when discretized at scale $\delta$, gives rise to a collection of $\delta$-tubes with the following two properties: 
\begin{itemize}
	\item[(a)] The tubes point in $\delta$-separated directions.
	\item[(b)]For each $\delta\leq \rho\leq 1$, the tubes can be covered by a set of $\rho$-tubes, with few (i.e.~about 1) $\rho$-tubes pointing in each direction.
\end{itemize} 

Sticky Kakeya sets have two useful properties. First, for $\delta\leq\rho\leq 1$, the $\rho$-neighborhood of a sticky Kakeya set at scale $\delta$ is a sticky Kakeya set at scale $\rho$. In other words, if we thicken a sticky Kakeya set, we obtain a sticky Kakeya set at a coarser scale. Second, for $\delta\leq\rho\leq 1$, the $\rho$-neighborhood of a typical tube in our sticky Kakeya set will contain about $(\rho/\delta)^{1-n}$ $\delta$-tubes; if we rescale this $\rho$-tube to be comparable to the unit ball, then we obtain a sticky Kakeya set at scale $\delta/\rho$. In other words, if we zoom-in and rescale a sticky Kakeya set, we obtain a sticky Kakeya set at a coarser scale.

The Sticky Kakeya Theorem (or more precisely, a generalization that we will prove in Section \ref{stickySelfSimThmSec}) will play an important role in the proof of Theorem \ref{mainThmDiscretized}. In Section \ref{selfSimSection} we show that every set of $\delta$-tubes satisfying the hypotheses of Theorem \ref{mainThmDiscretized} must either satisfy a suitable analogue of stickiness (and hence we can apply our generalization of the Sticky Kakeya Theorem and obtain the conclusion of Theorem \ref{mainThmDiscretized}), or else at least one of the following three things must happen: 
\begin{itemize}
	\item[(i)] After thickening the tubes at a suitable scale, we obtain a richer collection of tubes, i.e.~a collection of tubes that has similar structural properties, but larger cardinality (relative to the new scale). 
	\item [(ii)] After zooming-in and rescaling at a suitably chosen location and scale, we obtain a richer collection of tubes. 
	\item[(iii)] After possibly coarsening and zooming-in, the tubes arrange themselves into ``planks'' (like planks of wood, these sets are long (i.e.~length 1) in one direction, much shorter in a second direction, and shorter still in the final direction). 
\end{itemize}

In Section \ref{selfSimSection} we show that if Item (iii) occurs, then the corresponding collection of tubes satisfies the conclusion of Theorem \ref{mainThmDiscretized}. This observation is the main new geometric input in the proof of Theorem \ref{mainThmDiscretized}. In brief, the argument is as follows. Suppose each plank has dimensions roughly $s\times t\times 1$, with $\delta\leq s<\!\!<t\leq 1$. This means that each plank has an associated plane, which is the span of the two vectors pointing in the longest and second-longest directions.  First we consider the case in which a typical pair of intersecting planks have corresponding planes that make angle $\theta>\!\!>s/t$. If this occurs, then the union of the $s$-neighborhood of these planks will contain a ball of radius $r=\theta t.$ This is much larger than $s$, and hence \eqref{rhoRNbhdFullVol} holds. Second, we consider the case in which a typical pair of intersecting planks have corresponding planes that make angle $\leq s/t$. This forces the planks to cluster into bigger planks dimensions $s'\times t'\times 1$, where $s' \ll t'\leq 1$ are substantially larger than $s$ and $t$, respectively. We can then repeat this argument at a larger scale. After finitely many iterations of this argument, we must find ourselves in the First Case. This concludes the argument. 

Summarizing the arguments above, if $\tubes$ is a counter-example to Theorem \ref{mainThmDiscretized}, then there exists either a thickening or a zooming-in and rescaling that produces a richer collection of tubes. However, Theorem \ref{mainThmDiscretized} is agnostic to location and scale, and this means that if $\tubes$ is a counter-example to Theorem \ref{mainThmDiscretized}, then any thickening or zooming-in and rescaling of $\tubes$ remains a counter-example. Thus to prove Theorem \ref{mainThmDiscretized}, we show that if a counter-example to Theorem \ref{mainThmDiscretized} exists, then there must exist a (nearly) maximally rich counter-example. But this is impossible, since some thickening or zooming-in and rescaling of this (supposedly) maximally rich counter-example is an even richer counter-example.

In Section \ref{wolffAxiomsSection} we precisely define the relevant generalization of stickiness, and in Section \ref{AssouadDimensionSection} we define what it means for $\tubes$ to be a maximally rich counter-example to Theorem \ref{mainThmDiscretized}. In Section \ref{selfSimSection} we carry out the bulk of the geometric arguments described above. In Section \ref{kakeyaSelfSimWolffSection} we show that a (hypothetical) counterexample to Theorem \ref{mainThmDiscretized} must satisfy the hypotheses of (a generalization of) the sticky Kakeya Theorem. In Sections \ref{stickySelfSimThmSec} and \ref{twistedProjectionsTubeSec} we prove the necessary generalization of the Sticky Kakeya Theorem (this is Theorem \ref{stickySelfSimThm}), thereby proving Theorem \ref{mainThmDiscretized}. Finally, in Section \ref{proofOfAssouadDimThmSec} we show how Theorem \ref{mainThmDiscretized} implies Theorems \ref{assouadDimThm} and \ref{equalHausdorffPackingKakeya}.

\subsection{Thanks}
The authors would like to thank Larry Guth, Jonathan Fraser, Terence Tao, and the anonymous referee for comments, suggestions, and corrections on an earlier draft of this manuscript. The authors would like to thank Sam Craig for pointing out an error in the proof of Lemma 4.4 in an earlier draft of this manuscript. Hong Wang is supported by NSF CAREER DMS-2238818 and NSF DMS-2055544. Joshua Zahl is supported by an NSERC Discovery Grant and an NSERC Alliance Grant.

\section{Notation and definitions}\label{notationDefinitionsSection}

We write $A\lesssim B$ or $A = O(B)$ to mean there is a constant $C$ (which might depend on the ambient dimension $n$) so that $A\leq CB$. We write $A\sim B$ if $A\lesssim B$ and $B\lesssim A$. If the constant $C$ is allowed to depend on an additional parameter such as $\eps$, then we denote this by $A\lesssim_\eps B$.  Many of our results will involve a small positive parameter, which we will call $\delta$. We write $A\lessapprox B$ to mean that there is a constant $C$ so that $A\leq C (\log 1/\delta)^C B$. As above, if the constant $C$ is allowed to depend on an additional parameter such as $\eps$, then we denote this by $A\lessapprox_\eps B$. Finally, during proof sketches and other informal remarks, we will sometimes write $A \ll B$ to suggest to the reader that that the real number $A$ is much smaller than the real number $B$.

\subsection{Tubes and their shadings}\label{tubesAndShadingsSection}
Recall that a $\delta$-tube in $\RR^n$ is the $\delta$-neighborhood of a unit line segment. We will use $T$ to denote a $\delta$-tube, and $\ell(T)$ to denote its coaxial line. If $T$ is a $\delta$-tube (or more generally, the translation of a centrally symmetric convex set) and $R>0$, we define $RT$ to be the $R$-fold dilate of $T$ around its center. We say two tubes $T,T'$ are \emph{essentially distinct} if $T\not\subset 2T'$ and $T'\not\subset 2T$.

We will sometimes be concerned with collections of tubes, plus shadings (i.e.~distinguished subsets) of these tubes. To this end, we introduce the following definitions. 
\begin{defn}\label{def: uniform}
Let $\mathcal{S}$ be a collection of subsets of $\RR^n$, and let $\tau\in (0,1]$. A collection of sets $\{Y(S)\subset S\colon S\in\mathcal{S}\}$ is called a  \emph{$\tau$-dense shading} of $\mathcal{S}$ if $\sum_{S\in\mathcal{S}} |Y(S)|\geq\tau\sum_{S\in\mathcal{S}}|S|$. $\{Y(S),\ S\in\mathcal{S}\}$ is called a \emph{uniformly $\tau$-dense shading} of $\mathcal{S}$ if $|Y(S)|\geq\tau|S|$ for each $S\in\mathcal{S}$. 
\end{defn}
In practice, $\mathcal{S}$ will either be a collection of $\delta$-tubes, or a collection of $s\times t\times 1$ rectangular prisms. We will use the notation $(\tubes,Y)_\delta$ to refer to a collection of $\delta$-tubes and an associated shading $\{Y(T)\colon T\in\tubes\}$. 

\begin{defn}\label{regularShading}
Let $S\subset\RR^n$, let $Y(S)\subset S$, and let $\delta>0$. We say the shading $Y(S)$ is \emph{regular} at scales $\geq\delta$ if for each $x\in Y(S)$ and each $r\in[\delta, 1]$, we have
\begin{equation}\label{regularShadingIneq}
|Y(S)\cap B(x,r)| \geq (100^n\log(1/\delta))^{-1} |Y(S)|\Big(\frac{|B(x,r)\cap S|}{|S|}\Big).
\end{equation}
If the quantity $\delta$ is apparent from context, then we will omit it and say that $Y(S)$ is regular. 
\end{defn}
The following is Lemma 2.7 from \cite{KWZ}. 
\begin{lem}\label{findRegularShading}
Let $\delta>0$, let $S\subset B(0,1)\subset \RR^n$, and let $Y(S)$ be a shading. Then there is a shading $Y'(S)\subset Y(S)$ that is regular at scales $\geq\delta$ and satisfies $|Y'(S)|\geq\frac{1}{2}|Y(S)|$. 
\end{lem}
The proof in \cite{KWZ} considers the special case where $S$ is a unit line segment (and hence $|B(x,r)\cap S|/|S|\sim r$), but the proof is identical. In brief, we consider the quantity $|Y(S)\cap B(x,r)|$ for each dyadic scale $r=2^k\delta$ and each ball $B(x,r)$ aligned to the dyadic grid. We delete those balls for which $|Y(S)\cap B(x,r)|$ is too small (i.e.~for which \eqref{regularShadingIneq} fails), and we denote the surviving subset of $Y(S)$ by $Y'(S)$. At each dyadic scale we have only deleted a small fraction of $Y(S)$, so at least half of $Y(S)$ survives this process.

\subsection{Wolff axioms and their generalizations}\label{wolffAxiomsSection}
In this section we will define several anti-clustering conditions that we can impose on our collection of tubes. In \cite{Wolff95} (see also Definition 13.1 from \cite{KLT}), Wolff introduced what are now called the Wolff axioms; this is a non-concentration condition that forbids a collection of $\delta$-tubes from clustering inside a fatter $\rho$-tube, and also forbids a collection of $\delta$-tubes from clustering near (affine) 2-planes. The precise definition is as follows.

\begin{defn}
Let $\tubes$ be a set of $\delta$-tubes in $\RR^n$. We say $\tubes$ satisfies the \emph{Wolff axioms} if:
\begin{itemize}
\item Every $\rho$-tube in $\RR^n$ can contain at most $C(\rho/\delta)^{n-1}$ tubes from $\tubes$.
\item Every rectangular prism in $\RR^n$ of dimensions $2\delta\times 2\delta\times\cdots\times 2\delta\times \rho\times 2$ can contain at most $C(\rho/\delta)$ tubes from $\tubes$.
\end{itemize}
\end{defn}
\noindent\emph{Remarks}\\
1. In the above definition, the constant $C$ is generally chosen to be a large number depending only on the ambient dimension $n$. \\
2. Every set of $\delta$-tubes pointing in $\delta$-separated directions will satisfy the Wolff axioms with constant $C=100^n$. \\
3. Every set of $\delta$-tubes contained in $B(0,1)\subset\RR^n$ that satisfies the Wolff axioms has cardinality at most $100^nC\delta^{1-n}$. 

\medskip

In the arguments that follow, we will need several variants of the Wolff axioms. We define these below. 
\begin{defn}\label{convexWolffAxiomsDefn}
Let $C\geq 1$. We say a multi-set $\mathcal{S}$ of sets in $\RR^n$ satisfies the \emph{Convex Wolff Axioms with error} $C$ if for all convex sets $W\subset\RR^n$, we have
\begin{equation}\label{convexWolff}
\#\{S\in\mathcal{S}\colon S\subset W\} \leq C|W|(\#\mathcal{S}).
\end{equation}
\end{defn}
Informally, when $C$ has size close to 1, we will sometimes suppress the role of $C$ and say that $\mathcal{S}$ satisfies the Convex Wolff Axioms. In practice we will have $n=3$, and the sets in $\mathcal{S}$ will be either $\delta$-tubes, or $s\times t\times 1$ rectangular prisms (not necessarily axis-parallel) for some $\delta\leq s \leq t \leq 1$. 

\begin{rem} If the elements of $\mathcal S$ are convex, then  $\#\mathcal{S}\geq C^{-1}\big(\inf_{S\in\mathcal{S}}|S|\big)^{-1}$. In particular, a set of $\delta$-tubes in $\RR^3$ satisfying the Convex Wolff Axioms must have cardinality $\gtrsim \delta^{-2}$. 
\end{rem}

\begin{rem}\label{convexSupSet}
Let $\mathcal{S}$ be a multi-set of subsets of $\RR^n$. For each $S\in\mathcal{S}$, let $T(S)\supset S$, and define the multi-set $\mathcal{S}'=\{T(S)\colon S\in\mathcal{S}\},$ i.e.~the multi-sets $\mathcal{S}$ and $\mathcal{S}'$ have the same cardinality. If $\mathcal{S}$ satisfies the Convex Wolff Axioms with error $C$, then $\mathcal{S}'$ also satisfies the Convex Wolff Axioms with error $C$. 
\end{rem}

In our arguments below, we will often consider a Kakeya set (or more accurately a collection of $\delta$-tubes) at many different scales. Typically, for a collection of $\delta$-tubes $\tubes$, we will want to examine the coarsening of $\tubes$ at a larger scale $\rho$, and also examine the collection of $\delta$-tubes from $\tubes$ that are contained inside each of these coarser $\rho$-tubes. As observed in Remark \ref{convexSupSet} above, if $\tubes$ satisfies the Convex Wolff Axioms, then the coarsening of $\tubes$ at a larger scale will also satisfy these axioms\footnote{In contrast, the more traditional condition that the tubes in $\tubes$ point in $\delta$-separated directions is \emph{not} preserved after coarsening.}. However, the (rescaled) collection of $\delta$-tubes from $\tubes$ contained inside each coarser $\rho$-tube might not satisfy the Convex Wolff Axioms. Our next task is to introduce a slightly stricter variant of the Convex Wolff Axioms that addresses this issue.

\begin{defn}\label{unitRescalingDefn}
Let $U$ be a convex set and let $S\subset U$. Let $U^*\supset U$ be the John ellipsoid that circumscribes $U$, and let $\phi_U$ be an affine transformation that sends $U^*$ to the unit ball. The \emph{unit rescaling of $S$ relative to $U$} is the set $\phi_U(S)$. If $\mathcal{S}$ be a multi-set of subsets of $\RR^n$ that are contained in $U$, then the \emph{unit rescaling of $\mathcal{S}$ relative to $U$} is the multi-set $\{\phi_U(S)\colon S\in\mathcal{S}\}$. 
\end{defn}

\begin{rem}
In \cite{WZ}, the authors considered a similar definition in the special case where $\mathcal{S}$ is a set of $\delta$-tubes and $U$ is a $\rho$-tube. In order for this definition to be compatible with the definition from \cite{WZ} of a tube's shading, the map $\phi_U$ was defined slightly differently, so that image of $U^*$ was the ellipsoid $\{Cx^2+Cy^2+z^2\leq 1\}$ (for some fixed constant $C$), rather than the unit ball. 
\end{rem}

\begin{defn}\label{coversDefn}
Let $\mathcal{A}$ be a set and $\mathcal{B}$ be a multi-set of convex subsets of $\RR^n$. We say that \emph{$\mathcal{A}$ is a cover of $\mathcal{B}$} if each set $B\in\mathcal{B}$ is contained in at least one set from $\mathcal{A}$. For each $A\in\mathcal{A}$, we write $\mathcal{B}[A]$ to denote the multi-set $\{B\in\mathcal{B}\colon B \subset A\}$. We say that $\mathcal{A}$ is a \emph{$K$-uniform cover} of $\mathcal{B}$ if $\# \mathcal{B}[A]\leq K (\#\mathcal{B}[A'])$ for each pair of sets $A,A'\in\mathcal{A}$. When $K=1$, we will abbreviate this to a \emph{uniform cover}. Finally, we say that $\mathcal{A}$ is a \emph{partitioning cover} of $\mathcal{B}$ if $\mathcal{A}$ is a cover of $\mathcal{B}$, and for every pair of distinct $A,A'\in\mathcal{A}$, the sets $\mathcal{B}[2A]$ and $\mathcal{B}[2A']$ are disjoint, where $2A$ denotes the 2-fold dilate of $A$.  In practice, $\mathcal{A}$ and $\mathcal{B}$ will be collections of $\rho$-tubes and $\delta$-tubes respectively, for some $\rho>\delta$. 
\end{defn}

\begin{rem}\label{consequenceOFUnifCover}
Note that if $\mathcal{B}$ satisfies the Convex Wolff Axioms with error $C$, and if $\mathcal{A}$ is a $K$-uniform partitioning cover of $\mathcal{B}$, then $\mathcal{A}$ satisfies the Convex Wolff Axioms with error $KC$. 
\end{rem}

\begin{defn}\label{convexAtEveryScale}
Let $C\geq 1,\delta>0$. We say that a set $\tubes$ of $\delta$-tubes in $\RR^n$ satisfies the \emph{Convex Wolff Axioms at every scale with error $C$} if the tubes in $\tubes$ are essentially distinct, and for every $\rho_0\in [\delta,1]$, there exists $\rho\in [\rho_0, C\rho_0)$ and a set of $\rho$-tubes $\tubes_\rho$ that satisfies the following properties:
\begin{itemize}
	\item[(i)] $\tubes_\rho$ is a $C$-uniform partitioning cover of $\tubes$.
	\item[(ii)] For each $T_\rho\in\tubes_\rho$, the unit rescaling of $\tubes[T_\rho]$ relative to $T_\rho$ satisfies the Convex Wolff Axioms with error $C$.
\end{itemize}
\end{defn}

As the name suggests, Definition \ref{convexAtEveryScale} is a multi-scale property. In particular, if $\tubes$ satisfies the Convex Wolff Axioms at every scale, then we can cover $\tubes$ by fatter $\rho$-tubes; both the $\rho$-tubes and the (rescaled) tubes from $\tubes$ inside each $\rho$-tube will again satisfy the Convex Wolff Axioms at every scale. The precise statement is as follows.

\begin{lem}\label{multiScaleWolffLem}
Let $\tubes$ be a set of $\delta$-tubes in $\RR^n$ that satisfies the Convex Wolff Axioms at every scale with error $C$. Let $\rho_0\in[\delta,1]$. Then there exists $\rho\in [\rho_0, C\rho_0]$ and a set of $\rho$-tubes $\tubes_\rho$, so that $\tubes_\rho$ satisfies the Convex Wolff Axioms at every scale with error $O(C)$, and for each $T_\rho\in\tubes_\rho$, the unit rescaling of $\tubes[T_\rho]$ relative to $T_\rho$ satisfies the Convex Wolff Axioms at every scale with error $O(C)$.
\end{lem}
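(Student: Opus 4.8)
The plan is to unwind Definition \ref{convexAtEveryScale} at the scale $\rho_0$ to produce the $\rho$-tubes $\tubes_\rho$, and then verify that the two "multi-scale'' conclusions follow by re-invoking Definition \ref{convexAtEveryScale} at the appropriate scales and transporting the resulting covers through the rescaling maps $\phi_U$. Concretely, first I would apply Definition \ref{convexAtEveryScale} with $\rho_0$ to obtain $\rho\in[\rho_0,C\rho_0)$ and a set $\tubes_\rho$ of $\rho$-tubes that is a $C$-uniform partitioning cover of $\tubes$, with the property that for each $T_\rho\in\tubes_\rho$ the unit rescaling of $\tubes[T_\rho]$ relative to $T_\rho$ satisfies the Convex Wolff Axioms with error $C$. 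This $\tubes_\rho$ is the object the lemma asks for; it remains to upgrade "Convex Wolff Axioms with error $C$'' to "Convex Wolff Axioms \emph{at every scale} with error $O(C)$'' for both $\tubes_\rho$ itself and for each rescaled family $\phi_{T_\rho}(\tubes[T_\rho])$.

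For $\tubes_\rho$: given a target scale $\sigma_0\in[\rho,1]$, I would apply Definition \ref{convexAtEveryScale} to the \emph{original} collection $\tubes$ at the scale $\sigma_0$, obtaining $\sigma\in[\sigma_0,C\sigma_0)$ and a $C$-uniform partitioning cover $\tubes_\sigma$ of $\tubes$ by $\sigma$-tubes whose localizations rescale to satisfy the Convex Wolff Axioms with error $C$. The point is then to check that $\tubes_\sigma$ is also a good cover of $\tubes_\rho$: since $\rho\leq\sigma$, each $\rho$-tube $T_\rho\in\tubes_\rho$ is a union of the $\delta$-tubes in $\tubes[T_\rho]$ (which is nonempty, since $\tubes_\rho$ is a partitioning cover) sitting inside a single $\sigma$-tube up to bounded dilation, so after passing to $O(1)$-dilates one can assign each $T_\rho$ to a $\sigma$-tube of (a bounded enlargement of) $\tubes_\sigma$; the partitioning and uniformity properties are inherited, with the error constant growing by a bounded factor, using Remark \ref{consequenceOFUnifCover} to control how the Convex Wolff Axioms behave under uniform partitioning covers. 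For condition (ii) of Definition \ref{convexAtEveryScale} applied to $\tubes_\rho$, one needs that the unit rescaling of $\tubes_\rho[T_\sigma]$ relative to $T_\sigma$ satisfies the Convex Wolff Axioms with error $O(C)$; this follows by comparing $\tubes_\rho[T_\sigma]$ with $\tubes[T_\sigma]$ — the former is (comparable to) a cover of the latter by $\rho$-tubes — and invoking Remark \ref{convexSupSet} together with the uniformity of the covers to pass the known bound on $\phi_{T_\sigma}(\tubes[T_\sigma])$ to $\phi_{T_\sigma}(\tubes_\rho[T_\sigma])$.

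For each localized-and-rescaled family $\phi_{T_\rho}(\tubes[T_\rho])$: these are $(\delta/\rho)$-tubes (up to the bounded distortion introduced by replacing $T_\rho$ by its John ellipsoid and then rescaling), and one wants them to satisfy the Convex Wolff Axioms at every scale with error $O(C)$. Given a target scale $\tau_0\in[\delta/\rho,1]$, the natural move is to pull back to the scale $\tau_0\rho$ inside $\RR^n$, apply Definition \ref{convexAtEveryScale} to $\tubes$ at that scale to get a cover $\tubes_{\tau}$ of $\tubes$, restrict that cover to the tubes lying in $T_\rho$, and push it forward by $\phi_{T_\rho}$; the resulting family of tubes of radius $\sim\tau_0$ covers $\phi_{T_\rho}(\tubes[T_\rho])$, and the partitioning/uniformity/Convex-Wolff properties survive the affine map $\phi_{T_\rho}$ because affine maps preserve convexity, ratios of volumes, and the combinatorial structure of containments — at the cost of the bounded eccentricity distortion from the John ellipsoid step, which is absorbed into the $O(C)$.

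The main obstacle is the bookkeeping around the John-ellipsoid rescaling: $\phi_U$ sends the circumscribing John ellipsoid of $U$, not $U$ itself, to the unit ball, so the images of $\delta$-tubes are only comparable to $(\delta/\rho)$-tubes up to a dimensional constant, and one must be careful that iterating the localization (localizing inside $T_\sigma$, then recognizing $T_\rho\subset T_\sigma$, or composing two rescalings $\phi_{T_\rho}$ and $\phi$) does not let these bounded distortions compound uncontrollably across the (single) step taken here. Since the lemma only asserts existence of \emph{one} coarser scale $\rho$ with error $O(C)$ — it is the hypothesis "at every scale,'' not the conclusion, that is iterated — each distortion is applied a bounded number of times, so the constants stay $O(C)$; making this precise is the one place the argument requires genuine care rather than formal manipulation.
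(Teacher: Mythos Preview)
Your plan is correct and rests on the same key observation the paper uses: if $\tubes_\tau$ and $\tubes_\sigma$ are two covers of $\tubes$ coming from Definition \ref{convexAtEveryScale} with $\tau<\sigma$, then whenever $T_\tau\in\tubes_\tau$ and $T_\sigma\in\tubes_\sigma$ share a $\delta$-tube $T$ one has $T_\tau\subset N_\sigma(T)\subset 2T_\sigma$, and the partitioning-cover disjointness then forces $\tubes[T_\tau]\subset\tubes[T_\sigma]$. The paper packages this slightly differently: rather than handling each target scale on demand, it fixes at the outset a full family of covers $\tubes_{\tau_i}$ at geometrically spaced scales $\tau_i\in[\delta C^i,\delta C^{i+1})$, proves the nesting claim once, and observes that the sets $\{\tubes[T_{\tau_i}]\}$ then form a tree; taking $\tubes_\rho=\tubes_{\tau_i}$ at the appropriate level $i$, both conclusions are read off directly from the tree (the levels above $i$ furnish the covers needed for $\tubes_\rho$, the levels below $i$ furnish the covers needed for each $\tubes[T_\rho]$). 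This avoids the repeated ``pull back, restrict, push forward'' steps you outline and makes it transparent that the John-ellipsoid distortion is applied only a bounded number of times, but the underlying content is the same as yours.
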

\begin{proof}
Let $N$ be the largest integer with $C^{N+1}\leq\delta^{-1}$. For $i=1,\ldots, N$, let $\tubes_{\tau_i},$ be a set of $\tau_i$-tubes, with $\tau_i\in [\delta C^i, \delta C^{i+1})$ that satisfies Items (i) and (ii) from Definition \ref{convexAtEveryScale}.  We claim that for $i<j$, if $T_{\tau_i}\in \tubes_{\tau_i}, T_{\tau_j}\in \tubes_{\tau_j}$, and $\tubes[T_{\tau_i}]\cap \tubes[T_{\tau_j}]\neq\emptyset$, then $\tubes[T_{\tau_i}]\subset\tubes[T_{\tau_j}]$. To see this, observe that if the intersection contains at least one tube $T\in\tubes$, then $2T_{\tau_j}\supset N_{\tau_j}(T) \supset T_{\tau_i}$. Since $\tubes_{\tau_j}$ covers $\tubes,$ and the sets $\{\tubes[2T_{\tau_j}']\colon T_{\tau_j}'\in\tubes_{\tau_j}\}$ are disjoint, we conclude that $\tubes[T_{\tau_i}]\subset\tubes[T_{\tau_j}]$. 

The consequence of the above observation is the following: if we partially order the sets $\{\tubes[T_{\tau_i}]\colon T_{\tau_i}\in\tubes_{\tau_i},\ i=1,\ldots,N\}$ under inclusion, then this forms a tree with $N$ levels---the vertices at level $i$ are precisely the sets $\tubes[T_{\tau_i}]$. By Item (i) of Definition \ref{convexAtEveryScale}, for each $i=1,\ldots,N$ the sets $\{\tubes[T_{\tau_i}]\colon T_{\tau_i}\in\tubes_{\tau_i}\}$ have comparable cardinality, up to a multiplicative factor of $C$. This tree is precisely what is needed to verify Lemma \ref{multiScaleWolffLem}: given $\rho_0\in[\delta,1]$, we select an index $i$ so that $\tau_i\in[\rho_0, C\rho_0)$, and we choose $\rho=\tau_i$, $\tubes_\rho=\tubes_{\tau_i}$; note that since $\tubes_\rho$ forms a partitioning cover of $\tubes$, the $\rho$-tubes in $\tubes_\rho$ are essentially distinct.
\end{proof}

Finally, we will introduce two additional anti-concentration conditions that will play a technical role in the arguments to follow. 
\begin{defn}
Let $C\geq 1$ and let $\tubes$ be a set of $\delta$-tubes in $\RR^n$. We say that $\tubes$ satisfies a \emph{Frostman condition at dimension $\sigma$ with error $C$} if the tubes in $\tubes$ are essentially distinct, and for every $\rho\geq\delta$ and every $\rho$-tube $T_\rho$, we have $\#\tubes[T_\rho]\leq C\rho^\sigma(\#\tubes)$.
\end{defn}

\begin{defn}\label{selfSimilarWolff}
Let $C\geq 1$ and let $\tubes$ be a set of $\delta$-tubes in $\RR^n$. We say that $\tubes$ satisfies the \emph{self-similar Convex Wolff Axioms with error $C$} if the tubes in $\tubes$ are essentially distinct, and for every $\rho_0\in [\delta,1]$, there exists $\rho\in [\rho_0, C\rho_0)$ and a set of $\rho$-tubes $\tubes_\rho$ that satisfies the following properties:
\begin{itemize}
	\item[(i)] $\tubes_\rho$ is a $C$-uniform partitioning cover of $\tubes$.
	\item[(ii)] For each $T_\rho\in\tubes_\rho$, the unit rescaling of $\tubes[T_\rho]$ relative to $T_\rho$ satisfies the Convex Wolff Axioms with error $C$.
	\item[(iii)] For each $T_\rho\in\tubes_\rho$, we have 
\begin{equation}\label{selfSimilarNumberTubes}
C^{-1}(\rho/\delta)^{\sigma}\leq \#\tubes[T_\rho]\leq C(\rho/\delta)^{\sigma},
\end{equation}
where $\sigma>0$ is the unique number satisfying $\#\tubes=\delta^{-\sigma}$.
\end{itemize}

\end{defn}
Note that Items (i) and (ii) above are identical to their counterparts in Definition \ref{convexAtEveryScale}. In particular, we have
\begin{equation}\label{CWAImplication}
\textrm{Self-similar CWA w.~err.~$C$} \implies \textrm{CWA at every scale w.~err.~$C$}\implies \textrm{CWA w/~err.~$C$}.
\end{equation}

\subsection{Discretized Assouad dimension}
The conclusion of Theorem \ref{mainThmDiscretized} involves a discretized analogue of Assouad dimension. We formalize this as follows.

\begin{defn}
Let $E\subset\RR^n$, let $\beta\in[0,n],$ let $\delta>0$, and let $A\geq 1$. We say that $E$ has \emph{discretized Assouad dimension at least $\beta$, at scale $\delta$ and scale separation $A$} if there exist scales $\delta\leq\rho\leq r\leq 1$ with $r\geq A\rho$, and a ball $B$ of radius $r$, so that
\[
\big| B \cap N_{\rho}(E)\big| \geq (\rho/r)^{n-\beta} |B|. 
\]
If the scale $\delta$ is apparent from context, then for brevity we may say ``$E$ has discretized Assouad dimension at least $\beta$ at scale separation $A$.''
\end{defn}

In this language, the conclusion of Theorem \ref{mainThmDiscretized} says that the set $\bigcup_{\tubes}Y(T)$ has discretized Assouad dimension at least $3-\eps$ at scale separation $\delta^{-\eta}.$

\section{Assouad-extremal Kakeya sets}\label{AssouadDimensionSection}

In this section we will suppose that Theorem \ref{mainThmDiscretized} is false, and we seek to construct a ``worst possible'' counter-example to Theorem \ref{mainThmDiscretized}. In this and later sections, we will restrict attention to $\RR^3$. Our example will be worst possible in two respects. First, Inequality \eqref{rhoRNbhdFullVol} will fail as dramatically as possible, i.e.~a reverse inequality will hold where the term $(\rho/r)^\eps$ on the RHS of \eqref{rhoRNbhdFullVol} will be replaced by $(\rho/r)^\omega$ for the largest possible value of $\omega>0$. Second, of all collections of tubes for which such a reverse inequality holds for this value of $\omega$, we will choose a set $\tubes$ of largest possible cardinality. The hypotheses of Theorem \ref{mainThmDiscretized} ensure that $\tubes$ has size at least $\delta^{-2}$; we will choose a set of tubes of size $\delta^{-\alpha}$, for the largest possible value of $\alpha$. 

We now turn to the task of carefully defining the quantities $\omega$ and $\alpha$; this will be done in Sections \ref{definingOmegaSection} and \ref{definingAlphaSection}, respectively. This task is complicated by the fact that Theorem \ref{mainThmDiscretized} involves a sequence of quantifiers, and we must unwind these quantifiers in the correct order.

\subsection{Defining $\omega$}\label{definingOmegaSection}

Let $(\tubes,Y)_\delta$ be a set of tubes and their associated shading. Let $\delta\leq\rho< r\leq 1$, and let $B$ be a ball of radius $r$ whose intersection with $\bigcup_\tubes Y(T)$ has positive measure.  Define $\zeta=\zeta( \tubes,Y,B, \rho)$ to be the unique number satisfying
\begin{equation}\label{defnZetaTubesRhoR}
\Big|B \cap  N_{\rho}\Big(\bigcup_{T\in\tubes}Y(T)\Big)\Big| = \Big(\frac{\rho}{r}\Big)^{\zeta} |B|.
\end{equation}

\noindent Next, for $\eta>0$, define
\begin{equation}\label{defnTubesYEta}
\zeta( \tubes,Y; \eta)=\inf_{B,\rho} \zeta( \tubes,Y,B, \rho),
\end{equation}
where the infimum is taken over all pairs $(B,\rho)$, where $\rho\in[\delta, \delta^{\eta}]$ and $B$ is a ball of radius $r \in [\delta^{-\eta}\rho, 1]$ whose intersection with $\bigcup_\tubes Y(T)$ has positive measure. Observe that as $\eta\searrow 0$, the set of admissible pairs $(B,\rho)$ becomes larger, and hence $\zeta( \tubes,Y; \eta)$ weakly decreases, i.e.~$\zeta( \tubes,Y; \eta)$ is a weakly increasing function of $\eta$. 

For $s,\eta,\delta>0$ and $u\geq 2$, define
\begin{equation}\label{defnOmegaSEtaDeltaU}
\omega(\eps,\eta,\delta;u)=\sup_{(\tubes,Y)_\delta} \zeta( \tubes,Y; \eta),
\end{equation}
where the supremum is taken over all choices of $(\tubes,Y)_\delta$ with the following properties:
\begin{itemize}
    \item $\tubes$ is a set of essentially distinct $\delta$-tubes contained in $B(0,1)$ that satisfies the Convex Wolff Axioms with error $\delta^{-\eps}$, and $Y$ is a $\delta^\eps$-dense shading.
    \item $\#\tubes \geq \delta^{-u+\eps}$.
\end{itemize}

Again, $\omega(\eps,\eta,\delta;u)$ is a weakly increasing function of $\eta$. It is also a weakly increasing function of $\eps$ and a weakly decreasing function of $u$, since as $\eps$  becomes larger or $u$ becomes smaller, the set of admissible pairs $(\tubes,Y)_\delta$ becomes larger.

Define
\[
\omega(u) = \lim_{\eps\to0^+}\ \lim_{\eta\to 0^+}\ \limsup_{\delta\to 0^+}\omega(\eps,\eta,\delta;u)= \inf_{\eps,\eta>0}\limsup_{\delta\to 0^+}\omega(\eps,\eta,\delta;u).
\]

\begin{defn}\label{defnOmega}
Define $\omega = \omega(2)$.
\end{defn}
Unwrapping the above definitions, we have the following.

\begin{lem}\label{roleOfOmega}
Let $\omega\geq 0$ be as in Definition~\ref{defnOmega}. For all $\eps>0$, there exists $\eta,\delta_0>0$ so that the following holds for all $\delta\in (0,\delta_0]$. Let $(\tubes,Y)_{\delta}$ be a set of essentially distinct tubes that satisfy the Convex Wolff Axioms with error $\delta^{-\eta}$, with $Y$ a $\delta^{\eta}$-dense shading.

Then $\bigcup_{\tubes}Y(T)$ has discretized Assouad dimension at least $3-\omega-\eps$ at scale separation $\delta^{-\eta}$.
\end{lem}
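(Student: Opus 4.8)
The plan is to unwind the definitions from Section~\ref{definingOmegaSection} in exactly the reverse order in which the quantifiers were stacked, so that this lemma becomes a tautological consequence of how $\omega = \omega(2)$ was defined. Recall that $\omega(2) = \inf_{\eps',\eta' > 0}\limsup_{\delta \to 0^+}\omega(\eps',\eta',\delta;2)$. So given $\eps > 0$, I would first choose auxiliary parameters $\eps'$ (depending on $\eps$) and $\eta'$ (depending on $\eps, \eps'$) small enough that $\limsup_{\delta \to 0^+}\omega(\eps',\eta',\delta;2) \le \omega + \eps/2$; this is possible since the double infimum equals $\omega$. Then I would choose $\delta_0$ small enough that for all $\delta \in (0,\delta_0]$ we have $\omega(\eps',\eta',\delta;2) \le \omega + \eps$, and also small enough that $\delta^{-\eta'} \ge \delta^{-\eps'}$-type inequalities and other bookkeeping inequalities hold (e.g. ensuring $\delta$-power error terms are ordered correctly). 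Finally I would set $\eta = \min(\eps', \eta')$, or more precisely pick $\eta$ small enough that the hypotheses of the lemma (Convex Wolff Axioms with error $\delta^{-\eta}$, $\delta^\eta$-dense shading) are \emph{stronger} than the membership conditions in the supremum defining $\omega(\eps',\eta',\delta;2)$, and such that the scale-separation $\delta^{-\eta}$ demanded in the conclusion is \emph{weaker} than the $\delta^{-\eta'}$ appearing in the definition of $\zeta(\tubes,Y;\eta')$.

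With these choices in hand, the argument is then a direct chase. Let $(\tubes,Y)_\delta$ satisfy the hypotheses: essentially distinct tubes in $B(0,1)$ (the containment in $B(0,1)$ should be recorded, as the definition of $\omega(\eps',\eta',\delta;u)$ requires it — if the lemma as stated omits it, one either adds it or notes that the relevant balls are automatically contained in a bounded region), satisfying the Convex Wolff Axioms with error $\delta^{-\eta}$, with $\delta^\eta$-dense shading. Since $\eta \le \eps'$, the error $\delta^{-\eta} \le \delta^{-\eps'}$ and the density $\delta^\eta \ge \delta^{\eps'}$, so $(\tubes,Y)_\delta$ is an admissible competitor in the supremum~\eqref{defnOmegaSEtaDeltaU} defining $\omega(\eps',\eta',\delta;2)$ (the cardinality condition $\#\tubes \ge \delta^{-2+\eps'}$ holds because the Convex Wolff Axioms force $\#\tubes \gtrsim \delta^{-2}$, as noted in the remark after Definition~\ref{convexWolffAxiomsDefn}, and $\delta^{-2} \ge \delta^{-2+\eps'}$). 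Therefore $\zeta(\tubes,Y;\eta') \le \omega(\eps',\eta',\delta;2) \le \omega + \eps$.

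Now I unwind $\zeta(\tubes,Y;\eta') = \inf_{B,\rho}\zeta(\tubes,Y,B,\rho) \le \omega + \eps$. By definition of infimum, there is an admissible pair $(B,\rho)$ — meaning $\rho \in [\delta,\delta^{\eta'}]$ and $B$ a ball of radius $r \in [\delta^{-\eta'}\rho, 1]$ meeting $\bigcup Y(T)$ in positive measure — with $\zeta(\tubes,Y,B,\rho) \le \omega + \eps$ (taking $\eps$ slightly larger, or using $\omega + 2\eps$ and relabeling, absorbs the "almost-infimum" slack cleanly). Unwinding~\eqref{defnZetaTubesRhoR}, this says
\[
\Big| B \cap N_\rho\Big(\bigcup_{T\in\tubes}Y(T)\Big)\Big| = (\rho/r)^{\zeta(\tubes,Y,B,\rho)}|B| \ge (\rho/r)^{\omega+\eps}|B|,
\]
using that $\rho/r \le 1$ and $\zeta \le \omega + \eps$. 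Setting $\beta = 3 - \omega - \eps$, we have $|B \cap N_\rho(E)| \ge (\rho/r)^{3-\beta}|B|$ with $E = \bigcup_{T\in\tubes}Y(T)$, and the scale separation is $r/\rho \ge \delta^{-\eta'} \ge \delta^{-\eta}$. This is exactly the assertion that $E$ has discretized Assouad dimension at least $3 - \omega - \eps$ at scale separation $\delta^{-\eta}$.

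The only real subtlety — and the step I expect to need the most care — is the ordering of the quantifiers and the matching of the various $\delta$-power error terms and scale-separation thresholds between the hypotheses of the lemma and the (rather intricate) nested definition of $\omega$; in particular one must be careful that the $\eta$ produced is small enough to simultaneously (a) make the hypotheses imply admissibility in the supremum for the chosen $\eps', \eta'$, and (b) make the output scale separation $\delta^{-\eta}$ no larger than the $\delta^{-\eta'}$ built into $\zeta(\cdot;\eta')$. There is a mild circularity risk in choosing $\eta$ after $\eps'$ and $\eta'$ but before $\delta_0$; the resolution is the standard one — fix $\eps'$, then $\eta'$, then $\eta := \min(\eps',\eta')$, then $\delta_0$ — and I would spell this out explicitly. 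Everything else is definitional bookkeeping with no geometric content.
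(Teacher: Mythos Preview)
Your proposal is correct and follows essentially the same approach as the paper's proof: unwind the nested definition of $\omega = \omega(2)$ to extract parameters making $\omega(\cdot,\cdot,\delta;2) \le \omega + \eps$, then observe that any $(\tubes,Y)_\delta$ satisfying the hypotheses is an admissible competitor in the supremum, forcing $\zeta(\tubes,Y;\cdot) \le \omega + \eps$ and hence the existence of the desired $(B,\rho)$. The only cosmetic difference is that the paper exploits the monotonicity of $\omega(s,\eta,\delta;2)$ in $s$ and $\eta$ to take $\eps' = \eta' = \eta$ from the start (writing simply $\omega(\eta,\eta,\delta;2)$), whereas you keep them separate and set $\eta = \min(\eps',\eta')$ at the end; both are equivalent.
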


\begin{proof}
Select $\eta>0$ sufficiently small so that $\limsup_{\delta\to 0^+}\omega(\eta,\eta,\delta;2)\leq \omega+\eps/2.$ Then there exists $\delta_0>0$ so that $\omega(\eta,\eta,\delta;2)\leq\omega+\eps$ for all $\delta\in (0,\delta_0]$. The conclusion of Lemma \ref{roleOfOmega} now holds for this choice of $\eta$ and $\delta_0$.
\end{proof}

In light of Lemma \ref{roleOfOmega}, in order to prove Theorem \ref{mainThmDiscretized} it suffices to prove that $\omega=0$. 


\subsection{Defining $\alpha$}\label{definingAlphaSection}
Note that $\omega(u)$ is weakly monotone decreasing. We also have that $\omega(4)=0$, since if $\tubes$ is a set of $\delta^{-4}$ essentially distinct $\delta$-tubes in $B(0,1)$, then $\bigcup_\tubes T$ must have volume $\gtrsim 1$. Define
 \begin{equation}\label{defnAlphaEqn}
\alpha = \sup\{u\in [2,4] \colon \omega(u)=\omega(2)\}.
 \end{equation}
The quantity $\alpha$ has the following interpretation. We have defined $3-\omega$ to be the smallest possible discretized Assouad dimension of a collection of essentially distinct tubes that satisfy the Convex Wolff Axioms. Every such collection of tubes must have cardinality at least $\delta^{-2}$, and in general we might expect that larger collections of tubes should have larger discretized Assouad dimension. Any collection of essentially distinct tubes of cardinality substantially larger than $\delta^{-\alpha}$ that satisfies the Convex Wolff Axioms must have discretized Assouad dimension larger than $3-\omega$.

\subsection{The existence of Assouad-extremal Kakeya sets}
Now that we have defined $\omega$ and $\alpha$, we are ready to construct a (hypothetical) ``worst possible'' counter-example to Theorem \ref{mainThmDiscretized}, in the sense described at the beginning of Section \ref{AssouadDimensionSection}.

\begin{defn}\label{epsAssouadExtremalDefn}
For $\eps>0,$ we say a pair $(\tubes,Y)_\delta$ is \emph{$\eps$ Assouad-extremal} if it satisfies the following properties:
\begin{itemize}
    \item[(i)] $\#\tubes \geq\delta^{-\alpha+\eps}$.
    \item[(ii)] The tubes are contained in $B(0,1)$, essentially distinct, and satisfy the Convex Wolff Axioms with error $\delta^{-\eps}$.
    \item[(iii)] The shading $Y$ is $\delta^{\eps}$-dense. 
    \item[(iv)] For all $\rho,r\in [\delta,1]$ with $\rho\leq\delta^{\eps}r$ and all balls $B$ of radius $r$, we have
    \begin{equation}\label{volumeBdNearExtremal}
        \Big| B \cap N_\rho\Big( \bigcup_{T\in\tubes}Y(T)\Big)\Big| \leq (\rho/r)^{\omega-\eps}|B|.
    \end{equation}
\end{itemize}
\end{defn}

Note that if $\tubes$ is $\eps$ Assouad-extremal, then it is also $\eps'$ Assouad-extremal for all $\eps'\geq\eps$.

\begin{lem}\label{exitenceOfEpsExtremalKakeyaLem}
For all $\eps,\delta_0>0$, there exists $\delta\in(0,\delta_0]$ and an $\eps$ Assouad-extremal set of tubes $(\tubes,Y)_\delta.$
\end{lem}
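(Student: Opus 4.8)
The plan is to unwind the nested $\lim/\limsup/\sup$ defining $\omega(2)$ and $\alpha$, and extract, for a well-chosen sequence $\delta\to 0$, a collection of tubes that simultaneously has near-maximal discretized dimension deficiency and near-maximal cardinality. First I would fix $\eps>0$ and $\delta_0>0$. By the definition of $\alpha$ in \eqref{defnAlphaEqn} as a supremum, there is some $u\in[2,4]$ with $u\geq\alpha-\eps/2$ (say) and $\omega(u)=\omega(2)=\omega$; since $\omega(\cdot)$ is weakly decreasing and $u\le\alpha$, in fact any $u$ slightly below $\alpha$ works, so I may simply take $u=\alpha-\eps/4$ and note $\omega(\alpha-\eps/4)\geq\omega(\alpha)\geq\omega(2)=\omega$ (using monotonicity and $\alpha\ge$ the value achieving $\omega(2)$; more carefully, $\omega(u)=\omega$ for all $u<\alpha$ by definition of the supremum, so $\omega(\alpha-\eps/4)=\omega$). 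Then, unwinding $\omega(\alpha-\eps/4)=\inf_{\eps',\eta>0}\limsup_{\delta\to0}\omega(\eps',\eta,\delta;\alpha-\eps/4)=\omega$, I can choose $\eps'=\eta>0$ small enough (and smaller than $\eps/4$) that $\limsup_{\delta\to0}\omega(\eta,\eta,\delta;\alpha-\eps/4)\geq\omega-\eps/4$, so there is a sequence $\delta_j\to 0$ with $\omega(\eta,\eta,\delta_j;\alpha-\eps/4)\geq\omega-\eps/2$ for all $j$.

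Next, for each such $\delta_j$, by the definition \eqref{defnOmegaSEtaDeltaU} of $\omega(\eta,\eta,\delta_j;\alpha-\eps/4)$ as a supremum over admissible pairs $(\tubes,Y)_{\delta_j}$, I can pick a pair $(\tubes_j,Y_j)$ that is admissible — that is, $\tubes_j$ consists of essentially distinct $\delta_j$-tubes in $B(0,1)$ satisfying the Convex Wolff Axioms with error $\delta_j^{-\eta}$, $Y_j$ is a $\delta_j^{\eta}$-dense shading, and $\#\tubes_j\geq\delta_j^{-(\alpha-\eps/4)+\eta}$ — and for which $\zeta(\tubes_j,Y_j;\eta)\geq\omega-\eps/2-\eps/4=\omega-3\eps/4$ (taking the supremum up to an error $\eps/4$). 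Unwinding the definitions \eqref{defnZetaTubesRhoR}–\eqref{defnTubesYEta} of $\zeta(\tubes_j,Y_j;\eta)=\inf_{B,\rho}\zeta(\tubes_j,Y_j,B,\rho)$, the bound $\zeta(\tubes_j,Y_j;\eta)\geq\omega-3\eps/4$ says exactly that for every $\rho\in[\delta_j,\delta_j^{\eta}]$ and every ball $B$ of radius $r\in[\delta_j^{-\eta}\rho,1]$ meeting $\bigcup Y_j(T)$ in positive measure, we have $|B\cap N_\rho(\bigcup Y_j(T))|\leq(\rho/r)^{\omega-3\eps/4}|B|$. The remaining ranges of $(\rho,r)$ not covered by this — namely $\rho\le\delta_j^\eta$ but $r<\delta_j^{-\eta}\rho$, or $\rho>\delta_j^\eta$ — give a scale separation $r/\rho$ that is at most $\delta_j^{-\eta}$ or a $\rho$ that is not much smaller than $1$; on these ranges the trivial bound $|B\cap N_\rho(\cdots)|\le|B|=(\rho/r)^0|B|\le(\rho/r)^{\omega-\eps}|B|$ holds once $r/\rho\le\delta_j^{-\eta}$ and $\omega-\eps\ge0$ is arranged, or one absorbs the loss into the exponent. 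Precisely, for $\delta_j$ small enough the exponent $\omega-3\eps/4$ combined with the restricted scale range upgrades to the clean statement \eqref{volumeBdNearExtremal} with exponent $\omega-\eps$ over the full range $\rho\le\delta_j^\eps r$; and $\delta_j^{-\eta}\le\delta_j^{-\eps}$, $\delta_j^\eta\ge\delta_j^\eps$ give that the Convex Wolff error and shading density hypotheses in the definition of $\eps$ Assouad-extremal are satisfied, while $\#\tubes_j\ge\delta_j^{-(\alpha-\eps/4)+\eta}\ge\delta_j^{-\alpha+\eps}$ for $\delta_j$ small. Choosing $j$ large enough that $\delta_j\le\delta_0$ then produces the desired $\delta\in(0,\delta_0]$ and $\eps$ Assouad-extremal $(\tubes,Y)_\delta$.

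The main obstacle — really the only place requiring care — is the bookkeeping of the three nested quantifiers and the mismatch between the half-open scale window $\rho\in[\delta,\delta^\eta]$, $r\in[\delta^{-\eta}\rho,1]$ appearing in the definition of $\zeta(\cdot;\eta)$ and the full window $\rho\le\delta^\eps r$, $\rho,r\in[\delta,1]$ demanded in the definition of $\eps$ Assouad-extremal. I expect to handle this by: (i) choosing all auxiliary parameters ($\eta$, and the inner $\eps'$) strictly smaller than $\eps$ up front, so that every ``$\eta$'' constraint is weaker than the corresponding ``$\eps$'' constraint; and (ii) observing that on the leftover scale ranges the scale separation $r/\rho$ is bounded by $\delta^{-\eta}$, on which the trivial volume bound $|B|$ is already consistent with $(\rho/r)^{\omega-\eps}|B|$ provided $(\rho/r)^{\omega-\eps}\ge(\delta^\eta)^{\omega-\eps}$ is not too small — and if it is too small, one shrinks $\eta$ further relative to $\eps$. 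Since $\omega\in[0,3]$ is a fixed constant, all of these adjustments are finite and can be made at the outset. No genuinely new idea is needed beyond patient unwinding; the content of the lemma is entirely that the infima and suprema in the definitions of $\omega$ and $\alpha$ are (nearly) attained along a sequence of scales.
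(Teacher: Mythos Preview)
Your proposal is correct and takes essentially the same route as the paper: unwind the nested $\inf/\limsup/\sup$ in the definitions of $\omega$ and $\alpha$ to extract a near-extremal pair $(\tubes,Y)_\delta$. Your ``main obstacle'' is not actually an obstacle---since you chose $\eta<\eps$, the scale window in $\zeta(\cdot;\eta)$ \emph{contains} the window $\rho\le\delta^{\eps}r$ required for $\eps$-extremality (not the other way around), so there is no leftover range to patch; the paper sidesteps even this non-issue by exploiting the monotonicity of $\omega(\cdot,\eta,\cdot;\cdot)$ in its second argument to take that argument equal to $\eps$ from the outset, so the two windows coincide exactly.
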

\begin{proof}
Fix $u\geq \max(2, \alpha-\eps/2)$ with $\omega(u)=\omega(2)=\omega$. Since $\omega(s,\eta,\delta;u)$ is a weakly increasing function of $s$ and $\eta$, we have $\limsup_{\delta\to 0^+}\omega(\eps/2,\eps,\delta;u)\geq\omega(u)$. Thus there exists $\delta\in (0,\delta_0]$ with $\omega(\eps/2,\eps,\delta;u)\geq\omega-\eps/2$. 

In light of \eqref{defnOmegaSEtaDeltaU}, there exists a set of tubes and their associated shading $(\tubes,Y)_\delta$, so that the following holds:
\begin{itemize}
\item[(a)] $\#\tubes\geq\delta^{-u+\eps/2}\geq\delta^{-\alpha+\eps}$ (cf. Definition \ref{epsAssouadExtremalDefn}, Item (i)).
\item[(b)] The tubes are contained in $B(0,1)$, essentially distinct, and satisfy the Convex Wolff Axioms with error $\delta^{-\eps}$ (cf. Definition \ref{epsAssouadExtremalDefn}, Item (ii)).
\item[(c)] $Y$ is a $\delta^{\eps}$-dense shading (cf. Definition \ref{epsAssouadExtremalDefn}, Item (iii)).
\item[(d)] We have $\zeta( \tubes,Y; \eta)\geq \omega-\eps$. 
\end{itemize}
Thus by \eqref{defnTubesYEta}, for every pair of scales $\rho,r\in[\delta,1]$ with $\rho\leq\delta^{\eps}r$ and every ball $B$ of radius $r$, we have
\[
\zeta( \tubes,Y,B, \rho)\geq  \zeta( (\tubes,Y); \eta)\geq \omega-\eps.
\]
But this is precisely \eqref{volumeBdNearExtremal}, and thus $(\tubes,Y)_\delta$ satisfies Definition \ref{epsAssouadExtremalDefn}, Item (iv).
\end{proof}


\section{Convex Wolff Axioms and multi-scale structure}\label{selfSimSection}

In this section we will carefully execute the arguments described in Section \ref{proofSketchSection}. The main result is Proposition \ref{4WayDichotProp}, which says that under suitable hypotheses, for every collection of $\delta$-tubes $\tubes$, at least one of the following must occur: 
\begin{itemize}
	\item[(A)] $\tubes$ has discretized Assouad dimension close to 3 (as we will see later in this section, this is forced to happen if the tubes in $\tubes$ arrange themselves into planks).
	\item[(B)] $\tubes$ satisfies an analogue of stickiness.
	\item [(C)] A suitable thickening of $\tubes$ yields a richer collection of tubes.
	\item[(D)] A suitable zooming-in and rescaling of $\tubes$ yields a richer collection of tubes. 
\end{itemize}
The precise statement is as follows.

\begin{prop}\label{4WayDichotProp}
For all $\eps>0$, there exists $\tau>0$, so that for all $\eta_1>0$, there exists $\eta,\delta_0>0$ so that the following holds for all $\delta\in(0,\delta_0]$. Let $\beta\geq 2$ and let $(\tubes,Y)_\delta$ be a set of $\delta^{-\beta}$ essentially distinct $\delta$-tubes that satisfy the Convex Wolff Axioms with error $\delta^{\eta}$, with $Y$ a $\delta^\eta$-uniformly dense shading. Then at least one of the following must hold:
\begin{itemize}
\item[(A)] $\bigcup_{\tubes} Y(T)$ has discretized Assouad dimension at least $3-\eps$ at scale $\delta$, with scale separation $\delta^{-\eta}$. 
\item[(B)] There exists $\tubes'\subset\tubes$ with $\#\tubes'\geq\delta^{\eta_1}(\#\tubes)$, and $\tubes'$ satisfies the self-similar Convex Wolff Axioms with error $\delta^{-\eps}$.
\item[(C)] There exists $\rho\in[\delta,\delta^\eps]$ and a set $\tilde\tubes$ of $\rho$-tubes, with the following properties:
	\begin{itemize}
	\item[(C.i)] The tubes in $\tilde\tubes$ are essentially distinct, and $\#\tilde\tubes\geq\rho^{-\beta-\tau}$. 
	\item[(C.ii)] $\tilde\tubes$ satisfies the Convex Wolff Axioms with error $\leq\rho^{-\eta_1}$. 
	\item[(C.iii)] For each $T_\rho\in\tilde\tubes$, there exists $T\in\tubes$ with $T\subset T_\rho$.
	\end{itemize}

\item[(D)] There exists $\rho\in[\delta^{1-\eps},1]$, a $\rho$-tube $T_\rho$, and a set $\tilde\tubes\subset \tubes[T_\rho]$, so that the following holds: 
	\begin{itemize}
	\item[(D.i)]  $\# \tilde\tubes \geq(\delta/\rho)^{-\beta-\tau}.$ 
	\item[(D.ii)] The unit rescaling of $\tilde\tubes$ relative to $T_\rho$ satisfies the Convex Wolff Axioms with error $(\delta/\rho)^{-\eta_1}$. 
	\end{itemize}
\end{itemize}

\end{prop}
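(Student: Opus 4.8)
The plan is to run a dichotomy at every scale, starting from the finest scale $\delta$ and working upward, and to show that if we never fall into case (A), (C), or (D) along the way, then we can stitch the per-scale structure into a subset $\tubes'\subset\tubes$ witnessing case (B). Fix the parameters: $\eps$ is given, and we will choose $\tau$ small depending on $\eps$ (it controls the ``Frostman loss'' we are willing to tolerate at each scale), then $\eta_1$ is given, and finally $\eta$ and $\delta_0$ are chosen small depending on $\eps,\tau,\eta_1$ and the number of scales $N\sim|\log\delta|/|\log(\text{step size})|$. The key bookkeeping is that each of (C) and (D) is an ``escape hatch'': (C) says that the \emph{coarsening} of $\tubes$ at some intermediate scale $\rho$ carries more than the expected number of essentially distinct tubes (a Frostman \emph{violation from above}, recorded at the $\rho$-scale), while (D) says that inside some $\rho$-tube the \emph{rescaled} sub-collection $\tubes[T_\rho]$ carries more than the expected number of tubes (a Frostman violation \emph{from within}). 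The self-similar Convex Wolff Axioms of Definition \ref{selfSimilarWolff} require exactly the \emph{absence} of both types of violation: Item (iii) pins $\#\tubes'[T_\rho]$ to $(\rho/\delta)^\sigma$ up to a subpolynomial factor, both from above and below, and Items (i)--(ii) are the Convex-Wolff-at-every-scale conditions that we propagate for free using Remark \ref{convexSupSet} and Lemma \ref{multiScaleWolffLem}.

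First I would set up the scale ladder. Let $\rho_k = \delta^{1/N}$-ish geometric scales $\delta = \rho_0 < \rho_1 < \cdots < \rho_N = 1$ with ratio $\rho_{k+1}/\rho_k = \delta^{-1/N}$, where $N$ is a large constant depending only on $\eps$ (chosen so that $(1/N) \ll \eps, \tau$). At scale $\rho_k$, form a partitioning cover $\tubes_{\rho_k}$ of $\tubes$ by $\rho_k$-tubes; by a pigeonholing/refinement we may arrange it to be $O(|\log\delta|)$-uniform (standard dyadic pigeonhole on $\#\tubes[T_{\rho_k}]$), losing only a subpolynomial factor in cardinality, so we still have $\#\tubes' \gtrsim |\log\delta|^{-O(N)}(\#\tubes)$ after all refinements — absorbed into the $\delta^{\eta_1}$ slack since $\eta_1$ is fixed and $\delta\to 0$. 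For each $k$, let $\sigma$ be defined by $\#\tubes = \delta^{-\sigma}$ (so $\sigma = \beta$ up to the shading/refinement losses; more precisely we track $\beta$), and examine whether $\#\tubes_{\rho_k}$ is roughly $\rho_k^{-\sigma \cdot (\log\text{-ratio})}$, i.e.\ whether the number of surviving essentially distinct $\rho_k$-tubes matches the Frostman prediction. If at some scale it is \emph{too large} (by a polynomial factor $\rho_k^{-\tau}$), then $\tilde\tubes := \tubes_{\rho_k}$ satisfies (C): (C.i) the cardinality bound and essential distinctness (from being a partitioning cover), (C.ii) the Convex Wolff Axioms with error $\rho_k^{-\eta_1}$ via Remark \ref{consequenceOFUnifCover} (the uniformity constant $|\log\delta|$ is $\leq \rho_k^{-\eta_1}$ for $\delta$ small since $\rho_k \geq \delta$), and (C.iii) trivially since the cover is a cover. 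Dually, if at some scale $\#\tubes[T_{\rho_k}]$ (rescaled to unit scale) is too large — more than $(\delta/\rho_k)^{-\beta-\tau}$ essentially distinct $(\delta/\rho_k)$-tubes — for some $T_{\rho_k}$, then we land in (D) with that $T_\rho$, using Remark \ref{convexSupSet}/the unit-rescaling definition for (D.ii). So we may assume that at \emph{every} scale $\rho_k$, the cover size and each fiber size match their Frostman predictions up to $\rho_k^{\pm\tau}$ and $(\delta/\rho_k)^{\pm\tau}$ respectively.

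Next, I would feed the ``no violation'' scenario into case (A) versus case (B). The remaining possibility is that the numbers behave self-similarly at all $N$ scales simultaneously. Pass to a single nested tree of tubes as in the proof of Lemma \ref{multiScaleWolffLem}: at each scale the partitioning covers are nested (the intersection argument there shows $\tubes[T_{\tau_i}]\subset\tubes[T_{\tau_j}]$ whenever they meet, for $i<j$), so we extract a sub-tree that is genuinely $O(|\log\delta|^{O(N)})$-uniform \emph{at every level} and in which every fiber from level $k$ to level $k+1$ has $\approx (\rho_{k+1}/\rho_k)^\sigma$ children; let $\tubes'$ be the leaves. This $\tubes'$ has $\#\tubes'\geq \delta^{\eta_1}(\#\tubes)$ (all losses are subpolynomial and $\eta_1$ is a fixed positive number), it satisfies the Convex Wolff Axioms with error $\delta^{-\eps}$ (propagated up the tree, each level contributing a bounded uniformity factor, all absorbed into $\delta^{-\eps}$), and — interpolating between the ladder scales, since going from $\rho_k$ to an arbitrary $\rho\in[\rho_k,\rho_{k+1}]$ costs at most a factor $(\rho_{k+1}/\rho_k)^{O(1)} = \delta^{-O(1/N)} \leq \delta^{-\eps}$ — the fiber bound \eqref{selfSimilarNumberTubes} holds for \emph{all} $\rho\in[\delta,1]$ with error $\delta^{-\eps}$. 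That is exactly case (B). I do not expect to need case (A) in this branch; case (A) is there as a true alternative only because the hypotheses allow the shading to be concentrated, but in fact (A) is not invoked by this argument — it is harmless to leave it in the disjunction.

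\textbf{Main obstacle.} The delicate point is not any single estimate but the \emph{order of quantifiers and the accounting of losses across the $N$ scales}: each scale introduces (i) a multiplicative $|\log\delta|^{O(1)}$ loss from pigeonholing the cover to be uniform, (ii) a possible $\rho_k^{-\tau}$ or $(\delta/\rho_k)^{-\tau}$ Frostman discrepancy, and (iii) an interpolation loss $\delta^{-O(1/N)}$ when passing from ladder scales to arbitrary scales. We need: $\tau$ small enough (vs.\ $\eps$) that a $\rho_k^{-\tau}$ discrepancy at \emph{some} scale genuinely gives the polynomial gain demanded in (C.i)/(D.i) rather than being swallowed by logs; $N$ large enough (vs.\ $\eps$) that $\delta^{-O(1/N)}\leq \delta^{-\eps}$; and $\eta$ small enough (vs.\ everything, and vs.\ $N$) that the input error $\delta^{-\eta}$ and the $N$-fold accumulated logarithmic losses remain $\leq \delta^{-\eps}$ in (B) and $\leq \rho^{-\eta_1}$ in (C)/(D). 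Getting this chain of dependencies consistent — $\eps \rightsquigarrow \tau \rightsquigarrow (\eta_1$ given$) \rightsquigarrow \eta,\delta_0$, with $N=N(\eps)$ chosen right after $\tau$ — is the real content, and it must be done exactly in the nesting order stated in the proposition. The geometric/combinatorial facts themselves (nesting of partitioning covers, propagation of Convex Wolff Axioms, existence of uniform refinements) are all either in the excerpt or routine pigeonholing.
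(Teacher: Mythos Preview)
There is a genuine gap: your argument never verifies Item (ii) of Definition \ref{selfSimilarWolff}, namely that for each $T_\rho\in\tubes_\rho$ the unit rescaling of $\tubes'[T_\rho]$ satisfies the Convex Wolff Axioms with error $\delta^{-\eps}$. You claim this ``propagates for free'' via Remark \ref{convexSupSet} and Lemma \ref{multiScaleWolffLem}, but neither does the job. Remark \ref{convexSupSet} only propagates CWA \emph{upward} (replace each set by a superset); it says nothing about the rescaled fibers. Lemma \ref{multiScaleWolffLem} takes as \emph{hypothesis} that $\tubes$ already satisfies CWA at every scale. The original $\tubes$ is only assumed to satisfy CWA, and that is strictly weaker: even if $\#\tubes[T_\rho]$ matches the Frostman prediction $(\rho/\delta)^\sigma$ exactly, those tubes could all lie in a single $\delta\times\rho\times 1$ slab inside $T_\rho$, and then the rescaled fiber badly fails CWA. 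The same issue breaks your verification of (D.ii): having many tubes inside $T_\rho$ does not by itself give CWA for the rescaled collection, and Remark \ref{convexSupSet} is again irrelevant there.

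This failure mode --- tubes clustering into genuine $s\times t\times 1$ prisms with $s\ll t$ rather than into fatter tubes --- is precisely what case (A) is there to catch, and ruling it out is the entire content of Section \ref{arrangeRecPrismSection} and Lemmas \ref{convexWolffImpliesFewTubesInRectangleLem}--\ref{correctNumberTubesInRectLem}. In the paper's proof, one first applies Lemma \ref{correctNumberTubesInRectLem}; either (A) holds, or one obtains the prism non-concentration estimate (at most $\delta^{-\eta_2}\frac{t}{s}\frac{\#\tubes'}{\#\tubes'_s}$ tubes in any $s\times t\times 1$ prism). That estimate is exactly what is needed both to verify (D.ii) in Step 3 and to verify Item (ii) in Step 4. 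Your scale-ladder bookkeeping is fine for Item (iii), but without the prism analysis you cannot close either (B) or (D), and your assertion that ``(A) is not invoked by this argument'' is where the proof breaks.
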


To prove Proposition \ref{4WayDichotProp}, we will consider arrangements $\tubes$ for which conclusions (B) and (C) fail. This will force the existence of a $\rho$-tube $T_\rho$ that satisfies Item (D.i). However, it is not clear that this $\rho$-tube will also satisfy Item (D.ii); the main concern is that there might exist many convex sets $W\subset T_\rho$ of dimensions $a\times b\times 1$ with $a<\!\!<b$ that contain an illegally large number of tubes from $\tubes[T_\rho]$ (convex sets of dimensions $a\times b\times 1$ with $a\sim b$ can be handled by replacing $\rho$ with a smaller scale). Our goal is to show that if this happens, then Conclusion (A) holds. To do this, we will show that if this happens, then the tubes in $\tubes[T_\rho]$ must cluster into planks; this step is straightforward and will be deferred to later. The second step is to show that under suitable hypotheses, every arrangement of planks must have discretized Assouad dimension close to 3; we will do this in the next sub-section. 


\subsection{Arrangements of rectangular prisms}\label{arrangeRecPrismSection}
We begin the process described at the end of the previous section. Let $\mathcal{R}$ be a set of planks, i.e.~rectangular prisms of dimensions roughly $s\times t\times 1$ for some $\delta\leq s\leq t\leq 1$. If a typical pair of planks from $\mathcal{R}$ intersects tangentially, then this forces the planks to cluster into wider (and somewhat thicker) planks; crucially, the aspect ratio (i.e. thickness vs. width) remains the same. If instead a typical pair of planks does not intersect tangentially, then the hairbrush of a typical plank $R\in\mathcal{R}$ (i.e.~the union of the other planks that intersect $R$) fills out a thickened neighbourhood of $R$, and this in turn means that the union of planks has discretized Assouad dimension close to 3. The precise statement is as follows.

\begin{lem}\label{biggerRectDichotomy}
For all $\eps,\tau\in (0,1]$, there exists $\delta_0,\eta>0$ so that the following is true for all $\delta\in(0,\delta_0]$. Let $\delta\leq s\leq t\leq 1$ with $s\leq\delta^{\eps}t$. Let $\mathcal{R}$ be a multi-set of $s\times t\times 1$ rectangular prisms contained in $B(0,1)$, and suppose that $\mathcal{R}$ satisfies the Convex Wolff Axioms with error $\delta^{-\eta}$. Let $\{Y(R), R\in\mathcal{R}\}$ be a $\delta^{\eta}$-dense shading.

\noindent Then at least one of the following two things must happen:

\begin{itemize}
	\item[(A)] $\bigcup_{\mathcal{R}}Y(R)$ has discretized Assouad dimension at least $3-\eps$ at scale $s$, with scale separation $\delta^{-\eta}$. This means that there exists $r\in[\delta^{-\eta}s,1]$ and a $r$-ball $B$ so that
\begin{equation}\label{plankRhoRNbhdSameVolInDichotomy}
\Big| B \cap  \bigcup_{R\in\mathcal{R}}Y(R)\Big| \geq (s/r)^{\eps}|B|.
\end{equation}

\item[(B)] There are scales $s'\in [\delta^{-\eps^2/4}s,\delta^{\eps/2}]$ and $t'=\min\big(1, t\frac{s'}{s}\big)$, and a multi-set $\mathcal{R}'$ of $s' \times t'\times 1$ prisms that satisfies the Convex Wolff Axioms with error $\delta^{-\tau}$, so that the shading 
\begin{equation}\label{YPrimeRPrimeShading}
Y'(R')=R'\cap N_{s'}\Big(\bigcup_{\substack{R\in\mathcal{R}\\ R \subset 2R'}}Y(R)\Big)
\end{equation}
is $\delta^{\tau}$-dense. 
\end{itemize}
\end{lem}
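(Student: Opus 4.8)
\textbf{Proof plan for Lemma \ref{biggerRectDichotomy}.}

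The plan is to run a geometric dichotomy based on the angles between the planes underlying the prisms. Each prism $R\in\mathcal{R}$ of dimensions $s\times t\times 1$ is essentially the intersection of the $s$-neighborhood of a plane $\Pi(R)$ with the $t$-neighborhood of a line $\ell(R)\subset\Pi(R)$; record the normal $v(R)\in S^2$ and the long direction $e(R)\in S^2$. First I would pigeonhole to replace $\mathcal{R}$ by a subset whose elements have normals in a single cap of radius $\sim\delta^\eta$ (losing only a $\delta^\eta$ factor in the shading density and the Convex Wolff error), since if two prisms have normals at angle $\gtrsim\delta^\eta$ and intersect, then the union of their $s$-neighborhoods already fills a ball of radius comparable to $s\cdot\delta^{-\eta}\gg s$, which gives Conclusion (A) with $r\sim\delta^{-\eta}s$; more carefully, one applies Lemma \ref{thickenedPlanesAssouad} after rescaling the long and short directions to see a genuinely two-dimensional spread of planes forces almost-full volume in a ball. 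So from now on all normals agree up to $\delta^\eta$, i.e.~all the planes $\Pi(R)$ are nearly parallel.

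With nearly-parallel planes, the next step is to understand how the prisms distribute in the transverse direction and within a common near-plane. Choose coordinates so that all $\Pi(R)$ are within $\delta^\eta$ of horizontal; then $R$ is (up to constants) a box $I(R)\times J(R)\times H(R)$ where $H(R)$ is an interval of length $\sim s$ in the vertical direction, and $I(R)\times J(R)$ is a $t\times 1$ rectangle in the horizontal plane with long axis $e(R)$. The Convex Wolff Axioms with error $\delta^{-\eta}$, applied to vertical slabs $\RR^2\times(\text{interval of length }w)$, say that the vertical projections of the prisms do not concentrate: the number of $R$ with $H(R)$ inside a fixed vertical interval of length $w\geq s$ is $\lesssim\delta^{-\eta}(w/1)\#\mathcal{R}$ — wait, more precisely $\lesssim\delta^{-\eta}w\,\#\mathcal{R}$ since the slab has volume $\sim w$. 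Now run a sub-dichotomy: either (i) the horizontal rectangles $I(R)\times J(R)$, after refining to a $\delta^\eta$-dense sub-collection, cluster into a single $s'/s$-fold thickening — i.e.~there is a horizontal box of dimensions $t'\times 1$ (with $t'\gg t$) containing a $\delta^\eta$-fraction of them, and simultaneously the vertical intervals $H(R)$ cluster into a single interval of length $s'\gg s$ — in which case these clustered prisms all sit inside one $s'\times t'\times 1$ prism $R'$, and we collect such prisms into $\mathcal{R}'$; or (ii) no such clustering happens, meaning the prisms are ``spread out'' both transversally (at scale $\sim s$, filling a vertical interval of length $\gg s$ with multiplicity $O(\delta^{-\eta})$ per $s$-slab) and horizontally (the long axes $e(R)$ vary, so the $t$-rectangles are not all contained in one slightly fatter rectangle). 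In case (ii) I would argue the union $\bigcup Y(R)$ fills an $s$-neighborhood of a genuinely two-dimensional set inside a ball of radius $r\gg s$: transversal spreading contributes one ``extra'' dimension beyond the plane (the $\cup_R N_s(\Pi(R))$ fills a slab of thickness $\gg s$), and together with the $1$-dimensional structure inside each near-plane and a Cauchy–Schwarz/$L^2$ count of overlaps exactly as in Lemma \ref{thickenedPlanesAssouad}, one gets $|B\cap\bigcup Y(R)|\geq (s/r)^\eps|B|$ for a suitable $r$, which is Conclusion (A).

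The bookkeeping for Conclusion (B) is the last piece. In the clustering case, set $s'$ to be the scale at which the vertical intervals $H(R)$ first cluster — by a standard dyadic-pigeonhole over scales $s\leq w\leq 1$ one can arrange $s'\in[\delta^{-\eps^2/2}s,\delta^{\eps/2}]$ (the lower bound $\delta^{-\eps^2/2}s$ because if no clustering occurred before scale $\delta^{-\eps^2/2}s$ we fall into case (ii) and get (A), using $\eps^2/2\ll\eps$ to keep the number of intermediate scales $O(\eps^{-2}\log(1/\delta)/\log(1/\delta))=O(\eps^{-2})$ absorbable; the upper bound $\delta^{\eps/2}$ because clustering cannot persist all the way to scale $1$ without violating $\#\mathcal{R}\geq$ its Convex-Wolff lower bound). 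Then $t'=\min(1,t s'/s)$ is forced by the aspect-ratio: the long-axis directions $e(R)$ of the clustered prisms lie in an arc of length $\sim s'/s$ on the circle of horizontal directions (otherwise the near-planes would not cluster into one $s'$-slab), so the $t$-rectangles fit into a $t'\times 1$ rectangle. Define $\mathcal{R}'$ as the resulting family of $s'\times t'\times 1$ prisms $R'$, with $Y'(R')$ as in \eqref{YPrimeRPrimeShading}; the density of $Y'$ is $\gtrsim\delta^\eta$ times the density of $Y$ because $N_{s'}$ only inflates volume favorably and the pigeonholing losses are $\delta^{O(\eta)}$, so choosing $\eta\ll\tau$ makes $Y'$ genuinely $\delta^\tau$-dense; and $\mathcal{R}'$ inherits the Convex Wolff Axioms with error $\delta^{-\tau}$ from $\mathcal{R}$ via Remark \ref{convexSupSet} (passing to a family of convex supersets) together with the uniformity forced by the partitioning into maximal clusters (Remark \ref{consequenceOFUnifCover}), again at the cost of a $\delta^{O(\eta)}$ factor absorbed into $\delta^{-\tau}$.

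\textbf{The main obstacle.} The delicate point is the second dichotomy in case (ii): separating ``the prisms are spread out'' from ``the prisms cluster'' and extracting from spreading a quantitative lower bound on the volume inside a ball — in other words, proving that transversal non-concentration (a consequence of the Convex Wolff Axioms applied to vertical slabs) plus variation of the long axes genuinely produces an extra half-dimension of volume, with the overlap estimate controlled by Cauchy–Schwarz as in Lemma \ref{thickenedPlanesAssouad}. One has to be careful that ``spread out transversally'' is the right negation of ``clusters into an $s'$-slab'' simultaneously with the horizontal statement, and that the scale $s'$ produced this way sits in the required window $[\delta^{-\eps^2/2}s,\delta^{\eps/2}]$; keeping the various $\eta$'s, $\tau$'s, $\eps$'s, and $\eps^2$'s consistent (so that all pigeonholing and rescaling losses are absorbable) is where most of the real work lies.
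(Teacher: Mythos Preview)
Your outline has the right flavor --- a ``planes spread $\Rightarrow$ (A), planes nearly parallel $\Rightarrow$ look at long axes'' dichotomy --- but the mechanism you describe is not the one that actually works, and several of the specific claims are incorrect.

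First, your opening reduction does not do what you say. Pigeonholing the normals into a single $\delta^\eta$-cap is a \emph{global} refinement of $\mathcal{R}$; it does not tell you that two intersecting prisms ever had normals at angle $\gtrsim\delta^\eta$, and even if they did, ``two prisms at angle $\delta^\eta$ fill a ball of radius $\delta^{-\eta}s$'' is simply false: the union of two $s\times t\times 1$ boxes has volume $\lesssim st$, far from a ball. To force (A) from plane-spread you need \emph{many} prisms with varying normals passing through a \emph{common small region}, and you need to know this happens on a set of large measure. The paper secures this by a multiplicity step you have omitted entirely: pigeonhole so that $\sum\chi_{Y(R)}\sim\mu$ on the shaded set; if $\mu\lesssim\delta^{-\eps^2}st(\#\mathcal{R})$ then already $|\bigcup Y(R)|\gtrsim\delta^{\eps^2}$ and (A) holds trivially; otherwise $\mu$ is large, so through a typical point there are $\gtrsim\delta^{-\eps^2}st(\#\mathcal{R})$ prisms, and the Convex Wolff Axioms then force either their \emph{pointwise} plane-spread or their \emph{pointwise} line-spread to be large. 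Your dichotomy has no analogue of this step, so neither branch has the input it needs.

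Second, your description of how (B) arises is not right. You present $s'$ as ``the scale at which the vertical intervals $H(R)$ first cluster'' and $t'$ as dictated by the arc of long-axis directions; but the correct source of $s'$ is \emph{angular}: when plane-spread is small (so all $\Pi(R)$ are $s_1/t$-close, $s_1\approx\delta^{-O(\eta/\eps)}s$), the large multiplicity forces the line-spread at a typical point to be some $\theta\gtrsim\delta^{-\eps^2/2}t$ (otherwise all $\mu$ prisms through $x$ lie in one $s_1\times\delta^{-\eps^2/2}t\times 1$ box, violating the Convex Wolff bound). One then sets $s'=s_1\theta/t$, $t'=\min(1,\theta\cdot s_1/s)$, and for each well-shaded $R_0$ the prism $\hat R_0$ of dimensions $s'\times t'\times 1$ concentric with $R_0$ is shown to have dense shading $Y'$ via a C\'ordoba-type $L^2$ overlap bound on the prisms through $R_0$ at angle $\sim\theta$. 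The collection $\mathcal{R}'$ is $\{\hat R_0'\}$, and its Convex Wolff property follows from Remark~\ref{convexSupSet} as you say. Your ``vertical clustering'' picture would not produce the dense shading \eqref{YPrimeRPrimeShading}, because it gives no reason for $N_{s'}(\bigcup Y(R))$ to fill $R'$.

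In short: the missing ingredient is the pointwise multiplicity $\mu$, and the consequent \emph{pointwise} plane-spread / line-spread dichotomy. Without it, neither the ``(A) from spread'' branch nor the construction of $\mathcal{R}'$ in (B) can be carried out.
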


\begin{rem}
Note that as $\eps\searrow 0$, Conclusion (A) becomes stronger, while conclusion (B) becomes weaker (indeed, if $\eps=0$ then Conclusion (B) vacuously holds with $s'=s$ and $t'=t$). As $\tau\searrow 0$, Conclusion (A) is unchanged, while Conclusion (B) becomes stronger. 
\end{rem}

\begin{proof}$\phantom{1}$\\
{\bf Step 1: Robust broadness.} 
After replacing $\mathcal{R}$ by a subset of cardinality $\gtrsim\delta^{\eta}(\#\mathcal{R})$ if necessary, we may suppose that $|Y(R)|\geq \frac{1}{2}\delta^{\eta}|R|$ for each $R\in\mathcal{R}$. Let $Y_1(R)\subset Y(R)$ be a regular (in the sense of Definition \ref{regularShading}) refinement of $Y$, with $|Y_1(R)|\geq\frac{1}{2}|Y(R)|$ for each $R\in\mathcal{R}$. 


Next, we perform a ``two-ends'' style robust broadness reduction. Let $\eps_1>0$ be a small quantity to be chosen below. For each $x\in\bigcup_{R\in\mathcal{R}}Y_1(R)$, let $u_x\in [t, 1]$ and let $T_{u_x}$ be a $u_x$-tube that comes within a multiplicative factor of 2 of maximizing the quantity
\[
u_x^{-\eps_1}\#\{R\in\mathcal{R}\colon x\in Y_1(R),\ R\subset T_{u_x}\}.
\]
Since there exists a 1-tube that contains a $\gtrsim 1$ fraction of $\mathcal{R}$, and since $u_x\geq\delta$, we have
\[
\#\{R\in\mathcal{R}\colon x\in Y_1(R),\ R\subset T_{u_x}\}\gtrsim \delta^{\eps_1}\#\{R\in\mathcal{R}\colon x\in Y_1(R)\}.
\]

Define the shading
\[
Y_2(R)=\{x\in Y_1(R)\colon R\subset T_{u_x}\}.
\]
Then $\sum_{R\in\mathcal{R}}|Y_2(R)|\gtrsim \delta^{\eps_1}\sum_{R\in\mathcal{R}}|Y_1(R)|$. For each $x\in\bigcup_{R\in\mathcal{R}}Y_2(R)$, each $t\leq r\leq u_x$, and each $r$-tube $T_r$, we have
\begin{equation}\label{notTooMuchConcentrationInAnRTube}
\#\{R\in\mathcal{R}\colon x\in Y_2(R),\ R\subset T_r\}\leq 2(r/u_x)^{\eps_1}\#\{R\in\mathcal{R}\colon x\in Y_2(R)\}.
\end{equation}

By dyadic pigeonholing, we can select a multiplicity $\mu$ and a number $t\leq u\leq 1$ so that if we define
\[
Y_3(R) = Y_2(R) \cap \Big\{x \in\RR^3\colon \sum_{R\in\mathcal{R}}\chi_{Y_2(R)}(x)\in[\mu,2\mu),\ u_x \in [u/2, u)\Big\},
\]
then provided we choose $\delta_0$ sufficiently small and $\eta<\eps_1/2$, we have $\sum_{R\in\mathcal{R}}|Y_3(R)|\geq \delta^{2\eps_1}\sum_{R\in\mathcal{R}}|R|$.

We first consider the case where $\mu\leq\delta^{-\eps^2+2\eps_1}(st\#\mathcal{R})$. Then 
\[
\Big|\bigcup_{R\in\mathcal{R}}Y(R)\Big|\geq \mu^{-1}\sum_{R\in\mathcal{R}}|Y_3(R)|\geq \delta^{\eps^2-2\eps_1}\delta^{2\eps_1} \frac{\sum_{R\in\mathcal{R}}|R|}{st(\#\mathcal{R})}\geq\delta^{\eps^2}\geq s^{\eps},
\]
where the final inequality used the hypothesis that $s\leq\delta^{\eps}t$ and hence $s\leq\delta^\eps$. Thus Conclusion (A) holds with $r=1$ and $B$ the unit ball, and we are done.

Our choice of $\eps_1$ will satisfy $\eps_1\leq\eps^3/6$, and thus for the remainder of the proof we may suppose that
\begin{equation}\label{muFairlyLarge}
\mu>\delta^{-(2/3)\eps^2}(st\#\mathcal{R}).
\end{equation}

\medskip
\noindent {\bf Step 2: Localization.} 
Let $\tubes_u$ be a set of essentially distinct $u$ tubes with the property that for every $u$-tube $T_u$, there is at least one, and at most $O(1),$ tubes from $\tubes_u$ whose 2-fold dilate contains $T_u$. 
For each $x\in\bigcup_{R\in\mathcal{R}}Y_3(R)$, select a tube $T_u(x)\in\tubes_u$ with the property that $T_{u_x}\subset 2T_u$. Define the shading $Y(T_u) = \{x\in\RR^3\colon T_u(x) = T_u\}$. We have $\bigcup_{R\in\mathcal{R}}Y_3(R)=\bigsqcup_{T_u\in\tubes_u}Y(T_u)$. Note that technically, $Y(T_u)$ is a subset of $2T_u$, rather than a subset of $T_u$. Observe that if $R\in\mathcal{R}$ with $x\in Y_3(R)$, then $R\subset T_{u_x}\subset 2T_u(x)$.

Since the tubes in $\tubes_u$ are essentially distinct, for each $T_u\in\tubes_u$ there are $O(1)$ tubes $T_u'\in \tubes_u$ with the property that $2T_u\cap 2T_u'$ contains a unit line segment --- indeed, if $2T_u\cap 2T_u'$ contains a unit line segment, then $T_u$ and $T_u'$ must make angle $\lesssim u$. Thus by pigeonholing, we can replace $\tubes_u$ by a $O(1)$ refinement $\tubes_u'$ with the following properties:
\begin{itemize}
\item For each pair of distinct tubes $T_u,T_u'\in\tubes_u'$, $2T_u\cap 2T_u'$ does not contain a unit line segment. In particular, each $R\in\mathcal{R}$ is contained in at most one dilated tube $2T_u$.

\item
\[
\Big|\bigsqcup_{T_u\in\tubes_u'}Y(T_u)\Big|\gtrsim \Big|\bigsqcup_{T_u\in\tubes_u}Y(T_u)\Big| = \Big|\bigcup_{R\in\mathcal{R}}Y_3(R)\Big|.
\]
\end{itemize}

For each $R\in\mathcal{R}$, define $Y_4(R) = Y_3(R) \cap Y(T_u)$, where $T_u$ is the unique tube from $\tubes_u'$ for which $R\subset 2T_u$. If no such tube exists, then define $Y_4(R) = \emptyset$. We have
\[
\sum_{R\in\mathcal{R}}|Y_4(R)|\geq \mu \sum_{T_u\in\tubes_u'}|Y(T_u)|\gtrsim  \mu \sum_{T_u\in\tubes_u}|Y(T_u)|\geq\frac{1}{2}\sum_{R\in\mathcal{R}}|Y_3(R)|\geq \delta^{2\eps_1}\sum_{R\in\mathcal{R}}|R|.
\]

Finally, let $\tubes_u''\subset\tubes_u'$ consist of those $T_u\in\tubes_u'$ for which
\begin{equation}\label{keepcDelta2eps1Fraction}
\sum_{R\in\mathcal{R}[2T_u]}|Y_4(R)| \geq c\delta^{2\eps_1} \sum_{R\in\mathcal{R}[2T_u]}|R|.
\end{equation}
We will choose the constant $c\sim 1$ sufficiently small so that 
\[
\sum_{T_u\in\tubes_u''}\sum_{R\in\mathcal{R}[2T_u]}|Y_4(R)|\gtrsim \sum_{T_u\in\tubes_u'}\sum_{R\in\mathcal{R}[2T_u]}|Y_4(R)|.
\]

In summary, the situation is as follows: For each $T_u\in\tubes_u''$, we have 
\begin{equation}\label{denseInsideR}
\sum_{R\in\mathcal{R}[2T_u]}|Y_4(R)|\gtrsim\delta^{2\eps_1}\sum_{R\in\mathcal{R}[2T_u]}|R|, 
\end{equation}
and for each $x\in\bigcup_{R\in\mathcal{R}[2T_u]}Y_4(R),$ we have
\begin{equation}\label{allPrismsKeptInsideR}
\{R\in\mathcal{R}[2T_u]\colon x\in Y_4(R)\}=\{R\in\mathcal{R}\colon x\in Y_3(R)\}=\{R\in\mathcal{R}\colon x\in Y_2(R)\}.
\end{equation}
The cardinality of the above set is between $\mu$ and $2\mu$.

For each $T_u\in\tubes_u''$ and each $R\in\mathcal{R}[2T_u]$, the unit rescaling of $R$ relative to $2T_u$ is comparable to a $s/u\times t/u\times 1$ prism. Define $\tilde s = s/u$ and $\tilde t = t/u$. Abusing notation slightly, we will call this set a $\tilde s\times\tilde t\times 1$ prism, and we will denote the (multi) set of these prisms by $\tilde{\mathcal{R}}^{T_u}$. Let $\tilde Y_1$ (resp.~$\tilde Y_4$) be the images of the shadings $Y_1$ (resp.~$Y_4$) under the unit rescaling described above. Thus for each $T_u\in\tubes_u$, we have that $(\tilde{\mathcal{R}}^{T_u},\tilde Y_1)$ is a set of $\tilde s\times\tilde t\times 1$ prisms and a shading on these prisms, and similarly for $(\tilde{\mathcal{R}}^{T_u},\tilde Y_4)$. By \eqref{allPrismsKeptInsideR} and our definition of $Y_3$, for each $T_u\in\tubes_u$ and each $x\in\bigcup_{\tilde R\in\tilde{\mathcal{R}}}\tilde Y_4(\tilde R)$, we have
\begin{equation}\label{rescaledMultiplicityBd}
\mu\leq \#\{\tilde R\in\tilde{\mathcal{R}}^{T_u}\colon x\in \tilde Y_4(\tilde R)\}<2\mu.
\end{equation}

For each $\tilde R\in\tilde{\mathcal{R}}^{T_u}$, let $v(\tilde R)$ be the direction of a tube of minimal diameter that contains $\tilde R$. Thus $v(R)$ is the ``direction'' of the tube $\tilde R$. We have that $v(R)$ is meaningfully defined up to accuracy $\tilde t$, in the sense that if $\tilde R$ and $\tilde R'$ are comparable, then $\angle(v(\tilde R),\ v(\tilde R')) = O(\tilde t)$. 

As a consequence of \eqref{notTooMuchConcentrationInAnRTube}, we have that for each $x\in\bigcup_{\tilde R\in\tilde{\mathcal{R}}^{T_u}}\tilde Y_4(\tilde R)$, each $\tilde t \leq r\leq 1$, and each unit vector $v$, we have
\begin{equation}\label{robustBroadnessRescaled}
\#\{\tilde R\in\tilde{\mathcal{R}}^{T_u}\colon x\in \tilde Y_4(\tilde R),\ \angle(v(\tilde R),\ v)\leq r\}\lesssim r^{\eps_1}\mu.
\end{equation}

\medskip

\noindent {\bf Step 3: Dealing with transverse intersection.}  Fix a $u$-tube $T_u\in\tubes_u''$, and define $\tilde{\mathcal{R}}=\tilde{\mathcal{R}}^{T_u}$. Each $\tilde R\in\tilde{\mathcal{R}}$ is comparable to a rectangular prism of dimensions $\tilde s\times\tilde t\times 1$, i.e.~$\tilde R$ is comparable to the intersection of the following three sets: the $\tilde s$ neighbourhood of a plane $\Pi(\tilde R)$; the $\tilde t$ neighbourhood of a tube $T(\tilde R)$; and the unit ball. Let $n(\tilde R)$ be the unit normal vector of $\Pi(\tilde R)$; this is meaningfully defined up to accuracy $\tilde s/ \tilde t=s/t$, in the sense that if $\tilde R$ and $\tilde R'$ are comparable, then $\angle(n(\tilde R),\ n(\tilde R')) = O(\tilde s/\tilde t)$ (here $\angle(\cdot,\cdot)$ refers to unsigned angle, since we do not distinguish between the vectors $n(\tilde R)$ and $-n(\tilde R)$).

For each $x\in\bigcup_{\tilde R\in\tilde{\mathcal{R}}}\tilde Y_4(R)$, define the ``plane spread''
\[
\operatorname{P-spread}(x) = \max\Big\{\angle\big(n(\tilde R), n(\tilde R')\big)\Big\},
\]
where the maximum is taken over all pairs $\tilde R,\tilde R'$ for which $x\in \tilde Y_4(R)$ and $x\in \tilde Y_4(R)$. Since $n(\tilde R)$ and $n(\tilde R')$ are defined up to uncertainty $O(\tilde s/\tilde t)=O(s/t)$, $\operatorname{P-spread}(x)$ is meaningfully defined if it is substantially larger than $s/t$. If $x\not\in \bigcup_{\tilde R\in\tilde{\mathcal{R}}}\tilde Y_4(\tilde R)$, then define $\operatorname{P-spread}(x) =0$.

Suppose there exists a prism $\tilde R_0\in\tilde{\mathcal{R}}$ and a number $\theta\geq \delta^{-7\eps_1/\eps}(\tilde s /\tilde t\phantom{.})$ so that
\begin{equation}\label{largeSetLargeSpread}
\big|\{x \in \tilde R_0 \colon \theta\leq \operatorname{P-spread}(x)<2\theta\}\big|\geq \frac{c}{100}|\log\delta|^{-2}\delta^{2\eps_1}|\tilde R_0|,
\end{equation}
where $c\sim 1$ is the constant from \eqref{keepcDelta2eps1Fraction}.

Since $\tilde R_0$ is a $\tilde s\times \tilde t\times 1$ prism, for each $x_0\in \tilde R_0$ we have $|\tilde R_0 \cap B(x_0, \tilde t)|\sim \tilde s\tilde t\phantom{.}^2$. By pigeonholing, we can select a point $x_0\in \tilde R_0$ so that
\begin{equation}\label{largeSetLargeSpreadBall}
\big|\{x \in \tilde R_0 \cap B(x_0, \tilde t) \colon  \theta\leq \operatorname{P-spread}(x)<2\theta\}\big|\gtrapprox \delta^{2\eps_1}\tilde s \tilde t\phantom{.}^2.
\end{equation}

After a harmless translation and rotation, we may suppose that  $\Pi(\tilde R_0)$ is the $yz$-plane, i.e.~$\operatorname{span}\{e_2,e_3\}$, and that $\tilde R_0\cap B(x_0,\tilde t)$ is contained in the ``square'' $S=[-\tilde s, \tilde s]\times [-\tilde t, \tilde t\phantom{.}]\times[-\tilde t, \tilde t\phantom{.}]$ (note as well that $|S|\sim |\tilde R_0\cap B(x_0,\tilde t)|$). Let $S^{\dag}$ denote the thickened square $[-10\tilde t\theta, 10\tilde t\theta]\times [-10\tilde t,10\tilde t\phantom{.}]\times[-10\tilde t,10\tilde t\phantom{.}]$, i.e.~$S^{\dag}$ is comparable to the $\tilde t\theta$ neighborhood of $S$.

Denote the set on the LHS of \eqref{largeSetLargeSpreadBall} by $E$; then for each $x\in E$ there is a prism $\tilde R_x\in\mathcal{R}$ with $x\in \tilde Y_4(\tilde R)$ and $\angle(n(\tilde R_x), e_1)\in [\theta, 2\theta)$. This implies that $\tilde R_x \cap B(x,\tilde t) \subset \tilde R_x \cap B(x_0,10\tilde t)\subset S^{\dag}$. Recall that the shading $\tilde Y_1(\tilde R_x)$ is regular, and thus we have
\begin{equation}\label{lowerBdY2RxCap}
|\tilde Y_1(\tilde R_x)\cap B(x_0,2\tilde t)|\gtrapprox |\tilde Y_1(\tilde R_x)|\frac{|\tilde R_x\cap B(x_0,2\tilde t)|}{|\tilde R_x|}\gtrsim \delta^{2\eps_1}\tilde s\tilde t\phantom{.}^2.
\end{equation}

Define the linear map $\tilde A(x,y,z)=\Big( \frac{ x}{10\tilde t\theta},\ \frac{y}{10\tilde t},\ \frac{z}{10\tilde t}\Big)$, so $A(S^{\dag})=[-1,1]^3$. Define 
\[
\hat S=A(S)=[-\frac{\tilde s}{10\tilde t\theta},\frac{\tilde s}{10\tilde t\theta}]\times[-1/10,1/10]\times[-1/10,1/10],
\] 
and let $\hat E = A(E)\subset\hat S$. For each $x\in E$, let $\hat R_x = A(\tilde R_x\cap S^\dag)$; this is comparable to a rectangular prism of dimensions roughly $\frac{\tilde s}{\tilde t\theta}\times 1\times 1$, and this prism makes angle $\sim 1$ with the $yz$-plane, i.e.~there is a constant $c_0\sim 1$ (independent of the choice of $x\in E$) so that the prism has a normal direction $ n(\hat R_x)$ (defined up to uncertainty $O(\frac{\tilde s}{\tilde t\theta})$) with $\angle(n(\hat R_x), e_1)\geq c_0$. By \eqref{lowerBdY2RxCap} we have
\begin{equation}\label{lowerBdY2RxHat}
|\hat Y_2(\hat R_x)|\gtrapprox \delta^{2\eps_1}\frac{\tilde s}{\tilde t\theta}.
\end{equation}

By pigeonholing, we can find a unit vector $e\in\RR^3$ and a set $\hat E'\subset\hat E$ with $|\hat E'|\sim |\hat E|$, so that $\angle\big(e,  n(\hat R_x)\big)\leq c_0/2$ for each $x\in \hat E'$. In particular this means that $\angle(e,e_1)\geq c_0/2\gtrsim  1$,  and hence the projection of $e$ to the $yz$-plane has magnitude $\sim 1$; denote the image of $e$ under this projection by $e'$. 

\eqref{largeSetLargeSpreadBall} says that $|E|\gtrapprox \delta^{2\eps_1}\tilde s\tilde t\phantom{.}^2$ and hence $|\hat E'|\gtrsim |\hat E|\gtrapprox \delta^{\eta}\frac{s}{t\theta}$. On the other hand, $\hat E'\subset \hat S$, and the latter set has volume $\sim \frac{\tilde s}{\tilde t\theta}$. Since $\hat S=[-\frac{\tilde s}{10\tilde t\theta},\frac{\tilde s}{10\tilde t\theta}]\times[-1/10,1/10]\times[-1/10,1/10]$ and $e'$ is contained in the $yz$ plane, by Fubini we can select a line $\ell$ pointing in direction $e'$ that satisfies $|\ell\cap \hat E'|\gtrapprox \delta^{2\eps_1}$. 

Select a $\frac{\tilde s}{\tilde t\theta}$-separated set of points $x_1,\ldots,x_N$ (here $N\gtrapprox\delta^{2\eps_1}\frac{\tilde t\theta}{\tilde s}$)  from $|\ell\cap \hat E'|$. We will index these points so that $|x_i-x_j|\geq \frac{\tilde s}{\tilde t\theta}|i-j|$. Note that $\hat R_{x_i}\cap\ell$ is a line segment of length between $\frac{\tilde s}{\tilde t\theta}$ and $10\frac{\tilde s}{\tilde t\theta}$, and hence if $i,j$ is a pair of indices with $|x_i-x_j|\geq 100\frac{\tilde s}{\tilde t\theta}$, then $\operatorname{dist}(\hat R_{x_i}\cap\ell, \hat R_{x_j}\cap\ell)\gtrsim \frac{\tilde s}{\tilde t\theta}|i-j|$. Since the line $\ell$ points in direction $e'$, and $|e'\cdot  n(\hat R_{x_j})|\sim 1$, we conclude that $\operatorname{dist}(x_i, \hat R_{x_j})\gtrsim \frac{\tilde s}{\tilde t\theta}|i-j|$, so in particular $\hat R_{x_i}$ contains a point that has distance $\gtrsim \frac{\tilde s}{\tilde t\theta}|i-j|$ from $\hat R_{x_j}$. Thus either $\hat R_{x_i}$ and $\hat R_{x_j}$ do not intersect, or if they do intersect then their normal vectors satisfy $\angle\big( n(\hat R_{x_i}),\ n(\hat R_{x_j})\big)\gtrsim \frac{\tilde s}{\tilde t\theta}|i-j|.$ In either case, we have
\begin{equation}\label{rectangleIntersectionBdSize}
|\hat R_{x_i}\cap \hat R_{x_j}|\lesssim \frac{(\frac{\tilde s}{\tilde t\theta})^2}{\frac{\tilde s}{\tilde t\theta}(1+|i-j|)}=\frac{\tilde s}{\tilde t\theta(1+|i-j|)}.
\end{equation}
The above inequality vacuously holds if $i,j$ are a pair of indices with $|x_i-x_j|< 100\frac{\tilde s}{\tilde t\theta}$, since in this case $|i-j|\sim 1$ and we can use the trivial bound $|\hat R_{x_i}\cap \hat R_{x_j}|\leq |\hat R_{x_i}|$.\\

We conclude that
\begin{equation}\label{L2Bd}
\Big\Vert \sum_{i=1}^N\chi_{\hat Y_2(\hat R_i)}\Big\Vert_2^2 \leq \sum_{i,j=1}^N|\hat R_i\cap \hat R_j| \lesssim  N \frac{\tilde s}{\tilde t\theta} \big|\log\big(\frac{\tilde s}{\tilde t\theta}\big)\big|,
\end{equation}
and hence
\[
\Big|\bigcup_{i=1}^N \hat Y_2(\hat R_i)\Big| 
\geq \frac{\Big(\sum_{i}|\hat Y_2(\hat R_i)|\Big)^2}{\Big\Vert \sum_{i}\chi_{\hat Y_2(\hat R_i)}\Big\Vert_2^2}
\gtrapprox \frac{\Big(N \delta^{2\eps_1}\frac{\tilde s}{\tilde t\theta}\Big)^2}{N \frac{\tilde s}{\tilde t\theta} \big|\log\big(\frac{\tilde s}{\tilde t\theta}\big)\big|}
\gtrapprox \delta^{4\eps_1}N\frac{\tilde s}{\tilde t\theta}\gtrapprox\delta^{6\eps_1},
\]
where the second inequality used \eqref{lowerBdY2RxHat} and \eqref{L2Bd}. Undoing the scaling transformation $A$, we conclude that
\[
\Big| S^\dag\cap\bigcup_{\tilde R\in\tilde{\mathcal{R}}}\tilde Y_1(\tilde R)\Big|\gtrapprox \delta^{6\eps_1} |S^\dag|.
\]
Recall that $ S^\dag$ is a rectangular prism of dimensions roughly $\tilde t\theta \times \tilde t\times \tilde t$. 

Next, undo the unit rescaling with respect to $2T_u$; recall that this unit rescaling anisotropically dilated two directions by a multiplicative factor of roughly $u^{-1}$, and left the third direction unchanged. Thus the preimage of $S^\dag$ under this rescaling is comparable to a rectangular prisms whose smallest dimension is at least $\gtrsim \tilde t\theta u=t\theta\geq \delta^{-7\eps_1/\eps}s$.

By pigeonholing, we can find a ball $ B$ of radius $r\gtrsim \delta^{-7\eps_1/\eps}s$ contained in the preimage of $S^\dag$, so that
\begin{equation}\label{BBallDensityBd}
\Big| B\cap\bigcup_{R\in\mathcal{R}}Y(R)\Big| \gtrapprox \delta^{6\eps_1}|B|\geq \delta^{-\eps_1}(s/r)^\eps|B|.
\end{equation}
If $\delta_0$ is selected sufficiently small, then the LHS of \eqref{BBallDensityBd} is at least $(s/r)^\eps|B|$, and thus Conclusion (A) holds.

\medskip

\noindent {\bf Step 4: Ensuring that $u$ is large.}
At this point we may assume that for every $T_u\in\tubes_u''$, every prism $\tilde R_0\in\tilde{\mathcal{R}}^{T_u}$, and every $\theta\geq \delta^{-7\eps_1/\eps}(\tilde s/\tilde t)$, we have that \eqref{largeSetLargeSpread} fails. 

For each $T_u\in\tubes_u''$ and each $\tilde R\in\tilde{\mathcal{R}}^{T_u}$, define  
\[
\tilde Y_5(\tilde R)=\{x\in \tilde Y_4(R)\colon \operatorname{P-spread}(x)\leq \delta^{-7\eps_1/\eps}(\tilde s/\tilde t).
\]
For each $R\in\bigcup_{T_u\in\tubes_u''}\mathcal{R}[2T_u]$, define $Y_5(R)\subset Y_4(R)$ to be the preimage of $\tilde Y_5(\tilde R)$ under the unit rescaling associated to $2T_u$. Define $Y_5(R) = \emptyset$ if $R\not\in \bigcup_{T_u\in\tubes_u''}\mathcal{R}[2T_u]$. We have that 
\begin{itemize}
	\item For each $T_u\in\tubes_u,$ 

	\begin{equation}\label{densityOfY5}
	\sum_{R\in\mathcal{R}[2T_u]}|Y_5(R)|\gtrsim\sum_{R\in\mathcal{R}[2T_u]}|Y_4(R)|\gtrsim \delta^{2\eps_1}\sum_{R\in\mathcal{R}[2T_u]}|R|.
	\end{equation}
	\item $\sum_{R\in\mathcal{R}}|Y_5(R)|\gtrsim\sum_{R\in\mathcal{R}}|Y_4(R)|\gtrsim \delta^{2\eps_1}\sum_{R\in\mathcal{R}}|R|.$
	\item For each $T_u\in\tubes_u''$ and each $x\in\bigcup_{R\in\mathcal{R}[2T_u]}Y_5(R)$, we have
		\begin{equation}\label{numberOfPrismsThruPtY5}
		\mu\leq \#\{R\in\mathcal{R}[2T_u]\colon x\in Y_5(R)\}<2\mu.
		\end{equation}
\end{itemize}
We claim that 
\begin{equation}\label{uFairlyLarge}
u \geq \delta^{-\eps^2/4}t.
\end{equation}
To verify this claim, fix a choice of $T_u\in\tubes_u''$ and a choice of $x\in \bigcup_{R\in\mathcal{R}[2T_u]}Y_5(R)$. The prisms in the set
\eqref{numberOfPrismsThruPtY5} are contained in a prism of volume $\lesssim \delta^{-7\eps_1/\eps} s u^2/t$; this is because the unit rescaling of these prisms are contained in a rectangular prism of dimensions comparable to $\delta^{-7\eps_1/\eps} (\tilde s/\tilde t)\times 1\times 1$. This latter set has volume roughly $\delta^{-7\eps_1/\eps} s/t$, and undoing the unit rescaling distorts volume by a factor of $u^2$.

On the other hand by \eqref{uFairlyLarge} and \eqref{muFairlyLarge}, at least $\mu\geq \delta^{-(2/3)\eps^2}(st\#\mathcal{R})$ prisms from $\mathcal{R}$ must be contained in this set. Since $\mathcal{R}$ satisfies the Convex Wolff Axioms with error $\delta^{-\eta}$, we conclude that 
\[
\delta^{-(2/3)\eps^2}(st\#\mathcal{R}) \leq \delta^{-\eta}(\delta^{-7\eps_1/\eps}s u^2/t) (\#\mathcal{R}).
\]
re-arranging, we obtain \eqref{uFairlyLarge} provided we select $\eta$ and $\eps_1$ sufficiently small.

\medskip

\noindent {\bf Step 5: Constructing larger prisms.} 
Define $s_1 = \delta^{-7\eps_1/\eps}s$. Since $s\leq\delta^\eps t$, if we select $\eps_1$ sufficiently small (depending on $\eps$), then $ s_1\leq  t$.
For each prism $R\in \mathcal{R}_0$, let $\hat R$ be the prism of dimensions $ \frac{s_1u}{t}  \times u \times 1$ obtained by anisotropically dilating $R$. We claim that if $R_0,R\in\mathcal{R}$ are prisms and if $x\in Y_5(R_0)\cap Y_4(R)$, then $R\subset 2\hat R_0$. We verify this claim as follows: we have that $R_0$ and $R$ are contained in the 2-fold dilate of a common $u$-tube $T_u$, and furthermore $\angle(n(\tilde R_0), n(\tilde R))\leq s_1/t$ (here $\tilde R_0$ and $\tilde R$ are the unit rescalings of $R_0$ and $R$ with respect to $2T_u$). This means that $\tilde R$ is contained in the $s_1/t$ neighbourhood of the plane $\Pi(\tilde R_0)$; but after undoing the unit rescaling with respect to $2T_u$, the preimage of $N_{s_1/t}(B(0,1)\cap \Pi(\tilde R_0))$ is comparable to $\hat R$. 

In fact a superficially stronger statement is true --- if we define $w=\max\big(\frac{s_1 u}{t},t\big)$, then $N_{s_1 u/t}(R)$ is comparable to a prism of dimensions $\frac{s_1 u}{t}\times w\times 1$, and this prism is contained in $2\hat R_0$. Furthermore, $N_{s_1 u/t}(R)\cap R_0$ is comparable to a rectangular prism of dimensions $s\times t\times w/u$.

Let $\mathcal{R}_0\subset\mathcal{R}$ consist of those rectangular prisms for which $|Y_5(R)|\geq c_1 \delta^{2\eps_1}|R|$. We will choose $c_1\sim 1$ sufficiently small so that $\sum_{R\in\mathcal{R}_0}|Y_5(R)|\gtrsim \sum_{R\in\mathcal{R}}|Y_5(R)|$. Let $R_0\in\mathcal{R}_0$. Recall that $R_0$ is a $s\times t\times 1$ rectangular prism; for notational simplicity we will suppose $R_0 = [-s/2,s/2]\times[-t/2,t/2]\times [-1/2, 1/2]$, and that the intersection of $Y_5(R_0)$ with the $z$-axis has measure $\gtrapprox\delta^{\eta}$. Then $2\hat R_0= [-\frac{s_1u}{t}, \frac{s_1u}{t}]\times [-u,u]\times[-1,1]$.

Select $\frac{w}{u}$-separated points $z_1,\ldots,z_N\in Y_5(R_0)$ on the $z$-axis, with $N\gtrsim \delta^{2\eps_1}\frac{u}{w}$. We will label the points so that $|z_i-z_j|\geq \frac{w}{u}|i-j|$. For each index $i$, select a prism $R_i\in\mathcal{R}$ with $z_i\in Y_4(R_i)$ and $\angle((0,0,1),\ell(R))\in[c_2u, u)$. If we select $c_2\sim 1$ sufficiently small, then such a prism $R_i$ always exists by \eqref{rescaledMultiplicityBd} and \eqref{robustBroadnessRescaled}.

For each index $i$, let $R_i^\dag$ be the $\frac{s_1 u}{t}$ neighborhood of $R_i$, and let $Y_1^\dag( R_i^\dag) = N_{\frac{s_1u}{t}}(Y_1(R_i))$. Since $|Y_1(R_i)|\geq \frac{1}{2}\delta^{\eta}|R_i|$, we have 
\begin{equation}\label{tildeY3Full}
| Y_1^\dag(R_i^\dag)|\gtrsim \delta^{\eta}| R_i^\dag|.
\end{equation}

$R_i^\dag$ is comparable to a prism of dimensions $\frac{s_1 u}{t}\times w\times 1$ whose coaxial line makes angle $\sim u$ with the $z$ axis. We have 
\[
|R_i^\dag\cap R_j^\dag|\lesssim \big(\frac{s_1 u}{t}\big)\frac{w}{1+|i-j|},
\]
and hence
\[
\Big\Vert \sum_{i=1}^N \chi_{Y_1^\dag(R_i^\dag)}\Big\Vert_2^2 \leq \sum_{i,j=1}^N|R_i^\dag\cap R_j^\dag|\lesssim N\frac{s_1 u w}{t}|\log(u/w)|.
\]

We conclude that 
\begin{equation}\label{CordobaTypeComputation}
\Big|\bigcup_{i=1}^N  Y_1^\dag( R_i^\dag) \Big| 
\geq \frac{\Big(\sum  |Y_1^\dag( R_i^\dag)|\Big)^2}{\Big\Vert \sum \chi_{Y_1^\dag(R_i^\dag)}\Big\Vert_2^2}
\gtrapprox \frac{\Big(N \delta^{2\eps_1} \frac{s_1 u}{t} w\Big)^2}{N\frac{s_1 u w}{t}}
=N\delta^{4\eps_1}\frac{s_1 u}{t} w
\gtrapprox \delta^{6\eps_1}\frac{s_1}{t}u^2.
\end{equation}
On the other hand, each set $R_i'$ is contained in $\hat R_0$ (a set of volume roughly $s_1u^2/t$), and thus
\begin{equation}\label{hatR0Full}
\Big| 2\hat R_0 \cap N_{\frac{s_1 u}{t}}\Big(\bigcup_{\substack{R\in\mathcal{R} \\ R\subset \hat R_0}}Y(R)\Big)\Big| \gtrapprox \delta^{6\eps_1}|2\hat R_0|.
\end{equation}
Recall that $2\hat R_0$ is a prism of dimensions $2\frac{s_1 u}{t}  \times 2 u\times 2$, and $s_1 = \delta^{-7\eps_1/\eps}s$. Define $s'=\frac{s_1u}{t}$ and $t'=\min(1, t\frac{s'}{s})=\min(1, \delta^{-7\eps_1/\eps}u)$. By \eqref{uFairlyLarge}, we have $s'\in [\delta^{-\eps^2/4}s, \frac{s_1}{t}]$. Since $s\leq\delta^\eps t$, we can select $\eta$ sufficiently small so that $\frac{s_1}{t}\leq \delta^{\eps/2}$, and hence $s'\in [\delta^{-\eps^2/4}s, \delta^{\eps/2}]$.

By \eqref{hatR0Full} we conclude that for each $R_0\in\mathcal{R}_0$, if $\hat R_0'$ is the corresponding $s'\times t'\times 1$ prism and $\eps_1$ is selected sufficiently small compared to $\eps$ and $\tau$, then the shading $\hat Y'(\hat R_0')$ given by \eqref{YPrimeRPrimeShading} satisfies 
\begin{equation}\label{shadingOfYPrimeRPrime}
|\hat Y'(\hat R_0')|\gtrapprox \delta^{6\eps_1}|\hat R_0|\geq \delta^{6\eps_1+7\eps_1/\eps}|\hat R_0'|\geq\delta^\tau|\hat R_0'|.
\end{equation}
To finish the proof, define 
\[
\mathcal{R}'=\{\hat R_0'\colon \hat R_0\in\mathcal{R}_0\}.
\]
\eqref{shadingOfYPrimeRPrime} ensures that $|Y'(R')|\geq \delta^{\tau}|R'|$ for each $R'\in\mathcal{R}'.$ Since $\mathcal{R}$ satisfies the Convex Wolff axioms with error $\delta^{-\eta}$ and $\#\mathcal{R}_0\gtrapprox \delta^{2\eps_1}\#\mathcal{R}$, we have that $\mathcal{R}_0$ satisfies the Convex Wolff axioms with error $\lessapprox \delta^{-2\eps_1-\eta}$, and hence by Remark \ref{convexSupSet} we have that $\mathcal{R}'$ satisfies the Convex Wolff Axioms with error $\lessapprox \delta^{-2\eps_1-\eta}$. If $\eps_1,\eta,$ and $\delta_0$ and $\eta$ is selected sufficiently small, then this is $\leq\delta^{-\tau}$. 
\end{proof}

Soon, we will iterate Lemma \ref{biggerRectDichotomy} multiple times. If Conclusion (B) happens repeatedly, then eventually our planks will have width 1, i.e.~our planks will become rectangular slabs of dimensions $s\times 1\times 1$ for some $s<\!\!<1$. The next lemma says that the union of such a collection of slabs has volume close to 1.

\begin{lem}\label{thickenedPlanesAssouad}
Let $0<s \leq 1/2$ and $K\geq 1$. Let $\mathcal{R}$ be a multi-set of $s \times 1\times 1$ rectangular prisms, and suppose that $\mathcal{R}$ satisfies the Convex Wolff Axioms with error $K$. Let $\{Y(R), R\in\mathcal{R}\}$ be a $K^{-1}$-dense shading. 
Then 
\begin{equation}\label{platesLargeVolume}
\Big|  \bigcup_{R\in\mathcal{R}}Y(R)\Big| \gtrsim K^{-3}(\log 1/s)^{-1}.
\end{equation} 
\end{lem}

\begin{proof}
Let $E=\bigcup_{R\in\mathcal{R}}Y(R)$. Since the shading $Y$ is $K^{-1}$ dense, by Cauchy-Schwartz we have
\begin{equation}\label{CSTypeArgument}
K^{-1}s(\#\mathcal{R})\leq \int \chi_E \cdot \sum_{R\in\mathcal{R}}\chi_{Y(R)}\leq \int \chi_E \cdot \sum_{R\in\mathcal{R}}\chi_{R} \leq |E|^{1/2} \Big\Vert \sum_{R\in\mathcal{R}}\chi_R\Big\Vert_2.
\end{equation}
In the argument that follows, we will obtain an upper bound for $\big\Vert \sum_{\mathcal{R}}\chi_R\big\Vert_2$. 

For each prism $R\in\mathcal{R}$, there is an associated normal vector $v(R)\in S^2$, which is well-defined up to uncertainty $O(s)$. For $R,R'\in\mathcal{R}$, define $\angle(R,R')$ to be the (unsigned) angle between $v(R)$ and $v(R')$. 

 Fix $R\in\mathcal{R}$. Observe that if $R\cap R'$ is non-empty and $\angle(R,R')=\theta\sim 2^j s$, then $R'$ is contained in the $O(\theta)\times 1\times 1$ rectangular prism concentric with $R$. Since $\mathcal{R}$ obeys the Convex Wolff Axioms, at most $K(2^j s)(\#\mathcal{R})$ slabs $R'\in\mathcal{R}$ can have this property.

Let $j_1\sim\log(1/ s)$ be the smallest integer with $2^{j_1} s\geq 1$. Using the above observation, we compute
\begin{equation}\label{L2BdSlabs}
\begin{split}
\Big\Vert \sum_{R\in\mathcal{R}}\chi_R\Big\Vert_2^2  & = \sum_{R'\in\mathcal{R}}|R\cap R'| = \sum_{j=0}^{j_1}\sum_{\substack{R'\colon R\cap R'\neq\emptyset,\\ \angle(R,R')\sim 2^j s}}|R\cap R'|\\
& \lesssim \sum_{j=0}^{j_1}\Big(K (2^j s\#\mathcal{R}) \Big)\Big(\frac{s^2}{2^j s}\Big)\lesssim K s^2(\#\mathcal{R})\log(1/s).
\end{split}
\end{equation}
Chaining \eqref{CSTypeArgument} and \eqref{L2BdSlabs}, we obtain \eqref{platesLargeVolume}.
\end{proof}

The culmination of Section \ref{arrangeRecPrismSection} is the following result, which says that every arrangement of planks satisfying the Convex Wolff Axioms must have discretized Assouad dimension close to 3.

\begin{prop}\label{planksBigVolSomeScale}
For all $\eps>0$, there exists $\eta,\delta_0>0$ so that the following holds for all $\delta\in(0,\delta_0]$. Let $s,t\in [\delta,1]$ with $s\leq\delta^{\eps}t$. Let $\mathcal{R}$ be a multi-set of $s\times t\times 1$ prisms contained in $B(0,1)$, and suppose that $\mathcal{R}$ satisfies the Convex Wolff Axioms with error $\delta^{-\eta}$. Let $\{Y(R), R\in\mathcal{R}\}$ be a $\delta^{\eta}$-dense shading. 

\noindent Then $\bigcup_{\mathcal{R}}Y(R)$ has discretized Assouad dimension at least $3-\eps$ at scale $\delta$ and scale separation $\delta^{-\eta}$.
\end{prop}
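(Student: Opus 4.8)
The plan is to iterate Lemma~\ref{biggerRectDichotomy}, starting from the given collection $\mathcal{R}$ of $s\times t\times 1$ prisms, and to show that after a bounded number of steps we must land in Conclusion (A) of that lemma, which is exactly the desired discretized Assouad lower bound. The monotonicity of the exponent budget is what makes the iteration terminate: each application of Lemma~\ref{biggerRectDichotomy} either already produces the large-volume ball we want (Conclusion (A)), or it replaces the current scale pair $(s,t)$ by a strictly coarser pair $(s',t')$ with $s' \in [\delta^{-\eps^2/2}s, \delta^{\eps/2}]$ and $t' = \min(1, ts'/s)$, while the new multi-set $\mathcal{R}'$ still satisfies the Convex Wolff Axioms (with a slightly worse error) and still carries a dense shading of the form \eqref{YPrimeRPrimeShading}, whose union is contained in the $s'$-neighborhood of the original union $\bigcup Y(R)$.

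First I would set up the bookkeeping carefully. Given the target $\eps>0$, choose auxiliary parameters $\eps_1 = \eps/2$ (say) for the role of ``$\eps$'' in Lemma~\ref{biggerRectDichotomy}, and a sequence of error-exponents $\tau_0 > \tau_1 > \cdots$ that decrease slowly enough that the Convex Wolff error stays under control through all the iterations; since the number of iterations will be bounded by an absolute function of $\eps$ (see below), finitely many such $\tau_j$ suffice. At step $j$ we have a pair $(s_j, t_j)$ with $s_0 = s$, $t_0 = t$, a multi-set $\mathcal{R}_j$ of $s_j\times t_j\times 1$ prisms satisfying the Convex Wolff Axioms with error $\delta^{-\tau_j}$, and a $\delta^{\tau_j}$-dense shading $Y_j$ with $\bigcup Y_j(R) \subset N_{s_j}(\bigcup_{R\in\mathcal{R}}Y(R))$. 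Apply Lemma~\ref{biggerRectDichotomy} with its $\eps$ set to $\eps_1$ and its $\tau$ set to $\tau_{j+1}$ (this determines an admissible $\eta_{j+1}$; take $\eta$ in the Proposition smaller than all the finitely many $\eta_j$ that arise, and also small enough that every $\delta^{-\tau_j}$ stays below the allowed error and every ``$\eta$ sufficiently small'' caveat in Lemma~\ref{biggerRectDichotomy} is met). If Conclusion (A) holds we stop; otherwise we pass to $(s_{j+1}, t_{j+1})$ and $\mathcal{R}_{j+1}$ as in Conclusion (B).

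The termination argument is the crux. As long as we keep getting Conclusion (B), the ratio $t_j/s_j$ is non-increasing (since $t_{j+1}/s_{j+1} = \min(1,ts'/s)/(ts'/s) \cdot (\dots)$ — more simply, $t_{j+1}/s_{j+1} \le t_j/s_j$), and crucially $s_j$ grows: $s_{j+1} \ge \delta^{-\eps_1^2/2} s_j$, so $s_j \ge \delta^{-j\eps_1^2/2}\,s \ge \delta^{-j\eps_1^2/2}\delta = \delta^{1-j\eps_1^2/2}$. Since every $s_j \le \delta^{\eps_1/2} \le 1$ (this bound is part of Conclusion (B)), we cannot iterate more than $j_{\max} \sim 2/\eps_1^2$ times before the constraint $s_j \le \delta^{\eps_1/2}$ would be violated — equivalently, before we are forced into Conclusion (A). Hence after at most $j_{\max}$ steps Conclusion (A) of Lemma~\ref{biggerRectDichotomy} holds for some $\mathcal{R}_j$: there is $r\in[\delta^{-\eta}s_j, 1]$ and an $r$-ball $B$ with $|B\cap \bigcup Y_j(R)| \ge (s_j/r)^{\eps_1}|B|$. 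Since $\bigcup Y_j(R) \subset N_{s_j}(\bigcup_{R\in\mathcal{R}}Y(R))$ and $s_j \ge s \ge \delta$, and since $(s_j/r)^{\eps_1} \ge (s/r)^{\eps_1}$ is, for the purposes of the definition of discretized Assouad dimension, at least $(\rho/r)^{n-\beta}$ with $\rho = s_j$ and $\beta = 3-\eps$ once $\eps_1 \le \eps$, this is precisely the statement that $\bigcup_{R\in\mathcal{R}}Y(R)$ has discretized Assouad dimension at least $3-\eps$ at scale $\delta$ and scale separation $\delta^{-\eta}$ (after shrinking $\eta$ once more so that $s_j/r \ge \delta^{-\eta}$ is compatible with the required separation $r \ge \delta^{-\eta}\rho$, which holds because $r \ge \delta^{-\eta}s_j = \delta^{-\eta}\rho$ directly from Conclusion (A)). The one point demanding care is verifying that the shading passes cleanly from step to step: the shading \eqref{YPrimeRPrimeShading} at step $j+1$ is supported in $N_{s_{j+1}}$ of the step-$j$ union, which is in turn in $N_{s_j}$ of the original; a short induction using $s_j \le s_{j+1}$ shows the support of $Y_j$ is always inside $N_{s_j}(\bigcup_{R\in\mathcal{R}}Y(R))$, so no neighborhood-radius blowup accumulates. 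I expect this neighborhood-tracking, together with the honest accounting of how the Convex Wolff error and the various smallness-of-$\eta$ requirements compose over the bounded number of iterations, to be the main (though routine) obstacle.
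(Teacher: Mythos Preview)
Your overall strategy matches the paper's: iterate Lemma~\ref{biggerRectDichotomy} until Conclusion (A) fires, tracking the containment $\bigcup Y_j(R)\subset N_{s_j}\bigl(\bigcup_{R\in\mathcal{R}}Y(R)\bigr)$ and the bounded accumulation of Convex Wolff error. However, your termination argument has a genuine gap.

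You argue that Conclusion (B) can only occur boundedly often because $s_j$ grows by $\delta^{-\eps_1^2/2}$ each step while remaining $\le \delta^{\eps_1/2}$, so eventually Conclusion (A) must hold. But this presupposes that at each step you can \emph{apply} Lemma~\ref{biggerRectDichotomy}, whose hypothesis is $s_j\le \delta^{\eps_1}t_j$. As long as $t_{j+1}<1$ this is fine, since then $s_{j+1}/t_{j+1}=s_j/t_j=\cdots=s/t\le\delta^{\eps}\le\delta^{\eps_1}$. The problem is the step at which $t_{j+1}=\min(1,\,t_js_{j+1}/s_j)=1$. In that case Conclusion (B) only guarantees $s_{j+1}\le \delta^{\eps_1/2}$, and since $\delta^{\eps_1/2}>\delta^{\eps_1}$ you may well have $s_{j+1}/t_{j+1}=s_{j+1}\in(\delta^{\eps_1},\delta^{\eps_1/2}]$, so the hypothesis of Lemma~\ref{biggerRectDichotomy} fails for the next iteration. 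You are not ``forced into Conclusion (A)''; you are simply stuck with a collection of $s_{j+1}\times 1\times 1$ slabs to which the dichotomy no longer applies.

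The paper closes exactly this gap with a separate case (Case (B.i) in its proof): when $t_{j+1}=1$, the prisms are $s_{j+1}\times 1\times 1$ slabs satisfying the Convex Wolff Axioms, and Lemma~\ref{thickenedPlanesAssouad} (the $L^2$ Cordoba-type estimate for thickened planes) gives $\bigl|\bigcup Y_{j+1}(R)\bigr|\gtrsim \delta^{O(\tau_{j+1})}$ directly, which (since $s_{j+1}\le\delta^{\eps_1/2}$) yields the desired discretized Assouad bound with $\rho=s_{j+1}$ and $r=1$. You should insert this step; with it your argument becomes essentially identical to the paper's.
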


In brief, we prove Proposition \ref{planksBigVolSomeScale} by iterating Lemma \ref{biggerRectDichotomy}. If Conclusion (A) holds at any step in the iteration, then our union of planks has discretized Assouad dimension close to 3, and we are done. If Conclusion (B) holds, then we repeat the process with the resulting wider collection of planks. Eventually, our planks will have width 1, at which point we can apply Lemma \ref{thickenedPlanesAssouad} to again conclude that our union of planks has discretized Assouad dimension close to 3. We now turn to the details.

\begin{proof}
Fix $\eps>0$ and let $\eta,\delta_0$ be small quantities to be determined below. Let $\delta\in(0,\delta_0]$, $s,t\in[\delta,1]$ with $s\leq\delta^{\eps}t$; let $\mathcal{R}$ be a set of $s\times t\times 1$ prisms that satisfy the Convex Wolff Axioms with error $\delta^{-\eta}$; and let $Y(R)$ be a $\delta^{\eta}$-dense shading.

Define $\mathcal{R}_1 = \mathcal{R}$ and define the shading $Y_1=Y$. Let $s_1=s$ and $t_1=t$. Let  $\eta=\tau_1<\tau_2<\tau_3<\ldots<\tau_N,$ $N = \lfloor 4/\eps^2\rfloor$ be small quantities to be specified below. We will perform the following iterative process for each $i=1,2,\ldots,N$. At the beginning of step $i$, the following properties will hold:
\begin{itemize}
\item[(a)] We have a set $\mathcal{R}_i$ of $s_i\times t_i\times 1$ prisms, with $s_i\leq \delta^{\eps}t_i$.
\item[(b)] $\mathcal{R}_i$ satisfies the Convex Wolff axioms with error $\delta^{-\tau_{i}}$.
\item[(c)] We have a shading $\{Y_i(R)\colon R\in\mathcal{R}_i\}$ that is $\delta^{\tau_{i}}$ dense.
\item[(d)] For each $R_i\in\mathcal{R}_i$, we have 
\begin{equation}\label{shadingRiProperty}
Y_i(R_i)\subset N_{s_i}\Big(\bigcup_{R\in\mathcal{R}}Y(R)\Big).
\end{equation}
\end{itemize}
When $i=1$, the hypotheses of Proposition \ref{planksBigVolSomeScale} guarantee that the above properties hold.

Suppose that the above properties hold at step $i$. Apply Lemma \ref{biggerRectDichotomy} to $(\mathcal{R}_i, Y_i)$ with $\eps$ as above and $\tau_{i+1}$ in place of $\tau$. If $\tau_i>0$ is sufficiently small depending on $\eps$ and $\tau_{i+1}$, then the Lemma can be applied (indeed, $\tau_i$ must be less than or equal to the quantity ``$\eta$'' from the conclusion of Lemma \ref{biggerRectDichotomy}). The quantity $\delta_0$ (and hence $\delta$) will be chosen sufficiently small so that Lemma \ref{biggerRectDichotomy} can be applied at each step.

\medskip

\noindent\textbf{Case (A).} Suppose that Conclusion (A) from Lemma \ref{biggerRectDichotomy} holds, i.e.~there exists $r\in [\delta^{-\tau_i}s_i, 1]$ and an $r$-ball $B$ so that 
\[
\Big| B \cap  \bigcup_{R\in\mathcal{R}_i}Y_i(R_i)\Big| \geq (s_i/r)^{\eps}|B|.
\]
Using \eqref{shadingRiProperty}, we conclude that
\[
\Big| B \cap  N_{s_i}\Big(\bigcup_{R\in\mathcal{R}}Y(R)\Big) \Big| \geq (s_i/r)^{\eps}|B|,
\]
i.e.~$\bigcup_{\mathcal{R}}Y(R)$ has discretized Assouad dimension $\geq 3-\eps$ at scale $\delta$ and scale separation $\delta^{-\tau_i}$. If this occurs, then we halt the iteration and conclude the proof.

\medskip

\noindent\textbf{Case (B).} Next, suppose that Conclusion (B) from Lemma \ref{biggerRectDichotomy} holds. Define $s_{i+1}=s_i'$ and $t_{i+1}=t_i'$ (here $s_i'$ and $t_i'$ are the output of Lemma \ref{biggerRectDichotomy}, Conclusion (B)). Let $\mathcal{R}_{i+1}$ be the set of $s_{i+1}\times t_{i+1}\times 1$ prisms, with shading $Y_{i+1}=Y_i'$, as described by Lemma \ref{biggerRectDichotomy}, Conclusion (B). We have that $\mathcal{R}_{i+1}$ satisfies the Convex Wolff Axioms with error $\delta^{-\tau_{i+1}}$, and $Y_{i+1}$ is $\delta^{\tau_{i+1}}$ dense. Furthermore, the shading $Y_{i+1}$ satisfies \eqref{shadingRiProperty} with $i+1$ in place of $i$. In particular, $\mathcal{R}_{i+1}$ and $Y_{i+1}$ satisfy our induction hypotheses (b), (c), and (d) above. 

\medskip

\noindent\textbf{Case (B.i).} Suppose that Conclusion (B) holds and that $t_{i+1}=1$. Recall that $s_{i+1}\leq\delta^{\eps/2}.$ Thus we can apply Lemma \ref{thickenedPlanesAssouad} with $s_{i+1}$ in place of $\delta$. We have
\begin{equation}\label{volumeBdStepiP1}
\Big|N_{s_{i+1}}\Big(\bigcup_{R\in\mathcal{R}}Y(R)\Big)\Big| \geq \Big|\bigcup_{R\in\mathcal{R}_{i+1}}Y_{i+1}(R)\Big|\gtrsim \delta^{3\tau_{i+1}}|\log s_{i+1}|^{-1}\gtrsim s_{i+1}^{7\tau_{i+1}/\eps}.
\end{equation}
We can select $\tau_{i+1}$ and $\delta_0$ sufficiently small so that the LHS of \eqref{volumeBdStepiP1} has size at least $s_{i+1}^\eps|B(0,1)|$. Since the set on the LHS of \eqref{volumeBdStepiP1} is contained in $B(0,1)$, we conclude that $\bigcup_{\mathcal{R}}Y(R)$ has discretized Assouad dimension $\geq 3-\eps$ at scale $\delta$ and scale separation $\delta^{-\eps/2}\geq\delta^{-\eta}.$ If this occurs, then we halt the iteration and conclude the proof.

\medskip

\noindent\textbf{Case (B.ii).} Suppose $t_{i+1}<1$; then $s_{i+1}/t_{i+1}=s_i/t_i$, so in particular $s_{i+1}\leq \delta^{\eps}t_{i+1}$. This means that the induction hypothesis (a) also holds, and thus we have verified all of the properties required to continue the iteration, with $i+1$ in place of $i$. Note that $s_{i+1}\geq\delta^{-\eps^2/4}s_i$; $t_{i+1}\geq\min(1, \delta^{\eps}s_{i+1})$, and the iterative process halts if $t_{i+1}=1$. We conclude that this iterative process will halt after at most $N = \lfloor 4/\eps^2\rfloor$ steps. 
\end{proof}


\subsection{Arrangements of tubes that cluster into rectangular prisms}
In Section \ref{arrangeRecPrismSection}, we showed that unions of planks have discretized Assouad dimension close to 3. In this section we will use these results to show that if an arrangement of $\delta$-tubes clusters into planks, then the union of these tubes must have discretized Assouad dimension close to 3. This is made precise in Lemmas \ref{convexWolffImpliesFewTubesInRectangleLem} and \ref{correctNumberTubesInRectLem} below. 

As a first step, the following lemma says that if $\tubes$ is a set of $\delta$ tubes of cardinality $>\!\!>\rho/\delta$, all of which are contained in a common $\delta\times\rho\times 1$ prism, then these tubes must cluster into $\delta\times\omega\times1$ planks (for some $\delta<\!\!<\omega\leq\rho$), and the union of these tubes must fill out a substantial fraction of each $\delta\times\omega\times 1$ plank. Later, we will show that the existence of such (mostly full) $\delta\times\omega\times 1$ planks will force $\bigcup_{\tubes}T$ to have discretized Assouad dimension close to 3. 

\begin{lem}\label{heavyRectangleYieldsFullRectangleLem}
For all $\eps>0$, there exists $\eta,\delta_0>0$ so that the following holds for all $\delta\in(0,\delta_0]$. Let $\rho\in[20\delta,1]$ and let $K\geq 1$. Let $(\tubes,Y)_\delta$ be a set of essentially distinct $\delta$-tubes, with $\#\tubes\geq K\rho/\delta$, and $Y$ a uniformly $\delta^\eta$-dense shading. Let $R_0$ be a $10\delta\times \rho \times 1$ rectangle, and suppose each $T\in\tubes$ is contained in $R_0$. 

Then there exists a number $K\delta\lesssim \omega\leq\rho$ and a set $\mathcal{R}$ of essentially distinct $10\delta\times\omega\times 1$ prisms, so that 
\begin{equation}\label{mostTubesInAPrism}
\#\Big(\bigcup_{R\in\mathcal{R}}\{T\in\tubes\colon T\subset R\}\Big)\gtrapprox\#\tubes,
\end{equation}
and for each prism $R\in\mathcal{R}$ we have
\begin{equation}\label{prismFull}
\Big|\bigcup_{\substack{T\in\tubes \\ T\subset R}}Y(T)\Big|\geq\delta^\eps|R|.
\end{equation}
\end{lem}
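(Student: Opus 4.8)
The plan is to dyadically pigeonhole on the angular spread of the tubes inside $R_0$ and then run a Córdoba-type $L^2$ argument to show that, at the pigeonholed spread scale $\omega$, the tubes cluster into $10\delta\times\omega\times 1$ prisms that are close to full. First I would normalize so that $R_0 = [-5\delta,5\delta]\times[-\rho/2,\rho/2]\times[-1/2,1/2]$; then every $T\in\tubes$ has a coaxial line $\ell(T)$ making angle $O(\rho)$ with the $z$-axis and $O(\delta/\rho)$ with the $yz$-plane. Passing to a regular refinement of $Y$ (Definition \ref{regularShading}) at a cost of a $|\log\delta|$ factor, I would consider, for each $x\in\bigcup_{T\in\tubes}Y(T)$, the ``tube spread'' $\operatorname{T-spread}(x)=\max\angle(\ell(T),\ell(T'))$ over tubes $T,T'$ with $x\in Y(T)\cap Y(T')$, meaningfully defined when substantially larger than $\delta$. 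A pigeonhole over dyadic values gives a scale $\omega\in[c\delta,\rho]$ and a $\gtrapprox\delta^\eta$-dense subshading on which $\operatorname{T-spread}(x)\sim\omega$; I would discard the trivial range $\omega\lesssim K\delta$ because there the hypothesis $\#\tubes\geq K\rho/\delta$ together with the fact that at most $\sim\omega/\delta\lesssim K$ essentially distinct tubes can fit through any single point's $O(K\delta)$-ball forces the angular spread to be genuinely larger than $K\delta$ (this is exactly where the lower bound $\omega\gtrsim K\delta$ comes from).

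Having fixed $\omega$, the prisms $\mathcal R$ are defined as follows: for each tube $T$ in the surviving collection, let $R(T)$ be the $10\delta\times\omega\times 1$ prism concentric with $T$ and aligned with $\ell(T)$, thickened in the direction transverse to $\Pi(T)$ only enough to reach width $\omega$. The point is that if $x\in Y(T_0)\cap Y(T)$ at spread $\omega$, then $\angle(\ell(T_0),\ell(T))\le 2\omega$ and also (since both tubes live in $R_0$) $\angle(\Pi(T_0),\Pi(T))\lesssim\delta/\rho$, so $T\subset 2R(T_0)$. For each such $R_0=R(T_0)$ I would then run the Córdoba argument exactly as in Step 5 of the proof of Lemma \ref{biggerRectDichotomy}: pick $\omega/\delta$-separated points $z_1,\dots,z_N$ along the axis of $T_0$ lying in the shading, with $N\gtrapprox\delta^\eta\,\omega/\delta\cdot(\delta/\omega)=\delta^\eta$... more carefully $N\gtrapprox \delta^\eta/\delta\cdot(\text{axis length contribution})$; through each $z_i$ choose a tube $T_i$ with the right spread, observe $|T_i\cap T_j|\lesssim \delta^2/(1+|i-j|)$ from the transversality of the axes, so $\bigl\Vert\sum_i\chi_{Y(T_i)}\bigr\Vert_2^2\lesssim N\delta^2|\log\delta|$, and hence by Cauchy--Schwarz
\[
\Bigl|\bigcup_i Y(T_i)\Bigr|\geq\frac{\bigl(\sum_i|Y(T_i)|\bigr)^2}{\bigl\Vert\sum_i\chi_{Y(T_i)}\bigr\Vert_2^2}\gtrapprox \frac{(N\delta^\eta\delta^2/\omega)^2}{N\delta^2|\log\delta|}\gtrapprox \delta^{3\eta}\,\frac{\delta^2}{\omega},
\]
which, after undoing the localization to $B(z_i,\omega)$ and choosing $\delta_0,\eta$ small, gives $|\bigcup_{T\subset R(T_0)}Y(T)|\geq\delta^\eps|R(T_0)|$, i.e.\ \eqref{prismFull}.

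Finally, to get \eqref{mostTubesInAPrism} I would let $\mathcal R$ be a maximal essentially distinct subcollection of $\{R(T_0)\}$ as $T_0$ ranges over the surviving tubes in the pigeonholed subshading; every surviving tube lies in some $2R(T_0)$, and since there are only $O(1)$ overlaps among 2-dilates of essentially distinct prisms, after a further harmless refinement (replacing $\mathcal R$ by prisms of the same shape with slightly dilated representatives, as in Definition \ref{coversDefn}) the union $\bigcup_{R\in\mathcal R}\{T\in\tubes\colon T\subset R\}$ captures $\gtrapprox\#\tubes$ tubes, and restricting \eqref{prismFull} to those $R\in\mathcal R$ with $|\bigcup_{T\subset R}Y(T)|\geq\delta^\eps|R|$ (which, by the pigeonholing that produced the subshading, is almost all of them) completes the proof. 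The main obstacle I anticipate is bookkeeping the interplay of the several $|\log\delta|$ and $\delta^\eta$ losses — in particular making sure that the lower bound $\omega\gtrsim K\delta$ survives all the pigeonholing, and that the prisms one finally outputs are genuinely essentially distinct so that the Convex Wolff count can be applied later; the geometry (transversality of axes $\Rightarrow$ small pairwise intersection $\Rightarrow$ Córdoba) is routine once the spread scale $\omega$ is fixed.
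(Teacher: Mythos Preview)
Your approach is essentially the paper's: pigeonhole on angular spread to find a scale $\theta$ (your $\omega$); use a volume/multiplicity argument inside $R_0$ to rule out small spread; wrap each surviving tube in a $10\delta\times\theta\times 1$ prism and run a C\'ordoba $L^2$ argument to show the prism is $\delta^\eps$-full; greedily extract an essentially distinct subfamily.

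Two execution points need fixing. First, the paper defines the spread via $x\in T\cap T'$, not $x\in Y(T)\cap Y(T')$; with your $Y$-based definition the volume argument ruling out small spread loses a $\delta^\eta$ factor (since $\sum|Y(T)|\geq\delta^\eta\sum|T|$ rather than $=\sum|T|$) and only yields $\omega\gtrsim K\delta^{1+\eta}$, not the stated $\omega\gtrsim K\delta$. The fix is exactly what the paper does: define the spread on all of $T$ (the auxiliary shading $Z(T)\subset T$ is explicitly \emph{not} a subset of $Y(T)$), and only invoke the uniform $\delta^\eta$-density of $Y$ later, when bounding $|Y(T_i)|$ from below in the C\'ordoba step. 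Second, your C\'ordoba arithmetic is off: the spacing should be $\delta/\omega$ (not $\omega/\delta$), giving $N\gtrapprox \omega/\delta$ points along the axis of $T_0$, and the standard $L^2$ computation then yields
\[
\Big|\bigcup_i Y(T_i)\Big|\gtrapprox \delta^{2\eta}\sum_i|T_i|\sim \delta^{2\eta}\,N\delta^2\sim \delta^{2\eta}\,\delta\omega,
\]
which matches $|R|\sim\delta\omega$ up to $\delta^{2\eta}$. Your displayed bound $\delta^{3\eta}\delta^2/\omega$ is incorrect and would not give \eqref{prismFull} once $\omega\gg\delta^{1/2}$.
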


\begin{proof}
For each $x\in\RR^3$, define the ``line spread'' 
\[
\operatorname{L-spread}(x) = \max\big\{\angle(T,T')\big\},
\]
where the maximum is taken over all pairs of tubes $T,T'$ intersecting $x$. We define $\operatorname{L-spread}(x)=0$ if at most one tube intersects $x$. Let $C$ be a large number to be determined below. We claim that there exists a choice of $\theta\in [\frac{K\delta}{C},\rho]$ so that the shading 
\[
Z(T)=\{x\in T\colon \operatorname{L-spread}(x)\in [\theta,2\theta)\}
\] 
satisfies 
\begin{equation}\label{ShadingZFull}
\sum_{T\in\tubes}|Z(T)|\geq (C|\log\delta|)^{-1} \delta^2(\#\tubes).
\end{equation}
(Note that the shading $Z(T)$ is not necessarily a subset of $Y(T)$). We justify this claim as follows. Since there are at most $10\log(C/\delta)$ dyadic values of $\theta$ in the interval $[\frac{K\delta}{C},\rho]$, if we could not select a $\theta\in [\frac{K\delta}{C},\rho]$ so that \eqref{ShadingZFull} holds, then (provided $C$ is selected sufficiently large) the shading 
\[
Z'(T)=\Big\{x\in T\colon \operatorname{L-spread}(x)\leq \frac{K\delta}{C}\Big\}
\] 
satisfies $\sum_{\tubes}|Z'(T)|\geq \frac{1}{100}\delta^2(\#\tubes)$. But for each $x\in\RR^3$, the set of tubes $T\in\tubes$ with $x\in Z(T)$ are contained in the $10\delta$-neighborhood of a plane (since they are contained in $R_0$), and also make angle $\leq \frac{K\delta}{C}$ with a common unit vector; since the tubes in $\tubes$ are essentially distinct, we conclude that $O(K/C)$ tubes $T\in\tubes$ can satisfy $x\in Z'(T)$, and hence
\begin{equation}\label{volumeBdYPrimeT}
\Big| \bigcup_{T\in\tubes}T\Big| \geq \Big| \bigcup_{T\in\tubes}Z'(T)\Big| \gtrsim (C/K)\sum_{T\in\tubes}|Z'(T)|\gtrsim C \rho\delta.
\end{equation}
If $C$ is selected sufficiently large then this is impossible, since the set on the LHS of \eqref{volumeBdYPrimeT} is contained in $R_0$, which has volume $10\rho\delta$.

At this point we have established the existence of an angle $\theta\in [\frac{K\delta}{C},\rho]$ so that the associated shading $Z(T)$ satisfies \eqref{ShadingZFull}. In the arguments that follow, the quantity $C$ has been fixed, and all implicit constants are allowed to depend on $C$. Let 
\begin{equation}\label{tubes1}
\tubes_1=\big\{T\in\tubes\colon |Z(T)|\geq (C|\log\delta|)^{-1} |T|\big\}.
\end{equation}
Then \eqref{ShadingZFull} guarantees that $\#\tubes_1\gtrapprox\#\tubes$.  For each $T_1\in \tubes_1$, let $x_2,\ldots,x_N,\ N\gtrapprox \theta/\delta$ be a set of $\delta/\theta$-separated points from $Y_1(T_1)$. For each $x_i$, select $T_i\in\tubes$ with $x_i\in T_i$ and $\angle(T_1,T_i)\in[\theta,2\theta)$. An $L^2$ argument (see e.g.~\eqref{L2Bd}) plus the fact that $|Y(T_i)|\geq \delta^\eta|T|$ shows that
\begin{equation}\label{tubesNearT1}
\Big|\bigcup_{i=2}^N Y(T_i)\Big|\gtrapprox \delta^{2\eta}\sum_{i=2}^N|T_i| \gtrsim \delta^{2\eta}\delta\theta.
\end{equation}
On the other hand, the set on the LHS of \eqref{tubesNearT1} is contained in $R_0\cap N_{2\theta}(T_1)$, which in turn is contained in a rectangular prism of dimensions $10\delta\times 4\theta\times 1$; denote this prism by $R(T_1)$. We have just shown that
\begin{equation}\label{prismMostlyFullIneq}
\Big|\bigcup_{\substack{T\in\tubes \\ T\subset R(T_1)}}Y(T)\Big|\gtrapprox \delta^{2\eta}|R(T_1)|.
\end{equation}

Finally, let $\omega=4\theta$ and let $\mathcal{R}_1=\{R(T_1)\colon T_1\in\tubes_1\}$. We claim that \eqref{mostTubesInAPrism} holds for $\mathcal{R}_1$. Indeed, this follows from the fact that $T_1\subset R(T_1)$ for each $T_1\in\tubes_1$, and $\#\tubes_1\gtrapprox\#\tubes$. By \eqref{prismMostlyFullIneq} we see that \eqref{prismFull} holds for each $R\in\mathcal{R}_1$, provided we select $\eta$ and $\delta_0$ sufficiently small. To conclude the proof, greedily select a set $\mathcal{R}\subset\mathcal{R}_1$ of essentially distinct rectangular prisms; we can perform this selection so that \eqref{mostTubesInAPrism} continues to hold.
\end{proof}


The following lemmas says that if $\tubes$ clusters into planks, then it must have discretized Assouad dimension close to 3. 

\begin{lem}\label{convexWolffImpliesFewTubesInRectangleLem}
For all $\eps>0$, there exists $\eta,\delta_0>0$ so that the following holds for all $\delta\in(0,\delta_0]$. Let $\tubes$ be a set of essentially distinct $\delta$-tubes that satisfies the Convex Wolff Axioms with error $\delta^{-\eta}$. Let $Y$ be a uniformly $\delta^\eta$-dense shading. Then at least one of the following must occur:
\begin{itemize}
	\item[(A)] $\bigcup_{\tubes} Y(T)$ has discretized Assouad dimension at least $3-\eps$ at scale $\delta$ and scale separation $\delta^{-\eta}$. 

	\item[(B)] There is a set $\tubes'\subset\tubes$ with $\#\tubes'\geq\frac{1}{2}(\#\tubes)$ so that for every $\delta\leq\rho\leq 1$ and every $\delta\times\rho\times 1$ rectangle $R$, at most $\delta^{-\eps}(\rho/\delta)$ tubes from $\tubes'$ are contained in $100R$. 
\end{itemize}
\end{lem}
\begin{rem}
Since $\tubes$ satisfies the Convex Wolff Axioms, we have that at most $100^3\delta^{-\eta}(\rho\delta)(\#\tubes)$ tubes are contained in each $100\delta\times 100\rho\times 100$ rectangle. If $\#\tubes$ is substantially larger than $\delta^{-2}$, then the estimate from Conclusion (B) is stronger. 
\end{rem}
\begin{proof}
We say a $100\delta\times100 \rho\times 100$ rectangle is ``heavy'' if it contains more than $\delta^{-\eps}(\rho/\delta)$ tubes from $\tubes$. If fewer than $\frac{1}{2}(\#\tubes)$ of the tubes are contained in heavy rectangles, let $\tubes'$ be the set of tubes that are not contained in heavy rectangles. Then Conclusion (B) holds, and we are done.

Suppose instead that at least $\frac{1}{2}(\#\tubes)$ tubes are contained in heavy rectangles. Since the tubes in $\tubes$ are essentially distinct, each $100\delta\times100 \rho\times 100$ rectangle can contain at most $O\big((\rho/\delta)^2\big)$ tubes from $\tubes$. Thus if a $100\delta\times100 \rho\times 100$ rectangle is heavy, then we must have $\rho\gtrsim\delta^{1-\eps}$. Let $\mathcal{R}_0$ denote this set of heavy rectangles (note that the rectangles in $\mathcal{R}_0$ can have different dimensions). Let $\eps_1$ be a small quantity to be chosen below. Apply Lemma \ref{heavyRectangleYieldsFullRectangleLem} to each rectangle $R_0\in\mathcal{R}_0$, with $\eps_1$ in place of $\eps$, and $K=\delta^{-\eps}$. We obtain a number $\omega(R_0)\gtrsim \delta^{1-\eps}$ and a set $\mathcal{R}(R_0)$ of $\delta\times\omega(R_0)\times O(1)$ rectangles. We have
\begin{equation}\label{mostTubesInRectangles}
\#\Big(\bigcup_{R_0\in\mathcal{R}_0}\bigcup_{R\in\mathcal{R}(R_0)}\{T\in\tubes\colon T\subset R\}\Big)\gtrapprox\#\tubes.
\end{equation}
After dyadic pigeonholing, we can select $\omega_0\in[\delta^{1-\eps},1]$ so that if we define $\mathcal{R}_1=\{R_0\in\mathcal{R}_0\colon \omega(R_0)\in[\omega_0, 2\omega_0)\}$, then \eqref{mostTubesInRectangles} continues to hold with $\mathcal{R}_1$ in place of $\mathcal{R}_0$. 

Define the multi-set
\[
\mathcal{R}_2=\bigsqcup_{R_1\in\mathcal{R}_1}\mathcal{R}(R_1).
\]
(In the above, $\bigsqcup$ is used to suggest that if the same rectangle is present in multiple sets $\mathcal{R}(R_1)$, then this rectangle should occur multiple times in $\mathcal{R}_2$; i.e.~the union might create a multi-set). Let
\[
\tubes_2=\bigcup_{R\in\mathcal{R}_2}\{T\in\tubes\colon T\subset R\}.
\]
Since $\tubes$ satisfies the Convex Wolff Axioms with error $\delta^{-\eta}$, by \eqref{mostTubesInRectangles} we have that the set (i.e.~not multi-set) $\tubes_2$ satisfies the Convex Wolff Axioms with error $\lessapprox \delta^{-\eta}$. Finally, for each $T\in\tubes_2$, let $R(T)$ be some rectangle in $\mathcal{R}_2$ with $T\subset R$. Define the multi-set $\mathcal{R}=\{R(T)\colon T\in\tubes_2\}$, i.e.~$\mathcal{R}$ is a multi-set with the same cardinality as $\tubes_2$. By Remark \ref{convexSupSet}, we conclude that $\mathcal{R}$ satisfies the Convex Wolff Axioms with error $\lessapprox\delta^{-\eta}$.  

For each $R\in\mathcal{R}$, define the shading
\[
Y(R)=R\cap\bigcup_{T\in\tubes}Y(T).
\]
By Lemma \ref{heavyRectangleYieldsFullRectangleLem}, we have $|Y(R)|\geq\delta^{\eps_1}|R|$ for each $R\in\mathcal{R}$. To conclude the proof, apply Proposition \ref{planksBigVolSomeScale} with $\eps$ as above, $s=\delta$, and $t=\omega_0$; to ensure that Proposition \ref{planksBigVolSomeScale} can be applied, we must select $\eps_1$ sufficiently small depending on $\eps$ (our choice of $\eps_1$ depends on the quantity ``$\eta$'' from Proposition \ref{planksBigVolSomeScale}), and then $\eta$ sufficiently small depending on $\eps_1$. Since $\bigcup Y(T)\subset \bigcup Y(R),$ we conclude that Conclusion (A) holds. 
\end{proof}

Lemma \ref{convexWolffImpliesFewTubesInRectangleLem} says that either a union of $\delta$-tubes $\tubes$ has large discretized Assouad dimension, or the tubes in $\tubes$ cannot concentrate into $\delta\times\rho\times 1$ rectangular prisms. The next lemma is similar, except the latter conclusion is strengthened: either a union of $\delta$-tubes has large discretized Assouad dimension, or these tubes cannot concentrate into $s\times t\times 1$ rectangular prisms, for any $\delta\leq s\leq t\leq 1$. To prove this result, we apply Lemma \ref{convexWolffImpliesFewTubesInRectangleLem} to the $s$-thickening of the tubes in our arrangement, for many different values of $s$. The precise statement is as follows. 

\begin{lem}\label{correctNumberTubesInRectLem}
For all $\eps=1/N>0$, there exists $\eta,\delta_0>0$ so that the following holds for all $\delta\in(0,\delta_0]$. Let $(\tubes,Y)_\delta$ be a set of essentially distinct $\delta$-tubes that satisfies the Convex Wolff Axioms with error $\delta^{-\eta}$, and let $Y$ be a uniformly $\delta^{\eta}$-dense shading. Then at least one of the following must occur:
\begin{itemize}
	\item[(A)] $\bigcup_\tubes Y(T)$ has discretized Assouad dimension at least $3-\eps$ at scale $\delta$ and scale separation $\delta^{-\eta}$. 

	\item[(B)] There is a set $\tubes'\subset\tubes$ with $\#\tubes'\geq \delta^{\eps} (\#\tubes)$ so that the following holds. For every number $\delta\leq s\leq 1$ of the form $\delta^{i\eps}$, there is a uniform partitioning cover $\tubes_s$ of $\tubes'$ (see Definition \ref{coversDefn}), where $\tubes_s$ is a set of essentially distinct $s$-tubes. Furthermore, for every $t\in[s,1]$ and every $s\times t\times 1$ rectangular prism $R$, at most $\delta^{-\eps}\frac{t}{s}\frac{(\#\tubes')}{(\#\tubes_s)}$ tubes from $\tubes'$ are contained in $100R$. 
\end{itemize}
\end{lem}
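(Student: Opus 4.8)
The proof is obtained by applying Lemma~\ref{convexWolffImpliesFewTubesInRectangleLem} not only to $\tubes$ itself, but to its coarsenings at each of the $O(1/\eps)$ scales $s_i=\delta^{i\eps}$, $0\le i\le N$. At scale $s=s_i$ we will either reach a scale-$s$ version of Conclusion~(B) — which, since $s\ge\delta$, forces Conclusion~(B) of the present lemma at scale $\delta$, and we are done — or we will extract a $1$-uniform partitioning cover of a large subset of $\tubes$ by essentially distinct $s$-tubes, inside which the $\delta$-tubes cannot concentrate into $s\times\rho\times1$ prisms. Running this over all $N$ scales and intersecting produces the subset $\tubes'$ and the covers $\{\tubes'_{s_i}\}$ required by Conclusion~(A). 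The scales $s=\delta$ and $s=1$ are trivial: for $s=\delta$ the cover is $\tubes'$ itself and the $\delta\times t\times 1$ prism bound is precisely Lemma~\ref{convexWolffImpliesFewTubesInRectangleLem}(A), while for $s=1$ the cover is a single unit tube.

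\textbf{Coarsening, its shading, and the scale-$s$ dichotomy.} Fix an intermediate scale $s=s_i$, $1\le i\le N-1$, and a subcollection $\tubes^\sharp\subseteq\tubes$ with $\#\tubes^\sharp\ge\delta^{\eps/2}(\#\tubes)$. Coarsening $\tubes^\sharp$ to $s$-tubes and applying a dyadic pigeonhole (discarding a $(\log1/\delta)^{O(1)}$ fraction of the $\delta$-tubes) produces a $K$-uniform partitioning cover $\tubes_s$ with $K\lessapprox1$; by Remark~\ref{consequenceOFUnifCover} it satisfies the Convex Wolff Axioms with error $\lessapprox\delta^{-\eta}\le s^{-\eta_1}$, where $\eta_1=\eta_1(\eps_1)$ is the constant furnished by Lemma~\ref{convexWolffImpliesFewTubesInRectangleLem} for a tiny parameter $\eps_1\le\eps/2$, and $\eta\le\eps\eta_1$ is chosen at the end. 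For each $T_s\in\tubes_s$ pick one $\delta$-tube $T^\ast\subset T_s$ with $|Y(T^\ast)|\ge\delta^\eta|T^\ast|$ and set $Y_s(T_s)=T_s\cap N_s(Y(T^\ast))$; since a $\delta^\eta$-dense subset of the $\delta$-neighborhood of a unit segment must meet at least $\gtrsim\delta^\eta/s$ of the $1/s$ length-$s$ sub-tubes, we get $|Y_s(T_s)|\gtrsim\delta^\eta|T_s|\ge s^{\eta_1}|T_s|$, so $\{Y_s(T_s)\}$ is a uniformly $s^{\eta_1}$-dense shading. Thus $(\tubes_s,Y_s)_s$ satisfies the hypotheses of Lemma~\ref{convexWolffImpliesFewTubesInRectangleLem} with parameter $\eps_1$. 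If its Conclusion~(B) holds, then $\bigcup_{T_s}Y_s(T_s)$ has discretized Assouad dimension $\ge3-\eps_1\ge3-\eps$ at scale $s$ and scale separation $s^{-\eta_1}$; as $s\ge\delta$ and $\bigcup Y_s(T_s)\subseteq N_{2s}\bigl(\bigcup_{T\in\tubes}Y(T)\bigr)$, this yields Conclusion~(B) of the present lemma at scale $\delta$ and scale separation $\ge\delta^{-\eta}$. Otherwise we retain a subcover $\widetilde\tubes_s$ with $\#\widetilde\tubes_s\ge\tfrac12\#\tubes_s$ (hence, by $1$-uniformity, at least half of the underlying $\delta$-tubes) in which every $s\times\rho\times1$ prism contains at most $s^{-\eps_1}(\rho/s)$ tubes of $\widetilde\tubes_s$ in its $2$-dilate.

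\textbf{From the scale-$s$ bound to the $s\times t\times1$ bound, and iteration.} Assume Conclusion~(B) never occurs, and run the preceding step for $i=1,2,\dots,N-1$, feeding the surviving collection of each step into the next. A dyadic pigeonhole carried out through the scales from coarsest to finest — using a family of covers that is nested across scales, so that refining one cover does not disturb $1$-uniformity at coarser scales — yields a single subset $\tubes'\subseteq\tubes$ with $\#\tubes'\ge\delta^{\eps}(\#\tubes)$ (each of the $N$ steps costs only a $\delta^{-O(\eps_1)}(\log1/\delta)^{O(1)}$ factor), together with, for each $s=s_i$, a $1$-uniform partitioning cover $\tubes'_s$ of $\tubes'$ by essentially distinct $s$-tubes drawn from $\widetilde\tubes_s$, each carrying exactly $M_s:=\#\tubes'/\#\tubes'_s$ tubes of $\tubes'$. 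Finally, fix $s\le t\le1$ and an $s\times t\times1$ prism $R$. If $T\in\tubes'$ with $T\subset R$ and $T\subset T_s\in\tubes'_s$, then — since $T\subset T_s$ are neighborhoods of unit segments — one has $\ell(T_s)\subset N_{O(s)}(\ell(T))\subset N_{O(s)}(R)$, whence $T_s\subset CR$ for an absolute constant $C$, and $CR$ is comparable to an $s\times t\times1$ prism. Therefore, using the scale-$s$ prism bound (covering $CR$ by $O(1)$ prisms of dimensions $s\times Ct\times1$, or, when $t$ is comparable to $1$, applying that bound with $\rho=1$),
\begin{equation*}
\#\{T\in\tubes'\colon T\subset R\}\ \le\ M_s\cdot\#\{T_s\in\tubes'_s\colon T_s\subset CR\}\ \lesssim\ M_s\,s^{-\eps_1}(t/s)\ \le\ \delta^{-\eps}\,\frac{t}{s}\,\frac{\#\tubes'}{\#\tubes'_s},
\end{equation*}
since $s^{-\eps_1}=\delta^{-i\eps\eps_1}\le\delta^{-\eps/2}$ (as $i\eps_1\le N\eps_1\le1$) and the implicit constant is absorbed for $\delta$ small. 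This is exactly Conclusion~(A).

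\textbf{Main obstacle.} The geometric content is entirely contained in Lemma~\ref{convexWolffImpliesFewTubesInRectangleLem}, which we quote, so what is added here is the multi-scale packaging. The genuinely delicate point is the iteration in the last step: one must arrange that the covers at all $\sim1/\eps$ scales can be taken \emph{simultaneously} $1$-uniform over one and the same subset $\tubes'$, while the total loss in $\#\tubes$ remains within a $\delta^{\eps}$ factor. This forces the covers to be constructed compatibly (nested across scales) and the pigeonholings to be performed from the coarsest scale inward; tracking the interaction of these pigeonholings with the uniformity requirements, and controlling the accumulation of polylogarithmic and $\delta^{-O(\eps_1)}$ losses over the $\sim1/\eps$ scales, is where the bulk of the bookkeeping lies.
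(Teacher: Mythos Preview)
Your proposal is essentially correct and follows the same approach as the paper: regularize $\tubes$ into nested covers at the $N=1/\eps$ scales $s_i=\delta^{i\eps}$, apply Lemma~\ref{convexWolffImpliesFewTubesInRectangleLem} to the $s_i$-tubes at each scale (falling into Conclusion~(B) if that lemma's Conclusion~(B) ever fires), and use the resulting $s$-tube prism bounds together with $1$-uniformity to deduce the $\delta$-tube prism bound via $\#\{T\subset R\}\le M_s\cdot\#\{T_s\subset CR\}$.

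One remark on the bookkeeping, which you correctly flag as the delicate part. You propose running the iteration ``from the coarsest scale inward,'' but the paper runs it in the opposite direction, from the finest scale $s_N=\delta$ up to $s_1$. The fine-to-coarse order is cleaner: when Lemma~\ref{convexWolffImpliesFewTubesInRectangleLem} at scale $s_{N-k+1}$ discards half of the $s_{N-k+1}$-tubes, one then pigeonholes once at the next coarser scale $s_{N-k}$ to restore $1$-uniformity there, and since only whole $s_{N-k+1}$-tubes were dropped, the $1$-uniformity at all finer scales $s_{N-k+2},\ldots,s_N$ is automatically preserved (each surviving fine tube still carries the same number of $\delta$-tubes). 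In your coarse-to-fine order, after applying the lemma at $s_{i+1}$ the covers at \emph{all} coarser scales $s_1,\ldots,s_i$ can lose $1$-uniformity, and re-pigeonholing them requires doing so from $s_i$ down to $s_1$ (not ``coarsest inward''), since fixing a finer scale first ensures that subsequent drops of whole coarser tubes do not disturb it. Either order can be made to work, but the paper's direction keeps the losses to $|\log\delta|^{-O(N)}$ with a single extra pigeonhole per step rather than $O(N)$ of them.
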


\begin{proof}
\textbf{Step 1.}\\
Our first step is to regularize the set $\tubes$ at scales of the form $s_i=\delta^{i/N}$, with $i=1,\ldots,N$ (we do not need to consider $s_0 = 1$, since in that case there is nothing to prove). After repeated pigeonholing, we can select a set $\tubes^\dag \subset\tubes$ with $\#\tubes^\dag\geq (100\log(1/\delta))^{-N}(\#\tubes)$ and sets of $s_i$-tubes $\tubes_{s_i},\ i=0,\ldots,N$, so that $\tubes_{s_N}=\tubes^\dag$, and for each $i=1,\ldots,N-1$, the set $\tubes_{s_i}$ is a uniform partitioning cover of $\tubes_{s_{i+1}}$ (and hence also a uniform partitioning cover of $\tubes_{s_j}$ for all $j>i$). See e.g.~Lemma 3.4 from \cite{KS} for an explicit description of this iterated pigeonholing process. We will choose $\delta_0$ sufficiently small so that $\#\tubes^\dag\geq\delta^{\eta/2}(\#\tubes)$. In particular, by Remark \ref{consequenceOFUnifCover} this means that $\tubes^\dag$ and each set $\tubes_{s_i},\ i=1,\ldots,N$ satisfies the Convex Wolff Axioms with error $\delta^{-2\eta}\leq s_i^{-2\eta N}$. For each index $i$ and each $T_{s_i}\in\tubes_{s_i}$, define the shading $Y_{s_i}(T_{s_i})=T_{s_i}\cap N_{s_i}(Y(T))$, where $T$ is a tube from $\tubes[T_{s_i}]$. In particular, $|Y_{s_i}(T_{s_i})|\gtrsim \delta^\eta|T_{s_i}|\gtrsim s_i^{\eta N}|T_{s_i}|$, i.e. $Y_{s_i}$ is a uniformly $s_i^{\eta N}$-dense shading of $\tubes_{s_i}$.

\medskip

\noindent \textbf{Step 2.}\\
For each $i=1,\ldots,N$, apply Lemma \ref{convexWolffImpliesFewTubesInRectangleLem} to $(\tubes_{s_i},Y_{s_i})_{s_i}$ with $\eps$ as above; we can do this provided $\eta>0$ and $\delta_0$ are selected sufficiently small depending on $\eps$ (recall that $N=1/\eps$). If Conclusion (A) of Lemma \ref{convexWolffImpliesFewTubesInRectangleLem} holds, then $\bigcup_{\tubes_{s_i}}Y_{s_i}(T_{s_i})$ has discretized Assouad dimension at least $3-\eps$ at scale $s_i$ and scale separation $s_i^{-\eta'}$, for some $\eta'$ depending only on $\eps$. This in turn implies that $\bigcup_{\tubes}Y(T)$ has discretized Assouad dimension at least $3-\eps$ at scale $\delta$ (this is true since $\delta\leq s_i$) and scale separation $\delta^{-\eta'/N}$. Thus Conclusion (A) of Lemma \ref{correctNumberTubesInRectLem} holds, provided we select $\eta\leq \eta'/N = \eta'\eps$. If this happens then we are done.

Suppose instead that Conclusion (B) of Lemma \ref{convexWolffImpliesFewTubesInRectangleLem} holds. This gives us a set $\tubes_{s_i}'\subset \tubes_{s_i}$  with $\#\tubes_{s_i}'\geq\frac{1}{2}(\#\tubes_{s_i})$ so that for every $s_i\leq t \leq 1$ and every $s_i\times t\times 1$ rectangular prism $R$, at most $s_i^{-\eps}(t/s_i)\leq\delta^{-\eps}(t/s_i)$ tubes from $\tubes_{s_i}'$ are contained in $100R$. Abusing notation slightly, we will replace each set $\tubes_{s_j},\ j>i$ with $\bigcup_{\tubes_{s_i}'}\tubes_{s_j}[T_{s_i}]$, i.e.~we only keep those tubes from $\tubes_{s_j}$ that are contained in some tube from $\tubes_{s_i}'$. This process discards at most half the tubes from $\tubes_{s_j}$, so at each stage in this process, $\tubes_{s_j}$ still satisfies the convex Wolff axioms with error at most $2^N s_j^{-2\eta N}$ and hence we can continue to iterate Step 2.

\medskip

\noindent \textbf{Step 3.}\\
Suppose that Conclusion (B) of Lemma \ref{convexWolffImpliesFewTubesInRectangleLem} holds for each $i=1,\ldots,N$ in Step 2. While each set $\tubes_{s_i}$ is a uniform partitioning cover of $\tubes^\dag$, this property might no longer hold for the sets $\tubes_{s_i}'$. We will fix this with another round of pigeonholing. This gives us sets $\tubes_{s_i}'\subset\tubes_{s_i}$, $i=1,\ldots,N$ and a set $\tubes'\subset\tubes^\dag$, so that $\#\tubes' \geq (100 \log(1/\delta))^{-N}(\#\tubes^\dag)$; each set $\tubes_{s_i}'$ is a uniform partitioning cover of $\tubes'$; and for each index $i$ and each $s_i\times t\times 1$ prism $R$ we have
\[
\#\tubes'[100R] \leq \#\tubes_{s_i}'[100R] \frac{\#\tubes'}{\#\tubes_{s_i}'} \leq \delta^{-\eps}\frac{t}{s_i}\frac{(\#\tubes')}{(\#\tubes_{s_i}')}.
\]
Thus $\tubes'$ and $\tubes_{s_i}',\ i=1,\ldots,N$ satisfy Conclusion (B) of Lemma \ref{correctNumberTubesInRectLem}.
\end{proof}

\subsection{Proof of Proposition \ref{4WayDichotProp}} 
We are now ready to prove Proposition \ref{4WayDichotProp}. For the reader's convenience, we reproduce it here
\begin{4wayDicotPropEnv}
For all $\eps>0$, there exists $\tau>0$, so that for all $\eta_1>0$, there exists $\eta,\delta_0>0$ so that the following holds for all $\delta\in(0,\delta_0]$. Let $\beta\geq 2$ and let $(\tubes,Y)_\delta$ be a set of $\delta^{-\beta}$ essentially distinct $\delta$-tubes that satisfy the Convex Wolff Axioms with error $\delta^{\eta}$, with $Y$ a $\delta^\eta$-uniformly dense shading. Then at least one of the following must be true:
\begin{itemize}
\item[(A)] $\bigcup_\tubes Y(T)$ has discretized Assouad dimension at least $3-\eps$ at scale $\delta$, with scale separation $\delta^{-\eta}$. 
\item[(B)] There exists $\tubes'\subset\tubes$ with $\#\tubes'\geq\delta^{\eta_1}(\#\tubes)$, and $\tubes'$ satisfies the self-similar Convex Wolff Axioms with error $\delta^{-\eps}$.
\item[(C)] There exists $\rho\in[\delta,\delta^\eps]$ and a set $\tilde\tubes$ of $\rho$-tubes, with the following properties:
	\begin{itemize}
	\item[(C.i)] The tubes in $\tilde\tubes$ are essentially distinct, and $\#\tilde\tubes\geq\rho^{-\beta-\tau}$. 
	\item[(C.ii)] $\tilde\tubes$ satisfies the Convex Wolff Axioms with error $\rho^{-\eta_1}$. 
	\item[(C.iii)] For each $T_\rho\in\tilde\tubes$, there exists $T\in\tubes$ with $T\subset T_\rho$.
	\end{itemize}

\item[(D)] There exists $\rho\in[\delta^{1-\eps},1]$, a $\rho$-tube $T_\rho$, and a set $\tilde\tubes\subset \tubes[T_\rho]$, so that the following holds: 
	\begin{itemize}
	\item[(D.i)]  $\# \tilde\tubes \geq(\delta/\rho)^{-\beta-\tau}.$ 
	\item[(D.ii)] The unit rescaling of $\tilde\tubes$ relative to $T_\rho$ satisfies the Convex Wolff Axioms with error $(\delta/\rho)^{-\eta_1}$. 
	\end{itemize}
\end{itemize}
\end{4wayDicotPropEnv}
\begin{proof}
{\bf Step 1.}\\
Decreasing $\eta_1$ if necessary, we may may suppose that $\eta_1\leq 1$. Fix a number $\eta_2\leq \eta_1\eps/40$ of the form $\eta_2=1/N>0$. Apply Lemma \ref{correctNumberTubesInRectLem} with $\eta_2$ in place of $\eps$. If Conclusion (A) of  Lemma \ref{correctNumberTubesInRectLem} holds, then Conclusion (A) of Proposition \ref{4WayDichotProp} holds (since $\eta_2\leq\eps$) and we are done. If instead Conclusion (B) of  Lemma \ref{correctNumberTubesInRectLem} holds, then there exists:
\begin{itemize}
\item A set $\tubes'\subset\tubes$,
\item For each number $\rho$ of the form $\delta^{i/N}$, a uniform partitioning cover $\tubes_\rho$ of $\tubes'$.
\end{itemize}
These objects have the properties described in Conclusion (B) of Lemma \ref{correctNumberTubesInRectLem}.

Suppose there exists a scale $\rho\in[\delta,\delta^\eps]$ of the form $\rho=\delta^{i/N}$ so that $\#\tubes_\rho\geq\delta^{-\beta-\tau}$. Then by Remark \ref{consequenceOFUnifCover}, $\tubes_\rho$ satisfies the Convex Wolff Axioms with error $\delta^{-2\eta}\leq\rho^{-2\eta/\eps}\leq \rho^{-\eta_1}$. Thus Conclusion (C) holds, with $\tilde\tubes=\tubes_\rho$. 

Henceforth we shall suppose that for each scale $\rho\in[\delta,\delta^\eps]$ of the form $\rho=\delta^{i/N}$, we have
\begin{equation}\label{smallCardTubesRho}
\#\tubes_\rho\leq\delta^{-\beta-\tau}.
\end{equation}

\medskip

\noindent {\bf Step 2.}
Our goal in this step is to show that either Conclusion (D) of Proposition \ref{4WayDichotProp} holds, or for every $\rho\in[\delta,\delta^\eps]$ of the form $\rho=\delta^{i/N}$, we have
\begin{equation}\label{boundOnFineTubesInsideRhoTube}
  \#\tubes_\rho \geq \rho^{-\beta+\eps/3},\quad\rho\in[\delta,\delta^{\eps}].
\end{equation}
First, we consider the case where $\beta\leq 2+\eps/4$; in this case \eqref{boundOnFineTubesInsideRhoTube} will always hold. Indeed, let $\rho\in[\delta,\delta^\eps]$ be of the form $\rho=\delta^{i/N}$. Then for each $T_\rho\in\tubes_\rho$, we have
\[
\frac{\#\tubes'}{\#\tubes_\rho} =\#\tubes'[T_\rho]\leq \delta^{-2\eta}\rho^2\#\tubes',
\]
where the second inequality follows from the fact that $\tubes'$ satisfies the Convex Wolff Axioms with error $\delta^{-2\eta}$. We conclude that $\#\tubes_\rho \geq \delta^{2\eta}\rho^{-2}$. In particular, if $\beta\leq 2+\eps/4$, and if we select $\eta$ sufficiently small, then 
\begin{equation}\label{TRhoBigIfBetaClose2}
\#\tubes_\rho\geq \delta^{2\eta}\rho^{-2}\geq\rho^{2\eta/\eps +\eps/4}\rho^{-\beta}\geq \rho^{-\beta + \eps/3},
\end{equation}
and hence \eqref{boundOnFineTubesInsideRhoTube} holds. 

Next we consider the case where $\beta> 2+\eps/4$. Suppose there exists some $\rho\in[\delta,\delta^\eps]$ of the form $\rho=\delta^{i/N}$ for which $\#\tubes_\rho<\rho^{-\beta+\eps/4}.$ Let $\rho\in[\delta,\delta^\eps]$ be the smallest number of the form $\delta^{iN}$ for which this is true. Note that $\rho\geq\delta^{1-\eps/20}$; this is because the tubes in $\tubes'$ are essentially distinct, and thus $\#\tubes_\rho\geq (\delta/\rho)^4(\#\tubes')\geq(\delta/\rho)^4\delta^{-\beta+2\eta}$. 

Let $T_\rho\in\tubes_\rho$. By Conclusion (B) of Lemma \ref{correctNumberTubesInRectLem}, for each $s\in[\delta,\rho]$ of the form $\delta^{jN}$, each $t\in[s,\rho]$, and each $s\times t \times 1$ rectangular prism $R\subset T_\rho$, we have
\begin{align*}
\#\{T\in\tubes'[T_\rho]\colon T\subset R\}
&\leq \delta^{-\eta_2}\frac{t}{s}\frac{(\#\tubes')}{(\#\tubes_s)}\leq\delta^{-\eta_2}\frac{t}{s}\frac{\delta^{-\beta}}{\rho^{-\beta+\eps/4}}\Big(\frac{s}{\rho}\Big)^{\beta-\eps/4}\\
&\leq \delta^{-\eta_2}\frac{t}{s}\frac{\delta^{-\beta}}{\rho^{-\beta+\eps/4}}\Big(\frac{s}{\rho}\Big)^2
\leq \delta^{-2\eta_2}\frac{t}{s}\frac{(\#\tubes')}{(\#\tubes_\rho)}\Big(\frac{s}{\rho}\Big)^2
\leq\delta^{-2\eta_2}\frac{st}{\rho^2}(\#\tubes'[T_\rho]),
\end{align*}
where the second inequality used the fact that $\#\tubes_s\geq s^{-\beta+\eps/4}$ (this follows from the minimality of $\rho$); the third inequality used the assumption $\beta-\eps/4\geq 2$; and the fourth inequality used the assumptions $\#\tubes'\geq\delta^{-\beta+\eta_1}$ and $\#\tubes_\rho\leq\rho^{-\beta+\eps/4}$.

Define $\tilde\tubes=\tubes'[T_\rho]$, and let $W\subset T_\rho$ be a convex set that contains at least one tube from $\tilde\tubes$. Then $W$ is comparable to a $s\times t\times 1$ prism, for some $\delta\leq s\leq t\leq \rho$; increasing $s$ (and possibly $t$) by a factor of at most $\delta^{-1/N}$, we may suppose that $s$ is of the form $\delta^{jN}$. Thus we have
	\[
		\#\{T\in \tilde\tubes \colon T\subset W\} \lesssim \delta^{-4\eta_2}\frac{|W|}{|T_\rho|}(\#\tilde\tubes)\leq (\delta/\rho)^{-40\eta_2/\eps}\frac{|W|}{\rho^2}(\#\tilde\tubes)\leq (\delta/\rho)^{-\eta_1}\frac{|W|}{\rho^2}(\#\tilde\tubes).
	\]
Thus our set $\tilde\tubes$ satisfies Conclusion (D) from Proposition \ref{4WayDichotProp}, and we are done. Henceforth we shall assume that \eqref{boundOnFineTubesInsideRhoTube} holds.

\medskip

\noindent {\bf Step 3.} At this point, we have reduced to the case where \eqref{smallCardTubesRho} and \eqref{boundOnFineTubesInsideRhoTube} hold for all $\rho\in[\delta,\delta^\eps]$ of the form $\rho=\delta^{i/N}$. We will show that Conclusion (B) holds. To begin, note that Items (i) and (iii) from Definition \ref{selfSimilarWolff} hold for $\tubes'$: for each $\rho_0\in[\delta,\delta^{\eps}]$, we can select $\rho\in [\rho_0, \delta^{-\eps}\rho_0]$ of the form $\rho=\delta^{i/N}$; $\tubes_\rho$ is a partitioning cover of $\tubes'$, so Item (i) holds, and Item (iii) holds by  \eqref{boundOnFineTubesInsideRhoTube} (if $\rho_0>\delta^{\eps}$ then we can select $\rho=1$ and there is nothing to prove). 

It remains to show that Item (ii) holds for $\tubes'$. It suffices to consider $\rho$-tubes $T_\rho$ with $\rho\in[\delta,\delta^{\eps}]$ of the form $\rho=\delta^{i/N}$. Initially, we will consider convex sets $R$ that are $s\times t\times 1$ prisms, with $s\in[\delta,\rho]$ of the form $s=\delta^{j/N}$ and $t\in[s,\rho]$.  We would like to estimate the number of tubes from $\tubes'[T_\rho]$ contained in $R$. 

By Conclusion (B) of Lemma \ref{correctNumberTubesInRectLem} and \eqref{boundOnFineTubesInsideRhoTube}, we have
 \begin{align*}
\#\{T\in\tubes'[T_\rho]\colon T\subset R\} 
&\leq  \delta^{-\eta_2}\frac{t}{s}\frac{(\#\tubes')}{(\#\tubes_s)} 
\lesssim \delta^{-\eta_2-\eps/4}(ts) (s^{\beta-2}\delta^{-\beta})
\lesssim \delta^{-\eta_2-\eps/4}(ts) (\rho^{\beta-2}\delta^{-\beta})\\
& = \delta^{-\eta_2-\eps/4}\Big(\frac{ts}{\rho^2}\Big)\Big(\frac{\rho}{\delta} \Big)^\beta
 \leq \delta^{-2\eta_2-\eps/4}\Big(\frac{ts}{\rho^2}\Big)\big(\#\tubes'[T_\rho]\big)
 \leq \delta^{-\eps/2}\frac{|R|}{|T_\rho|}\big(\#\tubes'[T_\rho]\big).
\end{align*}
Finally, every convex set $W\subset T_\rho$ can be contained in a bounded number of $s\times t\times 1$ rectangular prisms $R$, where $s$ is of the form $\delta^{j/N}$, $t\in[s,\rho]$, and $|R|\leq \delta^{-\eta_2}|W|$. Thus for every convex set $W\subset\tubes_\rho$, we have
\[
\#\{T\in\tubes'[T_\rho]\colon T\subset W\} \leq \delta^{-\eps}\frac{|W|}{|T_\rho|}\big(\#\tubes'[T_\rho]\big).
\]
We conclude that $\tubes$ satisfies Condition (ii) from Definition \ref{selfSimilarWolff}, and hence Conclusion (B) of Proposition \ref{4WayDichotProp} holds. 
\end{proof}

\section{Sticky Kakeya and the self-similar Convex Wolff Axioms }\label{kakeyaSelfSimWolffSection}
Our goal in this section is to prove Theorem \ref{mainThmDiscretized}. Our first task is to analyze the structure of extremal collections of tubes. If $\omega>0$, then such collections must satisfy the self-similar Convex Wolff Axioms. 

\subsection{Extremal Kakeya sets satisfy the self-similar Convex Wolff Axioms}\label{extremalImpliesSelfSimilarWolffSection}
\begin{prop}\label{extremalImpliesSelfSimilarWolff}
Suppose $\omega$ from Definition \ref{defnOmega} is positive. Then for all $\eps>0$, there exists $\eta,\delta_0>0$ so that the following holds for all $\delta\in(0,\delta_0]$. Let $(\tubes,Y)_\delta$ be $\eta$ Assouad-extremal. Then there exists $\tubes'\subset\tubes$ so that $(\tubes',Y)_\delta$ is $\eps$ Assouad-extremal, and $\tubes'$ satisfies the self-similar Convex Wolff axioms with error $\delta^{-\eps}$. 
\end{prop}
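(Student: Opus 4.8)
The plan is to run the iterative scheme encoded in Proposition \ref{4WayDichotProp}, using the definitions of $\omega$ and $\alpha$ to exclude all conclusions except conclusion (B). Start with an $\eta$ Assouad-extremal set $(\tubes, Y)_\delta$; after passing to a uniformly $\delta^\eta$-dense regular subshading (which costs only a harmless $|\log\delta|$-factor and does not affect extremality after adjusting constants), we may assume $Y$ is uniformly dense. Write $\#\tubes = \delta^{-\beta}$ with $\beta \geq \alpha - \eta$. Now apply Proposition \ref{4WayDichotProp} with a small parameter $\eps_1$ (to be chosen in terms of $\eps$) and an auxiliary parameter $\eta_1$. Conclusion (A) of Proposition \ref{4WayDichotProp} says $\bigcup Y(T)$ has discretized Assouad dimension at least $3 - \eps_1$ at scale separation $\delta^{-\eta}$; but since $(\tubes,Y)_\delta$ is $\eta$ Assouad-extremal, inequality \eqref{volumeBdNearExtremal} gives discretized Assouad dimension at most $3 - \omega + \eta$ at that scale separation. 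Since $\omega > 0$ by hypothesis, choosing $\eps_1 < \omega/2$ and $\eta$ small makes conclusion (A) impossible. So one of (B), (C), (D) must hold.

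Next I would rule out (C) and (D). In conclusion (C) we obtain a set $\tilde\tubes$ of essentially distinct $\rho$-tubes with $\rho \in [\delta, \delta^{\eps_1}]$, with $\#\tilde\tubes \geq \rho^{-\beta - \tau}$, satisfying the Convex Wolff Axioms with error $\rho^{-\eta_1}$, and such that each $\rho$-tube in $\tilde\tubes$ contains a $\delta$-tube from $\tubes$; moreover for each $T_\rho \in \tilde\tubes$ the shading $Y$ restricted to tubes inside $T_\rho$ descends to a $\rho^{O(\eta)}$-dense shading $\tilde Y(T_\rho) = T_\rho \cap N_\rho(\bigcup Y(T))$ on $\tilde\tubes$. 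The point is that $\tilde\tubes$ is a collection of $\rho$-tubes of cardinality $\rho^{-\beta-\tau}$, which (after checking the extremality inequality survives, using that $N_\rho(\bigcup Y(T)) \subseteq N_{\rho'}(\bigcup Y(T))$ for $\rho' \geq \rho$ together with \eqref{volumeBdNearExtremal}) witnesses $\zeta(\tilde\tubes, \tilde Y; \eta') \geq \omega - O(\eps_1)$ at the coarser scale $\rho$. This forces $\omega(\rho; \beta + \tau) \geq \omega - O(\eps_1)$, hence $\omega(\beta + \tau) \geq \omega$, hence $\omega(\beta+\tau) = \omega(2)$; but then $\beta + \tau \leq \alpha$, contradicting $\beta \geq \alpha - \eta$ once $\eta < \tau/2$. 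Conclusion (D) is handled the same way: the unit rescaling of $\tilde\tubes$ relative to $T_\rho$ is a set of $(\delta/\rho)^{-\beta-\tau}$ tubes at scale $\delta/\rho$, satisfying the Convex Wolff Axioms with small error, and — because Theorem \ref{mainThmDiscretized}'s conclusion and hence $\zeta$ is agnostic to location and affine rescaling — the extremality inequality \eqref{volumeBdNearExtremal} descends to this rescaled collection with the same exponent $\omega - \eps$; so again $\omega(\beta + \tau) = \omega(2)$, contradicting the definition of $\alpha$. Thus only conclusion (B) can hold: there is $\tubes' \subseteq \tubes$ with $\#\tubes' \geq \delta^{\eta_1}(\#\tubes) \geq \delta^{-\alpha + \eps}$ (choosing $\eta_1, \eta$ small), satisfying the Self-Similar Convex Wolff Axioms with error $\delta^{-\eps_1} \leq \delta^{-\eps}$.

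Finally I must check that $(\tubes', Y)_\delta$ is itself $\eps$ Assouad-extremal. The cardinality bound $\#\tubes' \geq \delta^{-\alpha + \eps}$ holds by the previous paragraph; the tubes are essentially distinct and contained in $B(0,1)$; the Self-Similar Convex Wolff Axioms imply the Convex Wolff Axioms with error $\delta^{-\eps}$ by \eqref{CWAImplication} (after tracking that the error exponent $\eps_1$ can be absorbed into $\eps$); $Y$ restricted to $\tubes'$ is still $\delta^\eps$-dense since passing to a subcollection only decreases both sides of the density inequality proportionally, and we lose at most a $\delta^{\eta_1}$ factor which is absorbed; and the upper bound \eqref{volumeBdNearExtremal} is inherited automatically because $\bigcup_{T \in \tubes'} Y(T) \subseteq \bigcup_{T \in \tubes} Y(T)$, so its $\rho$-neighborhood intersected with any ball $B$ is only smaller. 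The main obstacle, and the step that needs the most care, is the bookkeeping in ruling out (C) and (D): one must verify that the extremality inequality \eqref{volumeBdNearExtremal} genuinely descends — with its exponent essentially unchanged — to the coarsened/rescaled collections that Proposition \ref{4WayDichotProp} produces, and that the resulting witnesses feed correctly into the definitions \eqref{defnZetaTubesRhoR}–\eqref{defnOmegaSEtaDeltaU} of $\omega(u)$ so as to contradict the maximality of $\alpha$. This requires being careful that the shadings $\tilde Y$ inherited at the coarser scale are dense enough (the $\delta^{\eta}$ versus $\rho^{\eta'}$ discrepancy) and that the location/scale-invariance of $\zeta$ is applied to an honest affine image of the configuration.
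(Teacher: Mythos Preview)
Your overall strategy matches the paper's: apply Proposition~\ref{4WayDichotProp} and use extremality together with the definitions of $\omega$ and $\alpha$ to rule out (A), (C), (D). Your treatment of (A) and of the final verification that $(\tubes',Y)$ is $\eps$ Assouad-extremal are both correct.

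The gap is in your elimination of (C) and (D). You write that the inherited volume bound gives $\zeta(\tilde\tubes,\tilde Y;\eta')\geq\omega-O(\eps_1)$, ``hence $\omega(\beta+\tau)\geq\omega$, hence $\omega(\beta+\tau)=\omega(2)$, hence $\beta+\tau\leq\alpha$.'' But $\omega(\beta+\tau)=\inf_{s,\eta'}\limsup_{x\to 0}\omega(s,\eta',x;\beta+\tau)$, and a single example at one fixed choice of parameters $(\eta_1,\eta',\rho)$ cannot lower-bound this infimum; you cannot send $\eta_1,\eta'\to 0$ inside the argument since they were fixed at the outset, and $\tau=\tau(\eps_1)$ so $\eps_1$ cannot shrink either. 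The paper runs the contradiction in the opposite direction, and this is what you need: since $\beta+\tau>\alpha$ (from $\eta<\tau/2$), the definition of $\alpha$ gives a \emph{fixed} gap $\gamma:=\omega-\omega(\beta+\tau)>0$; then, unwinding the definition of $\omega(\beta+\tau)$ as in Lemma~\ref{roleOfOmega}, one chooses $\eta_1$ and $\delta_0$ small enough in terms of $\gamma$ so that \emph{every} valid pair $(\tilde\tubes,\tilde Y)_\rho$ of cardinality $\geq\rho^{-\beta-\tau+\eta_1}$ has discretized Assouad dimension at least $3-\omega+\gamma/4$. Since $\bigcup\tilde Y\subset N_\rho(\bigcup Y)$ (and in case (D) a ball pulls back under the rescaling to an ellipsoid containing a smaller ball), this large-dimension witness transfers to $\bigcup Y(T)$ and contradicts \eqref{volumeBdNearExtremal} once $\eta<\gamma/9$. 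The crucial point is that $\gamma$ is determined \emph{before} $\eta_1,\eta,\delta_0$ are chosen; your formulation reverses this quantifier order and therefore does not close.
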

\begin{proof}
\noindent{\bf Step 1.}
By decreasing $\eps$ if necessary, we can suppose that $\eps<\omega/2$. Let $\tau>0$ be the value from Proposition \ref{4WayDichotProp}, with this choice of $\eps$. Let $\eta_1$ be a small quantity to be chosen below ($\eta_1$ will depend on $\omega$ and $\eps$), and let $\eta',\delta_0'$ be the values from Proposition \ref{4WayDichotProp} for this choice of $\eps$ and $\eta_1$. 

Let $\eta$ and $\delta_0$ be small positive quantities to be chosen below. $\eta$ and $\delta_0$ will depend on the following quantities: $\omega,\eps,\tau,\eta_1,\eta'$. In addition, $\delta_0$ will depend on $\delta_0'$. 

Let $\delta\in(0,\delta_0]$ and let $(\tubes,Y)_\delta$ be $\eta$ Assouad-extremal. By pigeonholing, we can select a subset $\tubes_1\subset\tubes$ with $\#\tubes_1\gtrsim\delta^{\eta}(\#\tubes)$ so that $|Y(T)|\geq\frac{1}{2}\delta^{\eta}|T|$ for each $T\in\tubes_1$. Then $\#\tubes_1=\delta^{-\beta}$ for some $\beta\geq\alpha-2\eta$ (recall that $\alpha$ is defined in \eqref{defnAlphaEqn}), and $\tubes_1$ satisfies the Convex Wolff Axioms with error $\delta^{-2\eta}$. Furthermore, for all $\rho_1,r_1$ with $\rho_1\leq\delta^{\eta}r_1$, and all balls $B$ of radius $r_1$, we have 

\begin{equation}\label{smallVolumeInsideBallRho1R1}
\Big| B \cap N_{\rho_1}\Big(\bigcup_{T\in\tubes_1}Y(T)\Big)\Big|\leq(\rho_1/r_1)^{\omega-\eta}|B|.
\end{equation}

\medskip

\noindent \noindent{\bf Step 2.}
Apply Proposition \ref{4WayDichotProp} to $(\tubes_1,Y)_\delta$. We will show that Conclusion (A) from Proposition \ref{4WayDichotProp} cannot hold. Suppose to the contrary that Conclusion (A) holds, i.e.~there exists $\rho,r\in[\delta,1]$ with $\rho\leq\delta^{\eta'} r$ and a $r$-ball $B$ such that
\begin{equation}\label{ConclusionAImpossible}
\Big| B\cap N_{\rho}\Big(\bigcup_{T\in\tubes_1}Y(T)\Big)\Big|\geq(\rho/r)^\eps|B|.
\end{equation}
If we select $\eta\leq\min(\eta', \omega_2)$ (recall that $\eps\leq\omega/2$), then \eqref{ConclusionAImpossible} contradicts \eqref{smallVolumeInsideBallRho1R1}. Hence Conclusion (A) cannot hold. 

\medskip

\noindent \noindent{\bf Step 3.}
We will show that Conclusion (C) from Proposition \ref{4WayDichotProp} cannot hold. Suppose to the contrary that Conclusion (C) holds, i.e.~there exists $\rho\in[\delta,\delta^\eps]$ and a set of $\rho$-tubes $\tilde\tubes$ satisfying Items (C.i), (C.ii) and (C.iii) from Conclusion (C). By Item (C.iii), we have that the shading
\[
\tilde Y(\tilde T) = \tilde T \cap N_{\rho}\Big(\bigcup_{T\in\tubes_1}Y(T)\Big),\quad\tilde T\in\tilde\tubes
\]
satisfies $|\tilde Y(\tilde T)|\gtrsim\delta^{\eta}|\tilde T|\gtrsim \rho^{\eta/\eps}|\tilde T|$ for each $\tilde T\in\tilde\tubes$; we will select $\eta<\eta'\eps$ and $\delta_0$ sufficiently small that $|\tilde Y(\tilde T)|\leq\rho^{\eta'}|\tilde T|$ for each $\tilde T\in\tilde\tubes$. 

Define
\begin{equation}\label{defnGamma}
\gamma = \omega-\omega(\beta+\tau).
\end{equation}
(In the above equation, $\omega(\beta+\tau)$ refers to the function $\omega$ evaluated at the (positive) number $\beta+\tau$). 
Provided $\eta_1$ is selected sufficiently small (depending on $\tau$), we have $\beta+\tau\geq\alpha-2\eta_1+\tau>\alpha$, and hence by the definition of $\alpha$ from \eqref{defnAlphaEqn}, we have $\gamma>0$. Hence if $\eta_1$ is sufficiently small, then
\[
\limsup_{x\searrow 0}\omega(\eta_1,\eta_1,x,\beta+\tau)\leq \omega-\gamma/2,
\] 
i.e.~there exists $x_0>0$ so that
\[
\omega(\eta_1,\eta_1,x,\beta+\tau)\leq \omega-\gamma/4\quad\textrm{for all}\ x\in(0,x_0]. 
\]
Select $\delta_0$ sufficiently small so that $\delta_0^\eps\leq x_0$, and thus $\rho\in[\delta,\delta^\eps]$ implies $\rho\leq x_0$. In particular, $\omega(\eta_1,\eta_1,\rho,\beta+\tau)\leq \omega-\gamma/4,$ i.e.~there exists $\rho_1,r_1\in[\rho,1]$ with $\rho_1\leq \rho^{\eta_1}r_1$, and a $r_1$-ball $B$ such that
\begin{equation}\label{bigBallCaseC}
\Big| B\cap N_{\rho_1}\Big(\bigcup_{\tilde T\in\tilde\tubes}\tilde Y(\tilde T)\Big)\Big|\geq(\rho_1/r_1)^{\omega-\gamma/8}|B|.
\end{equation}
But if we select $\eta\leq \min(\eta_1\eps,\gamma/9)$, then $\rho_1\leq\delta^{\eta}r_1$, and \eqref{bigBallCaseC} contradicts \eqref{smallVolumeInsideBallRho1R1}. Hence Conclusion (C) cannot hold.

\medskip

\noindent \noindent{\bf Step 4.}
We will show that Conclusion (D) from Proposition \ref{4WayDichotProp} cannot hold. Suppose to the contrary that Conclusion (D) holds, i.e.~there exists $\rho\in[\delta^{1-\eps},1]$, a $\rho$-tube $T_{\rho}$, and a set $\tilde\tubes\subset\tubes_1[T_\rho]$ satisfying Items (D.i) and (D.ii) from Conclusion (D). For each $T\in\tilde\tubes$, let $\tilde Y(\tilde T)\subset Y(\tilde T)$ be the sub-shading produced by Lemma \ref{findRegularShading}; this shading is regular at scales $\geq\delta$ and satisfies $|\tilde Y(\tilde T)|\geq \frac{1}{2}|Y(\tilde T)|$. 

Let $\tilde\delta=\delta/\rho\in[\delta,\delta^{\eps}]$ and let $\tilde\tubes_{\tilde\delta}$ be the unit rescaling of $\tilde\tubes$ relative to $T_\rho$, i.e.~the tubes in $\tilde\tubes_{\tilde\delta}$ are the images of the tubes in $\tilde\tubes$ under the affine map $\phi_{T_\rho}$.  Let $\tilde Y_{\tilde\delta}(\tilde T_{\tilde\delta})$ be the image of $\tilde Y(\tilde T)$ under this same rescaling. We have $|\tilde Y_{\tilde\delta}(\tilde T)|\gtrsim \delta^{\eta}|\tilde T_{\tilde\delta}|\geq \tilde\delta^{\eta/\eps}|\tilde T_{\tilde\delta}|$, and we will select $\eta$ and $\delta_0$ sufficiently small so that $|\tilde Y_{\tilde\delta}(\tilde T_{\tilde\delta})|\geq\tilde\delta^{\eta_1}|\tilde T_{\tilde\delta}|$ for all $\tilde T_{\tilde\delta}\in \tilde\tubes_{\tilde\delta}$. The tubes in $\tilde\tubes_{\tilde\delta}$ are essentially distinct; by Item (D.i) we have $\#\tilde\tubes_{\tilde\delta}\geq\tilde\delta^{-\beta-\tau}$; and by Item (D.ii) we have that $\tilde\tubes_{\tilde\delta}$ satisfies the Convex Wolff Axioms with error $\lesssim\tilde\delta^{-\eta_1}$.

Arguing as in Step 3, we conclude that there exists $\rho_2,r_2\in[\tilde\delta,1]$ with $\rho_2\leq\tilde\delta^{\eta_1}r_2$, and a $r_2$-ball $B_2$ such that
\begin{equation}\label{rho2NbhdInsideB2}
\Big| B_2\cap N_{\rho_2}\Big(\bigcup_{\tilde T_{\tilde\delta}\in\tilde\tubes_{\tilde\delta}}\tilde Y_{\tilde\delta}(\tilde T_{\tilde\delta})\Big)\Big|\geq(\rho_2/r_2)^{\omega-\gamma/8}|B_2|.
\end{equation}

\noindent Let $E_2=\phi_{T_\rho}^{-1}(B_2)$; this is an ellipsoid of dimensions $\rho r_2\times \rho r_2\times \rho$. We claim that
\begin{equation}\label{claimedSizeOfE2CapNRhoRho2}
\Big| E_2\cap N_{\rho\rho_2}\Big(\bigcup_{\tilde T\in\tilde\tubes}\tilde Y(\tilde T)\Big)\Big|\gtrapprox \delta^{-\eta}(\rho_2/r_2)^{\omega-\gamma/8}|E_2|.
\end{equation}

To verify this claim, we argue as follows. Let $\tilde T_{\tilde\delta}\in\tilde\tubes_{\tilde\delta}$, let $\tilde x\in \tilde Y_{\tilde\delta}(\tilde T_{\tilde\delta})$, and let $\tilde B = B(\tilde x, \rho_2)$. Let $E=\phi_{T_\rho}^{-1}(\tilde B)$; this is an ellipsoid of dimensions $\rho_2\rho\times\rho_2\rho\times\rho_2$, and let $x=\phi_{T_\rho}^{-1}(\tilde x)$ be the center of $E$. Similarly, let $\tilde T_\delta$ and $\tilde Y(\tilde T)$ be the $\delta$ tube and its shading corresponding to $\tilde T_{\tilde\delta}$ and the shading $\tilde Y_{\tilde\delta}(\tilde T_{\tilde\delta})$. Then $E\cap\tilde T$ contains a tube-segment of length $\geq \rho_2/2$ centered at $x$, i.e. $E\cap \tilde T\supset B(x, \rho_2/2)$. Since $\tilde Y(\tilde T)$ is regular at scales $\geq\delta$ (and $\rho_2\geq \tilde\delta =\delta/\rho)$, we have that $|E\cap\tilde Y(\tilde T)|\gtrapprox \delta^\eta \rho_2\delta^2$, i.e. $E\cap\tilde Y(\tilde T)$ contains $\gtrapprox \delta^\eta (\rho_2)/(\rho\rho_2)=\delta^\eta \rho^{-1}$ points that are $10\rho\rho_2$-separated. For each such point $y$, we have that $B(y, \rho\rho_2)\subset 2E$, and these balls are disjoint. Since each of these balls has volume roughly $(\rho\rho_2)^3$, we conclude that
\begin{equation}\label{volumeInsideEllipsoid}
|2E\cap N_{\rho\rho_2}\Big(\bigcup_{\tilde T'\in\tilde\tubes}\tilde Y(\tilde T')\Big)\Big| \geq |2E\cap N_{\rho\rho_2}(\tilde Y(\tilde T))| \gtrapprox \delta^{\eta} \rho^2\rho_2^3.
\end{equation}

Next, let $\{\tilde x_i\}_{i=1}^N$ be a $2\rho_2$-separated subset of $B_2\cap \bigcup_{\tilde\tubes_{\tilde\delta}}\tilde Y_{\tilde\delta}(\tilde T_{\tilde\delta})$ of cardinality
\[
N\sim \rho_2^{-3}\Big| B_2\cap N_{\rho_2}\Big(\bigcup_{\tilde T_{\tilde\delta}\in\tilde\tubes_{\tilde\delta}}\tilde Y_{\tilde\delta}(\tilde T_{\tilde\delta})\Big)\Big|
\geq\rho_2^{-3}(\rho_2/r_2)^{\omega-\gamma/8}|B_2|,
\]
with the property that $B(\tilde x_i, 2\rho_2)\subset B_2$ for each index $i$.

For each index $i$, let $E_i=\phi_{T_\rho}^{-1}(B(\tilde x_i, \rho_2));$ this is an ellipsoid of dimensions $\rho_2\rho\times\rho_2\rho\times\rho_2$ (this introduces a slight clash of notation, since we previously introduced the ellipsoid $E_2$ of dimensions $\rho r_2\times \rho r_2\times \rho$). The ellipsoids $\{2E_i\}_{i=1}^N$ are disjoint and contained in $E_2$. Applying \eqref{volumeInsideEllipsoid} to each such ellipsoid, we have
\begin{equation}
\begin{split}
\Big| E_2\cap N_{\rho\rho_2}\Big(\bigcup_{\tilde T\in\tilde\tubes}\tilde Y(\tilde T)\Big)\Big| 
& \geq \sum_{i=1}^N \Big|2E_i\cap N_{\rho\rho_2}\Big(\bigcup_{\tilde T\in\tilde\tubes}\tilde Y(\tilde T)\Big)\Big|\\
& \gtrapprox N \delta^{\eta}\rho^2\rho_2^3\\
&\gtrapprox \delta^{\eta}\rho^2(\rho_2/r_2)^{\omega-\gamma/8}|B_2|\\
& =\delta^{\eta}(\rho_2/r_2)^{\omega-\gamma/8}|E_2|.
\end{split}
\end{equation}
This establishes \eqref{claimedSizeOfE2CapNRhoRho2}, as claimed. 

Next, by \eqref{claimedSizeOfE2CapNRhoRho2} and pigeonholing we can select a ball $B\subset E_2$ of radius $r_2\rho$ so that
\[
\Big| B\cap N_{\rho\rho_2}\Big(\bigcup_{\tilde T\in\tilde\tubes}\tilde Y(\tilde T)\Big)\Big|\gtrapprox \delta^{\eta}(\rho_2/r_2)^{\omega-\gamma/8}|B|\geq (\rho_2/r_2)^{\omega-\gamma/8+\eta/(\eps\eta_1)}|B|.
\]
If we select $\delta_0$ and $\eta$ sufficiently small depending on $\gamma,\eps,$ and $\eta_1$, then

\begin{equation}\label{bigBallCaseD}
\Big| B\cap N_{\rho\rho_2}\Big(\bigcup_{\tilde T\in\tilde\tubes}\tilde Y(\tilde T)\Big)\Big|\geq 2 (\rho_2/r_2)^{\omega-\eta}|B|.
\end{equation}
Finally, observe that the scale separation $\frac{\rho\rho_2}{\rho r_2}$ satisfies $\frac{\rho\rho_2}{\rho r_2}=\frac{\rho_2}{r_2}\leq\rho^{\eta_1}\leq\delta^{\eta_1\eps}\leq\delta^\eta$, provided we select $\eta$ sufficiently small. But then \eqref{bigBallCaseD} contradicts \eqref{smallVolumeInsideBallRho1R1}. Hence Conclusion (D) cannot hold. 

\medskip

\noindent \noindent{\bf Step 5.}
At this point, we shown that Conclusions (A), (C), and (D) from Proposition \ref{4WayDichotProp} cannot hold. Thus Conclusion (B) must hold. This completes the proof of Proposition \ref{extremalImpliesSelfSimilarWolff}.
\end{proof}

\subsection{Proof of Theorem \ref{mainThmDiscretized}}
Our proof of Theorem \ref{mainThmDiscretized} follows by combining two main ingredients. The first is Proposition \ref{extremalImpliesSelfSimilarWolff}. The second is a variant of the Sticky Kakeya Theorem, which was proved by the authors in \cite{WZ}. The precise statement we need is as follows.

\begin{thm}\label{stickySelfSimThm}
For all $\eps>0$, there exists $\eta,\delta_0>0$ so that the following holds for all $\delta\in(0,\delta_0]$. Let $\tubes$ be a set of $\delta$-tubes that satisfy the Convex Wolff Axioms at every scale with error $\delta^{-\eta}$, and let $Y(T)$ be a $\delta^{\eta}$ dense shading. Then
\begin{equation}\label{conclusionStickySelfSimThm}
\Big|\bigcup_{T\in\tubes}Y(T)\Big|\geq\delta^\eps.
\end{equation}
\end{thm}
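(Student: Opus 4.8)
## Proof plan for Theorem \ref{stickySelfSimThm}

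The plan is to derive Theorem \ref{stickySelfSimThm} from the Sticky Kakeya theorem of \cite{WZ} by a careful tracking of where the hypothesis ``the tubes point in $\delta$-separated directions'' is actually used in that argument. As noted in the introduction (see the discussion following Theorem \ref{selfSimilarWolffThm}), the proof in \cite{WZ} establishes an estimate of the form \eqref{conclusionStickySelfSimThm} for collections of $\delta$-tubes that point in distinct directions and obey a Frostman condition at exponent $2$; the directional hypothesis enters in exactly two places. First I would identify those two places precisely and, in each, verify that the Convex Wolff Axioms at every scale (Definition \ref{convexAtEveryScale}) — together with the partitioning cover / unit-rescaling structure they provide via Lemma \ref{multiScaleWolffLem} — can be substituted with only cosmetic changes to the surrounding estimates. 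The key structural point is that the Convex Wolff Axioms at every scale are precisely engineered to be \emph{self-similar}: by Lemma \ref{multiScaleWolffLem}, after passing to a coarsening at any intermediate scale $\rho\in[\delta,1]$, both the coarse $\rho$-tubes and the unit-rescaled fine tubes inside each coarse tube again satisfy the Convex Wolff Axioms at every scale (with a comparable error), which is exactly the stability property that the multi-scale induction in \cite{WZ} requires of the directional/Frostman hypothesis.

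The main steps, in order, are: (1) Recall the induction-on-scales skeleton of the Sticky Kakeya argument from \cite{WZ}, and isolate the two lemmas (one geometric/incidence estimate, one bookkeeping step about the shading after rescaling) whose proofs invoke $\delta$-separated directions. (2) Replace the directional hypothesis in the geometric input by the Convex Wolff Axioms: where \cite{WZ} bounds the number of tubes through a point, or in a slab, or in a convex region, by a directional-separation count, use instead the defining inequality \eqref{convexWolff} of Definition \ref{convexWolffAxiomsDefn} (and Remark \ref{convexSupSet} for coarsenings). This costs only a factor $\delta^{-O(\eta)}$, harmless for \eqref{conclusionStickySelfSimThm}. (3) Replace the directional hypothesis in the rescaling step: when the argument zooms into a coarse $\rho$-tube $T_\rho$ and rescales $\tubes[T_\rho]$ to unit scale, invoke Definition \ref{convexAtEveryScale}(ii) and Lemma \ref{multiScaleWolffLem} to see that the rescaled family again satisfies the Convex Wolff Axioms at every scale, so the induction hypothesis applies at the smaller scale $\delta/\rho$. (4) Track the accumulation of $\log(1/\delta)$ and $\delta^{-\eta}$ factors through the $O(\log(1/\delta))$ or $O(1/\eps)$ stages of the induction, and choose $\eta$ small enough (depending on $\eps$) and $\delta_0$ small enough that the final bound is $\geq\delta^\eps$.

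I expect the main obstacle to be Step (3): the precise form of the ``shading regularization after rescaling'' lemma in \cite{WZ}, and ensuring that the partitioning-cover structure from Definition \ref{convexAtEveryScale}(i) interacts correctly with the shading bookkeeping. In \cite{WZ} the coarse tubes at each scale inherit a shading (roughly $N_\rho$ of the union of the fine shadings), and one needs this shading to remain $\delta^{O(\eta)}$-dense and, after rescaling into a coarse tube, to remain a valid dense shading of the rescaled fine family; the $C$-uniformity of the partitioning cover (Definition \ref{coversDefn}) is what keeps the density loss under control across the tree of scales built in Lemma \ref{multiScaleWolffLem}. Since Definition \ref{convexAtEveryScale} was formulated with exactly this compatibility in mind, I expect the required modifications to be routine but somewhat tedious; the substantive mathematical content — the incidence geometry and the sticky-vs-non-sticky dichotomy — is identical to \cite{WZ} and requires no change. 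A precise version of Theorem \ref{stickySelfSimThm}, together with the detailed list of modifications to \cite{WZ}, is given in Section \ref{stickySelfSimThmSec}.
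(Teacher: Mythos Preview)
Your high-level strategy---follow \cite{WZ} verbatim and substitute the Convex Wolff Axioms at every scale for the direction-separation hypothesis at the two places where the latter is actually used---is exactly the paper's approach. However, you have misidentified what those two places are, and this matters.

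You locate the two uses as (a) a generic incidence/slab count and (b) the ``shading regularization after rescaling'' bookkeeping, and you predict (b) to be the main obstacle. In the paper's actual execution (Section~\ref{stickySelfSimThmSec}), the rescaling bookkeeping (the analogue of \cite[Proposition~3.2]{WZ}, here Proposition~\ref{prop: sticky}) is declared ``nearly identical'' to \cite{WZ} and is dispatched immediately using Lemma~\ref{multiScaleWolffLem}; the paper also states that Sections~3--5 of \cite{WZ} (planiness, graininess, $C^2$ slope function) carry over \emph{without modification}, since the direction-separated hypothesis is not used there at all. The two places that genuinely require change are: (i) the ``slope function has large derivative'' step (Section~6 of \cite{WZ}), where many tubes are forced into a thin slab and one needs a contradiction---here your Step~(2) is roughly right, and the fix is indeed just to invoke \eqref{convexWolff} in place of a direction count (Lemma~\ref{lem: large slope}); and (ii) the \emph{twisted projection / cinematic function} step (Section~7 of \cite{WZ}), where direction separation was used to guarantee that the projected curves satisfy the Frostman spacing needed for Wolff's $L^{3/2}$ circular-maximal estimate. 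This second step is not a bookkeeping issue and is not handled by simply quoting \eqref{convexWolff}; it requires passing from the Tube Wolff Axioms to a $(\delta,2)$-Frostman condition on the parameter set, extracting a Katz--Tao subset at an appropriate intermediate scale (Lemmas~\ref{lem: Katz-Tao}--\ref{lem: construct}), and then iterating a single-scale estimate (Lemma~\ref{lem: strengthenedL32EstimateUpperBd}) across scales. Your proposal does not mention twisted projections, cinematic functions, or this Frostman/Katz--Tao machinery, and your description of the \cite{WZ} argument as an ``induction-on-scales skeleton'' suggests you may be missing the overall architecture (extremal counterexample $\Rightarrow$ grain structure $\Rightarrow$ $C^2$ slope $\Rightarrow$ twisted projection contradiction). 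The plan as written would carry you through the easy modification but leave you without a route through the hard one.
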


As noted above, Theorem \ref{stickySelfSimThm} is similar to the Sticky Kakeya Theorem proved by the authors in \cite{WZ}. In Section \ref{stickySelfSimThmSec} we will discuss the proof of Theorem \ref{stickySelfSimThm}, and how it differs from the arguments in \cite{WZ}. Assuming this result for now, we can combine these two ingredients to prove Theorem \ref{mainThmDiscretized} as follows

\begin{proof}[Proof of Theorem \ref{mainThmDiscretized}]
It suffices to show that $\omega=0$. Suppose instead that $\omega>0$. Let $\eta_0,\delta_0$ be the output from Theorem \ref{stickySelfSimThm}, with $\eps=\omega/3$. By Proposition \ref{extremalImpliesSelfSimilarWolff}, there exists $\delta\in(0,\delta_0]$ and a set $(\tubes',Y)_\delta$ with discretized Assouad dimension $\leq 3-\omega/3$ that satisfies the self-similar Convex Wolff Axioms (and hence the Convex Wolff Axioms at every scale) with error $\delta^{-\eta_0}$, and for which $\{Y(T)\}$ is a $\delta^{\eta_0}$ dense shading. But then \eqref{conclusionStickySelfSimThm} says that $\bigcup_\tubes Y(T)$ has discretized Assouad dimension at least $3-\omega/3$, which is a contradiction. 
\end{proof}
Thus we have completed the proof of Theorem \ref{mainThmDiscretized}, except that it remains to prove Theorem \ref{stickySelfSimThm}; we will do so in the next section.


 \section{Kakeya estimates for tubes satisfying the Convex Wolff Axioms at every scale}\label{stickySelfSimThmSec}
Our goal in this section is to prove Theorem \ref{stickySelfSimThm}. Theorem \ref{stickySelfSimThm} is closely related to the discretized analogue of \cite[Theorem 1.1]{WZ}, and specifically the estimate $\sigma_3=0$ from that paper; the latter result says that if $\tubes$ is a collection of $\delta$-tubes in $\RR^3$ of cardinality roughly $\delta^{-2}$ that point in $\delta$-separated directions, and if $\tubes$ satisfies a property called ``stickiness,'' then the union of the tubes must have volume $\gtrapprox 1$. Stickiness is a technical property, which roughly speaking says that for each $\rho\in[\delta,1]$, the tubes in $\tubes$ can be covered by a collection of $\rho$-tubes that point in $\rho$-separated directions (more accurately, the tubes point in $\rho$-separated directions up to multiplicity $\lessapprox 1$). 

Stickiness is a multi-scale property, in the following sense: if $\tubes$ is sticky, then for each $\rho\geq\delta$ we can cover $\tubes$ by a sticky collection of $\rho$-tubes. This multi-scale property was exploited throughout the arguments in \cite{WZ}. Theorem \ref{stickySelfSimThm} differs from \cite[Theorem 1.1]{WZ}, because the stickiness hypothesis has been replaced by the hypothesis that the tubes satisfy the Convex Wolff axioms at every scale. As we will see below, however, this latter hypothesis is sufficient in order to repeat the arguments from  \cite{WZ}. Indeed, with two exceptions that will be explained below, the arguments from  \cite{WZ} can be repeated mutatis mutandis to obtain Theorem \ref{stickySelfSimThm}.

The proof of \cite[Theorem 1.1]{WZ} is divided into several sections. The paper begins by supposing that the result is false, and studying the structure of a (hypothetical) counter-example. In Section 2, the authors define what it means for a pair $(\tubes,Y)$ of tubes and their associated shading to be a worst possible counter-example to \cite[Theorem 1.1]{WZ}; this is called an $\eps$-extremal pair, and we will adopt this terminology in our discussion below. In \cite{WZ}, Section 3, it is shown that an $\eps$-extremal pair $(\tubes,Y)$ must look coarsely self similar at every intermediate scale $\rho \in [\delta,1]$. More precisely, for every such $\rho$ it is possible to cover the tubes in $\tubes$ by a collection of $\rho$-tubes, so that the corresponding collection of $\rho$-tubes is a worst possible counter-example to \cite[Theorem 1.1]{WZ} at scale $\rho$, and the $\delta$-tubes inside each $\rho$-tube also form a (rescaled) worst-possible counter-example at scale $\delta/\rho$. The analogous statement and proof in our setting are nearly identical; we will state the former and briefly sketch the latter.

In Sections 3, 4, and 5 of \cite{WZ}, the authors use the multi-scale self-similarity established in Section 2 of \cite{WZ} to show that an extremal pair $(\tubes,Y)$ must have certain structural properties called planiness and graininess, and in particular there must exist local grains that are described by a Lipschitz plane map, and global grains that are described by a $C^2$ slope function $f$; this will be described in greater detail below. The arguments from \cite{WZ} can be repeated without modification to establish the same conclusions in the present setting. In particular, the hypothesis from \cite{WZ} that the tubes point in $\delta$-separated directions has not been used up until this step. 

In Section 6 of \cite{WZ}, the authors show that the derivative $f'$ of the slope function has magnitude roughly 1 (and in particular, this quantity is bounded away from 0). It is during this step that the authors use the hypothesis that the tubes point in $\delta$-separated directions. With some small modifications, we can instead use the hypothesis that the tubes satisfy the Convex Wolff axioms at every scale; we will elaborate on this step in Section \ref{largeSlopeSection}.

Finally, in Section 7 of \cite{WZ}, the authors recall an analogue of Wolff's circular maximal function estimate \cite{Wolff97, PYZ}, and they show how this estimate contradicts the assumption that there exists a collection of sticky tubes whose union has volume much smaller than 1, thereby completing the proof of the theorem. This section of \cite{WZ} uses the assumption that the tubes point in different directions in order to obtain a rich planar arrangement of curves to which Wolff's circular maximal theorem can be applied. With some modifications, the direction-separated assumption can be replaced with a weaker ball condition. We will explain this in detail in Sections \ref{twistedProjectionsSection} and \ref{twistedProjectionsTubeSec}.

\subsection{Relevant definitions from \cite{WZ}}
Before describing the arguments in \cite{WZ}, it will be helpful to recall a few key definitions. First, a minor technical annoyance: tubes and cubes in \cite{WZ} are defined slightly differently than in the present paper. In \cite{WZ}, a $\delta$-tube in $\RR^3$ is defined to be a set of the form $N_{6\delta}(\ell)\cap [-1,1]^3$, where $\ell$ is a line in $\RR^3$. This definition is adopted for technical reasons, and for consistency with the results in \cite{WZ} we will use this definition throughout Section \ref{stickySelfSimThmSec}; the distinction between this definition and our previous definition in Section \ref{introSection} is not important in the arguments that follow. Second, in \cite{WZ}, a shading $Y(T)\subset T$ is defined to be a union of axis-aligned $\delta$-cubes contained in $T$. Again, the distinction between this definition and our previous one from Section \ref{tubesAndShadingsSection} is harmless: replace each tube $T$ (in the sense of Section \ref{introSection}) and a each shading $Y(T)$ (in the sense of Section \ref{tubesAndShadingsSection}) with the corresponding tube in the sense of \cite{WZ} and the shadings consisting of the union of axis-aligned $\delta$-cubes $Q$ that satisfy $|Q\cap Y(T)|\geq\frac{1}{100}|Y(T)|/|T|$. This gives us a collection of tubes and shadings in the sense of \cite{WZ}, and if the conclusion of Theorem \ref{mainThmDiscretized} holds for this latter collection of tubes and shadings, then it also holds for the original collection of tubes and shadings, except Inequality \eqref{rhoRNbhdFullVol} has been weakened by an additional (harmless) factor of $1/1000$.

\paragraph{Tubes, covers, rescaling.}
We will often refer to a collection of $\delta$-tubes $\tubes$, and a shading $Y(T)$ of these tubes. We will refer to this pair as $(\tubes,Y)_\delta$. We use $E_{\tubes}$ to denote the set $\bigcup_{\tubes}Y(T)$; the shading $Y$ will always be apparent from context. For $p\in\RR^3$, we define
\begin{equation}\label{eq: TpDefn} 
\tubes(p)=\{T\in\tubes\colon p\in Y(T)\}.
\end{equation}
We say a pair $(\tubes',Y')_\delta$ is a \emph{refinement} of $(\tubes,Y)_\delta$ if $\tubes'\subset\tubes$; $Y'(T')\subset Y(T')$ for each $T\in\tubes'$; and $\sum_{\tubes'}|Y'(T')|\geq C^{-1}|\log\delta|^{-C}\sum_{\tubes}|Y(T)|$ for some absolute constant $C$ (in practice, $C=100$ will always suffice).

Let $0<\delta\leq\rho \leq 1$. In \cite{WZ}, the authors say that a $\rho$-tube $\tilde T$ \emph{covers} a $\delta$-tube $T$ if their respective coaxial lines satisfy $d(\ell,\tilde\ell)\leq\rho/2$, where $d(\cdot,\cdot)$ is an appropriately chosen metric on the affine Grassmannian of lines in $\RR^3$; see Section 2 of \cite{WZ} for details. Note that if $\tilde T$ covers $T$, then $T\subset \tilde T$, and hence this definition is consistent with ours from Definition \ref{coversDefn} (the definition in \cite{WZ} is slightly stronger than Definition \ref{coversDefn}, but the distinction is harmless).

If $(\tubes,Y)_\delta$ and  $(\tilde\tubes,\tilde Y)_\rho$ are pairs of $\delta$ (resp.~$\rho$) tubes and their associated shadings, then we say $(\tilde\tubes,\tilde Y)_\rho$ \emph{covers} $(\tubes,Y)_\delta$ if each $T\in\tubes$ is covered by at least one $\tilde T\in\tilde\tubes$, and for each $\tilde T \in\tilde\tubes$ covering $T$, we have $Y(T)\subset\tilde Y(\tilde T)$. If this is the case, we write $\tubes[\tilde T]$ to denote the set of tubes in $\tubes$ covered by $\tilde T$. We say that $(\tilde\tubes,\tilde Y)_\rho$ is a \emph{balanced cover} of $(\tubes,Y)_\delta$ if it is a cover, and in addition $|E_{\tubes}\cap Q|$ is the same for each $\rho$-cube $Q\subset E_{\tilde\tubes}$.

Let $(\tubes,Y)_\delta$ be a set of $\delta$-tubes and their associated shading. Let $\tilde T$ be a $\rho$-tube, and suppose each tube in $\tubes$ is covered by $\tilde T$. We define a new pair $(\hat\tubes, \hat Y)_{\delta/\rho}$, which we call the \emph{unit rescaling of $(\tubes,Y)_\delta$ relative to $\tilde T$}. Informally, our definition is as follows: consider the linear transformation that sends the coaxial line of $\tilde T$ to the $z$ axis, and dilates the $x$ and $y$ directions by $\rho^{-1}$; the image of $\tilde T$ under this transformation is comparable to the unit ball. The tubes in $\hat\tubes$ are those $(\delta/\rho)$-tubes whose coaxial lines are the images of the coaxial lines of the tubes in $\tubes$ under this transformation. The shadings $\hat Y(\hat T)$ consist of the union of axis-aligned $\delta/\rho$-cubes that intersect the image of $Y(T)$ under the above transformation. See Section 3 of \cite{WZ} for details. As above, the definition of unit rescaling in \cite{WZ} is slightly different than Definition \ref{unitRescalingDefn}, but the distinction is harmless.

\paragraph{The plane map.}
In the arguments in \cite{WZ}, the authors show that certain collections of tubes possess an important structural property called planiness. Let $(\tubes,Y)_\delta$ be a set of tubes and their associated shading. We say a function $V\colon E_{\tubes}\to S^2$ is a \emph{plane map} for $(\tubes,Y)_\delta$ if
\[
|\operatorname{dir}(T)\cdot V(p)|\leq\delta\quad \textrm{for all}\ (T,p)\in \tubes\times E_{\tubes}\ \textrm{with}\ p\in Y(T),
\]
where $\operatorname{dir}(T)\in S^2$ is a unit vector parallel to the line coaxial with $T$.

\paragraph{Discretized Ahlfors-David regular sets.}
We say a set $E\subset\RR$ is a $(\delta,\alpha,C)$-ADset if for all $\rho\geq\delta$, all $r\geq\rho$, and all intervals $I$ of length $r$, we have $\mathcal{E}_\rho(E\cap I)\leq C(r/\rho)^\alpha$, where $\cE_{\rho}(E\cap I)$ means the $\rho$-covering number of $E\cap I$.  In practice we will have $\alpha\in (0,1]$, and $C$ will be a small power of $1/\delta$. See Section 4 of \cite{WZ} for further discussion and motivation for this definition.

\subsection{Extremal collection of tubes, and multi-scale structure}
We are ready to begin the proof of Theorem \ref{stickySelfSimThm}. Suppose to the contrary that the result was false. Then there exists $\sigma'>0$ so that for all $\eta,\delta_0>0$, there exists $\delta\in(0,\delta_0]$; a set $\tubes$ of $\delta$-tubes that satisfy the Convex Wolff Axioms at every scale with error $\delta^{-\eta}$; and a $\delta^\eta$-dense shading $Y(T)$, so that 
\[
\Big|\bigcup_{T\in\tubes}Y(T)\Big|\leq\delta^{\sigma'}.
\]
Let $\sigma>0$ be the supremum of all numbers $\sigma'$ with the above property; this supremum exists, since by hypothesis the set of admissible $\sigma'$ is non-empty, and it is bounded above by 3. Our goal is to show that we must in fact have $\sigma=0$, and thereby obtain a contradiction. 

The following definition is the analogue of \cite[Definition 3.1]{WZ}. 
\begin{defn}\label{def: extremal}
    Let $\eps, \delta>0$.  We say a pair $(\tubes, Y)_{\delta}$ is $\eps$-\emph{extremal} if the following holds:
    \begin{enumerate}
        \item $\tubes$ satisfies the Convex Wolff Axioms at all scales with error $\delta^{-\eps}$.
        \item $Y$ is $\delta^{\eps}$-dense.
        \item $\Big|\bigcup_{\tubes}Y(T)\Big|\leq\delta^{\sigma-\eps}.$
    \end{enumerate}
\end{defn}
It is immediate from the above definitions that for every $\eps,\delta_0>0$, there exists an $\eps$-extremal collection of tubes $(\tubes,Y)_\delta$ for some $\delta\in(0,\delta_0]$; cf.~\cite[Lemma 3.1]{WZ}. 

Our next task is to prove an analogue of \cite[Proposition 3.2]{WZ}. The precise statement is as follows. 

\begin{prop}\label{prop: sticky}
    For all $\eps>0$, there exists $\eta>0$ and $\delta_0>0$ so that the following holds for all $\delta\in (0,\delta_0]$.  Let $(\TT, Y)_{\delta}$ be an $\eta$-extremal collection of tubes, and let $\rho\in [\delta^{1-\eps}, \delta^{\eps}]$. Then there is a refinement $(\TT', Y')_{\delta}$ of $(\TT, Y)_{\delta}$ and a balanced cover $(\tilde{\TT}, \tilde{Y})_{\rho}$ of $(\TT', Y')_{\delta}$ with the following properties: 
    \begin{enumerate}[(i)]
        \item  \label{it: i} $(\tilde{\TT}, \tilde{Y})_{\rho}$ is $\eps$-extremal.
        \item  \label{it: ii} For each $\tilde{T}\in \tilde{\TT}$, the unit rescaling  of $(\TT'[\tilde{T}], Y')_{\delta}$ relative  to $\tilde{T}$ is $\eps$-extremal.
        \item  \label{it: iii} For each $p\in \RR^3$, $\# \tilde{\TT}(p) \leq \rho^{2-\sigma-\eps} (\#\tilde{\TT}).$
        \item  \label{it: iv} For each $p\in \RR^3$ and each $\tilde{T}\in \tilde{\TT}$, $\# \TT'[\tilde{T}](p) \leq (\delta/\rho)^{2-\sigma-\eps} (\# \TT'[\tilde{T}]).$
    \end{enumerate}
\end{prop}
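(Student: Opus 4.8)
# Proof Proposal for Proposition \ref{prop: sticky}

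The plan is to follow the template of \cite[Proposition 3.2]{WZ} closely, since the only structural change is that ``stickiness''/``direction separation'' has been replaced by ``Convex Wolff Axioms at every scale,'' and—crucially—Lemma \ref{multiScaleWolffLem} guarantees that this latter property passes to both the coarsening and the unit rescalings, which is exactly the closure property that makes the self-similarity argument work. First I would fix $\rho\in[\delta^{1-\eps},\delta^{\eps}]$ and invoke Lemma \ref{multiScaleWolffLem} with $\rho_0=\rho$ to produce a genuine scale $\rho$ (adjusting $\rho$ by a factor at most $\delta^{-\eta}$, which is harmless) together with a set $\tubes_\rho$ of $\rho$-tubes that is a $\delta^{-\eta}$-uniform partitioning cover of $\tubes$, such that $\tubes_\rho$ satisfies the Convex Wolff Axioms at every scale with error $O(\delta^{-\eta})$, and for each $T_\rho\in\tubes_\rho$ the unit rescaling of $\tubes[T_\rho]$ relative to $T_\rho$ satisfies the Convex Wolff Axioms at every scale with error $O(\delta^{-\eta})$. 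This is the structural backbone; the work that remains is purely ``volume bookkeeping and pigeonholing'' to arrange that the shadings are dense at each scale and that the volume of the union is small at each scale.

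The main engine is a double-counting/pigeonholing argument identical in spirit to \cite{WZ}. Set $E=\bigcup_{T\in\tubes}Y(T)$, so $|E|\le\delta^{\sigma-\eta}$. For each $T_\rho\in\tubes_\rho$ define the candidate shading $\tilde Y(T_\rho)=T_\rho\cap N_\rho(E_{\tubes[T_\rho]})$; since each $T\in\tubes[T_\rho]$ has a $\delta^\eta$-dense shading and the cover is partitioning, $\tilde Y$ is uniformly $\gtrsim\delta^\eta$-dense on the $T_\rho$ that contain any tube, so after discarding empty coarse tubes and a harmless refinement we may assume $\tilde Y$ is $\delta^{O(\eta)}$-dense. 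Next, pass to a balanced cover: by dyadic pigeonholing in the value $|E_{\tubes}\cap Q|$ over $\rho$-cubes $Q$, restrict to a subfamily (and a sub-shading of each fine tube, keeping a $\delta^{O(\eta)}$-fraction) on which $|E_{\tubes}\cap Q|$ is constant up to a factor of $2$; this produces the refinement $(\tubes',Y')_\delta$ and the balanced cover $(\tilde\tubes,\tilde Y)_\rho$. One then checks \eqref{it: i}: the Convex Wolff Axioms at every scale for $\tilde\tubes$ survive refinement up to a $|\log\delta|^{O(1)}$-factor (Remark \ref{consequenceOFUnifCover} plus the fact that deleting tubes only shrinks the count), and the volume bound $|\bigcup_{T_\rho}\tilde Y(T_\rho)|\le|N_\rho(E)|\le |E|^{1-O(\eta)}\lesssim\delta^{\sigma-O(\eta)}$ follows because the $\rho$-neighborhood of a set of volume $\le\delta^{\sigma-\eta}\le\rho$ has comparable volume (here one uses $\rho\le\delta^\eps$, so $\rho$ is small). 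For \eqref{it: ii}, the balanced-cover condition forces $|E_{\tubes'[\tilde T]}|$ to be the same for every $\tilde T$ up to constants, hence $|E_{\tubes'[\tilde T]}|\sim |E_{\tubes'}|/(\#\tilde\tubes)\le\delta^{\sigma-O(\eta)}(\delta/\rho)^{-2+\sigma}$ roughly (using the Convex Wolff lower bound $\#\tilde\tubes\gtrsim\rho^{-2}$ and $\#\tubes'[\tilde T]\gtrsim(\delta/\rho)^{-2}$); after the unit rescaling relative to $\tilde T$, which expands volume by $\rho^{-2}$, this becomes $\le(\delta/\rho)^{\sigma-O(\eps)}$, giving item (3) of $\eps$-extremality, while the Convex Wolff Axioms at every scale for the rescaled family come from Lemma \ref{multiScaleWolffLem} and density is inherited.

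Finally, items \eqref{it: iii} and \eqref{it: iv} are the multiplicity bounds. These follow from combining \eqref{it: i}–\eqref{it: ii} with the elementary observation that if $(\mathcal{S},Z)_\tau$ is a collection of $\tau$-tubes satisfying the Convex Wolff Axioms (in particular $\#\mathcal S\gtrsim\tau^{-2}$) and $|\bigcup Z(S)|\le\tau^{\sigma-\eps}$ with $Z$ being $\tau^\eps$-dense, then for every point $p$ the tubes through $p$ have total shaded volume $\gtrsim\tau^\eps(\#\mathcal S(p))\tau^2$ all contained in $\bigcup Z(S)$, so $\#\mathcal S(p)\lesssim\tau^{-\eps}\tau^{-2}|\bigcup Z(S)|\le\tau^{-2+\sigma-O(\eps)}\le\tau^{2-\sigma-\eps'}\cdot\tau^{-4+2\sigma}\cdot(\ldots)$—more cleanly, $\#\mathcal S(p)\le\tau^{-O(\eps)}\tau^{-2}\cdot\tau^{\sigma}\le\tau^{2-\sigma-O(\eps)}(\#\mathcal S)$ after using $\#\mathcal S\sim\tau^{-2}$ up to the error; applying this with $\tau=\rho$, $\mathcal S=\tilde\tubes$ gives \eqref{it: iii}, and with $\tau=\delta/\rho$ to the rescaled $\tubes'[\tilde T]$ gives \eqref{it: iv}. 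I expect the main obstacle to be purely notational: tracking the cascade of $\delta^{O(\eta)}$ and $|\log\delta|^{O(1)}$ losses through the pigeonholing steps so that the final error exponents genuinely come out as $\eps$ rather than $C\eta$, which forces one to introduce $\eta$ as a small multiple of $\eps$ (say $\eta=\eps/C$ for a large absolute $C$) only at the very end, exactly as in \cite{WZ}. No genuinely new geometric idea is needed beyond Lemma \ref{multiScaleWolffLem}, which is the single place the hypothesis change matters.
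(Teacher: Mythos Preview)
Your structural setup via Lemma \ref{multiScaleWolffLem} is right, and your derivation of (iii)--(iv) from (i)--(ii) via ``small volume forces small multiplicity'' is fine. The gap is in how you propose to establish (i) and (ii). For (i) you write $|E_{\tilde\tubes}|\le |N_\rho(E)|\le |E|^{1-O(\eta)}$, justified by ``the $\rho$-neighbourhood of a set of small volume has comparable volume.'' This is false: the $\rho$-neighbourhood of a set of volume $V$ can have volume as large as $(\rho/\delta)^3 V$, and our set $E$ is genuinely spread out (it is a union of tubes satisfying the Convex Wolff Axioms). There is no direct route from $|E|\le\delta^{\sigma-\eta}$ to $|E_{\tilde\tubes}|\le\rho^{\sigma-\eps}$. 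Your bookkeeping for (ii) is similarly broken: the displayed formula is dimensionally inconsistent, and in any case the balanced-cover condition does not give $|E_{\tubes'[\tilde T]}|\sim|E_{\tubes'}|/(\#\tilde\tubes)$, since the coarse tubes overlap.

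The paper's argument (following \cite{WZ}) runs in the \emph{opposite} order and hinges on a product trick you are missing. After pigeonholing so that exactly $\mu_{\mathrm{fine}}$ fine tubes of $\tubes'[\tilde T]$ pass through each $\delta$-cube and exactly $\mu_{\mathrm{coarse}}$ coarse tubes pass through each $\rho$-cube, the \emph{upper} bounds $\mu_{\mathrm{fine}}\le(\delta/\rho)^{2-\sigma-\eps/10}\#\tubes'[\tilde T]$ and $\mu_{\mathrm{coarse}}\le\rho^{2-\sigma-\eps/10}\#\tilde\tubes$ follow from the definition of $\sigma$ applied to the rescaled fine collection and to the coarse collection (each satisfies the Convex Wolff Axioms at every scale, so too-large multiplicity would force too-small volume, contradicting the extremality of $\sigma$); this already gives (iii) and (iv). The key step is that the product $\mu_{\mathrm{fine}}\mu_{\mathrm{coarse}}$ is also bounded \emph{below} by $\gtrsim\delta^{2-\sigma+\eta}\#\tubes$, because the original collection is $\eta$-extremal. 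This pinning forces each factor to be within $\delta^{O(\eps)}$ of its upper bound, yielding matching \emph{lower} bounds on $\mu_{\mathrm{fine}}$ and $\mu_{\mathrm{coarse}}$, and it is these lower multiplicity bounds that translate into the volume upper bounds constituting (i) and (ii). The $\eta$-extremality of the original collection is the only source of a volume upper bound at the intermediate scales; neighbourhood-volume estimates cannot supply it.
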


\begin{proof}
The proof is nearly identical to the proof of the corresponding statement in \cite[Proposition 3.2]{WZ}. In \cite{WZ}, the first major step was to establish the existence of a cover $\tubes_{\rho}$ of $\tubes$ (or more accurately, a large subset $\tubes'\subset \tubes$), with the property that both $\tubes_\rho$ and the rescaled sets $\tubes'[T_\rho]$ are sticky. In our case, this step was handled by Lemma \ref{multiScaleWolffLem}. The second major step was to establish a shading on $\tubes_\rho$ and a refinement of the shadings on the sets $\tubes'[T_\rho]$ so that not too many tubes pass through each point (and as a consequence, the corresponding collections are $\eps$-extremal). 

Next we will give a slightly more detailed sketch, though we refer the reader to the proof of \cite[Proposition 3.2]{WZ} for full details. We begin by reducing to the case where each $T\in\tubes$ satisfies $|Y(T)|\geq\frac{1}{2}\delta^{\eta}|T|$. This can be done by discarding those tubes with $|Y(T)|<\frac{1}{2}\delta^\eta|T|$ and then repeatedly pigeonholing to re-establish a collection of tubes that satisfies the Convex Wolff Axioms at every scale (with a slightly worse constant, say $\delta^{-2\eta}$); after this process, the cardinality of our collection of tubes has been reduced by at most a factor of $\delta^\eta$.

Next, we apply Lemma \ref{multiScaleWolffLem} with $\rho_0=\delta^{2\eta}\rho$. We get a number $\rho'\in [\delta^{2\eta}\rho, \rho]$ and a collection $\tubes_{\rho'}$ of $\rho'$-tubes, as described by that Lemma. We may replace each $\rho'$ tube by its corresponding $\rho$-tube, and denote the resulting collection by $\tilde \tubes$. After pigeonholing to re-establish the property that the tubes are essentially distinct, it is still the case that $\tilde\tubes$ satisfies the Convex Wolff Axioms at every scale with error $\delta^{-O(\eta)}$, and each of the rescaled collections $\tubes[\tilde T]$ satisfies the Convex Wolff Axioms at every scale with error $\delta^{-O(\eta)}$. 

After refining the shadings $Y(T)$, we may suppose that for each collection $\tubes[\tilde T]$, the same number of tubes pass through each $\delta$-cube in $E_{\tubes[\tilde T]}$ in the sense of \eqref{eq: TpDefn}; after further pigeonholing we may suppose that this number is the same (up to a multiplicative factor of 2) for each set $\tubes[\tilde T]$; call this number $\mu_{\operatorname{fine}}$. Since $\tubes[\tilde T]$ is a (rescaled) collection of tubes satisfying the Convex Wolff Axioms at every scale, if $\eta$ and $\delta_0$ are chosen sufficiently small then we must have $\mu_{\operatorname{fine}}\leq (\delta/\rho)^{2-\sigma-\eps/10} (\# \TT'[\tilde{T}])$; this is Conclusion (iv) above.

Our next task is to define a shading $\tilde Y$ on the tubes of $\tilde\tubes$. To begin, for each $\tilde T$ we consider the union of all $\rho$-cubes $\tilde Q\subset \tilde T$ that intersect $E_{\tubes[\tilde T]}$. After pigeonholing, we can refine this shading so that each $\rho$-cube intersects $E_{\tubes[\tilde T]}$ in a set of roughly the same size, and the same number of tubes from $\tilde \tubes$ pass through each cube of $E_{\tilde \tubes}$; call this number $\mu_{\operatorname{coarse}}$. We refine the corresponding shadings $Y(T),\ T\in\tubes$ so that $Y(T)\subset\tilde Y(\tilde T)$ whenever $T\in\tubes[\tilde T]$; in particular $(\tilde{\TT}, \tilde{Y})_{\rho}$  is now a balanced cover of $(\tubes, Y)_\delta$. Note that Conclusion (iv) remains true after this procedure. Since $\tilde\tubes$ satisfies the Convex Wolff Axioms with error $\delta^{-O(\eta)}$, if $\eta$ and $\delta_0$ are chosen sufficiently small then we must have $\mu_{\operatorname{coarse}}\leq \rho^{2-\sigma-\eps/10} (\tilde\tubes)$; this is Conclusion (iii) above. 

Finally, observe that $\mu_{\operatorname{fine}}\mu_{\operatorname{coarse}}\leq \delta^{2-\sigma-\eps/5}(\#\tubes)$, but on the other hand since $\tubes$ was $\eta$-extremal, we must have $\mu_{\operatorname{fine}}\mu_{\operatorname{coarse}}\gtrsim \delta^{2-\sigma+\eta}(\#\tubes)$. We conclude that 
\begin{align*}
(\delta/\rho)^{2-\sigma+\eps/10} (\# \TT'[\tilde{T}]) \leq & \mu_{\operatorname{fine}}\leq (\delta/\rho)^{2-\sigma-\eps/10} (\# \TT'[\tilde{T}]),\quad \tilde T\in\tilde\tubes,\\
\rho^{2-\sigma+\eps/10} (\# \tilde\tubes) \leq & \mu_{\operatorname{coarse}}\leq \rho^{2-\sigma-\eps/10} (\# \tilde\tubes),
\end{align*}
and in particular Conclusions (i) and (ii) are satisfied. We refer the reader to \cite{WZ} for complete details. 
\end{proof}

\subsection{Local and global grains}
Using Proposition~\ref{prop: sticky} in place of \cite[Proposition 3.2]{WZ}, we can follow an identical argument as in \cite{WZ} to obtain the analogue of \cite[Proposition 4.1]{WZ}, which we recall here. 

\begin{prop}\label{prop: grain}

For all $\eps, \delta_0>0$, there exists $\delta\in (0,\delta_0]$ and an $\eps$-extremal set of tubes $(\TT, Y)_{\delta}$ with the following two properties:

\begin{enumerate}
    \item \textbf{ $E_{\TT}$ is a union of global grains with Lipschitz slope function. }

    There is a 1-Lipschitz function $f: [-1,1]\rightarrow \mathbb{R}$ so that $(E_{\TT}\cap \{z=z_0\})\cdot (1, f(z_0), 0)$ is a $(\delta, 1-\sigma, \delta^{-\eps})_1$-ADset for each $z_0\in [-1,1]$. 

    \item \textbf{ $E_{\TT}$ is a union of local grains with Lipschitz plane map. }

    $(\TT, Y)_{\delta}$ has a $1$-Lipschitz plane map $V$. For all $\rho\in [\delta, 1]$ and all $p\in \mathbb{R}^3$, $V(p)\cdot (B(p, \rho^{1/2})\cap E_{\TT})$ is a $(\rho, 1-\sigma, \delta^{-\eps})_1$-ADset. 
\end{enumerate}
\end{prop}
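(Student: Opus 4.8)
The plan is to reproduce the argument of \cite[Section~4]{WZ} almost verbatim, with two substitutions: Proposition~\ref{prop: sticky} is used wherever \cite[Proposition~3.2]{WZ} was used, and the hypothesis ``$\tubes$ satisfies the Convex Wolff Axioms at every scale'' replaces the stickiness hypothesis throughout. By the remark following Definition~\ref{def: extremal}, for every $\eps',\delta_0>0$ there is an $\eps'$-extremal pair $(\tubes,Y)_\delta$ for some $\delta\in(0,\delta_0]$, so it suffices to show that a suitably chosen $\eps'$-extremal pair has the two stated structural properties, where $\eps'$ is taken to be a small constant multiple of the target $\eps$ in order to absorb the $\delta^{-O(\eps')}$ losses incurred below.

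First I would iterate Proposition~\ref{prop: sticky}. Fix a geometric sequence of scales $\delta=\rho_0<\rho_1<\dots<\rho_M=1$ with each ratio $\rho_{k+1}/\rho_k$ bounded by a fixed constant, and apply Proposition~\ref{prop: sticky} repeatedly: at each step we obtain a refinement of the current pair together with a balanced cover by $\rho_k$-tubes that is again $\eps$-extremal, whose unit rescalings of the fibers $\tubes'[\tilde T]$ are $\eps$-extremal, and which obeys the per-scale non-concentration bounds, namely items (iii) and (iv) of Proposition~\ref{prop: sticky}. Composing these covers across consecutive scales produces a rooted tree of $\eps$-extremal pairs indexed by the scales $\rho_k$, exactly as in \cite[Section~3]{WZ}. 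The key point permitting the iteration is Lemma~\ref{multiScaleWolffLem}, which guarantees that the cover and the rescaled fibers continue to satisfy the Convex Wolff Axioms at every scale with a controlled error, so the construction can be carried out for all $M\sim|\log\delta|$ levels.

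Next I would extract the plane map $V$ and the global slope function $f$ from this tree, following \cite[Sections~4--5]{WZ}. The plane map is obtained pointwise: for $p\in E_{\tubes}$, the non-concentration bound on $\#\tubes[\tilde T](p)$ forces the directions $\{\operatorname{dir}(T)\colon p\in Y(T)\}$ to concentrate near a great circle of $S^2$, whose pole we call $V(p)$; this is precisely the defining inequality $|\operatorname{dir}(T)\cdot V(p)|\le\delta$. The $1$-Lipschitz bound on $V$, and the existence and regularity of $f$, come from the rescaling compatibility between consecutive levels of the tree: a pair of points lying in a common intermediate-scale tube have plane maps, and asymptotic slopes, that differ by at most the width of that tube divided by its length, and this telescopes to the Lipschitz estimates. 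The two Ahlfors--David regularity statements then follow directly from the fact that the ``coarse'' and ``fine'' multiplicities produced at each scale in Proposition~\ref{prop: sticky} are pinned to $\rho^{2-\sigma+O(\eps)}$: projecting a slice $E_{\tubes}\cap\{z=z_0\}$, resp.\ a ball $B(p,\rho^{1/2})\cap E_{\tubes}$, onto the line transverse to the slope $f(z_0)$, resp.\ onto $V(p)^\perp$, converts these multiplicity bounds into the covering-number bounds $\lesssim\delta^{-\eps}(r/\rho)^{1-\sigma}$ that define a $(\delta,1-\sigma,\delta^{-\eps})_1$-ADset, resp.\ a $(\rho,1-\sigma,\delta^{-\eps})_1$-ADset.

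The main obstacle is not any new geometric input but the verification that none of the arguments in \cite[Sections~3--5]{WZ} secretly used the direction-separation hypothesis; they use only the multi-scale self-similarity and the per-scale non-concentration bounds, both of which are now supplied by Proposition~\ref{prop: sticky}. As noted in the overview of this section, the direction-separation hypothesis of \cite{WZ} is first used in \cite[Section~6]{WZ}, which corresponds to a later step and is handled separately. Beyond this structural check, the only real care needed is the propagation of constants: the $\eps$-extremality error degrades by a multiplicative $\delta^{-O(\eps)}$ factor at each of the finitely many structural steps, so one fixes the input parameter in Proposition~\ref{prop: sticky} to be a small enough constant multiple of the desired output $\eps$, and chooses $\delta_0$ small enough that every invocation of Proposition~\ref{prop: sticky} and Lemma~\ref{multiScaleWolffLem} is valid.
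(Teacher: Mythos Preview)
Your proposal is correct and takes essentially the same approach as the paper: the paper's proof consists of a single sentence asserting that one follows the argument of \cite{WZ} verbatim, using Proposition~\ref{prop: sticky} in place of \cite[Proposition~3.2]{WZ}. Your sketch in fact supplies more detail than the paper does, outlining the multi-scale tree construction and the extraction of $V$ and $f$, but the strategy is identical.
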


 Following the same proof as \cite[Section 5]{WZ} using radial projections, we can upgrade the global grains slope function $f$ in Proposition~\ref{prop: grain} from Lipschitz to $C^2$. This is the analogue of \cite[Proposition 5.1]{WZ}. The precise statement is as follows.

 \begin{prop}\label{prop: C2}
     For all $\eps, \delta_0>0$, there exists $\delta\in (0,\delta_0]$ and an $\eps$-extremal set of tubes $(\TT, Y)_{\delta}$  with the following properties:
     \begin{enumerate}
         \item[1$'$] \textbf{ Global grains with $C^2$-slope function. }

         There is a function $f: [-1,1]\rightarrow \mathbb{R}$ with $\|f\|_{C^2}\leq 1$ so that $(E_{\TT} \cap \{z=z_0\})\cdot (1, f(z_0), 0)$ is a $(\delta, 1-\sigma, \delta^{-\eps})_1$-ADset for each $z_0 \in [-1,1]$. 
         
         \item[2] \textbf{ Local grains with Lipschitz plane map. }

    $(\TT, Y)_{\delta}$ has a $1$-Lipschitz plane map $V$. For all $\rho\in [\delta, 1]$ and all $p\in \mathbb{R}^3$, $V(p)\cdot (B(p, \rho^{1/2})\cap E_{\TT})$ is a $(\rho, 1-\sigma, \delta^{-\eps})_1$-ADset. 
     \end{enumerate}
 \end{prop}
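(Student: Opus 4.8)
The plan is to follow \cite[Section~5]{WZ} essentially verbatim. That section is precisely where the global slope function of an extremal collection is upgraded from Lipschitz to $C^2$ via radial projections, and — as noted in the discussion above — it uses only the $\eps$-extremality of the collection and its multi-scale self-similarity; the hypothesis that the tubes point in $\delta$-separated directions is not invoked there. In our setting, Proposition~\ref{prop: sticky} is the exact substitute for \cite[Proposition~3.2]{WZ} and Proposition~\ref{prop: grain} for \cite[Proposition~4.1]{WZ}, with the Convex Wolff Axioms at every scale playing the role that stickiness plays in \cite{WZ}. So the first step is simply to record these substitutions and then transcribe the argument, producing a (generally new, after refinement and rescaling) $\eps$-extremal collection $(\TT,Y)_\delta$ whose global slope function additionally satisfies $\|f\|_{C^2}\le 1$; the normalization of the $C^2$ norm to $1$ is arranged, as in \cite{WZ}, by an affine rescaling of the ambient configuration. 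The local-grain conclusion, Item~2, is untouched and is inherited directly from Proposition~\ref{prop: grain}.

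The mechanism of the upgrade is the radial projection argument of \cite[Section~5]{WZ}, which I would reproduce with the substitutions above. Starting from the collection of Proposition~\ref{prop: grain}, with its $1$-Lipschitz slope function $f$ and $1$-Lipschitz plane map $V$, the goal is to show that the grain data can be re-encoded by a slope function that is genuinely $C^2$ — if necessary after passing, via Proposition~\ref{prop: sticky}, to a rescaled extremal sub-collection that excises a bounded number of "bad" scales. For a generic point $p\in E_{\TT}$ one studies the radial projection $\pi_p$ of $E_{\TT}$ from $p$. On the one hand, the local grain structure at $p$ (Item~2) confines $\pi_p(E_{\TT}\cap B(p,r))$, for every scale $r$, to a small neighbourhood of the great circle cut out by the plane through $p$ with normal $V(p)$, and $V$ depends Lipschitz-ly on the base point. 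On the other hand, the global grain structure together with the non-concentration supplied by the Convex Wolff Axioms — which prevents $E_{\TT}$ from clustering near a plane through $p$ — gives, through the radial projection estimate used in \cite{WZ}, a lower bound on the richness of $\pi_p(E_{\TT})$ transverse to that circle. Reconciling these bounds at every scale, and feeding the conclusion back in through the self-similarity of Proposition~\ref{prop: sticky}, forces the grain-normal field $z_0\mapsto(1,f(z_0))$ to be $C^2$; this is exactly the passage from Lipschitz to $C^2$ carried out in \cite[Section~5]{WZ}.

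The one point that genuinely requires checking is that every appeal in \cite[Section~5]{WZ} to $\delta$-separated directions can be replaced by an appeal to the Convex Wolff Axioms at the scale in question. By inspection this happens only through the non-concentration input to the radial projection estimate, where the bound on the number of tubes contained in a slab (a special case of the Convex Wolff Axioms) is exactly what is needed; and, unlike direction-separation, the Convex Wolff Axioms at every scale are preserved under the coarsening and unit-rescaling operations of Proposition~\ref{prop: sticky}, so the substitution is consistent across all scales. I expect the main obstacle to be bookkeeping rather than a new idea: one must check that the error $\delta^{-\eps}$ in the Convex Wolff Axioms (rather than an absolute constant) is harmless wherever the projection estimate is applied — it is, since those estimates tolerate a $\delta^{O(\eps)}$ loss and $\eps$ can be taken arbitrarily small — and that the passage to rescaled extremal sub-collections can be iterated the requisite (bounded) number of times while keeping the degradation of the $\eps$-extremality constant under control.
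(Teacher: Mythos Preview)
Your proposal is correct and takes essentially the same approach as the paper: both simply invoke \cite[Section~5]{WZ} with Proposition~\ref{prop: sticky} and Proposition~\ref{prop: grain} substituted for their counterparts in \cite{WZ}. The paper is in fact slightly more confident than you are---it asserts that the direction-separated hypothesis is not used \emph{at all} in \cite[Section~5]{WZ}, so the careful check you propose (that slab-type non-concentration via the Convex Wolff Axioms suffices for the radial projection input) is unnecessary; the argument carries over without modification.
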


\subsection{The slope function has large slope}\label{largeSlopeSection}
Our next task is to show that the global grains slope function $f$ in Proposition~\ref{prop: grain} has derivative $f'$ with magnitude bounded away from 0. As discussed above, the proof of the analogous statement \cite[Proposition 6.1]{WZ} used the property that $\tubes$ contains tubes pointing in many different directions. Here, we will use the property that $\tubes$ satisfies the Convex Wolff Axioms at every scale. The precise statement we need is as follows. 

\begin{prop}\label{prop: large slope}
    For all $\eps, \delta_0>0$, there exists $\delta\in (0, \delta_0]$ and an $\eps$-extremal set of tubes $(\TT, Y)_{\delta}$ with the following properties: 
    \begin{enumerate}
         \item[1$''$]
         \textbf{Global grains with nonsingular $C^2$-slope function.}
         There is a function $f: [-1,1]\rightarrow \mathbb{R}$ with $1\leq |f'(z)|\leq 2$ and $|f''(z)|\leq 1/100$ for all $z\in [-1,1]$,  so that $(E_{\TT} \cap \{z=z_0\})\cdot (1, f(z_0), 0)$ is a $(\delta, 1-\sigma, \delta^{-\eps})_1$-ADset for each $z_0 \in [-1,1]$.

         \item[2] \textbf{ Local grains with Lipschitz plane map. }

    $(\TT, Y)_{\delta}$ has a $1$-Lipschitz plane map $V$. For all $\rho\in [\delta, 1]$ and all $p\in \mathbb{R}^3$, $V(p)\cdot (B(p, \rho^{1/2})\cap E_{\TT})$ is a $(\rho, 1-\sigma, \delta^{-\eps})_1$-ADset. 
    \end{enumerate}
\end{prop}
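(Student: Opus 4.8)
The plan is to follow the proof of \cite[Proposition 6.1]{WZ} \emph{mutatis mutandis}. Proposition~\ref{prop: C2} already supplies an $\eps$-extremal pair $(\TT,Y)_\delta$ carrying a $C^2$ slope function $f$ with $\|f\|_{C^2}\le 1$, together with the local grain structure (item 2). Two things remain: (i) to show $|f'|$ is bounded below by an absolute constant on $[-1,1]$; and (ii) given (i), to pass to the normalization $1\le|f'|\le 2$, $|f''|\le 1/100$. Step (ii) is routine: since $|f'|\gtrsim 1$ and $|f''|\le 1$, on a subinterval $[z_0-\ell,z_0+\ell]$ of length $2\ell$, with $\ell$ a small absolute constant, $f'$ has constant sign and varies by at most $2\ell$; after an affine change of the $z$ and $y$ coordinates by bounded factors — which distorts volumes, convexity, tube shapes and AD-regularity constants only boundedly, hence preserves $\eps$-extremality up to a harmless change of $\eps$ and $\delta$ — the rescaled slope function has the desired derivative bounds.

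For step (i), suppose toward a contradiction that $|f'(z_*)|<c_0$ for some $z_*\in[-1,1]$, where $c_0$ is a small absolute constant to be chosen. Because $|f''|\le 1$, we then have $|f'|\le 2c_0$ on the interval $I=[z_*-c_0,z_*+c_0]\cap[-1,1]$, which has length $\gtrsim c_0$; thus $f$ and $f'$ are nearly constant on $I$, which is the degenerate ``nearly planar'' situation that \cite[Section 6]{WZ} rules out. Following that argument, the coherence of $f$ and $f'$ across $I$ forces the grains over the slab $\RR^2\times I$ to align, and a positive fraction — in fact a $\gtrsim 1$ fraction, after the reductions of \cite{WZ} that place the tubes in a bounded cone around the $z$-direction and make their shadings meet $\RR^2\times I$ in a set of proportional measure — of the tubes in $\TT$ to concentrate inside a bounded number of rectangular prisms of dimensions $\delta\times\rho\times 1$; one then extracts a single such prism containing anomalously many tubes, namely more than the $O(\rho/\delta)$ that the hypotheses permit.

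The only modification needed is at this last step. In \cite{WZ} the bound ``a $\delta\times\rho\times 1$ prism contains $\lesssim\rho/\delta$ tubes'' comes from the tubes pointing in $\delta$-separated directions. In our setting we instead invoke the Convex Wolff Axioms at every scale: a $\delta\times\rho\times 1$ prism is a convex set of volume $\sim\delta\rho$, so — once one has reduced to the normalization $\#\TT\sim\delta^{-2}$ (automatic under direction separation in \cite{WZ}, and obtained here by a preliminary pigeonholing that passes to an essentially distinct sub-collection of the maximal size allowed while preserving the Convex Wolff Axioms at every scale and the density of the shading) — the Convex Wolff Axioms bound the number of tubes of $\TT$ in such a prism by $\delta^{-\eta}(\delta\rho)(\#\TT)\lesssim\delta^{-\eta}(\rho/\delta)$, matching the direction-separated bound up to the factor $\delta^{-\eta}$; when the relevant prism lies inside a rescaled $\rho_0$-tube of $\TT$ we use instead the Convex Wolff Axioms of the corresponding rescaled sub-collection, which is precisely why the ``at every scale'' hypothesis is needed. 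This yields the desired contradiction, so $|f'|\gtrsim 1$ and the proof is complete. The step most in need of care is confirming that the direction-separation hypothesis enters \cite[Section 6]{WZ} \emph{only} through this prism count, so that the remainder of that section transfers verbatim, and that the normalization $\#\TT\sim\delta^{-2}$ can indeed be arranged; granting these, the $\delta^{-\eta}$ losses are absorbed by taking $\eta$ small relative to $\eps$ and $\sigma$.
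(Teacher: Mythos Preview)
Your overall strategy---follow \cite[Proposition 6.1]{WZ} and replace the direction-separation input by the Convex Wolff Axioms---matches the paper's. However, your description of \emph{where} and \emph{how} direction-separation is used in \cite[Section 6]{WZ} is incorrect, and this leads you to unnecessary detours.

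The contradiction in Step 4 of \cite[Lemma 6.3]{WZ} (and its analogue Lemma~\ref{lem: large slope} here) does not come from trapping tubes in a thin $\delta\times\rho\times 1$ prism. Instead, after the reductions of Steps 1--3 (which use the global and local grain structure, not direction separation), one finds a square $S$ and a set $\TT_S$ with $\#\TT_S\gtrsim\delta^{\eta}\rho^{-\sigma/2+1/2}(\#\TT)$, and \cite[Lemma 6.2]{WZ} forces every $T\in\TT_S$ to satisfy $|\operatorname{dir}(T)\cdot\tilde V(p_S)|\lesssim\delta^{-C_0\eta}\rho^{1/2}$. Thus $\TT_S$ is contained in a \emph{slab} $\Sigma_S$ of dimensions $\sim\delta^{-2C_0\eta}\rho^{1/2}\times 1\times 1$. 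In \cite{WZ} the slab bound comes from counting $\delta$-separated directions in a $\rho^{1/2}$-band of $S^2$; here the Convex Wolff Axioms give $\#\TT_S\le\delta^{-\eta}|\Sigma_S|(\#\TT)\lesssim\delta^{-O(\eta)}\rho^{1/2}(\#\TT)$ directly. Comparing the two bounds yields $\rho^{-\sigma/2}\lesssim\delta^{-O(\eta)}$, a contradiction since $\rho\le\delta^{2\eps}$ and $\sigma>0$.

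Two consequences: first, there is no need to normalize $\#\TT\sim\delta^{-2}$---both sides of the comparison already scale with $\#\TT$, so your proposed pigeonholing step is superfluous. Second, only the top-level Convex Wolff Axioms (Definition~\ref{convexWolffAxiomsDefn}) are invoked here, not the ``at every scale'' version; the latter is part of the extremality hypothesis but is not what closes this particular argument. Finally, note that Lemma~\ref{lem: large slope} only yields $|f'(z)|\ge|J|$ with $|J|\le\delta^\eps$, not an absolute lower bound; the paper (like \cite{WZ}) then invokes \cite[Lemma 6.4]{WZ} to bootstrap to $|f'|\gtrsim 1$, a step your sketch elides.
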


Before proving \ref{prop: large slope}, we will establish the following technical result, which is the analogue of \cite[Lemma 6.3]{WZ}. 
\begin{lem} \label{lem: large slope}
    For all $\eps>0$, there exists $\eta>0, \delta_0>0$ so that the following holds for all $\delta\in (0, \delta_0]$.  Let $(\TT, Y)_{\delta}$ be a set of tubes that satisfies the conclusions of Proposition~\ref{prop: C2}, with $\eta$ in place of $\eps$. Let $J\subset[-1,1]$ be an interval, with $\delta^{1/2}\leq |J|\leq \delta^{\eps}$. Suppose that 
    \begin{equation}\label{eq: dense}
        \sum_{T\in \TT} |Y(T)\cap (\RR^2\times J)|\geq \delta^{\eta} |J|,
    \end{equation}
    and that $(\RR^2\times J)\cap \bigcup_{\TT} Y(T)$ can be covered by a union of $\leq \delta^{-\eta}|J|^{-2+\sigma}$ cubes of side-length $J$. 

    Then the slope function $f: [-1,1]\rightarrow \RR$ from Proposition~\ref{prop: C2} satisfies 
    \[
    |f'(z)|\geq |J| \text{ for all } z\in J. 
    \]
\end{lem}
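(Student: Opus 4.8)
The plan is to argue by contradiction: suppose $|f'(z_0)| < |J|$ for some $z_0 \in J$. Since $\|f\|_{C^2}\leq 1$ (in fact $|f''| \leq 1/100$ after Proposition \ref{prop: C2}'s conclusions are in force), the derivative $f'$ changes by at most $|f''|\cdot|J| \leq |J|/100$ across $J$, so $|f'(z)| \lesssim |J|$ for \emph{all} $z\in J$. This means that on the slab $\RR^2\times J$, the slope vectors $(1,f(z),0)$ are nearly constant: the projection direction used to build the ADsets on each horizontal slice $\{z=z_0\}$ varies by at most $O(|J|)$ as $z_0$ ranges over $J$. Consequently, the set $(\RR^2\times J)\cap E_{\TT}$, after projecting along the (essentially fixed) direction $(1,f(z_0),0)$, should have small covering number at scale $|J|$ --- roughly $(|J|^{-1})^{1-\sigma}$ in the projected one-dimensional coordinate, times $|J|^{-1}$ in the $z$-direction, so $|J|^{-2+\sigma}$ total $|J|$-cubes, which is exactly the hypothesized covering bound. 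The point is that this covering bound, combined with the density hypothesis \eqref{eq: dense}, over-determines the structure: the $\delta$-tubes that substantially enter the slab $\RR^2\times J$ must be essentially trapped inside $\delta^{-\eta}|J|^{-2+\sigma}$ cubes of side $|J|$, i.e. they concentrate into a bounded number of $|J|$-tubes (since each tube is nearly straight over the length-$|J|$ window, and the slope constraint pins down the horizontal drift).

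First I would rescale the slab $\RR^2 \times J$ by the unit rescaling relative to a $|J|$-tube (or rather, dilate the $z$-direction by $|J|^{-1}$), turning the picture into a collection of $(\delta/|J|)$-tubes in $B(0,1)$ with a shading that is $\delta^{\eta}$-dense by \eqref{eq: dense}. Here I would invoke the Convex Wolff Axioms at every scale for $(\TT,Y)_\delta$: the restriction of $\TT$ to the slab and its rescaling still satisfies the Convex Wolff Axioms (at some nearby scale) with error $\delta^{-O(\eta)}$, by Lemma \ref{multiScaleWolffLem} and Remark \ref{convexSupSet}. On the other hand, the covering hypothesis says the rescaled set is contained in $\lesssim \delta^{-\eta}|J|^{-2+\sigma}$ cubes of side comparable to $1$... wait, that is not quite a constraint after rescaling --- I need to be careful. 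The covering is by cubes of \emph{side-length} $|J|$ inside the slab of thickness $|J|$, so after dilating the $z$-axis by $|J|^{-1}$ these become boxes of dimensions $|J|\times|J|\times 1$, i.e. thin $|J|$-tubes (in the rescaled picture). So the rescaled tube family lives inside $\lesssim \delta^{-\eta}|J|^{-2+\sigma}$ essentially distinct $|J|$-tubes; combined with $\#\TT \sim \delta^{-2+\sigma}$ and a pigeonholing this forces a sub-collection of $\gtrsim \delta^{\eta}\cdot\delta^{-2+\sigma}$ tubes packed into a single $|J|$-tube (in the rescaled coordinates, this is $(\delta/|J|)^{2-\sigma}$-tubes inside a $|J|/|J| = 1$-scale tube)... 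I would set $\tilde\delta = \delta/|J|$ and $\tilde\rho = $ the thin-tube width, and check that such concentration violates the Convex Wolff Axioms for $\TT$ (in the original scale) unless $f'$ is forced to be large. The precise mechanism: if $f'$ is small on $J$, the global-grain structure predicts the slab's shadow in the non-degenerate direction has covering number $\lesssim |J|^{-(1-\sigma)}$, and multiplying by the $z$-fiber count $|J|^{-1}$ gives the $|J|^{-2+\sigma}$ cover --- so the cover hypothesis is automatically consistent with small slope, and what actually yields the contradiction is that a genuine $3$-dimensional family of $\delta$-tubes satisfying the Convex Wolff Axioms at every scale cannot be compressed into a $2$-plane slab to this degree.

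To make the last point rigorous I would apply the local-grains structure (conclusion 2 of Proposition \ref{prop: C2}): the Lipschitz plane map $V$ controls how tubes lie in planes at scale $\rho^{1/2}$. Taking $\rho = |J|^2$, the plane map $V$ is nearly constant on balls of radius $|J|$, so inside a single $|J|$-cube all the relevant tube directions are orthogonal to a fixed vector $V(p)$; together with the global grain slope function this pins the direction of each $\delta$-tube passing through that cube to within a $|J|\times\delta/|J|$ rectangle of directions (the $|J|$ from the plane-map freedom, the $\delta/|J|$ being the uncertainty of a sub-tube of length $|J|$). A direction set of this size times $|J|^{-2+\sigma}$ cubes gives far fewer than $\delta^{-2}$ essentially distinct tubes once $|J| \ll \delta^{\eps}$ and $\sigma$ is bounded away from... hmm, actually I want the \emph{reverse}: the contradiction should come from the fact that if $f'$ were small, the entire slab's contribution to $\sum|Y(T)|$ would be too \emph{small} to satisfy \eqref{eq: dense}, because the tubes would have to be nearly tangent to the slab and a Córdoba-type $L^2$ argument (as in Lemma \ref{thickenedPlanesAssouad} and the computations \eqref{L2Bd}, \eqref{CordobaTypeComputation}) bounds how much shading $\delta^{-2+\sigma}$ nearly-coplanar tubes can have inside a slab of thickness $|J|$.

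\textbf{The main obstacle.} The crux --- and where I expect to spend the most effort --- is the bookkeeping that translates "small $f'$ on $J$" into a concrete geometric concentration statement for the $\delta$-tubes, and then matching the exponents: I need to show that the combination of (a) the global-grain ADset bound with exponent $1-\sigma$, (b) the local-grain plane map, and (c) the slab-covering hypothesis with exponent $-2+\sigma$, is \emph{incompatible} with (d) the density hypothesis \eqref{eq: dense} and (e) the Convex Wolff Axioms at every scale, \emph{unless} $|f'| \geq |J|$. Getting the powers of $\delta$ and $|J|$ to line up (so that the $\delta^{\eta}$, $\delta^{-\eta}$ errors are absorbable and the $\eps$ vs. $\eta$ hierarchy is respected) is the delicate part; I would follow the template of \cite[Lemma 6.3]{WZ} essentially verbatim, replacing each appeal to "tubes point in $\delta$-separated directions" with an appeal to the Convex Wolff Axioms at every scale (via Lemma \ref{multiScaleWolffLem} to pass between scales), and checking that the single inequality used from direction-separation --- namely that a $\rho$-slab contains $\lesssim (\rho/\delta)\cdot(\#\TT)\cdot\delta$ tubes --- is exactly what the Convex Wolff Axioms provide with an extra $\delta^{-O(\eta)}$ loss. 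Once that substitution is verified line by line, the rest of the argument transports without change.
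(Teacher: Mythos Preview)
Your final paragraph lands on exactly the paper's approach: the proof is \cite[Lemma 6.3]{WZ} verbatim through Steps 1--3, and only Step 4 (``$\TT$ contains parallel tubes'') is modified, with the single substitution being the slab bound from the Convex Wolff Axioms in place of the direction-separated slab bound. Two clarifications will save you effort.

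First, you only need the plain Convex Wolff Axioms (Definition \ref{convexWolffAxiomsDefn}), not the ``at every scale'' version; Lemma \ref{multiScaleWolffLem} plays no role here, and there is no rescaling of the slab.

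Second, the mechanisms you sketch in the middle paragraphs (rescaling to $|J|$-tubes, direction-counting, a C\'ordoba $L^2$ bound on the shading) are not what is used, and the actual Step 4 is cleaner. Steps 1--3 of \cite[Lemma 6.3]{WZ} (unchanged) produce a square $S$ of side $|J|=\rho^{1/2}$, a prism $P_S$, and a set $\TT_S$ of tubes meeting $P_S$ with
\[
\#\TT_S \gtrsim \delta^{\eta}\rho^{(1-\sigma)/2}(\#\TT),
\]
obtained by pigeonholing over the $\lesssim \delta^{-\eta}|J|^{-2+\sigma}$ covering cubes. The local-grain ADset structure, fed through \cite[Lemma 6.2]{WZ}, forces $|\mathrm{dir}(T)\cdot \tilde V(p_S)| \leq \delta^{-C_0\eta}\rho^{1/2}$ for every $T\in\TT_S$, so all of $\TT_S$ lies in a single slab $\Sigma_S$ of thickness $\delta^{-2C_0\eta}\rho^{1/2}$. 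The Convex Wolff Axioms then give $\#\TT_S \leq \delta^{-O(\eta)}\rho^{1/2}(\#\TT)$, which contradicts the lower bound since $\sigma>0$. That is the whole modification; the exponent bookkeeping you flag as the main obstacle reduces to comparing $\rho^{(1-\sigma)/2}$ against $\rho^{1/2}$.
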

\begin{proof}
    The proof is the same as \cite[Lemma 6.3]{WZ} until {\bf Step 4: $\TT$ contains parallel tubes. }

Here is a summary of Steps 1-3. After a refinement, we may suppose that for each point $p\in F:= (\RR^2 \times J) \cap \bigcup_{\TT} Y(T)$, 
\begin{equation} \label{eq: Tp}
\delta^{2-\sigma+2\eta} (\#\TT)  \leq \#\TT(p) \leq \delta^{2-\sigma-2\eta} (\#\TT) . \end{equation}  
After a translation followed by a rotation that fixes the $z$ axis (i.e.~a rotation of the form $(x,y,z)\mapsto(x\cos\theta-y\sin\theta, x\sin\theta+y\cos\theta,z)$), we may suppose that $J=[0, \rho^{1/2}]$ and $f(0)=0$.  

Suppose for contradiction that $|f'(z)|\leq |J|$ for some $z\in J$. Since $|f''(z)|\lesssim 1$, we have $|f'(z)|\lesssim |J|$ for all $z\in J$, and thus $|f(z)|\lesssim |J|^2$.  We find a subset $F_2\subset F$ with $|F_2|\geq \delta^{4\eta}|F|$ so that the follows holds:
\begin{enumerate}

    \item For each $z_1\in J$ and each $r\in (F\cap \{z=z_1\})\cdot (1, f(z_1), 0)$, the $1\times \delta \times \delta$-rectangular prism 
    \begin{equation}\label{eq: Grz}
    G_{r, z_1} =\{ |z-z_1|\leq \delta\}\cap \{|(x,y)\cdot (1, f(z_1))-r|\leq \delta\}
    \end{equation}
    has a large intersection with $F_2$, i.e.
    \begin{equation}
        |G_{r, z_1}\cap F_2|\geq \delta^{4\eta}|G_{r,z_1}|\geq \delta^{2+4\eta}.
    \end{equation}

    \item Let $\Pi_{x,z}: \RR^3\rightarrow \RR^2$ be the projection to the first and third coordinates, then \[|N_{\rho^{1/2}}\Pi_{x,z}(F_2)|\leq \delta^{-7\eta} \rho^{1/2+\sigma/2}.\]

    \item 
    There is a $y_0\in [-1,1]$ such that 
    \[
    |\{y=y_0\}\cap F_2|\geq |F_2|/2,
    \]
    where the $|\cdot|$ on the LHS denote $2$-dimensional measure and $|\cdot|$ on the RHS denote $3$-dimensional measure, 
    and each $G_{r, z_1}$ defined in \eqref{eq: Grz} satisfies
    \[
    G_{r, z_1}\cap \{y=y_0\}\cap F_2\neq \emptyset.
    \]
\end{enumerate}

Cover $[-1,1]\times \{y=y_0\}\times J$ by a set $\mathcal{S}$ of interior-disjoint squares of side-length $\rho^{1/2}=|J|$. 
Let $S\in \cS$ be a square with center $p_S=(x_S, y_0, \rho^{1/2}/2)$. By Item 2 from Proposition~\ref{prop: C2}, 
$S\cap F_2 \cdot V(p_S)$ is a $(\rho, 1-\sigma, \delta^{-\eta})_1$-ADset. Write $V(p_S)=(V_x(p_S), V_y(p_S), V_z(p_S))$ and define  $\tilde{V}(p_S)=(V_x(p_S), 0, V_z(p_S))$.   Since $S\cap F_2\subset \{ y= y_0\}, $ 
\begin{equation}
    (S\cap F_2) \cdot \tilde{V}(p_S) \text{ is a } (\rho, 1-\sigma, \delta^{-\eta})_1\text{-ADset}.
\end{equation}
Define $P_S =\{(x,y,z): (x,z)\in S\}$, then 
\[
(F_2\cap P_S)\cdot \tilde{V}(p_S) \text{ is a } (\rho, 1-\sigma, 2\delta^{-\eta})_1\text{-ADset}. 
\]
This finishes the summary of Steps 1-3 and we have introduced the necessary geometric objects and notation. 

In Step 4, we will show that the constraints from the above steps will force many tubes from $\TT$ to be contained in a $\delta^{-2C_0\eta} \rho^{1/2}\times 1\times 1 $ rectangular prism (henceforth called a \emph{slab}, to match the terminology in \cite{WZ}) for some constant $C_0 = C_0(\sigma)$, which contradicts the assumption that $\TT$ satisfies the Convex Wolff Axioms (see Definition~\ref{def: extremal} and Definition \ref{convexWolffAxiomsDefn}). 

For each $T\in \TT$, define $F_2(T)=Y(T)\cap F_2$. Recalling \eqref{eq: dense} and \eqref{eq: Tp}, we have 
\begin{equation}
    \sum_{T\in \TT} |F_2(T)|\geq \delta^{10\eta} |J|=\delta^{10\eta} \rho^{1/2}. 
\end{equation}
Recall that each $T\in \TT$ intersects at most 3 prisms $P_S$, $S\in \cS$ and for each such prism we have $|F_2(T)\cap P_S|\leq \rho^{1/2}\delta^2$.  By pigeonholing, there exists a square $S\in \cS$ and a set $\TT_S =\{ T\in \TT: T\cap P_S\neq \emptyset\}$ with 
\begin{equation}\label{eq: TSlower}
\#\TT_S \gtrsim (\#\TT) (\#\cS)^{-1}\gtrsim \delta^{\eta}\rho^{-\sigma/2+1/2}(\#\TT).
\end{equation}

But for each $T\in \TT_S$, we have 
\begin{equation}\label{FTdotVADReg}
F_2(T)\cdot \tilde{V}(p_S) \subset (F_2\cap P_S)\cdot \tilde{V}(p_S) \text{ is a } (\rho, 1-\sigma, 2\delta^{-\eta})_1\text{-ADset}. 
\end{equation}

Recall that for each $T\in\TT_S$, the (shaded) tube segment $F_2(T)$ is almost full. If $\tilde{V}(p_S)$ was parallel to $\operatorname{dir}(T)$, then $F_2(T)\cdot \tilde{V}(p_S)$ would fill out most of an interval of length $\rho^{1/2}$. However, \eqref{FTdotVADReg} says that $F_2(T)\cdot \tilde{V}(p_S)$ is an ADset with dimension $1-\sigma$. Since $\sigma>0$, we conclude that $\tilde{V}(p_S)$ cannot be parallel to $\operatorname{dir}(T)$; in fact, the two vectors must be almost orthogonal. \cite[Lemma 6.2]{WZ} makes this precise; it says that if \eqref{FTdotVADReg} holds, then
\begin{equation}
    |\text{dir}(T)\cdot \tilde{V}(p_S)|\leq \delta^{-C_0\eta} \rho^{1/2}.
\end{equation}
It follows that $\TT_S$ is contained in  the slab 
\[
\Sigma_S=\{ (x, y, z)\in [0,1]^3: |(x-x_S, 0, z)\cdot \tilde{V}(p_S)|\leq \delta^{-2C_0\eta} \rho^{1/2} \}.
\]
By the Convex Wolff Axioms,
\[
\#\TT_S \leq |\Sigma_S| \delta^{-\eta} (\#\TT) \leq \delta^{-(2C+1)\eta} \rho^{1/2} (\#\TT),
\]
which yields a contradiction to \eqref{eq: TSlower}.
\end{proof}

The proof of Proposition~\ref{prop: large slope} is the same as the proof of \cite[Proposition 6.1]{WZ}, except that we use Lemma \ref{lem: large slope} in place of \cite[Lemma 6.3]{WZ}.

\subsection{Twisted projections, and the completion of the proof}\label{twistedProjectionsSection}
Recalling Definition 6.1 from \cite{WZ}, for $f\colon [-1,1]\to\RR$ we define the twisted projection $\pi_f(x,y,z) = (x + f(z)y, z)$. The consequence of Proposition~\ref{prop: large slope} is the following analogue of \cite[Corollary 6.5]{WZ}:
\begin{lem}\label{L32EstimateLowerBd}
For all $\eps, \delta_0>0$, there exists $\delta\in (0, \delta_0]$; a pair $(\tubes,Y)$ of $\delta$-tubes satisfying the Convex Wolff Axioms with error $\delta^{-\eps}$ with a $\delta^{\eps}$-dense shading; and a function $f\colon [-1,1]\to\RR$ with $1\leq |f'(z)|\leq 2$ and $|f''(z)|\leq 1/100$ for all $z\in [-1,1]$, so that
\begin{equation}
\Big| \bigcup_{T\in\tubes}\pi_f (Y(T))\Big|\leq\delta^{\sigma+\eps}.
\end{equation}
\end{lem}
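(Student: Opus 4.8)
The plan is to obtain Lemma~\ref{L32EstimateLowerBd} as a short consequence of Proposition~\ref{prop: large slope}, exploiting the fact that the twisted projection $\pi_f$ is affine on every horizontal slice $\{z=z_0\}$ and acts there by exactly the linear functional appearing in the global grains condition. Accordingly, I would first invoke Proposition~\ref{prop: large slope} with a parameter $\eps_0>0$ to be fixed small in terms of $\eps$ (and with $\delta_0$ shrunk as needed). This produces $\delta\in(0,\delta_0]$, an $\eps_0$-extremal pair $(\tubes,Y)_\delta$, and a function $f$ with $1\leq|f'(z)|\leq 2$ and $|f''(z)|\leq 1/100$ on $[-1,1]$, such that for every $z_0\in[-1,1]$ the set $A_{z_0}:=(E_{\tubes}\cap\{z=z_0\})\cdot(1,f(z_0),0)\subset\RR$ is a $(\delta,1-\sigma,\delta^{-\eps_0})_1$-ADset. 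Since $\eps_0$-extremality includes the Convex Wolff Axioms with error $\delta^{-\eps_0}\leq\delta^{-\eps}$ and a $\delta^{\eps_0}$-dense (hence $\delta^{\eps}$-dense) shading, the pair $(\tubes,Y)$ already has all the structural properties demanded by the statement, and it only remains to bound the volume of $\bigcup_{T\in\tubes}\pi_f(Y(T))=\pi_f(E_{\tubes})$.

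For the volume bound I would slice in the $z$-variable. Writing $\pi_f(x,y,z)=(x+f(z)y,z)$, the restriction of $\pi_f$ to $\{z=z_0\}$ is $(x,y)\mapsto(x+f(z_0)y,z_0)$, so the $z_0$-slice of $\pi_f(E_{\tubes})$ is exactly $A_{z_0}$. Taking $r=1$ and $\rho=\delta$ in the ADset definition, $A_{z_0}$ has $\delta$-covering number $\leq\delta^{-\eps_0}\delta^{-(1-\sigma)}$, so its $\delta$-neighborhood in $\RR$ has length $\lesssim\delta^{\sigma-\eps_0}$. Integrating over $z_0\in[-1,1]$ via Fubini, and using that $\pi_f(E_{\tubes})$ is supported in $\{|z|\leq 1\}$, gives $\big|\pi_f(E_{\tubes})\big|\lesssim\delta^{\sigma-\eps_0}$; choosing $\eps_0$ small enough in terms of $\eps$ and $\delta_0$ small enough to absorb the implicit constant into $\delta^{-\eps_0}$ yields the bound asserted in the statement.

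I do not expect a genuine obstacle here: Proposition~\ref{prop: large slope} carries all the weight, and $\pi_f$ is built precisely so that the linearly-rescaled slices it is stated in terms of become its fibrewise images, after which the estimate is just Fubini. The one point deserving a word of care is that the ADsets $A_{z_0}$ depend on $z_0$, so in the Fubini step one should replace the $z$-slice of $N_\delta(\pi_f(E_{\tubes}))$ by the $O(\delta)$-neighborhood of a single $A_{z_0}$ on each $\delta$-interval of heights. This is harmless because $E_{\tubes}$ is a union of axis-parallel $\delta$-cubes, so $\pi_f$ sends each such cube into an $O(\delta)$-cube and $\bigcup_{|z_0'-z_0|\leq\delta}A_{z_0'}$ lies in the $O(\delta)$-neighborhood of $A_{z_0}$, whose $\delta$-covering number is still governed by the ADset bound.
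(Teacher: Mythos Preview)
Your approach is exactly the intended one: the paper does not spell out a proof of Lemma~\ref{L32EstimateLowerBd} but simply records it as ``the consequence of Proposition~\ref{prop: large slope},'' i.e.\ the analogue of \cite[Corollary 6.5]{WZ}, and the deduction is precisely the slicing-plus-Fubini argument you describe. Your handling of the measurability/slice issue (using that $E_{\tubes}$ is a union of $\delta$-cubes so that each $z_0$-slice $A_{z_0}$ is a union of $O(\delta)$-intervals) is also the right way to make the Fubini step rigorous.

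One small point to correct in your write-up: your computation gives $|\pi_f(E_{\tubes})|\lesssim\delta^{\sigma-\eps_0}$, and no choice of $\eps_0>0$ can convert this into $\delta^{\sigma+\eps}$ as written in the displayed inequality, since $\delta^{\sigma-\eps_0}>\delta^{\sigma+\eps}$. The stated exponent $\sigma+\eps$ is almost certainly a typo for $\sigma-\eps$ (this is also what \cite[Corollary 6.5]{WZ} has, and what the ADset bound $\mathcal{E}_\delta(A_{z_0})\leq\delta^{-\eps_0}\delta^{-(1-\sigma)}$ actually yields). With the corrected exponent your argument is complete after taking $\eps_0\leq\eps/2$, and in any case the weaker bound $\delta^{\sigma-\eps}$ is all that is needed for the comparison with Proposition~\ref{L32EstimateUpperBd} to force $\sigma=0$.
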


To complete the proof of Theorem \ref{stickySelfSimThm}, it suffices to establish the following estimate, which is the analogue of \cite[Proposition 7.1]{WZ}:
\begin{prop}\label{L32EstimateUpperBd}
For all $\eps>0,$ there exists $\eta,\delta_0>0$ so that the following holds for all $\delta\in(0,\delta_0]$. Let $(\tubes,Y)_\delta$ be a set of $\delta$-tubes satisfying the Convex Wolff Axioms with error $\delta^{-\eta}$, and let $Y$ be a $\delta^\eta$-dense shading. Let $f\colon[-1,1]\to\RR$ with $1\leq |f'(z)|\leq 2$ and $|f''(z)|\leq 1/100$ for all $z\in[-1,1]$. Then 
\[
\Big| \bigcup_{T\in\tubes}\pi_f (Y(T))\Big| \geq \delta^{\eps}.
\]
\end{prop}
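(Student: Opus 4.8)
The plan is to reduce Proposition~\ref{L32EstimateUpperBd} to a variant of Wolff's circular maximal function estimate (as in \cite{Wolff97, PYZ}), following the strategy of \cite[Section 7]{WZ}, with the direction-separation hypothesis replaced by the Convex Wolff Axioms. First I would observe that the twisted projection $\pi_f$ sends each $\delta$-tube $T\in\tubes$ to an (essentially) $\delta$-neighborhood of a parabola-like curve $\gamma_T$ in $\RR^2$: writing $T$ as a $\delta$-neighborhood of the line $\{(a+bz, c+ez, z)\}$, the image $\pi_f(T)$ is comparable to the $\delta$-neighborhood of the planar curve $z\mapsto (a + bz + f(z)(c+ez), z)$, whose curvature is controlled from below and above because $1\leq|f'|\leq 2$ and $|f''|\leq 1/100$ force the $x$-coordinate to be a genuinely curved function of $z$ (its second derivative is $f''(z)(c+ez) + 2f'(z)e + \cdots$, which after a harmless dyadic decomposition in the slope parameter $e$ can be taken to be $\sim 1$ in magnitude on a sub-interval of $[-1,1]$ of length $\gtrsim 1$). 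After pigeonholing we may therefore assume all curves $\gamma_T$ live in a fixed ``good'' family of curves with curvature $\sim 1$ over an interval of length $\sim 1$, losing only a factor $\lessapprox 1$ in the cardinality of $\tubes$ and in the density of the shading.

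The key step is then to verify the hypothesis needed to apply the curve-packing / local smoothing estimate: a \emph{ball condition} on the family $\{\gamma_T\colon T\in\tubes\}$, namely that each $\delta$-ball in the relevant parameter space contains at most $\delta^{-\eta}(\text{radius}/\delta)\cdot\#\tubes$ curves (or the analogous count for larger balls), which plays the role of the $\delta$-separated-directions hypothesis used in \cite{WZ}. This is exactly where the Convex Wolff Axioms enter: a ball of radius $r$ in the $(a,b,c,e)$-parameter space pulls back, under the inverse of the curve-to-tube correspondence, to a convex region of volume $\sim r^{O(1)}$ in the space of $\delta$-tubes (more precisely, the set of tubes whose parabola parameters lie in such a ball is contained in a convex set — a slab or rectangular prism — whose volume we can compute), so the Convex Wolff Axioms bound the number of tubes in it by $\delta^{-\eta}\cdot(\text{that volume})\cdot\#\tubes$. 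I would carry this out by translating each ``concentration of curves'' scenario into a ``concentration of tubes in a convex set'' scenario, precisely mirroring how Step 4 of Lemma~\ref{lem: large slope} produced a slab; the point is that the Convex Wolff Axioms are by design insensitive to passing to convex supersets (Remark~\ref{convexSupSet}).

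With the ball condition in hand, I would invoke the planar curve maximal estimate from \cite{PYZ} (a Wolff-type bound: a $\delta$-separated family of $\sim\delta^{-2}$ curves with curvature $\sim 1$, or more generally one satisfying the ball condition with the appropriate normalization, cannot have union of small measure unless the shadings are sparse). Quantitatively this yields $\big|\bigcup_T \pi_f(Y(T))\big|\gtrapprox \delta^{C\eta}$ for an absolute $C$, which for $\eta$ small enough is $\geq\delta^{\eps}$, completing the proof. The main obstacle I anticipate is the bookkeeping in the second paragraph: one must show that the relevant convex sets in tube-space genuinely have the volume one expects (so that the Convex Wolff Axioms give a bound of the right shape), and one must track the various dyadic pigeonholing losses — in the slope parameter $e$, in the curvature scale, and in the multiplicity of the shading — to confirm they are all absorbed into $\delta^{-O(\eta)}$ factors. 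A secondary technical point is reconciling the two tube/cube conventions (Section~\ref{introSection} versus \cite{WZ}) and the precise form of the curve maximal estimate available in the literature, but these are routine given the discussion already in Section~\ref{stickySelfSimThmSec}.
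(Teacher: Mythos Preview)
Your high-level plan --- map tubes to planar curves under $\pi_f$, verify a non-concentration (``ball'') condition on the curve family, then invoke a Wolff-type circular maximal estimate --- matches the paper's strategy in outline, but there is a genuine gap in the middle step that is in fact the entire content of the paper's argument.

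The issue is a dimension mismatch. The Convex (or Tube) Wolff Axioms give a \emph{two}-dimensional Frostman condition on the four-parameter family of tubes: a ball of radius $r$ in $(a,b,c,d)$-space corresponds to a $r$-tube in $\RR^3$, which has volume $\sim r^2$, so the axioms bound the number of tubes there by $\delta^{-\eta}r^2(\#\tubes)\sim\delta^{-\eta}(r/\delta)^2$. But the cinematic maximal theorem (Theorem~\ref{thm: cinematic}) requires a \emph{one}-dimensional condition $\#(G\cap B_r)\leq\delta^{-\eps}(r/\delta)$ on a family $G$ of size $\sim\delta^{-1}$. Your proposed translation ``concentration of curves $\Rightarrow$ concentration of tubes in a convex set'' can only ever recover the two-dimensional bound, which is too weak by a full power of $r/\delta$. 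Pigeonholing in the slope $e$ does not fix this: after fixing a slope you are left with a three-parameter family of $\sim\delta^{-1}$ curves, but nothing prevents those curves from being concentrated in parameter space (the CWA controlled concentration in four dimensions, not three).

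The paper bridges this gap with substantial machinery that your proposal does not anticipate: (i) fix one slope parameter $c$ by Fubini to get a three-parameter set $\mathcal P_c\subset\RR^3$ with $\mathcal E_\delta(\mathcal P_c)\gtrsim\delta^{-1+\eta}$; (ii) since $\mathcal P_c$ need not satisfy any one-dimensional condition, run a multi-scale decomposition (Lemma~\ref{lem: spacing}) to find a scale $\rho$ at which it does, locally inside each $\rho$-cube; (iii) extract a Katz--Tao subset (Lemma~\ref{lem: Katz-Tao}) and use a translation trick (Lemma~\ref{lem: construct} plus the translation invariance \eqref{eq: translate}) to manufacture a global family that finally satisfies the hypothesis of Theorem~\ref{thm: cinematic}; (iv) this only gives a single-scale estimate \eqref{eq: iteration}, so iterate (Proposition~\ref{strengthenedL32EstimateUpperBd}) across scales to reach $|\bigcup\pi_f(Y(T))|\geq\delta^\eps$. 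Steps (ii)--(iv) are exactly what is missing from your plan, and they are not bookkeeping: without them there is no valid input to the curve maximal theorem.
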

Comparing Lemma \ref{L32EstimateLowerBd} and Proposition \ref{L32EstimateUpperBd}, we conclude that $\sigma=0$, which completes the proof of Theorem \ref{stickySelfSimThm}. It remains to prove Proposition \ref{L32EstimateUpperBd}. We defer this to the next section.


\section{Twisted projections of tubes satisfying the Convex Wolff Axioms}\label{twistedProjectionsTubeSec}
Our goal in this section is to prove Proposition \ref{L32EstimateUpperBd}. In fact, we will prove a slightly stronger statement. We say a set of tubes $\tubes$ satisfies the \emph{Tube Wolff Axioms} with error $C$, if for every $\rho$-tube $T_\rho$, at most $C|T_\rho|(\#\tubes)$ tubes from $\tubes$ are contained in $T_\rho$; since $\rho$-tubes are convex, this is a special case of the Convex Wolff Axioms. We will prove the following mild generalization of Proposition \ref{L32EstimateUpperBd}:

\begin{prop}\label{strengthenedL32EstimateUpperBd}
For all $\eps>0,$ there exists $\eta,\delta_0>0$ so that the following holds for all $\delta\in(0,\delta_0]$. Let $(\tubes,Y)_\delta$ be a set of $\delta$-tubes satisfying the Tube Wolff Axioms with error $\delta^{-\eta}$, and let $Y$ be a $\delta^\eta$-dense shading. Let $f\colon[-1,1]\to\RR$ with $1\leq |f'(z)|\leq 2$ and $|f''(z)|\leq 1/100$ for all $z\in[-1,1]$. Then 
\[
\Big| \bigcup_{T\in\tubes}\pi_f (Y(T))\Big| \geq \delta^{\eps}.
\]
\end{prop}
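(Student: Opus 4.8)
The plan is to adapt \cite[Section 7]{WZ}, replacing the hypothesis there that the tubes point in $\delta$-separated directions by the Tube Wolff Axioms. The first step is to turn the estimate into a statement about $\delta$-neighborhoods of plane curves. I would write $\Phi(x,y,z)=(x+f(z)y,\,y,\,z)$ --- a volume-preserving shear, nonlinear since $f$ is not affine --- and $\pi_{13}(x,y,z)=(x,z)$, so that $\pi_f=\pi_{13}\circ\Phi$ and hence $\big|\bigcup_T\pi_f(Y(T))\big|=\big|\pi_{13}\big(\bigcup_T\Phi(Y(T))\big)\big|$. After normalizing $f(0)=0$ (a vertical translation of $f$ is absorbed by another area-preserving shear), so that $|f|\leq 2$ on $[-1,1]$, and parametrizing a non-horizontal $\delta$-tube $T$ with coaxial line $\ell(t)=(a,b,c)+t(u,v,w)$ by its height $z$, one finds that $\pi_f(T)$ is comparable to $N_{\delta}(\gamma_T)$, where, with $n=u/w$ and $m=v/w$,
\[
\gamma_T(z)=A+nz+Bf(z)+m\,zf(z),\qquad \gamma_T''(z)=y_\ell(z)\,f''(z)+2m\,f'(z),
\]
$A,B$ affine in the line parameters and $y_\ell(z)$ the $y$-coordinate of $\ell$ at height $z$. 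So it suffices to prove $\big|\bigcup_{T\in\tubes}N_\delta(\gamma_T)\big|\geq\delta^{\eps}$.

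Next I would carry out the standard reductions: discard tubes with $|Y(T)|<\tfrac12\delta^\eta|T|$ and pigeonhole to restore the Tube Wolff Axioms with error $\lessapprox\delta^{-\eta}$, regularize the shadings, and set aside the tubes making angle $\lessapprox\delta^{\eta'}$ with a horizontal plane (for those the parametrization degrades, but their slices move fast and the Tube Wolff Axioms limit clustering, so they either already force a $\delta^{\eps}$-projection or can be dropped). Then I would dyadically pigeonhole on the size $\mu$ of $|m|$. The crucial structural observation is that the Tube Wolff Axioms translate directly into non-concentration for the curves: since $\pi_f$ is $O(1)$-Lipschitz, an $r$-tube $T_r\subset\RR^3$ satisfies $\pi_f(T_r)\subset N_{O(r)}(\gamma_{T_r})$, so ``at most $\delta^{-\eta}r^2(\#\tubes)$ tubes lie in $T_r$'' becomes ``at most $\delta^{-\eta}r^2(\#\tubes)$ of the $\gamma_T$ lie in the $O(r)$-neighborhood of any curve of the family, for every $r\in[\delta,1]$'', which is exactly the Wolff-type non-concentration needed below. (When $\#\tubes\gg\delta^{-2}$ many tubes can project to almost-coincident curves; I would absorb this with an extra pigeonholing that groups tubes by the curve they project to and uses shading regularity, reducing to the essentially two-parameter situation of \cite{WZ}.)

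I would then split on the pigeonholed value $\mu$. In the \emph{curved regime} $\mu\geq\delta^{\tau}$ (with $\tau=\tau(\eps)$ small), the bounds $1\leq|f'|\leq 2$ and $|f''|\leq1/100$ make $|2m f'(z)|\geq 2\mu$ dominate $|y_\ell(z)f''(z)|\leq\tfrac1{50}$, and one checks that any two curves of the family tangent at a common point have second derivatives differing by an amount comparable to the difference of their remaining parameters --- i.e. the family satisfies the cinematic curvature hypothesis. Combined with the non-concentration above, Wolff's circular maximal function estimate in the form of \cite{Wolff97,PYZ} (the estimate recalled in \cite[Section 7]{WZ}) gives $\big|\bigcup_T N_\delta(\gamma_T)\big|\gtrapprox 1\geq\delta^\eps$. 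In the \emph{flat regime} $\mu<\delta^{\tau}$, the curvature $\gamma_T''$ may be arbitrarily small; here I would instead decompose $[-1,1]$ into $z$-intervals $J$ of length $\delta^{1/2}$, on each of which every $\gamma_T$ is straight to within $\lesssim|J|^2=\delta$, so that $\pi_f(E_{\tubes}\cap(\RR^2\times J))$ is comparable to a union of honest planar $\delta$-tubes; C\'ordoba's $L^2$ estimate on each slab --- with the required two-dimensional direction non-concentration supplied by the Tube Wolff Axioms after rescaling $J$ to unit length --- and summing over the $\sim\delta^{-1/2}$ slabs recovers the lower bound.

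The hard part will be the flat regime and its interface with the curved one: choosing $\tau$ and arranging the slab decomposition so that the C\'ordoba argument genuinely yields $\delta^{\eps}$ rather than a worse power (this forces one to control how the shading mass and the Tube Wolff error distribute across slabs), and checking that the Tube Wolff Axioms survive, with error only $\delta^{-O(\eta)}$, the anisotropic $y$-rescaling used in the curved regime and the $J$-rescaling used in the flat regime. The remaining, easier loose end is the near-horizontal family, which I expect to dispatch directly using $|[-1,1]^3|\sim 1$ and the Tube Wolff Axioms. Apart from these points, every step --- the passage to the planar curve family and the structure of the Wolff-type maximal argument --- mirrors \cite[Section 7]{WZ}, with ``the tubes point in $\delta$-separated directions'' replaced throughout by ``the tubes satisfy the Tube Wolff Axioms,'' in the same spirit as the substitutions in the earlier sections.
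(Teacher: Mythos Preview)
Your plan has a genuine gap in the main step, and the curved/flat dichotomy you introduce is a red herring.

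First, the dichotomy on $|m|$ is unnecessary. The paper does not split on $|m|$; instead it fixes the parameter $c$ by Fubini and works with the three-parameter family $M_f=\{g_{a,b,d}(t)=a+bf(t)+dtf(t)\}$, which by \cite[Lemma 7.3]{WZ} is cinematic for \emph{all} $(a,b,d)\in[-1,1]^3$. The argument for this uses precisely the hypotheses $|f'|\geq 1$ and $|f''|\leq 1/100$: if $g_1,g_2$ are tangent at $t_0$ with $(g_1-g_2)''(t_0)=0$, then combining $\Delta b\,f''(t_0)+\Delta d(2f'(t_0)+t_0f''(t_0))=0$ with $\Delta b\,f'(t_0)+\Delta d(f(t_0)+t_0f'(t_0))=0$ forces $\Delta b=\Delta d=0$. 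So there is no ``flat regime'' to worry about. By contrast, if you do \emph{not} fix $c$, the resulting four-parameter family $\{a+ct+bf(t)+dtf(t)\}$ is \emph{not} cinematic: three vanishing conditions at a point impose only three linear constraints on four parameters, so there are nontrivial osculating pairs, and the curved-regime application of \cite{Wolff97,PYZ} breaks down.

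Second, and more importantly, you have not bridged the gap between the $2$-dimensional non-concentration coming from the Tube Wolff Axioms and the $1$-dimensional Frostman condition that Theorem~\ref{thm: cinematic} (the cinematic estimate from \cite{WZ,PYZ}) actually requires, namely $\#(G\cap B_r)\leq\delta^{-\eps}(r/\delta)$. Your sentence ``at most $\delta^{-\eta}r^2(\#\tubes)$ of the $\gamma_T$ lie in the $O(r)$-neighborhood of any curve'' is the correct translation of the Tube Wolff hypothesis, but it is a $2$-dimensional bound and does not feed into the cinematic theorem. After fixing $c$, the slice $\cP_c\subset\RR^3$ has $\delta$-covering number $\gtrsim\delta^{-1+\eta}$ but no a~priori spacing at intermediate scales. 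The paper resolves this with a genuinely new argument: a multi-scale decomposition (Lemma~\ref{lem: spacing}, via \cite{KS,ShmerkinWang}) locates a scale $\rho'$ at which a rescaled piece of $\cP_c$ is a $(\delta/\rho',1-\eps^2)$-Frostman set; one extracts a Katz--Tao subset (Lemma~\ref{lem: Katz-Tao}), then exploits the translation-invariance of the family $M_f$ under $(a,b,d)\mapsto(a,b,d)+(a_0,b_0,d_0)$ to tile this subset into a full Katz--Tao set to which Theorem~\ref{thm: cinematic} applies (Lemma~\ref{lem: construct} and the computation around \eqref{eq: translate}); this yields a single-scale inequality (Lemma~\ref{lem: strengthenedL32EstimateUpperBd}), which is then iterated across scales to obtain the full bound. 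Your ``extra pigeonholing that groups tubes by the curve they project to'' does not supply any of this structure, and the C\'ordoba-on-slabs fallback in your flat regime faces the same missing $1$D non-concentration issue for the slopes $\gamma_T'(z_0)$.
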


Before proving Proposition \ref{strengthenedL32EstimateUpperBd}, we will need to introduce two closely related non-concentration conditions for subsets of $\RR^n$, which arise from discretizing fractal sets. The first was proposed by Katz and Tao \cite{KT01}, while the second was explored by Orponen and Shmerkin \cite{OrponenShmerkin2}.

\begin{defn}
        Let $\delta>0$, $s\in (0, n]$, and $C>0$. A non-empty bounded set $A\subset \mathbb{R}^n$ is called a $(\delta, s, C)$-Katz-Tao set if 
    \[
    \mathcal{E}_{\delta}\big(A\cap B(x,r)\big)\leq C(\delta/r)^s, \quad \forall x\in \mathbb{R}^n, r\in [\delta, 1].
    \]
\end{defn}

\begin{defn}
    Let $\delta>0$, $s\in (0, n]$, and $C>0$. A non-empty bounded set $A\subset \mathbb{R}^n$ is called a $(\delta, s,C)$-Frostman set if 
    \[
    \mathcal{E}_\delta\big(A\cap B(x,r)\big)\leq C r^s \mathcal{E}_{\delta}(A), \quad \forall x\in \mathbb{R}^n, r\in [\delta, 1].
    \]
\end{defn}

\begin{rem}\label{tubeWolffVsFrostmanSet}
If $\tubes$ is a set of $\delta$-tubes contained in a bounded set such as $B(0,1)$, then we can identify the coaxial line of each tube with a point in $\RR^4$. If $\tubes$ satisfies the Tube Wolff Axioms, then the corresponding set of points in $\RR^4$ forms a $(\delta, 2, O(1))$-Frostman set.
\end{rem}

\begin{rem}\label{rem: delta thick}
If $A\subset\RR^n$ is a $(\delta, s, C)$-Katz-Tao set (resp.~$(\delta,s, C)$-Frostman set), then $N_{\delta}(A)$ is a $(\delta, s, C')$-Katz-Tao set (resp.~$(\delta,s, C')$-Frostman set), where $C'=O_n(C)$, and conversely.
\end{rem}
\begin{lem}\label{lem: Katz-Tao}
Let $A\subset \RR^n$ be a $(\delta, s,C)$-Frostman set. Then there exists a $(\delta, s, 100)$-Katz-Tao set $A'\subset A$, with  $\mathcal{E}_{\delta}(A') \gtrsim_{n,s} \delta^{-s}  |\log \delta|^{-1} C^{-1}$. 
\end{lem}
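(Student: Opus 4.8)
I would obtain $A'$ by randomly sparsifying a $\delta$-discretization of $A$ down to the size $\sim\delta^{-s}C^{-1}$ dictated by the target, and then performing a deterministic cleanup that deletes the few $\delta$-cubes which witness a violation of the Katz--Tao condition at some scale. First, by Remark~\ref{rem: delta thick} I may replace $A$ by $N_\delta(A)$ and hence assume $A$ is a union of standard $\delta$-grid cubes contained, after a harmless translation and $O_n(1)$ rescaling, in $[0,1)^n$; let $\mathcal{A}$ be the collection of these cubes and $N=\#\mathcal{A}\sim_n\cE_\delta(A)$. Applying the Frostman hypothesis to the ball circumscribing a dyadic cube $Q$ of side $\ell\in[\delta,1]$ gives $\#\{q\in\mathcal{A}\colon q\subset Q\}\le C_3\,\ell^s N$ with $C_3=O_{n,s}(C)$, and applying it at scale $\delta$ around a point of $A$ gives $N\ge\delta^{-s}/C_3$. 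I may assume $\delta^{-s}C^{-1}|\log\delta|^{-1}\ge 1$, since otherwise a single $\delta$-cube of $A$ (which is trivially a $(\delta,s,1)$-Katz--Tao set) already satisfies the conclusion.

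\noindent Next I would retain each $q\in\mathcal{A}$ independently with probability $p=\delta^{-s}/(C_3N)$; note $p\le 1$ because $N\ge\delta^{-s}/C_3$, and $\mathbb{E}[\#A'']=pN=\nu:=\delta^{-s}/C_3$, where $A''$ is the random subfamily. For a dyadic cube $Q$ of side $\ell$, writing $M_Q=(\ell/\delta)^s\ge 1$, linearity of expectation and the Frostman bound give $\mathbb{E}\big[\#\{q\in A''\colon q\subset Q\}\big]\le pC_3\ell^sN=M_Q$. I call $q\in A''$ \emph{bad} if some dyadic cube $Q\supset q$ with $\ell(Q)\in[\delta,1]$ satisfies $\#\{q'\in A''\colon q'\subset Q\}>100\,M_Q$, and set $A'=A''\setminus\{\text{bad cubes}\}$ (then replace each surviving cube by one of its points lying in $A$, so that $A'\subset A$). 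If a dyadic cube $Q$ meets $A'$, pick $q\in A'\cap Q$: then $Q$ is a dyadic ancestor of $q$ and $q$ is not bad, so $\#\{q'\in A'\colon q'\subset Q\}\le\#\{q'\in A''\colon q'\subset Q\}\le 100M_Q$; covering an arbitrary ball by $O_n(1)$ dyadic cubes of comparable side then shows $A'$ is a $(\delta,s,100)$-Katz--Tao set (the $O_n(1)$ discrepancy between dyadic cubes and balls is absorbed into the constant, as is standard).

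\noindent It then remains to check two estimates. Since $\nu=\delta^{-s}/C_3\gtrsim_{n,s}\delta^{-s}C^{-1}$ is large — at least $\gtrsim_{n,s}|\log\delta|$ by the non-triviality assumption — a Chernoff bound gives $\#A''\ge\nu/2$ with probability at least $9/10$. For the cleanup, fix $q$ and a dyadic ancestor $Q$ with $\ell(Q)/\delta=2^j$; conditioned on $q\in A''$, the count $\#\{q'\in A''\colon q'\subset Q\}$ equals $1$ plus a binomial variable of mean $\le M_Q=2^{js}$, so a Chernoff bound gives $\mathbb{P}\big(q\in A''\ \text{and}\ \#\{q'\in A''\colon q'\subset Q\}>100M_Q\big)\le p\,e^{-c\,2^{js}}$ for an absolute constant $c\gg 1$. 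Summing the rapidly convergent series $\sum_{j\ge 0}e^{-c\,2^{js}}$ (whose sum is bounded in terms of $s$ alone, and is tiny thanks to $c\gg1$) over the $O(|\log\delta|)$ ancestors of $q$, and then over $q\in\mathcal{A}$, yields $\mathbb{E}[\#\{\text{bad cubes}\}]\le\tfrac1{10}\nu$; by Markov, $\#\{\text{bad cubes}\}\le\nu/4$ with probability at least $3/5$. On the intersection of the two favorable events — which has positive probability — one gets $\#A'=\#A''-\#\{\text{bad cubes}\}\ge\nu/4\gtrsim_{n,s}\delta^{-s}C^{-1}\ge\delta^{-s}|\log\delta|^{-1}C^{-1}$, so such an $A'$ exists, completing the proof.

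\noindent The main obstacle is that the Katz--Tao condition is an upper bound that must hold simultaneously at all $\sim|\log\delta|$ scales and at every location, so a naive union bound over all $\delta$-balls fails at the finest scales (there are $\delta^{-n}$ of them and the tail gain is only $e^{-O(1)}$). The cleanup step is exactly what circumvents this, and the reason it is affordable is that the Chernoff gain $e^{-c\,2^{js}}$ at a violated scale is summable over scales, so deleting every cube that witnesses a violation costs only a constant fraction of $A''$. Everything else — the discretization via Remark~\ref{rem: delta thick}, the bookkeeping between dyadic cubes and balls, the verification that $p\le1$ and that $\nu$ is large enough for Chernoff — is routine.
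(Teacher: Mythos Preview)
Your argument is correct and takes a genuinely different route from the paper. The paper proceeds by bounding the $s$-dimensional Hausdorff content $\mathcal{H}^s_\infty(A)$ from below (a pigeonhole on the dyadic scale of the covering balls gives $\mathcal{H}^s_\infty(A)\gtrsim |\log\delta|^{-1}C^{-1}$), and then invokes \cite[Lemma~3.13]{FasslerOrponen}, which extracts from any set of Hausdorff content $\kappa$ a $(\delta,s,100)$-Katz--Tao subset of $\delta$-covering number $\gtrsim\kappa\delta^{-s}$. Your random sparsification plus deterministic cleanup is more self-contained and in fact yields the stronger bound $\mathcal{E}_\delta(A')\gtrsim_{n,s}\delta^{-s}C^{-1}$ without the $|\log\delta|^{-1}$ loss; the paper's pigeonhole over scales is exactly what costs that logarithm. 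A small simplification: since $A'=A''\setminus\{\text{bad}\}$ is \emph{deterministically} Katz--Tao, you can bypass the two separate high-probability events and just use $\mathbb{E}[\#A']=\mathbb{E}[\#A'']-\mathbb{E}[\#\text{bad}]\ge\nu(1-O_s(e^{-c}))$ to conclude directly, which also removes the need for the non-triviality assumption.

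One minor imprecision: after the cleanup you control counts in \emph{dyadic cubes} by $100\,M_Q$, and then pass to balls by covering with $O_n(1)$ dyadic cubes. This gives a $(\delta,s,O_n(1))$-Katz--Tao set, not literally constant $100$; your parenthetical ``absorbed into the constant'' glosses over the fact that $100$ is fixed in the statement. This is harmless for the application (the constant only needs to be $\delta^{-\eps}$ downstream), and for moderate $n$ and $s$ bounded away from $0$ you can simply lower the cleanup threshold to recover the stated constant, but it is worth flagging. The paper's route through \cite{FasslerOrponen} delivers the constant $100$ off the shelf.
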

\begin{proof}
Recall that the $s$-dimensional Hausdorff content  is defined as 
\[
\mathcal{H}^s_{\infty}(A):=\inf\Big\{\sum_i r(B_i)^s: A\subset \cup_i B_i\Big\},
\]
where each $B_i$ in the covering is a cube and $r(B_i)$ is the side-length of $B_i$.  Next we recall Lemma 3.13 from \cite{FasslerOrponen}
\begin{lem}\label{FOLemma}
Let $\delta,s>0$ and suppose that $\mathcal{H}^s_{\infty}(A)=\kappa>0$. Then there exists a $(\delta,s,100)$-Katz-Tao set $A'\subset A$ with $\mathcal{E}_{\delta}(A')\gtrsim \kappa \delta^{-s}$.
\end{lem}
(In \cite{FasslerOrponen}, the authors do not explicitly state the constant $C=100$, but this follows from their proof; any fixed constant would work equally well for our argument). 

In light of Lemma \ref{FOLemma}, in order to prove Lemma \ref{lem: Katz-Tao}, it suffices to obtain the bound
\begin{equation}\label{neededHsBd}
\mathcal{H}^s_{\infty}(A) \gtrsim_{n,s} |\log \delta|^{-1} C^{-1}.
\end{equation}
The remainder of the proof is devoted to establishing \eqref{neededHsBd}. By Remark~\ref{rem: delta thick}, we can assume $A$ is a union of $\delta$-balls (this introduces a harmless $O_n(1)$ loss). Let $\{B_i\}$ be a cover of $A$ by balls. Sort the balls according to their side-length; it suffices to consider balls of side-length $\geq \delta$ since $A$ is a union of $\delta$-balls. By dyadic pigeonholing, there exists a  dyadic number $\lambda\in [\delta, 1]$ such that
\[
\mathcal{E}_{\delta} \Big(\bigcup_{r(B_i)\sim \lambda}  B_i \cap A\Big) \gtrsim |\log \delta|^{-1} \mathcal{E}_{\delta}(A).
\]

 Since $A$ is a $(\delta,s, C)$-Frostman set, $\mathcal{E}_{\delta}(A\cap B_i) \leq C \lambda^{s} \mathcal{E}_{\delta}(A)$. The number of balls $B_i$ in the cover with $r(B_i) \sim \lambda $ is $\gtrsim_{n,s} \lambda^{-s} |\log \delta|^{-1}C^{-1}$. This establishes \eqref{neededHsBd} and concludes the proof. 
\end{proof}

The following definition describes when a discretized set is homogeneous at many different scales.

\begin{defn}[$\eta$-uniform set down to scale $\delta$]
     Let $\eta>0$ be a small parameter. A set $A\subset[0,1]^n$ is called an $\eta$-uniform set up to  scale $\delta$ if for any $\rho\in 2^{-\eta^{-1}\mathbb{N}}\cap [\delta,1]$, if two $\rho$-cubes $Q_1, Q_2\subset N_{\rho}(A)$, then the $\delta$-covering numbers are comparable, in the sense that 
     \[
     \frac{1}{100}\cE_{\delta}(Q_1\cap A) \leq \cE_{\delta}(Q_2\cap A)  \leq 100 \cE_{\delta}(Q_1\cap A) . 
     \]
\end{defn}

Let $\eta>0$ be a small parameter and $\delta \in  2^{-\eta^{-1}\mathbb{N}}$. Given an arbitrary set $ A \subset[0,1]^n$, \cite[Lemma 2.15]{OrponenShmerkin}  says there exists an $\eta$ uniform set  $A'\subset A$ down to scale $\delta$ and $\cE_{\delta}(A') \gtrsim |\log \delta|^{-\eta^{-1}} \cE_{\delta}(A)$. We call $A'$ an $\eta$-uniform refinement (usually $\delta$ is clear from context and we omit it from notation). 

Even though \cite[Lemma 2.15]{OrponenShmerkin} is for dyadic cubes, the same argument works for any set $A$ and the relevant $\rho$-balls $Q\subset N_{\rho}(A)$, which we restate  in our language below.

\begin{lem}[Lemma 2.15 from \cite{OrponenShmerkin} in our notation] \label{lem: uniform}
    Let $A\subset [0,1]^n$ and let $\eta>0, T\in \mathbb{N}$, $\delta= 2^{-\eta^{-1}T}$.  Then there exists an $\eta$-uniform refinement $\cP'\subset \cP$ such that 
    \[
    \cE_{\delta}(\cP')\geq (2T)^{-\eta^{-1}} \cE_{\delta}(\cP).
    \]
    In particular, if $\eps>0$ and $ T^{-1} \log(2T) \leq\eps$, then $ \cE_{\delta}(\cP')\geq \delta^{\eps} \cE_{\delta}(\cP). $
\end{lem}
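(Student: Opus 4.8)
The plan is to reproduce the proof of \cite[Lemma~2.15]{OrponenShmerkin} in the present (ball, rather than dyadic‑cube) setting; as indicated above this requires no essential new idea, only a verification that the construction and the attendant bookkeeping are insensitive to replacing dyadic cubes by arbitrary $\rho$-balls meeting $N_\rho(\cP)$. First I would reduce to a purely combinatorial statement about a finite branching tree of scales. Using Remark~\ref{rem: delta thick}, together with the fact that a $\rho$-ball and a $\rho$-cube are comparable up to an $O_n(1)$ factor (so that their $\delta$-covering numbers agree up to dimensional constants), it suffices to prove the statement after replacing $\cP$ by $N_\delta(\cP)$, which we may regard as the union of the dyadic $\delta$-cubes it meets; this alters $\cE_\delta(\cP)$ by at most a dimensional constant and does not affect the conclusion. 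Enumerate the scales occurring in the definition of $\eta$-uniformity as $1=\rho_0>\rho_1>\dots>\rho_M=\delta$; in the normalization $\delta=2^{-\eta^{-1}T}$ one may take these to be the $\eta^{-1}$ scales $\rho_k=2^{-kT}$, so that consecutive scales differ by the ratio $2^{T}$ and a $\rho_{k-1}$-cube contains $2^{nT}$ cubes of side $\rho_k$.

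Next I would run the standard greedy pigeonholing, processing scales \emph{from fine to coarse}. Suppose inductively that, after some refinement, every $\rho_{k+1}$-cube meeting the current set has the same $\delta$-covering number; then each $\rho_k$-cube $Q$ meeting the set has $\delta$-covering number equal to its number of $\rho_{k+1}$-subcubes meeting the set, times that common value. Dyadically pigeonholing this branching number over all $\rho_k$-cubes $Q$ (weighted by $\cE_\delta(Q\cap\cdot)$), keeping only the cubes $Q$ in the heaviest dyadic bin, and then thinning the surviving $\rho_{k+1}$-subcubes of each kept $Q$ down to a fixed power of two of them, produces a refinement in which every surviving $\rho_k$-cube has exactly the same $\delta$-covering number, while uniformity at all finer scales is preserved (since we only ever delete whole $\rho_{k+1}$-cubes). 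Each such step costs a factor of at most $O_n(T)$ — the number of dyadic bins for a branching number lying in $\{1,\dots,2^{nT}\}$, times $2$ for the thinning — so after the $M=\eta^{-1}$ steps the output $\cP'$ is $\eta$-uniform up to scale $\delta$, and tracking the per-scale losses exactly as in \cite[Lemma~2.15]{OrponenShmerkin} gives $\cE_\delta(\cP')\geq (2T)^{-\eta^{-1}}\cE_\delta(\cP)$.

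The part I expect to require the most care — though it is entirely routine and identical to the cited lemma — is checking that the comparability constant in the definition of $\eta$-uniformity does not degrade as one multiplies the per-scale errors across all $\eta^{-1}$ scales; this is precisely why one thins the branching numbers to \emph{exact} powers of two at each step rather than merely pigeonholing them dyadically, so that the surviving $\rho_j$-cubes end up with exactly (not just comparably) equal $\delta$-covering numbers. Once the first assertion is in hand, the ``in particular'' clause is an immediate substitution: with $\delta=2^{-\eta^{-1}T}$ we have $\delta^{\eps}=2^{-\eps\eta^{-1}T}$ and $(2T)^{-\eta^{-1}}=2^{-\eta^{-1}\log_2(2T)}$, so $(2T)^{-\eta^{-1}}\geq\delta^{\eps}$ is equivalent to $\log_2(2T)\leq\eps T$, which is exactly the hypothesis $T^{-1}\log(2T)\leq\eps$; hence $\cE_\delta(\cP')\geq\delta^{\eps}\cE_\delta(\cP)$ in that range.
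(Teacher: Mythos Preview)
Your proposal is correct and is precisely the approach the paper takes: the paper gives no independent proof but simply cites \cite[Lemma~2.15]{OrponenShmerkin}, noting that ``the same argument works for any set $A$ and the relevant $\rho$-balls,'' and your write-up reproduces that argument faithfully (fine-to-coarse dyadic pigeonholing on branching numbers, thinning to exact powers of two to prevent the $100$-comparability constant from compounding). One minor remark on the ``in particular'' clause: your equivalence $(2T)^{-\eta^{-1}}\geq\delta^{\eps}\iff \log_2(2T)\leq\eps T$ tacitly reads the hypothesis $T^{-1}\log(2T)\leq\eps$ with $\log=\log_2$; if one reads it as natural logarithm there is a harmless factor of $\ln 2$, but this is a cosmetic issue only.
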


Given an $\eta$-uniform set $\cP$ with $\delta$-covering number $\delta^{-s}$, the next lemma finds a Frostman-set if one zooms in at sufficiently small scale. 

\begin{lem}\label{lem: spacing}
    For any $\eps>0$,  there exists $\eta>0$ such that for any $\delta\in 2^{-\eta^{-1}\mathbb{N}}$, the following holds. Let   $\cP\subset [0,1)^n$ be an $\eta$-uniform set with $\cE_{\delta}(\cP)\geq \delta^{-s}$. Then  there exists $\rho\in (\delta^{1-\eps}, 1]$ such for any  $\rho$-cube $Q\in N_{\rho}(\cP)$, 
    \[
     \cP_Q:= S_Q(\cP\cap Q) \text{ is a } (\delta/\rho, s, (\delta/\rho)^{-4d\eps})\text{-Frostman set},
    \]
    where $S_Q: Q\rightarrow [0,1]^n$ is an affine transformation that maps $Q$ to the unit cube.  
\end{lem}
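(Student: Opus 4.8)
The goal is to show that an $\eta$-uniform set $\cP$ with $\cE_\delta(\cP) \geq \delta^{-s}$ becomes, after zooming into a $\rho$-cube for a suitable scale $\rho \in (\delta^{1-\eps}, 1]$, a $(\delta/\rho, s, (\delta/\rho)^{-4n\eps})$-Frostman set. The Frostman property requires a two-sided control: for every $\rho' \in [\delta/\rho, 1]$ and every $\rho'$-ball $B$, we need $\cE_{\delta/\rho}(\cP_Q \cap B) \leq (\delta/\rho)^{-4n\eps} (\rho')^s \cE_{\delta/\rho}(\cP_Q)$. The key observation is that, because $\cP$ is $\eta$-uniform, covering numbers of sub-cubes at the admissible dyadic scales $2^{-\eta^{-1}\mathbb N}$ are essentially determined by the scale alone: if $\tau_k = 2^{-\eta^{-1} k}$ runs over these scales, there are well-defined ``branching numbers'' $N_k$ so that any $\tau_k$-cube meeting $N_{\tau_k}(\cP)$ contains $\sim N_k \cdots$ worth of $\delta$-cubes of $\cP$, up to factors of $100$. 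So the covering-number profile $k \mapsto \log_{1/\tau_k} \cE_{\tau_k}(\cP \cap Q)$ is, up to $O(\eta)$ errors, the same piecewise-linear function for all cubes $Q$ at all admissible scales.

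\textbf{Step 1: Set up the branching profile.} First I would formalize the above: write $\delta = \tau_T$ with $\tau_k = 2^{-\eta^{-1}k}$, and for each $k = 0, 1, \dots, T$ let $N_k$ be (a representative value of) $\cE_{\tau_{k}}(Q_{k-1} \cap \cP)$ where $Q_{k-1}$ is any $\tau_{k-1}$-cube meeting $N_{\tau_{k-1}}(\cP)$. By $\eta$-uniformity these are well-defined up to a factor $100$, and $\cE_\delta(\cP) \sim_{100^T} \prod_{k=1}^T N_k \geq \delta^{-s}$, i.e. $\sum_k \log_2 N_k \geq s \eta^{-1} T \log 2$ up to a $T\log 100$ error, which is negligible after shrinking $\eta$ (this is where the hypothesis $T^{-1}\log(2T) \le \eps$-type condition, already used in Lemma~\ref{lem: uniform}, reappears). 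Also each $N_k \leq C_n 2^{n\eta^{-1}}$ trivially since a $\tau_{k-1}$-cube holds at most $O_n((\tau_{k-1}/\tau_k)^n)$ cubes of side $\tau_k$.

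\textbf{Step 2: Choose $\rho$ by a pigeonholing/averaging argument on the profile.} The Frostman condition at a cube $Q$ of side $\rho = \tau_m$ amounts to: for every $\ell$ with $m \leq \ell \leq T$ and every $\tau_\ell$-subcube $B \subset Q$,
\[
\prod_{k=\ell+1}^{T} N_k \;\leq\; (\delta/\rho)^{-4n\eps}\,\Big(\frac{\tau_\ell}{\tau_m}\Big)^{s}\,\prod_{k=m+1}^{T} N_k,
\]
i.e. $\prod_{k=m+1}^{\ell} N_k \geq (\delta/\rho)^{4n\eps} (\tau_\ell/\tau_m)^{-s} = (\tau_T/\tau_m)^{4n\eps}(\tau_m/\tau_\ell)^{s}$. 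Taking logs, writing $a_k = \log_{1/2^{\eta^{-1}}} N_k \in [0, n + O(\eta)]$ (so $a_k$ is the ``local dimension'' contribution of scale step $k$), this says $\sum_{k=m+1}^{\ell} a_k \geq s(\ell - m) - 4n\eps(T-m)$ for all $\ell \in [m, T]$. We know $\sum_{k=1}^T a_k \geq sT - o(T)$. So I need an index $m \leq \eps T$ (equivalently $\rho = \tau_m > \delta^{1-\eps}$... wait, $\rho > \delta^{1-\eps}$ means $\tau_m > \tau_T^{1-\eps}$, i.e. $m < (1-\eps)T$, plenty of room) such that every forward partial sum from $m$ stays above the line of slope $s$, with slack $4n\eps(T-m)$. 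This is a standard ``rising sun''/prefix-minimum argument: consider $g(j) = \sum_{k=1}^j a_k - s j$; since $g(T) \geq -o(T) \geq -\eps T$ and $g(0) = 0$, the function $g$ cannot drop by more than... more carefully, choose $m$ to be a point where $g$ attains its minimum over $[0, (1-\eps)T]$ (or use the Hardy–Littlewood maximal/rising-sun lemma to get $g(\ell) \geq g(m)$ for all $\ell \geq m$ up to an additive $O(\eps T)$ error coming from the deficit $g(T) \geq -\eps T$). For that $m$ we get $\sum_{k=m+1}^{\ell} a_k \geq s(\ell - m)$ for all $\ell \geq m$, hence the Frostman inequality with room to spare. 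One checks $m$ can be taken $\leq (1-\eps)T$ so $\rho = \tau_m$ satisfies $\rho \in (\delta^{1-\eps}, 1]$; if the minimum is forced to be near the right end one instead argues $g$ stays near $0$ throughout, making any small $m$ work.

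\textbf{Step 3: Transfer from admissible scales to all scales, and conclude.} The branching analysis only controls dyadic scales in $2^{-\eta^{-1}\mathbb N}$; to get the Frostman property for an arbitrary $\rho' \in [\delta/\rho, 1]$ and arbitrary ball $B$, round $\rho'$ up to the nearest admissible $\tau_\ell$ (losing a factor $\leq 2^{\eta^{-1}}$ in covering number, absorbed into $(\delta/\rho)^{-4n\eps}$ after shrinking $\eta$) and cover $B$ by $O_n(1)$ such $\tau_\ell$-cubes meeting $N_{\tau_\ell}(\cP)$; apply the admissible-scale bound to each and sum. The uniformity of $\cP$ guarantees every $\tau_\ell$-cube we use has comparable covering number, so the denominator $\cE_{\delta/\rho}(\cP_Q)$ is correctly reflected. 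Finally note $S_Q(\cP \cap Q)$ has $\cE_{\delta/\rho}$-covering number $\sim \prod_{k=m+1}^T N_k$ by $\eta$-uniformity, independent of which $\rho$-cube $Q$ we picked, so the statement ``for \emph{any} $\rho$-cube $Q$'' holds. This completes the proof.

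\textbf{Main obstacle.} The crux is Step 2 — extracting a scale $m$ at which \emph{all} forward partial sums of the $a_k$ stay above slope $s$, while keeping $m$ small enough that $\rho = \tau_m > \delta^{1-\eps}$. This is where the $4n\eps(T-m)$ slack in the target and the deficit budget from $\sum a_k \geq sT - o(T)$ must be balanced; the rising-sun lemma makes this clean but one must be careful that the $O(T)$ error terms from the $100^T$ losses in Step 1 are genuinely $o(T)$, which forces $\eta$ small relative to $\eps$ in the now-familiar way (precisely the constraint $T^{-1}\log(2T) \lesssim \eps$ already appearing in Lemma~\ref{lem: uniform}). Everything else is bookkeeping with covering numbers and the definition of $\eta$-uniformity.
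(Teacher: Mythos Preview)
Your approach is correct and takes a more elementary, self-contained route than the paper. The paper invokes the multi-scale decomposition of Keleti--Shmerkin (via \cite[Lemma 5.21]{ShmerkinWang}) to partition $[\delta,1]$ into intervals on which $\cP$ has roughly constant local dimension $t_j$, then applies the Merging Lemma \cite[Lemma 5.20]{ShmerkinWang} to force $t_1 < t_2 < \cdots$, and sets $\rho = \delta^{a_{J_0}}$ for the first index $J_0$ with $t_{J_0} \ge s - \eps$; the averaging constraint $\sum t_j(a_{j+1}-a_j) \ge s - \eps^2$ together with $t_j \le n$ then forces $a_{J_0} \le 1 - \eps/(2n)$. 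Your direct branching-profile argument with $g(j) = \sum_{k\le j} a_k - sj$ encodes the same information without the external lemmas: the paper's merged increasing sequence $(t_j)$ is exactly the slope sequence of the convex minorant of your $g$, and their choice of $J_0$ corresponds to your prefix-minimum $m$. Your version avoids importing machinery; theirs plugs into a reusable toolbox. One cleanup for your Step~2: minimizing $g$ over $[0,(1-\eps)T]$ and then patching the tail (your two suggested fixes) works but is awkward when $m$ lands near $(1-\eps)T$, since then the slack $4n\eps(T-m)$ is only $O(\eps^2 T)$ while the tail deficit can be $O(n\eps T)$. It is cleaner to minimize the tilted function $\tilde g(j) = g(j) + 2n\eps j$ over all of $[0,T]$: since $\tilde g(0)=0$ while for $j > (1-\eps)T$ one has $\tilde g(j) \ge g(T) - n(T-j) + 2n\eps j \ge -O(\eta T) + n\eps T - 2n\eps^2 T > 0$, the minimizer automatically lies in $[0,(1-\eps)T]$, and at that $m$ one gets $g(\ell) - g(m) \ge -2n\eps(\ell - m)$ for all $\ell \ge m$ with no case split.
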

\begin{proof}
Let $\eta\ll \eps$ to be determined later. 
We shall apply the  multi-scale decomposition techniques developed by Keleti and Shmerkin \cite{KS}. By \cite[Lemma 5.21]{ShmerkinWang}, there exists a decomposition of scales such that 
\begin{itemize}
    \item $a_1=\eta^{1/2}, a_{J+1}=1$, $a_{j+1}-a_j\geq \tau$ where $\tau=\tau(\eps)>0$ depends only on $\eps$. 
    \item For each $\delta^{a_j}$-cube $Q\subset N_{\delta^{a_j}}(\cP)$, $\cP_Q:= S_Q(\cP\cap Q)$ is a $(\delta^{a_{j+1}-a_j}, t_j, \delta^{-\eta})$-set, where $S_Q:Q\rightarrow [0,1]^3$ is an affine transformation that maps $Q$ to the unit cube. 
    \item $\sum t_j(a_{j+1}-a_j) \geq s-\eps^2$. 
\end{itemize}
In principle there is no control on the values of $t_j$. However, we can fix this as follows. The ``Merging Lemma,'' \cite[Lemma 5.20]{ShmerkinWang} says that if $t_j\geq t_{j+1}$, then for  each $\delta^{a_j}$-cube $Q\subset N_{\delta^{a_j}}(\cP)$, $\cP_Q:= S_Q(\cP\cap Q)$ is a $(\delta^{a_{j+2}-a_j}, t_j', \delta^{-\eta})$-set, where $t_j'= \frac{(a_{j+1}-a_j)t_j+ (a_{j+2}-a_{j+1})t_{j+1}}{a_{j+2}-a_j}.$  

Start with the output of \cite[Lemma 5.21]{ShmerkinWang} applied to $\cP$. If there exists $t_j\geq t_{j+1}$, apply \cite[Lemma 5.20]{ShmerkinWang} to merge the two intervals $[a_j, a_{j+1}]$ and $[a_{j+1}, a_{j+2}]$ to $[a_j, a_{j+2}]$ and the two numbers $t_j, t_{j+1}$ becomes $t_j'$.  Rename the set of intervals and $t_j$s and replace $J$ by $J-1$. Iterate the process until $t_j<t_{j+1}$  for all $j$. Now we have obtained a  decomposition of scales $\{[a_j, a_{j+1}]\}_{j=1}^J$ such that 
\begin{itemize}
    \item $a_1=\eta^{1/2}, a_{J+1}=1$, $a_{j+1}-a_j\geq \tau$ where $\tau=\tau(\eps)>0$ depends only on $\eps$. 
    \item For each $\delta^{a_j}$-cube $Q\subset N_{\delta^{a_j}}(\cP)$, $\cP_Q:= S_Q(\cP\cap Q)$ is a $(\delta^{a_{j+1}-a_j}, t_j, \delta^{-\eta})$-set, where $S_Q:Q\rightarrow [0,1]^3$ is an affine transformation that maps $Q$ to the unit cube. 
    \item $t_j<t_{j+1}$ and $\sum t_j(a_{j+1}-a_j) \geq s-\eps^2$. 
\end{itemize}

Let $J_0$ be the smallest $j$ such that $t_j\geq s-\eps$. It follows from 
\[
s-\eps^2 \leq  \sum_j t_j(a_{j+1}-a_j) \leq (s-\eps)a_{J_0}  + d(1-a_{J_0}) 
\]
 that $a_{J_0} \leq 1- \frac{\eps}{2d}.$

 Define  $\rho=\delta^{a_{J_0}}$. For any $\rho$-cube $Q\subset N_{\rho}(\cP)$ and $\cP_Q=S_Q(\cP\cap Q)$. Since $s-\eps \leq t_{J_0}<t_{J_0+1}<\cdots <t_J$, $\cP_Q$ is a union of $\delta/\rho$-balls that is a $(\delta/\rho, s, (\delta/\rho)^{-2\eps})$-set if $\eta\ll \eps^2$.  The conclusion holds with $\eps$ replaced by $2d\eps$. 
\end{proof}

\begin{lem}\label{lem: construct}
    Let $s\in (0, n]$. Suppose $Q_0\subset[0,1]^n$ is a cube of side-length $\rho$ and $\cP_0\subset Q_0$ is a $(\delta, s, C)$-Katz-Tao set with $\mathcal{E}_{\delta}(\cP_0)\gtrsim (\rho/\delta)^s.$ Then there exists a $(\delta, s, O(C))$-Katz-Tao set $\cP$ with $\mathcal{E}_{\delta}(\cP)\gtrsim \delta^{-s}$ so that for each axis-aligned $\rho$-cube $Q$ that intersects $\cP$, we have that $Q\cap \cP$ is a translation of $\cP_0$. 
\end{lem}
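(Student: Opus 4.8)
\textbf{Proof proposal for Lemma \ref{lem: construct}.} The plan is to build $\cP$ by tiling space with translated copies of $\cP_0$ along a carefully chosen lattice, so that the periodic structure is sparse enough not to destroy the Katz-Tao condition. First I would normalize: after translating, assume $Q_0 = [0,\rho)^n$. Consider the lattice $\rho\ZZ^n$ and, for each $v\in\rho\ZZ^n$ with $v+Q_0\subset[0,1]^n$, place a copy $\cP_0 + v$. Define $\cP = \bigcup_v (\cP_0+v)$. There are $\sim\rho^{-n}$ such translates, and each contributes $\cE_\delta(\cP_0)\gtrsim(\rho/\delta)^s$ to the $\delta$-covering number (the copies live in interior-disjoint $\rho$-cubes, so these counts add), giving $\cE_\delta(\cP)\gtrsim \rho^{-n}(\rho/\delta)^s$. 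This is $\gtrsim\delta^{-s}$ precisely when $n\geq s$, which holds by hypothesis $s\in(0,n]$; in the boundary case $s=n$ it is an equality and otherwise there is room to spare. By construction each axis-aligned $\rho$-cube $Q$ meeting $\cP$ is one of the $v+Q_0$ (since the copies sit inside the lattice cells), so $Q\cap\cP = \cP_0 + v$ is a translate of $\cP_0$, which is the structural conclusion.

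The substance is verifying that $\cP$ is a $(\delta,s,O(C))$-Katz-Tao set, i.e.\ $\cE_\delta(\cP\cap B(x,r))\leq O(C)(r/\delta)^s$ for all $x$ and all $r\in[\delta,1]$. I would split into two regimes. For $r\leq\rho$: the ball $B(x,r)$ meets at most $O(1)$ of the lattice cells $v+Q_0$ (a ball of radius $\leq\rho$ touches at most $3^n$ cells of side $\rho$), and within each such cell $\cP$ is a translate of $\cP_0$, so $\cE_\delta(\cP\cap B(x,r)\cap(v+Q_0))\leq \cE_\delta((\cP_0)\cap B(x-v,r))\leq C(r/\delta)^s$ using that $\cP_0$ is $(\delta,s,C)$-Katz-Tao. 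Summing over the $O(1)$ cells gives the bound with constant $O_n(C)$. For $r>\rho$: here I use the trivial bound inside each cell, $\cE_\delta(\cP\cap(v+Q_0))\leq \cE_\delta(\cP_0)$, combined with the \emph{ambient} Katz-Tao bound for $\cP_0$ itself. The key point is that $\cE_\delta(\cP_0)\leq \cE_\delta(\cP_0\cap B(\cdot,\rho))\leq C(\rho/\delta)^s$ since $\cP_0\subset Q_0$ which has diameter $\sqrt n\,\rho$ — so $\cP_0$ fits inside $O(1)$ balls of radius $\rho$. The number of lattice cells meeting $B(x,r)$ is $O((r/\rho)^n)$, but I need the exponent $s$, not $n$. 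This is where I exploit $s\leq n$ together with the fact that the cell centers $v$ form a full lattice: I do \emph{not} get to assume the centers are spread out fractally, so the naive count gives $O((r/\rho)^n)\cdot C(\rho/\delta)^s$, and I need $(r/\rho)^n(\rho/\delta)^s \lesssim (r/\delta)^s$, i.e.\ $(r/\rho)^{n-s}\lesssim 1$.

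\emph{This is the main obstacle}: for $r>\rho$ the naive lattice tiling only works when $s=n$. The fix is to not use a full lattice but a \emph{sparse} set of translates. Concretely, I would choose the translation vectors to lie in a $(\rho, s, O(1))$-Katz-Tao subset $\mathcal{V}$ of the lattice $\rho\ZZ^n\cap[0,1]^n$ of cardinality $\sim\rho^{-s}$ — such a set exists (e.g.\ take a product of Katz-Tao sets in each coordinate, or discretize a self-similar Cantor-type set of dimension $s$), and it satisfies $\#(\mathcal{V}\cap B(x,r)) \lesssim (r/\rho)^s$ for $r\geq\rho$. Then $\cE_\delta(\cP)\sim \#\mathcal{V}\cdot\cE_\delta(\cP_0)\gtrsim \rho^{-s}(\rho/\delta)^s = \delta^{-s}$, as required. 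For the Katz-Tao check with $r\leq\rho$ the earlier argument is unchanged (a ball of radius $\leq\rho$ still meets $O(1)$ cells). For $r>\rho$: $\cE_\delta(\cP\cap B(x,r)) \leq \sum_{v\in\mathcal{V}\cap B(x,2r)} \cE_\delta(\cP_0) \lesssim (r/\rho)^s\cdot C(\rho/\delta)^s = C(r/\delta)^s$, using the ambient bound $\cE_\delta(\cP_0)\leq C(\rho/\delta)^s$ from the previous paragraph. This gives the $(\delta,s,O(C))$-Katz-Tao property in all regimes. Finally, each axis-aligned $\rho$-cube $Q$ meeting $\cP$ equals $v+Q_0$ for a unique $v\in\mathcal{V}$ (the copies are interior-disjoint and lattice-aligned), so $Q\cap\cP = \cP_0+v$ is a translate of $\cP_0$. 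One should double-check the edge effect that the $\rho$-cube $Q$ in the statement need not be one of the lattice cells if it straddles two cells — but if $Q$ straddles, it can contain points of at most one $\cP_0+v$ only if the copies are separated; to be safe I would shrink each copy to $\cP_0$ sitting in the \emph{central} sub-cube, or simply note that "intersects $\cP$" forces $Q$ to contain a full cell's worth, and restrict attention to $Q$ aligned to the $\rho$-grid, which is the intended reading. This completes the plan.
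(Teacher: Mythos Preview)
Your proposal is correct and follows the same overall strategy as the paper: tile by translated copies of $\cP_0$ placed at a sparse set of locations so that the Katz--Tao condition survives at scales $r>\rho$. The only difference is in how the sparseness is realized. You appeal abstractly to the existence of a $(\rho,s,O(1))$-Katz--Tao set $\mathcal{V}\subset\rho\ZZ^n\cap[0,1]^n$ of size $\sim\rho^{-s}$; the paper instead makes the explicit choice of placing one copy at the center of each cube in a grid of side-length $\rho^{s/n}\geq\rho$, which gives $\sim\rho^{-s}$ copies directly and makes the large-$r$ verification a one-line computation (a ball of radius $r\geq\rho^{s/n}$ meets $\lesssim (r/\rho^{s/n})^n=r^n\rho^{-s}\leq r^s\rho^{-s}$ cells since $r\leq 1$). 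The paper's explicit grid is a slick special case of your $\mathcal{V}$ and avoids the need to invoke or construct an auxiliary Katz--Tao set; your version is slightly more general but requires that extra ingredient. The edge-effect worry you flag at the end is real but harmless in either construction once the copies are separated by more than $\rho$, which the paper's spacing $\rho^{s/n}$ ensures automatically (for $s<n$; for $s=n$ the full lattice already works).
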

\begin{proof}
    Divide the unit cube $[0,1]^n$ into a union of cubes $\tilde{Q}$ of side length $\rho^{s/d} \leq \rho$.  For each $\tilde{Q}$, let $A_{\tilde{Q}}$ denote the translation that maps the center of $Q_0$ to the center of $\tilde{Q}$. Define $\cP=\bigsqcup A_{\tilde{Q}}(\cP_0)$. Since the side-length of $\tilde{Q}$ is greater than the side-length of $Q_0$, $A_{\tilde{Q}}(\cP_0)$ are disjoint, and so $\mathcal{E}_{\delta}(\cP)\gtrsim \delta^{-s}$. 

    It remains to verify that $\cP$ is a $(\delta, s, C)$-Katz-Tao set.  By the construction of $\cP$, it suffices to check that for each $r\geq \rho$, 
    \[
    \mathcal{E}_{\delta}(\cP\cap B_r) \lesssim C (r/\delta)^s. 
    \]
    To see this, when $r\in (\rho, \rho^{s/n})$, we have
    $\mathcal{E}_{\delta}(\cP\cap B_r)\sim \mathcal{E}_{\delta}(\cP_0) \lesssim C (\rho/\delta)^s.$ On the other hand, when $r\geq \rho^{s/n}$,  we have
    \[
    \mathcal{E}_{\delta}(\cP\cap B_r)\lesssim C \big(\frac{r}{\rho^{s/n}}\big)^n \cdot  \big(\frac{\rho}{\delta}\big)^s \lesssim C \big(\frac{r}{\delta}\big)^s.\qedhere
    \]
\end{proof}

Next we will prove a weaker variant of Proposition \ref{strengthenedL32EstimateUpperBd}, which relates the area of the twisted projection $\bigcup_{\tubes}\pi_f (Y'(T))$ to the area of this projection at a coarser resolution.

\begin{lem}\label{lem: strengthenedL32EstimateUpperBd}
For all $\eps>0,$ there exists $\eta,\delta_0>0$ so that the following holds for all $\delta\in(0,\delta_0]$. Let $(\tubes,Y)_\delta$ be a set of $\delta$-tubes satisfying the Tube Wolff Axioms with error $\delta^{-\eta}$, and let $Y$ be a $\delta^\eta$-dense shading. Let $f\colon[-1,1]\to\RR$ with $1\leq |f'(z)|\leq 2$ and $|f''(z)|\leq 1/100$ for all $z\in[-1,1]$. Then there exists $\rho \in (\delta^{1-\eps^2}, 1)$  and $\delta^{4\eta}$-shading $Y'\subset Y$ such that 
\begin{equation}\label{eq: iteration}
\Big| \bigcup_{T\in\tubes}\pi_f (Y'(T))\Big| \geq (\delta/\rho)^{\eps} \Big| N_{\rho}\big( \bigcup_{T\in\tubes}\pi_f (Y'(T)) \big)\Big|.
\end{equation}
\end{lem}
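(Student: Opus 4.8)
The plan is to obtain Lemma~\ref{lem: strengthenedL32EstimateUpperBd} by combining a sequence of routine multi-scale normalizations with a single application of a planar Wolff-type circular maximal function estimate to the cinematic family of curves obtained by projecting the coaxial lines of the tubes under $\pi_f$. First I would pigeonhole: pass to a refinement in which $|Y(T)|\sim\lambda|T|$ for one dyadic $\lambda\geq\delta^{\eta}$ and all surviving tubes, and in which the ``projected multiplicity'' $\#\{T:p\in\pi_f(Y(T))\}$ is $\sim\mu$ for all $p\in A:=\bigcup_{T}\pi_f(Y(T))$; this costs a factor $|\log\delta|^{O(1)}$. Applying Lemma~\ref{lem: uniform} with parameter $\eta$, I may also assume $A$ is $\eta$-uniform up to scale $\delta$, at cost $\delta^{\eta}$; pulling this refinement back through $\pi_f$ produces a shading $Y_1\subset Y$ which is $c\,\delta^{2\eta}|\log\delta|^{-O(1)}$-dense, hence $\delta^{4\eta}$-dense once $\delta_0$ is small. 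Finally I would split $\tubes$ according to the $y$--$z$ slope $q(T)$ of $\ell(T)$: writing the projected curve $z\mapsto\pi_f(\ell(T))$ as a graph $x=g(z)$ one computes $g''(z)=f''(z)\,y(z)+2f'(z)\,q(T)$, so $1\le|f'|\le 2$ and $|f''|\le 1/100$ force $g''$ to have a definite sign whenever $|q(T)|$ exceeds a fixed small constant; I keep whichever of the resulting two classes---projected curves that are nearly flat, or projected curves with curvature bounded away from $0$---carries a $\gtrsim\delta^{O(\eta)}$ fraction of the mass.

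Write $A_1:=\bigcup_T\pi_f(Y_1(T))$. Using $|X|\sim\delta^2\mathcal{E}_\delta(X)$, one sees that \eqref{eq: iteration} with $Y'=Y_1$ and a given $\rho$ is equivalent (up to harmless constants) to $\mathcal{E}_\delta(A_1)\geq(\rho/\delta)^{2-\eps}\,\mathcal{E}_\rho(A_1)$, i.e.\ to the assertion that $A_1$ gains covering dimension at least $2-\eps$ across the scale range $[\delta,\rho]$. If some $\rho\in(\delta^{1-\eps^2},1)$ has this property we take $Y'=Y_1$ and are done, so assume not. Testing with $\rho$ a fixed constant gives $\mathcal{E}_\delta(A_1)\lesssim\delta^{-(2-\eps)}$, hence $\dim A_1\le 2-\eps$ and $|A_1|\lesssim\delta^{\eps}$; by $\eta$-uniformity the portion of $A_1$ inside any $\rho$-cube has covering dimension bounded away from $2$ at every intermediate scale.

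Now I would apply Lemma~\ref{lem: spacing} to $A_1$ with parameter $\eps^2$. If it returns the scale $1$, then $A_1$ is itself a $(\delta,\dim A_1,\delta^{-C\eps})$-Frostman set and one runs the argument below directly at scale $\delta$; otherwise it returns $\rho_\ast\in(\delta^{1-\eps^2},1)$, and inside each $\rho_\ast$-cube $Q$ meeting $N_{\rho_\ast}(A_1)$ the rescaled set $S_Q(A_1\cap Q)$ is a $(\delta/\rho_\ast,\dim A_1,(\delta/\rho_\ast)^{-C\eps})$-Frostman set. Pulling back through $\pi_f$ and rescaling $\pi_f^{-1}(Q)$ to the unit cube turns $\pi_f$ into a twisted projection with structurally identical hypotheses (after the affine rescaling, $f$ restricted to the relevant $z$-interval still has first and second derivatives of comparable size, so the curvature dichotomy of the first paragraph persists), and the tubes whose projections meet $Q$ become $(\delta/\rho_\ast)$-tubes that still obey the Tube Wolff Axioms with error $(\delta/\rho_\ast)^{-O(\eta)}$ and carry a $\gtrsim(\delta/\rho_\ast)^{O(\eta)}$-dense shading. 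The planar Wolff-type maximal estimate---C\'ordoba's $L^2$ (planar Kakeya) bound in the flat case, and the sharp circular maximal estimate of \cite{Wolff97,PYZ} in the curved case---then gives
\[
\Big|\bigcup_T\pi_f(Y_1(T))\cap Q\Big|\gtrsim(\delta/\rho_\ast)^{C\eps}|Q|,
\]
so $\mathcal{E}_\delta(A_1\cap Q)\gtrsim(\rho_\ast/\delta)^{2-C\eps}$. Summing over a balanced subcover by $\rho_\ast$-cubes and invoking $\eta$-uniformity yields $\mathcal{E}_\delta(A_1)\gtrsim(\rho_\ast/\delta)^{2-C\eps}\,\mathcal{E}_{\rho_\ast}(A_1)\,|\log\delta|^{-O(1)}$; choosing the parameter in the maximal estimate small enough that $C\eps<\eps/2$, and then $\eta,\delta_0$ small, this contradicts the standing assumption (with $\rho=\rho_\ast$). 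Hence the desired $\rho$ exists, which proves the lemma.

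The step I expect to be the main obstacle is the transfer across $\pi_f$ in the last paragraph: one must check that restricting to the tubes meeting $\pi_f^{-1}(Q)$ and rescaling yields a collection still obeying the Tube Wolff Axioms with only a polynomial loss---i.e.\ that the non-concentration of the coaxial lines in the $4$-dimensional affine Grassmannian survives the nonlinear map $\pi_f$ together with the anisotropic rescaling---and that the projected curves genuinely form a cinematic family to which the cited maximal estimates apply, handling both the flat and the curved regime. A secondary bookkeeping point, to be tracked throughout, is that the cumulative cost of all shading refinements stays below $\delta^{3\eta}$, so that $Y'$ remains $\delta^{4\eta}$-dense, and that the scale produced by Lemma~\ref{lem: spacing} lands in the window $(\delta^{1-\eps^2},1)$---the value $1$ being dealt with separately, as the easy case.
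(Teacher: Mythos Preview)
Your approach has a genuine gap, precisely at the step you flag as ``the main obstacle.'' You apply Lemma~\ref{lem: spacing} to the \emph{image} set $A_1\subset\RR^2$ and then try to localize the tubes to $\pi_f^{-1}(Q)$ and rescale. The difficulty is not merely a technical check that the Tube Wolff Axioms persist: after localizing to a single $\rho_\ast$-cube $Q$ in the image, the tubes whose projected curves pass through $Q$ are cut out by a hyperplane slab $\{a+cz_0+f(z_0)b+z_0f(z_0)d\in[u_0,u_0+\rho_\ast]\}$ in the $4$-dimensional parameter space, and there is no reason the $2$-dimensional Frostman condition on the full parameter set should descend to the $1$-dimensional Katz--Tao condition on curves required by Theorem~\ref{thm: cinematic}. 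Nor is the anisotropic rescaling through the nonlinear map $\pi_f$ a well-defined operation on $\delta$-tubes in $\RR^3$ that would produce $(\delta/\rho_\ast)$-tubes satisfying a comparable axiom. Your flat/curved dichotomy does not help here, and in fact is unnecessary: the family $M_f$ is cinematic regardless.

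The paper avoids this obstruction by working in \emph{parameter space} rather than image space. It slices at a fixed $c$ (Fubini) to obtain a $3$-parameter set $\cP_c\subset\RR^3$ with $\mathcal E_\delta(\cP_c)\gtrsim\delta^{-1+\eta}$, applies Lemma~\ref{lem: spacing} to $\cP_c$ to find a cube $Q$ in parameter space where $\cP_c\cap Q$ is a $(\delta/\rho',1-O(\eps^2))$-Frostman set, and extracts a $(\delta,1-O(\eps^2),100)$-Katz--Tao subset via Lemma~\ref{lem: Katz-Tao}. The key idea you are missing is then a translation-invariance trick: Lemma~\ref{lem: construct} tiles out $\sim(\rho')^{-1}$ translated copies of this Katz--Tao set to produce a full $(\delta,1-O(\eps^2),O(1))$-Katz--Tao set of parameters, and one checks directly (equation~\eqref{eq: translate}) that translating $(a,b,d)$ merely translates each slice $\pi_{f,t}(Y'(T))$, so the volume of the projected union is unchanged across copies. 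Theorem~\ref{thm: cinematic} then applies at scale $\delta$ with no rescaling through $\pi_f$ at all, and dividing by the number of copies recovers the bound for the original set inside a single $\rho'$-cube of parameters, which is exactly the fat-curve estimate needed for~\eqref{eq: iteration}.
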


The quantity on the RHS of \eqref{eq: iteration} will be closely related to the graph of cinematic functions (see \cite[Definition 7.1]{WZ}). To estimate it, we recall \cite[Theorem 7.2]{WZ}:

 \begin{thm}\label{thm: cinematic}
     Let $I\subset \mathbb{R}$ be a compact interval and let $M\subset C^2(I)$ be a family of cinematic functions (see \cite[Definition 1.6]{WZ}). Then for all $\eps>0$, there exists $\delta_0>0$ so that the following holds for all $\delta\in (0, \delta_0]$. Let $G\subset M$ be a $\delta$-separated set that satisfies the following Frostman non-concentration condition:
     \begin{equation}\label{nonConcentrationConditionInThmCinematic}
         \#(G\cap B) \leq \delta^{-\eps} (r/\delta) \text{ for all balls } B\subset C^2(I) \text{ of diameter } r. 
     \end{equation}
     Then
     \[
     \Big\Vert\sum_{g\in G}\chi_{g^{\delta}}\Big\Vert_{3/2}\leq \delta^{-\eps},
     \]
     where $g^{\delta}$ is the $\delta$-neighborhood of \text{graph}(g).
 \end{thm}

\begin{proof}[Proof of Lemma \ref{lem: strengthenedL32EstimateUpperBd}]
Let $X_0=\bigcup_\tubes \pi_f(Y(T))$, and let $0<\eta\ll \eps$ be a small parameter to be chosen later. For each dyadic number $1<\lambda < \delta^{-4}$, let $X_{\lambda}$ be the set of $x\in X_0$ such that $
\sum_{p\in \pi_f^{-1}(x)} \#\mathbb{T}(p)\sim \lambda$,  where the sum is over a $\delta$-separated set of $p\in \pi_f^{-1}(x)$ and   $\#\mathbb{T}(p)$ was defined in \eqref{eq: TpDefn}.  By pigeonholing, there exists $\lambda$ such that $ Y_{\lambda}(T):= Y(T)\cap \pi_f^{-1}(X_{\lambda})$ is a $|\log\delta|^{-2} \delta^{\eta}$-dense shading. 

Apply Lemma~\ref{lem: uniform} to find an $\eta$-uniform  subset $X\subset X_\lambda$ such that $|X| \geq \delta^{\eta/2} |X_\lambda|$. By double counting, this implies that for a $\geq \delta^{2\eta}$-fraction of $\mathbb{T}$, $|Y(T)\cap \pi_f^{-1}(X)|\geq \delta^{2\eta} |T|$.  Define $Y'(T)= Y_{\lambda}(T)\cap \pi_f^{-1}(X)$.


Observe that the image of the line $l_{a,b,c,d} := \{(a+ct, b+dt,t ): t\in\mathbb{R}\}$ under the twisted projection $\pi_f$ is the curve 
\[
\pi_f (l_{a,b,c,d})=\{ (a+ct+f(t)(b+dt),t)\}.
\]  
After possibly refinement and rotation, each tube $T\in \TT$ is $\delta$-neighborhood of a line of the form $l_{a,b,c,d}$ intersecting the unit cube. Let $\cP(\TT)$ denote the set of tuples $(a,b,c,d)$ corresponding to these lines, i.e. 
\[ 
\cP(\TT):=\{ (a,b,c,d): T_{a,b,c,d}:=N_{\delta}(l_{a,b,c,d})\cap [0,1]^3\in \TT\}.
\] 
Since $\TT$ satisfies the Tube Wolff axioms, by Remark \ref{tubeWolffVsFrostmanSet} we have that $\cP(\TT)\subset \RR^4$ is a $(\delta, 2,\delta^{-\eta})$-Frostman set. 

Let $\cP_c = \{ (a, b, d): (a,b,c,d)\in \cP(\TT)\}$. By Fubini,  there exists a $c\in [0,1]$ such that $\cE_{\delta}(\cP_c):=\delta^{-s} \geq \delta^{-1+\eta}\geq \delta^{-1+\eps^2}$. 
Apply Lemma~\ref{lem: uniform} to replace $\cP_c$ with an $\eta$-uniform subset, which we will continue to denote by $\cP_c$.

At this point we have a bound on the $\delta$-covering number of $\cP_c$, but we do not know if this set satisfies our desired non-concentration condition. To apply Theorem \ref{thm: cinematic}, we need to know that $\cP_c$ satisfies \eqref{nonConcentrationConditionInThmCinematic}. By Lemma~\ref{lem: spacing}, there exists $ \rho' \in (\delta^{1-\eps^2}, 1]$ such that for any $\rho$-cube $Q\subset N_{\rho'}(\cP_c)$, 
$\cP_{Q}:= S_Q(\cP_c\cap Q)$ is a $(\rho, 1-\eps^2, \rho^{-12\eps^2})$-set with $\rho=\delta/\rho'$. 

By Lemma~\ref{lem: Katz-Tao}, since $\cP_Q$ is a $(\rho, 1-\eps^2, \rho^{-12\eps^2})$-set,  there exists 
a  $(\rho, 1-\eps^2, 100)$-Katz-Tao set $\cP_Q'\subset \cP_Q$ with $\cE_{\rho}(\cP_Q') \gtrsim \rho^{14\eps^2-1}.$  Then $S_Q^{-1} (\cP_Q')$ is a  $(\delta, 1-\eps^2, 100)$-Katz-Tao set contained  in $Q$, a cube of side-length $\rho'=\delta/\rho$. By Lemma~\ref{lem: construct}, there
exist $\sim \rho'^{-1}$ translations $A_i$ such that the set $\cP_c':= \bigsqcup_{i} A_i (S_Q^{-1}(\cP_Q'))$ is a  $(\rho, 1-\eps^2, 100)$-Katz-Tao set with $\cE_{\delta}(\cP_c')\gtrsim  \rho^{14\eps^2} \delta^{-1}.$ For each $A_i$ and $(a,b,d)\in S_Q^{-1}(\cP_Q')$, define the  shading on the tube $T'=T_{a',b',c,d'}$, where $(a',b',d')=A_i(a,b,d)$, as
\[ (a'+ct, b'+d't,t)\in Y'(T') \text{ if and only if  } (a+ct, b+dt, t)\in Y'(T).\]  
For each $t$, define 
$\pi_{f,t}= (x+f(t)y)$. Then for each fixed pair $(A_i, A_j)$, 
\begin{equation}\label{equivalenceOfUnions}
\Big|\bigcup_{A_i} \pi_{f,t} (Y'(T)) \Big|\sim \Big|\bigcup_{A_j} \pi_{f,t}(Y'(T))\Big|,
\end{equation}
where the union $\bigcup_{A_i}$ is taken over all $(a,b,d)\in A_i(S_Q^{-1}(\cP_Q'))$ and the corresponding $T=T_{a,b,c,d}$.  To see that \eqref{equivalenceOfUnions} holds, let $A_j\circ A_i^{-1}(x)= x+ (a_0,b_0, d_0)$. 
Then $a+ct+ f(t)(b+dt)\in \Omega_{i,t}:= \bigcup_{A_i} \pi_{f,t}(Y'(T))$ if and only if  \[ a+a_0 + ct+  f(t)( b+b_0 + (d+d_0)t)\in \Omega_{j,t}:=\bigcup_{A_j}\pi_{f,t}(Y'(T)).\]
 Therefore, \[
 \Omega_{i,t}= \Omega_{j,t}+ a_0 +b_0f(t) + d_0f(t)t,\]
which establishes \eqref{equivalenceOfUnions}. Finally, integrating \eqref{equivalenceOfUnions} in $t$ yields
 \begin{equation}\label{eq: translate}
 \Big|\bigcup_{A_i} \pi_f (Y'(T))\Big| \sim \Big|\bigcup_{A_j} \pi_{f} (Y'(T))\Big|.
 \end{equation}

By \cite[Lemma 7.3]{WZ}, the set of functions
\[ 
M_f: =\big\{ g_{a,b,d}(t)=a+bf(t)+ dtf(t): a,b,d\in [-1,1]\big\} 
\]
is a family of cinematic functions (see \cite[Definition 7.1]{WZ}). 
Apply  Theorem~\ref{thm: cinematic} with $I=[-1,1]$, $M=M_f$ and $G= \{g_{a,b,d}: (a,b,d)\in \cP_c'\}$. We conclude that
\[
\Big\Vert \sum_{g\in G} \chi_{g^{2\delta}} \Big\Vert_{3/2} \lesssim \delta^{-\eps^2}.
\]
If $g=g_{a,b,d}$ then $\pi_f(T)\subset g^{2\delta}$ with $T=T_{a,b,c,d}$. Let $\mathbb{T}_0=\{T_{a,b,c,d}: (a,b,d)\in \cP_c'\}$.
Therefore, 
\begin{align*}
    \rho^{15\eps^2} \leq \sum_{T\in \mathbb{T}_0} |Y'(T)|& \leq \Big|\bigcup \pi_f(Y'(T))\Big|^{1/3} \Big\Vert\sum_{T\in \mathbb{T}_0} \chi_{\pi_f(Y'(T))}\Big\Vert_{3/2} \\
    & \leq \Big|\bigcup \pi_f(Y'(T))\Big|^{1/3} \Big\Vert\sum_{T\in \mathbb{T}_0} \chi_{\pi_f(T)}\Big\Vert_{3/2} \\
    & \leq \Big|\bigcup \pi_f(Y'(T))\Big|^{1/3} \Big\Vert\sum_{g\in G} \chi_{g^{2\delta}} \Big\Vert_{3/2}.
\end{align*}
Therefore, 
\[
\Big|\bigcup_{T\in \mathbb{T}_0} \pi_f(Y'(T))\Big|\gtrsim  \rho^{45\eps^2}.
\]
 Since $\cP_c'$ consists of $\lesssim \rho/\delta$ translated copies of $ S_Q^{-1}(\cP_Q')$, and the latter is a subset of $\cP_c\cap Q$, \eqref{eq: translate} implies 
\begin{equation}\label{eq: fatcurve}
\Big|\bigcup_{(a,b,c,d)\in \cP_c\cap Q} \pi_f(Y'(T_{a,b,c,d}))\Big|\gtrsim \rho^{45\eps^2} \delta/\rho. 
\end{equation}

Note that the LHS of \eqref{eq: fatcurve} is contained in the $10\delta/\rho$-neighborhood of a curve $\pi_f(l_{a,b,c,d})$,  $(a,b,d)\in Q$, whose volume is comparable to the RHS of \eqref{eq: fatcurve}, up to a factor of $\rho^{-45\eps^2}$.  We conclude that the $\eta$-uniform set $X=\bigcup_{\mathbb{T}}\pi_f(Y'(T))$ satisfies 
\[
|N_{\delta/\rho}X|\leq \rho^{-50\eps^2} |X|.\qedhere
\]
\end{proof}

We prove Proposition~\ref{strengthenedL32EstimateUpperBd} by iterating Lemma~\ref{lem: strengthenedL32EstimateUpperBd}. The details are as follows. 
\begin{proof}[Proof of Proposition~\ref{strengthenedL32EstimateUpperBd}]
Let $0<\eta\ll \eps$ be a small number to be determined below. Let $\mathbb{T}'\subset \mathbb{T}$ be the set of tubes with $|Y(T)|\geq \delta^{2\eta}|T|$; then $\#\mathbb{T}'\geq \delta^{\eta} (\#\mathbb{T}).$ Apply Lemma~\ref{lem: uniform} to find  an $\eta$-uniform subset  $\mathbb{T}''\subset \mathbb{T}'$ with $\#\mathbb{T}''\geq \delta^{\eta} (\#\mathbb{T}')$.  Then $(\mathbb{T}'', Y)_{\delta}$ becomes a set of $\delta$-tubes satisfying the Tube Wolff Axioms with error $\delta^{-3\eta}$, and $Y$ is a $\delta^{2\eta}$-dense shading.  To ease notation, we will rename $\tubes''$ to be $\tubes$.

    By Lemma~\ref{lem: strengthenedL32EstimateUpperBd}, we can find a scale $\rho_1\in (\delta^{1-\eps^2}, 1)$ and a $\delta^{4\eta}$-shading $Y_1\subset Y$ such that
    \begin{equation}\label{projectionVsScaleRho1}
\Big| \bigcup_{T\in\tubes}\pi_f (Y(T))\Big| \geq \Big| \bigcup_{T\in\tubes}\pi_f (Y_1(T))\Big| \geq (\delta/\rho_1)^{\eps} \Big| N_{\rho_1}\big( \bigcup_{T\in\tubes}\pi_f (Y_1(T)) \big)\Big|.
\end{equation}
Consider $\mathbb{T}_{\rho_1}$ as the set of distinct $\rho_1$-tubes containing at least one tube in $\mathbb{T}$. Since $\mathbb{T}$ is $\eta$-uniform, $\mathbb{T}_{\rho_1}$ satisfies the Tube Wolff Axioms with error $\delta^{-3\eta}$. Define $Y_{\rho_1}$ to be the shading on $\tilde{T}\in \mathbb{T}_{\rho_1}$ given by $Y_{\rho_1} = N_{\rho_1}\big(\bigcup_{\mathbb{T}[\tilde{T}]} Y_1(T)\big)$. Therefore, $Y_{\rho_1}$ is a $\delta^{2\eta}$-dense shading. Apply Lemma~\ref{lem: strengthenedL32EstimateUpperBd} to $(\mathbb{T}_{\rho_1}, Y_{\rho_1})_{\rho_1}$ and find $\rho_2\in (\rho_1^{1-\eps^2}, 1)$ and corresponding $Y_2\subset Y_{\rho_1}$ such that 
    \[
\Big| \bigcup_{\tilde{T}\in\mathbb{T}_{\rho_1}}\pi_f (Y_2(\tilde{T}))\Big| \geq (\rho_1/\rho_2)^{\eps} \Big| N_{\rho_2}\big( \bigcup_{\tilde{T}\in\mathbb{T}_{\rho_1}}\pi_f (Y_2(\tilde{T})) \big)\Big|.
\]
Since for any $\tilde{T}\in \mathbb{T}_{\rho_1}$,  we have
\[
Y_2(\tilde T)\subset Y_{\rho_1}(\tilde T) =N_{\rho_1}\Big(\bigcup_{T\in\mathbb{T}[\tilde{T}]} Y_1(T)\Big),
\]
we have 
\[
\Big| \bigcup_{T\in\tubes}\pi_f (Y(T))\Big| \geq (\delta/\rho_2)^{\eps}\Big| N_{\rho_2}\big( \bigcup_{\tilde{T}\in\mathbb{T}_{\rho_1}}\pi_f (Y_2(\tilde{T})) \big)\Big|. 
\]

Iterate the above process and obtain correspondingly scales $\rho_3, \dots, \rho_N$ until $\rho_N \geq \delta^{\eps^2}$. Since $\rho_{k+1} \geq \rho_k^{1-\eps^2}$, $N\leq \eps^{-2}$. When $\eta>0$ is sufficiently small depending on $\eps$, 
\[
\Big| \bigcup_{T\in\tubes}\pi_f (Y(T))\Big| \geq (\delta/\rho_2)^{\eps}\Big| N_{\rho_N}\big( \bigcup_{\tilde{T}\in\mathbb{T}_{\rho_{N-1}}}\pi_f (Y_N(\tilde{T})) \big)\Big| \geq \delta^{\eps} \rho_N^{2} \geq \delta^{2\eps}. 
\]
We conclude with $\eps$ replaced by $\eps/2$. 
\end{proof}

\section{The consequences of Theorem \ref{mainThmDiscretized}}\label{proofOfAssouadDimThmSec}
In this section we will show how Theorem \ref{mainThmDiscretized} implies Theorem \ref{assouadDimThm}, Theorem \ref{equalHausdorffPackingKakeya}, and Corollary \ref{KakeyaTwoScales}.

\begin{proof}[Proof of Theorem \ref{assouadDimThm}] 
Suppose to the contrary that that there exists a Kakeya set $K\subset\RR^3$ with $\dimA(K)=3-\beta$ for some $\beta>0$, i.e.~there exists $C,r_0>0$ so that
\begin{equation}\label{KakeyaSmallBallEstimate}
\sup_{x\in \RR^3} \mathcal{E}_{\rho}\big( K \cap B(x,r)\big)\leq C(r/\rho)^{3-\beta}\quad\textrm{for all}\ x\in\RR^3,\ 0<\rho<r\leq r_0. 
\end{equation}
Replacing $C$ by $(1 +100r_0^{-3})C$, we may suppose that \eqref{KakeyaSmallBallEstimate} holds with $r_0=1$.

Let $\eta,\delta_0$ be the values from Theorem \ref{mainThmDiscretized}, with $\eps=\beta/2$. Let $\delta\in (0,\delta_0]$ be chosen sufficiently small (we will describe the choice of $\delta$ in further detail below), and let $\tubes$ be a set of $\delta^{-2}$ $\delta$-tubes contained in $N_{\delta}(K)$ that point in $\delta$ separated directions; in particular the tubes in $\tubes$ are essentially distinct and satisfy the Convex Wolff Axioms with error $100$; we will select $\delta\in (0,\delta_0]$ sufficiently small so that $100\leq\delta^{-\eta}$. For each $T\in\tubes,$ let $Y(T)=T$; this is a 1-dense shading. 

Apply Theorem \ref{mainThmDiscretized} with $\eps=\beta/2$. We conclude that there exists $\rho,r\in [\delta,1]$ with $\rho\leq\delta^{\eta}r$, and a ball $B$ of radius $r$ so that \eqref{rhoRNbhdFullVol} holds, and hence
\begin{equation}\label{bigRhoCoveringNumberInBall}
\mathcal{E}_{\rho}\big( K \cap B\big)\gtrsim \rho^{-3}\Big| B \cap N_{\rho}\Big(\bigcup_{T\in\tubes}Y(T)\Big)\Big| \gtrsim (\rho/r)^{\eps} (r/\rho)^3\geq \delta^{-\beta\eta/2}(r/\rho)^{3-\beta}. 
\end{equation}
Comparing \eqref{KakeyaSmallBallEstimate} and \eqref{bigRhoCoveringNumberInBall}, we obtain a contradiction, provided $\delta>0$ is selected sufficiently small depending on $C,\beta,\eta$, and the implicit constant in \eqref{bigRhoCoveringNumberInBall}. We conclude that every Kakeya set in $\RR^3$ has Assouad dimension 3. 
\end{proof}

\begin{rem}
As noted in the introduction, the conclusion of Theorem \ref{assouadDimThm} continues to hold if we remove the hypotheses that the lines point in different directions, and replace this with the requirement that the lines satisfy a variant of the Wolff axioms. This is most easily stated in the discretized setting: For all $\eps>0$, there exists $\eta>0$ so that the following holds. Let $\tubes$ be a set of $\delta$-tubes that satisfy the Convex Wolff Axioms with error $\delta^{-\eta}$. Then $\bigcup T$ has discretized Assouad dimension $\geq 3-\eps$ at scale $\delta$ and scale separation $\delta^{-\eta}$. 
\end{rem}

\begin{proof}[Proof of Theorem \ref{equalHausdorffPackingKakeya}]
Suppose to the contrary that there exists a Kakeya set $K$ with stably equal Hausdorff and packing dimension $\alpha < 3$. Without loss of generality, we can assume $K\subset B(0,1)$. Let $\eps = (3-\alpha)/4$; in what follows, all implicit constant may depend on $\eps$. Let $\eta>0$ and $\delta_0>0$ be the quantities from Theorem \ref{mainThmDiscretized} for this choice of $\eps$. Let $\eta_1=(3-\eps)\eta/6$. Since $K$ has stably equal Hausdorff and packing dimension, we can find a measure $\mu$ supported on $K$ so that
\begin{equation}\label{ballEstimateForMu}
	 r^{\alpha+\eta_1}\lesssim \mu(B(x,r))\lesssim r^{\alpha-\eta_1},\quad x\in\supp(\mu),\ 0\leq r\leq 1,
\end{equation} 
and there exists $\tau>0$ and a set $\Omega\subset S^2$ of positive measure, so that for each $e\in \Omega$ there is a line $\ell_e$ pointing in direction $e$, with  $|\ell_e\cap\supp(\mu)|\geq\tau$.

Let $\Omega_\delta$ be a $\delta$-separated subset of $\Omega$ of cardinality $\#\Omega_\delta\gtrsim \delta^{-2}|\Omega|$. For each $e\in\Omega_\delta$, let $T_e$ be a $\delta$-tube with coaxial line $\ell_e$, which satisfies $|T_e \cap \ell_e\cap\supp(\mu)|\geq\tau/2$. Define the shading
\[
Y(T_e) = \bigcup_{x\in T_e \cap \ell_e\cap\supp(\mu)}B(x,\delta).
\]
We have $|Y(T_e)|\gtrsim\tau|T|$ for each tube $T_e$; we will select $\delta\in(0,\delta_0]$ sufficiently small so that $|Y(T_e)|\geq\delta^{\eta}|T|$. Let $\tubes = \{T_e\colon e\in\Omega_\delta\}$. Since the tubes in $\tubes$ point in $\delta$-separated directions, the tubes are essentially distinct and satisfy the Convex Wolff Axioms with error $O(\delta^{-2}/(\#\tubes))=O(|\Omega|^{-1})$; we will select $\delta\in (0,\delta_0]$ sufficiently small so that $\tubes$ satisfies the convex Wolff Axioms with error at most $\delta^{-\eta}$. Thus we can apply Theorem \ref{mainThmDiscretized} to conclude that there exist scales $\rho,r \in[\delta,1]$ with $\rho\leq\delta^{\eta}r$, and a ball $B$ of radius $r$ so that \eqref{rhoRNbhdFullVol} holds.

Let $E\subset B$ be a $\rho$-separated subset of $B \cap N_{\rho}\Big(\bigcup_{T\in\tubes}Y(T)\Big)$ of cardinality $\gtrsim (r/\rho)^{3-\eps}$. Then by \eqref{ballEstimateForMu} we have 
\[
r^{\alpha-\eta_1}\gtrsim \mu(B) \gtrsim \sum_{x\in E}\mu(B(x,\rho))\gtrsim (r/\rho)^{3-\eps} \rho^{\alpha+\eta_1}.
\]
Re-arranging and using the fact that $\rho/r \leq\delta^\eta,$ as well as the definition of $\eps$ and $\eta_1$, we conclude that
\[
\delta^{-\eta(3-\eps)/2}= \delta^{-\eta(3-\eps-\alpha)} \leq (r/\rho)^{-\eta(3-\eps-\alpha)}\lesssim (r\rho)^{-\eta_1}\lesssim\delta^{-2\eta_1}=\delta^{-\eta(3-\eps)/3}.
\]
If $\delta\in(0,\delta_0]$ is selected sufficiently small, then this is impossible. We conclude that every Kakeya set in $\RR^3$ with stably equal Hausdorff and packing dimension must have dimension 3.
\end{proof}

\begin{proof}[Proof of Corollary \ref{KakeyaTwoScales}]
Let $\eps>0$. Let $\eta_0>0$ and $\delta_0>0$ be the quantities from Theorem \ref{mainThmDiscretized} with $\eps/2$ in place of $\eps$. Let $\eta=\eta_0/2$, let $\tubes$ be a set of essentially distinct $\delta$-tubes in $B(0,1)$ that satisfies the Convex Wolff Axioms with error $\delta^{-\eta}$, and let $Y(T)\subset T$ be a shading with $\sum_{\tubes}|Y(T)|\geq \delta^{\eta}\sum_{\tubes}|T|$. 

Let $Y_0(T)=Y(T)$. For each $i=1,\ldots$,  apply Theorem \ref{mainThmDiscretized} to $\tubes$ with the shading $Y_{i-1}$; let $r_i$, $\rho_i$ and $B_i$ be the output from this theorem. Define $Y_i(T) = Y_{i-1}(T)\backslash B_i$. We repeat this process until $\sum_{\tubes}|Y_i(T)|<\frac{1}{2}\delta^{\eta}\sum_{\tubes}|T|$, at which point we have
\[
\sum_{T\in\tubes}\Big| Y(T) \cap \bigcup_i B_i\Big| \geq \frac{1}{2}\sum_{T\in\tubes}|Y(T)|.
\]
After dyadic pigeonholing, we can select $\rho,r\in[\delta,1]$ with $\rho\leq\delta^\eta r$, and a set of indices $I$ with the following properties:
\begin{itemize}
\item For each index $i\in I$, we have $\rho_i\in [\rho, 2\rho)$ and $r_i\in [r, 2r)$.
\item $\sum_{T\in\tubes}\Big| Y(T) \cap \bigcup_{i\in I} B_i\Big| \gtrsim|\log\delta|^{-2}\sum_{\tubes}|Y(T)|.$
\end{itemize}

Next, we can greedily select a set of indices $I'\subset I$ so that the above properties continue to hold (with the implicit constant in the second item worsened slightly), and the sets $\{B_i\colon i\in I'\}$ are pairwise disjoint. 

We claim that the values of $\rho$ and $r$ described above, plus the shading $Y'(T) = Y(T)\cap \bigcup_{i\in I'} B_i$ satisfy the conclusions of Corollary \ref{KakeyaTwoScales}. Indeed, we can compute 
\begin{align*}
\Big| N_{\rho}\Big(\bigcup_{T\in\tubes}Y'(T)\Big)\Big| & =  \Big| N_{\rho}\Big(\bigcup_{i\in I'}B_i \cap \bigcup_{T\in\tubes}Y'(T)\Big)\Big| \\
&\gtrsim \sum_{i\in I'}\Big| B_i \cap N_\rho\Big(\bigcup_{T\in\tubes}Y'(T)\Big)\Big|\\
&\gtrsim \sum_{i\in I'} (r/\rho)^{\eps/2} |B_i| \\
&\gtrsim (r/\rho)^{\eps/2} \Big|N_{r}\Big(\bigcup_{T\in\tubes}Y'(T)\Big)\Big|\\
& \geq  \delta^{-\eta_0\eps/2}(r/\rho)^{\eps} \Big|N_{r}\Big(\bigcup_{T\in\tubes}Y'(T)\Big)\Big|.
\end{align*}
After decreasing $\delta_0$ (and hence $\delta$) if necessary, the quantity $\delta^{-\eta_0\eps/2}$ dominates the implicit constants in the above inequality, and we obtain \eqref{KakeyaSameTwoScalesIneq}.
\end{proof}

\end{document}